\newtheorem{Theorem}{Theorem}
\newtheorem{theorem}{Theorem}[section]
\newtheorem{lemma}[theorem]{Lemma}
\newtheorem{corollary}[theorem]{Corollary}
\newtheorem{proposition}[theorem]{Proposition}
\newtheorem{conjecture}[theorem]{Conjecture}
\newtheorem{scholium}[theorem]{Scholium}
\newtheorem{question}[theorem]{Question}
\theoremstyle{definition}
\newtheorem{definition}[theorem]{Definition} 
\newtheorem{example}[theorem]{Example} 
\theoremstyle{remark}
\newtheorem{remark}[theorem]{Remark}
\numberwithin{equation}{section}
\newcommand{\ot}{\otimes}
\newcommand{\ra}{\rightarrow}
\newcommand{\BC}{\mathbb{C}}
\newcommand{\BR}{\mathbb{R}}
\DeclareMathOperator{\Ima}{Im}
\def \sign{{\text{sign}}}
\newcommand{\IIDiag}{\raisebox{-0.33\height}{\includegraphics[scale=0.25]{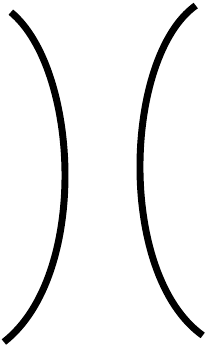}}}
\newcommand{\XDiag}{\raisebox{-0.33\height}{\includegraphics[scale=0.25]{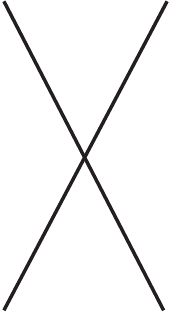}}}
\newcommand{\PMEdgeDiag}{\raisebox{-0.33\height}{\includegraphics[scale=0.25]{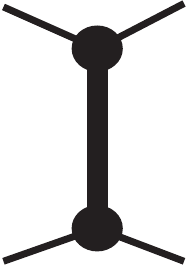}}}
\newcommand{\NegPMEdgeDiag}{\raisebox{-0.33\height}{\includegraphics[scale=0.25]{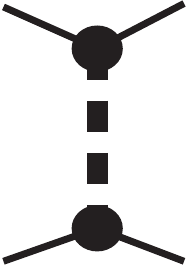}}}
\newcommand{\qdim}{q\!\dim}
\newcommand{\Kthreethree}{\raisebox{-0.33\height}{\includegraphics[scale=0.5]{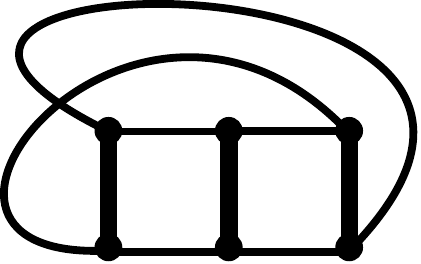}}}
\newcommand{\Kthreethreezero}{\raisebox{-0.33\height}{\includegraphics[scale=0.5]{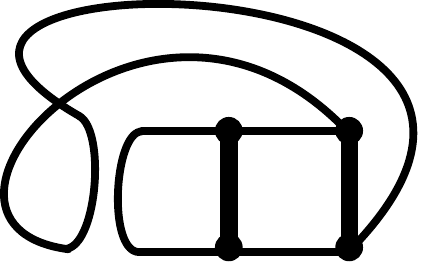}}}
\newcommand{\Kthreethreeone}{\raisebox{-0.33\height}{\includegraphics[scale=0.5]{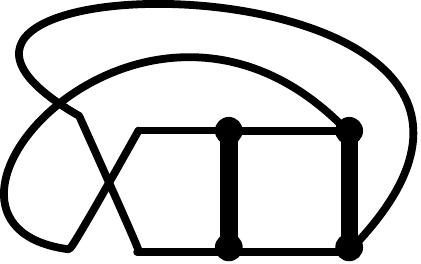}}}
\newcommand{\thetatwohor}{\raisebox{-0.33\height}{\includegraphics[scale=0.5]{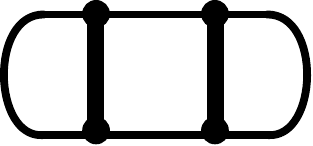}}}
\newcommand{\thetatwovert}{\raisebox{-0.45\height}{\includegraphics[scale=0.5]{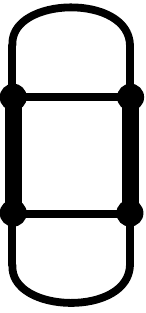}}}
\newcommand{\BZ}{\mathbb{Z}}
\newcommand{\BQ}{\mathbb{Q}}
\newcommand{\BN}{\mathbb{N}}
\newcommand{\ZZ}{{\mathbb Z}}
\newcommand{\NN}{{\mathbb N}}
\def\del{\partial}
\def\del{\partial}
 \def\l@subsection{\@tocline{2}{0pt}{4pc}{6pc}{}}
\def\l@subsubsection{\@tocline{3}{0pt}{8pc}{8pc}{}}
\begin{document}

\title{A topological quantum field theory approach to graph coloring}

\thanks{}

\author{Scott Baldridge}
\address{Department of Mathematics, Louisiana State University\\
Baton Rouge, LA}
\email{baldridge@math.lsu.edu}

\author{Ben McCarty}
\address{Department of Mathematical Sciences, University of Memphis\\
Memphis, TN}
\email{ben.mccarty@memphis.edu}

\subjclass{}
\date{}

\begin{abstract} In this paper, we use a topological quantum field theory (TQFT) to define families of new homology theories of a $2$-dimensional CW complex of a smooth closed surface.  The dimensions of these homology groups can be used to count the number of ways that each face of the CW complex can be colored with one of $n$ colors so that no two adjacent faces have the same color. We use these homologies to define new invariants of graphs, give new characterizations of well-known polynomial invariants of graphs, and rephrase and offer new approaches to famous conjectures about graph coloring. In particular, we show that the TQFT has the potential to generate $4$-face colorings of a bridgeless planar graph, leading to a constructive approach to the four color theorem.  The TQFT has ramifications for the study of smooth surfaces and provides examples of new types of Frobenius algebras.
\end{abstract}

\maketitle

\section{Introduction}
This paper introduces a topological quantum field theory (TQFT) approach to coloring faces of embedded graphs on surfaces with $n$ colors for any $n\in \BN$.  A $(d+1)$-dimensional TQFT assigns a projective $R$-module of finite type $V_M$ to any closed $d$-dimensional manifold $M$, an $R$-isomorphism between $V_M$ and $V_{M'}$ to any homeomorphism from $M$ to $M'$, and an $R$-homomorphism to any $(d+1)$-dimensional cobordism between $d$-dimensional manifolds \cite{Atiyah}. The category we use has a circle object with morphisms that are cobordisms between circles.  Unlike most $(d+1)$-dimensional TQFTs studied in topology, these cobordisms can be unoriented. The $(1+1)$-dimensional TQFT is then a functor from this category to the category of $R$-modules. For the applications presented here, $R$ will often be taken to be a field, therefore the values of the TQFT live in the category of vector spaces over $R$.

Given a TQFT and certain relations, there is a procedure for constructing homology theories from the TQFT by extending it to the category of chain complexes (cf. \cite{BN3}).  In this paper, we produce two related homology theories using this procedure: the {\em bigraded $n$-color homology} and the {\em filtered $n$-color homology}.  These homology theories are defined for a graph that is the $1$-skeleton of a $2$-dimensional CW complex of a closed smooth surface and encode structural information about the colorings (or states) of the $2$-cells of the surface with $n$ colors (see {\em $n$-face colorings} in \Cref{definition:n-face-coloring}).  Because the homologies are expressing graphical information about the surfaces, we will refer to the CW complexes  as ribbon graphs throughout the paper (cf. \Cref{Def:ribbongraph}).   

\newpage

Our homologies allow us to prove numerous new results as well as reframe several important problems in topology, geometry, graph theory and physics. In particular, we 
\begin{enumerate}
\item introduce two new polynomial invariants of a ribbon graph with a perfect matching, the {\em $n$-color polynomial} and {\em total face color polynomial}, and prove several known and new graph theoretic results using them,
\item show that the two homology theories are related via a spectral sequence, and in doing so prove several new results that parallel recent theorems proven using gauge theory and instanton homology of webs,
\item identify face colorings of a $2$-dimensional CW complex of a surface with harmonic classes of a Laplace operator on a finite dimensional Hilbert space, and show that the space of harmonic classes is isomorphic to the filtered $n$-color homology,
\item explain how the bigraded $n$-color homology categorifies the $n$-color polynomial, which connects computations of the filtered $n$-color homology to the Penrose polynomial,
\item give the first simple topological characterization of the fifty-year-old Penrose polynomial for nonplanar graphs, 
\item express the four-color theorem, cycle double cover conjecture, and Tutte's flow conjectures as questions about $n$-color homology theories, and in particular, show how the spectral sequence can be used to generate $4$-face colorings of a planar graph, i.e. a possible {\em construction-based} approach to  the four color theorem, and 
\item explicitly describe the category of cobordisms, local relations, and functors of the topological quantum field theory used to generate these homologies and polynomials. 
\end{enumerate}

These results constitute the first steps toward the new direction in topology/geometry and graph theory proposed in \cite{BaldCohomology}. We discuss each of these results in the next four subsections, summarized  by Theorems A--G. Each of these theorems build out one of the homologies/polynomials in \Cref{fig:InvariantTable} or connect them together into a coherent whole. In fact, taken together, these theorems tell a story. In the last subsection of the introduction, we briefly describe the proposal in \cite{BaldCohomology} that lead to the homologies in this paper.

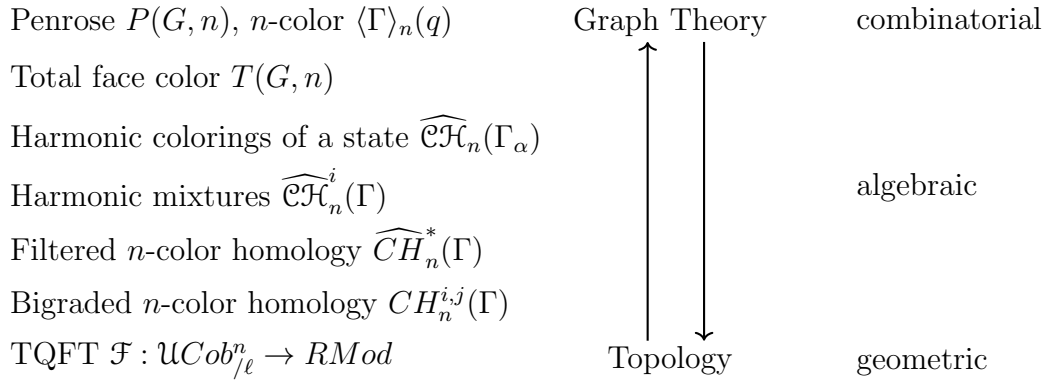
\begin{figure}[h]
$$\begin{tikzpicture}[thick, scale = .75]

\begin{scope}[xshift = -3 cm, yshift = 0 cm]
\draw (-8,2) node[anchor=west] {Penrose $P(G,n)$, $n$-color $\langle \Gamma \rangle_n(q)$};
\draw (-8,1) node[anchor=west]  {Total face color $T(G,n)$};
\draw (-8,0) node[anchor=west]  {Harmonic colorings of a state $\widehat{\mathcal{CH}}_n(\Gamma_\alpha)$};
\draw (-8,-1) node[anchor=west]  {Harmonic mixtures $\widehat{\mathcal{CH}}^i_n(\Gamma)$};
\draw (-8,-2) node[anchor=west] {Filtered $n$-color homology $\widehat{CH}_n^*(\Gamma)$};
\draw (-8,-3) node[anchor=west] {Bigraded $n$-color homology $CH_n^{i,j}(\Gamma)$};
\draw (-8,-4) node[anchor=west] {TQFT $\mathcal{F}: \mathcal{U}Cob_{/\ell}^n \rightarrow RMod$};
\end{scope}

\begin{scope}[xshift = 0.5 cm, yshift = 0 cm]
\draw (0.4,2) node{Graph Theory};
\draw[->] (0,-3.65) --(0,1.65);
\draw[->] (1,1.65) --(1,-3.65);
\draw (0.4,-4) node{Topology};
\end{scope}

\begin{scope}[xshift = -4 cm, yshift = 0.08 cm]
\draw (8,2) node[anchor=west] {combinatorial};
\draw (8,-1) node[anchor=west]  {algebraic};
\draw (8,-4.1) node[anchor=west]  {geometric};

\end{scope}

\end{tikzpicture}$$
\caption{The main structures, homologies, and polynomials defined in this paper and  how they relate to each other and to topology and graph theory.}
\label{fig:InvariantTable}
\end{figure}

\newpage

\subsection{Polynomials related to the Penrose polynomial} A {\em ribbon graph} $\Gamma$ is the closure of a small neighborhood of the $1$-skeleton $G$ of a CW complex of a closed surface $\overline{\Gamma}$ together with the $1$-skeleton  (cf. \Cref{Sec:perfect-matching-graphs} for details), which is equivalent to the CW complex.  A {\em perfect matching graph}, $\Gamma_M$, is a ribbon graph $\Gamma$ of $G(V,E)$ together with a perfect matching $M\subset E$ (see \Cref{Subsec:pm-graphs}). As first described in \cite{BaldCohomology}, choosing a perfect matching allows one to upgrade number-only invariants (e.g. the Penrose Formula or $n$-color number in \Cref{defn:n-color-number}) to a well-defined polynomial called the {\em $n$-color polynomial of a perfect matching graph}. This polynomial is the graded Euler characteristic of the bigraded $n$-color homology.

Briefly, here is the setup to describe the $n$-color polynomial for $n=3$: Given a  perfect matching graph $\Gamma_M$, resolve the perfect matching edges in two different ways inductively using the Kauffman-like bracket, $\langle \PMEdgeDiag \rangle_{\! 3} = \langle \IIDiag  \rangle_{\! 3}  - q\langle \XDiag \rangle_{\! 3}$ and replace immersed circles  when they appear with a Laurent polynomial expression, $\langle \bigcirc \rangle_{\! 3} = q+1+q^{-1}$. For example, the polynomial for the theta graph $\theta$ with standard perfect matching is:

\begin{eqnarray*}
\Big\langle \raisebox{-0.43\height}{\includegraphics[scale=0.30]{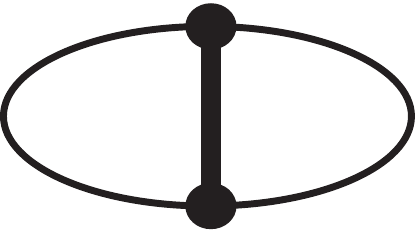}} \Big{\rangle}_{\!\!3} &=& \Big \langle \raisebox{-0.43\height}{\includegraphics[scale=0.20]{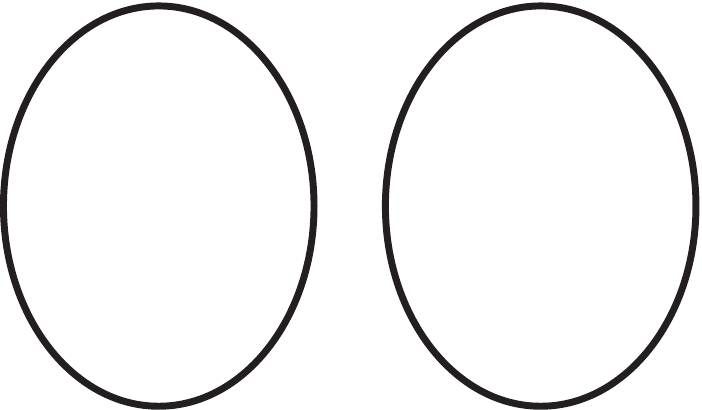}} \Big\rangle_{\!\!3} -q \Big\langle \raisebox{-0.43\height}{\includegraphics[scale=0.20]{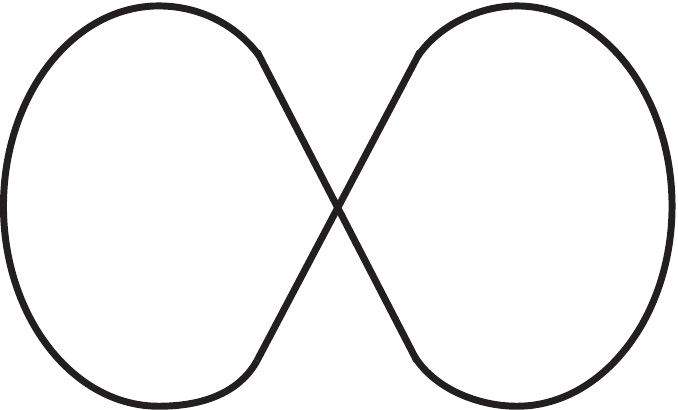}} \Big\rangle_{\!\!3}\\
&=& (q+1+q^{-1})^2 - q(q+1+q^{-1})\\
&=& q+2+2q^{-1}+q^{-2}.
\end{eqnarray*}

This is an invariant of the perfect matching graph and can be used to distinguish two perfect matching graphs in much the same way that knot theorists use invariants to distinguish links.  This is more than just an analogy: it is exactly what was done in \cite{BKR}, where (directed) trivalent perfect matching graphs were identified with link diagrams (for a graph theoretic-only example, see James Oxley's graphs in \cite{BaldCohomology}). 

We do not need to restrict to just invariants of ribbon graph/perfect matching pairs.  There are two ways to get an invariant of a ribbon graph from perfect matching graphs: either sum over all perfect matchings of a given ribbon graph or work with the canonical perfect matching graph associated to a ribbon graph. We work with the latter in this paper.  Given a ribbon graph $\Gamma$ for a graph $G(V,E)$, one obtains a canonical perfect matching graph $\Gamma^\flat_E$, called the blowup (cf. \Cref{def:blow-up-of-a-graph}).  This allows us to define the $n$-color polynomial of the graph itself, i.e., $\langle \Gamma\rangle_{\! n} :=\langle \Gamma^\flat_E \rangle_{\! n}$ (cf. \Cref{defn:n-color-poly-of-a-graph}).  Comparing \Cref{defn:n-color-poly} and \Cref{defn:penrose_poly} one sees that substituting $q=1$ in the $n$-color polynomial  $\langle \Gamma\rangle_n$ yields the value of the Penrose polynomial, $P(\Gamma,n)$, evaluated at $n$.  Thus, invariants of perfect matching graphs lead naturally to invariants, like the Penrose polynomial, of the original ribbon graph.

It is in the context of exploring invariants of perfect matching graphs $\Gamma_M$, or through the blowup $\Gamma_E^\flat$, exploring invariants of the ribbon graph $\Gamma$ itself, that one should understand \Cref{MainTheorem:n-color-polynomial} below. The first four statements are generalizations of well known results about the Penrose polynomial (see \Cref{thm:penrose-polynomial-values}). To setup this theorem, let $K_n$ be the Klein group $K_n:=\BZ_2\times \BZ_2 \times \dots\times \BZ_2$ where $n=2^k$ for some $k\in \BN$, i.e., $|K_n|=n$.  The last two statements are derived from the perfect matching version of \Cref{MainTheorem:Colorings-of-State-Graphs}  and connects nowhere zero $K_n$-flows to $(n-1)$-face and $n$-face colorings in later theorems (see \Cref{MainTheorem:Bigraded-n-color-homology}--\Cref{Theorem:mainthmPenrose}).

\newpage

\begin{Theorem} \label{MainTheorem:n-color-polynomial}
Let $\Gamma_M$ be a perfect matching graph of a connected trivalent graph $G(V,E)$ with perfect matching $M\subset E$. For $n\in\BN$, the $n$-color polynomial, $ \langle \Gamma_M\rangle_{\! n}(q)$, is an invariant of $\Gamma_M$, i.e., it depends only on the ribbon graph $\Gamma$ and perfect matching $M$. Furthermore, for a plane perfect matching graph:
\begin{enumerate}
\item $ \langle \Gamma_M\rangle_{\! 2}(1)= 2^k$ where $k$ is the number of cycles of $G\setminus M$ if the perfect matching is even, and $\langle \Gamma_M\rangle_{\! 2}(1)=0$ otherwise,
\item $\langle \Gamma_M\rangle_{\! 3}(1) = \#\{\mbox{$3$-edge colorings of $G$}\}$,
\item $ \langle \Gamma_M\rangle_{\! n}(1)=0$ if $G$ has a bridge, 
\item for $n=2^k$ with $k\in\BN$, $\langle \Gamma_M\rangle_{\! n-1}(1)>0$ if and only if $\Gamma$ is $n$-face colorable, and
\item for $n=2^k$ with $k\in \BN$, $$ \langle \Gamma_M\rangle_{\! n-1}(1)  \leq \#\{\mbox{nowhere zero $K_n$-flows of $G$}\} \leq \frac{1}{n} \langle \Gamma\rangle_{\! n}(1).$$
\end{enumerate}
\end{Theorem}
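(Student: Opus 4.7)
The plan is to derive a state-sum formula for $\langle \Gamma_M \rangle_n(q)$ by fully expanding the defining skein relation over all $2^{|M|}$ ways to resolve the matched edges, obtaining
\[
\langle \Gamma_M \rangle_n(q) = \sum_{\sigma \in \{0,1\}^M} (-q)^{|\sigma|_1}\,[n]_q^{c(\sigma)},
\]
where $|\sigma|_1$ counts the X-resolutions in $\sigma$, $c(\sigma)$ is the number of closed loops in the fully resolved diagram, and $[n]_q$ is the circle value (so $[n]_1 = n$). Invariance under changes of embedded diagrammatic presentation of $(\Gamma, M)$ follows because both $|\sigma|_1$ and $c(\sigma)$ are intrinsic to the ribbon graph plus matching. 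Specializing at $q=1$ gives $\langle \Gamma_M \rangle_n(1) = \sum_{\sigma} (-1)^{|\sigma|_1}\,n^{c(\sigma)}$, which is the object of analysis for all five claims.

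For (1)--(3) I would argue combinatorially from this state sum. For (1), when $n=2$, the all-$0$ resolution of an even perfect matching reproduces $G\setminus M$, whose $k$ cycles each contribute a factor of $2$; I would pair off the remaining states via a sign-reversing involution to leave $2^k$, and show that when the matching is odd an involution on all of $\{0,1\}^M$ forces the total sum to vanish. For (2), the $n=3$ statement is Penrose's original trivalent result repackaged with the perfect matching as bookkeeping: his local bijection between ``valid'' states and proper $3$-edge colorings at each vertex of $G$ carries over unchanged once one identifies the matched edge at each vertex with Penrose's distinguished strand. For (3), a bridge $e$ supplies an explicit sign-reversing, fixed-point-free involution on states: if $e \in M$, flip the resolution at $e$; if $e \notin M$, flip the resolution at the nearest matched edge on one side of $e$. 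In both cases $|\sigma|_1$ changes parity while $c(\sigma)$ is preserved, so the signed sum vanishes.

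For (4) and (5), the key input is the Tutte--Jaeger correspondence between $n$-face colorings of a plane cubic graph (for $n=2^k$) and nowhere-zero $K_n$-flows on $G$. Using $\langle \Gamma\rangle_n(1) = P(\Gamma,n)$ and interpreting the Penrose polynomial at $n$ as the number of $n$-face colorings of a bridgeless plane cubic graph, the upper bound follows from the free $K_n$-action that translates the color of a fixed face: each nowhere-zero $K_n$-flow arises from exactly $n$ face colorings, which yields $\#\{K_n\text{-flows}\} \leq \tfrac{1}{n}\langle \Gamma\rangle_n(1)$. For the lower bound and for (4), I would expand $\langle \Gamma_M\rangle_{n-1}(1)$ in the state-sum form and construct an injection from the states contributing nontrivially into the set of nowhere-zero $K_n$-flows by using the matching to locally assemble a coherent flow value at each vertex of $G$. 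The hardest part will be establishing this injection without hidden cancellation: one must show $\langle \Gamma_M\rangle_{n-1}(1)$ genuinely counts a subfamily of flows rather than the difference of two signed subfamilies. This likely requires either a direct flow-theoretic matching argument or, more in the spirit of the paper, the positivity provided by identifying the relevant contributions with dimensions of harmonic spaces via the spectral sequence between the bigraded and filtered $n$-color homologies promised in the introduction.
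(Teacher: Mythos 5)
Your state-sum expansion is exactly the paper's Equation~(3.5), and your recognition that (4) and (5) must wait for the homological machinery (Theorems B--D) is correct --- that is precisely where the paper obtains the positivity that rules out hidden cancellation, by identifying the surviving contributions with dimensions of harmonic color spaces on the $E_\infty$-page. However, there are two concrete problems in the combinatorial parts. For (3), the case $e \notin M$ is vacuous and your proposed involution for it is suspect: since $G$ is trivalent and $G\setminus M$ is a disjoint union of cycles, a bridge edge cannot lie on any cycle and must therefore lie in $M$. Once this is recorded, the involution ``flip the resolution at $e$'' is exactly the paper's argument, and the key point (which you should state) is that the $1$-smoothing of a bridge merely introduces a double point in the circle, so $c(\sigma)$ is unchanged and the sign flips; no ``nearest matched edge'' case arises. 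For (1), ``pair off the remaining states via a sign-reversing involution'' is asserted without specifying the involution, and that is where all the difficulty is concentrated. The paper flags this as nontrivial: it proves the even case for plane graphs via Proposition~3 of \cite{BLM} and the general case by invoking Theorem~5 of \cite{BaldKauffMc} through the functor of \cite{BKR}. You would have to say which pairs of states cancel, why all $2^k$ surviving states carry the same sign, and why an odd cycle forces total cancellation; none of this is routine, and a direct involution proof is not currently in the paper.

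For (2), your plan to reuse Penrose's original local tensor identity is legitimate --- the identity applies at every trivalent vertex, so recursing only at the matched edges still yields the count of proper $3$-edge colorings of a plane cubic graph --- but it is not the route the paper takes. The paper deliberately proves (2) as a corollary of its own Statement~(5) at $n=4$, so that the argument rests only on the new homology theories and generalizes cleanly to the nonplanar total face color polynomial. Both arguments are correct; yours is shorter and classical, the paper's is uniform with the later nonplanar results.
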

\bigskip

\noindent {\em Remark.} Many of the statements in \Cref{MainTheorem:n-color-polynomial} also hold for nonplanar ribbon graphs with perfect matchings (see their proofs where we prove them in greatest generality). However, some do not---see \Cref{fig:K33-is-zero} for an example where Statement (2) fails. In the cases where they do not hold, there is a stronger polynomial invariant introduced in this paper called the {\em total face color polynomial}, $T(G,n)$, described in \Cref{section:totalfacecolorpoly}, that can be used to get similar statements. 
\mbox{}\\

When $n=4$, the number of nonzero $K_4$-flows is equal to the number of $3$-edge colorings of $G$. A later, stronger version of Statement (5) using filtered $3$- and $4$-color homology can then be used to give a new proof of Statement (2). In fact, we state a theorem and make a conjecture (see \Cref{Theorem:mainthm-four-color} and  \Cref{conjecture:total-color-poly-is-an-upper-bound}) about nowhere zero $K_n$-flows using filtered $n$-color homology. In particular, Tutte's $4$-flow conjecture and the cycle double cover conjecture can be rephrased as statements using the total face color polynomial.

One of the  corollaries that follows from \Cref{thm:map-R-theorem} is that many of the polynomial invariants defined in this paper for ribbon graphs can be turned into {\em abstract} graph invariants. Thus, they are theorems about graphs themselves and not just CW complexes of a surface.  For example, we give the first well-defined definition of the Penrose polynomial for an abstract connected trivalent graph $G$ in the literature (see \Cref{def:penrose-poly-abstract-graph}).

\subsection{New homology theories that count $n$-face colorings} At the heart of this paper is the interplay between  the bigraded  and filtered $n$-color homologies through a spectral sequence.   The bigraded $n$-color homology relates the TQFT to the Penrose polynomial.   The filtered $n$-color homology theory is the theory that relates the TQFT to important ideas in graph theory like the four color theorem and the cycle double cover conjecture. The filtered $n$-color homology theory is the one that gives counts of $n$-face colorings of the CW complex of the surface.  One of the strengths of both homology theories is that they are easily computable from a ribbon diagram (cf. \Cref{Def:ribbondiagram}). In fact, we have computer programs for computing them that are available upon request.

\Cref{MainTheorem:Bigraded-n-color-homology} states that the graded Euler characteristic of the bigraded $n$-color homology is the $n$-color polynomial.  Briefly, to define this homology for a perfect matching diagram $\Gamma_M$, we first start with a ribbon diagram with a perfect matching $M$. Replace the perfect matching edges in the diagram  $\PMEdgeDiag$ by either a $0$-smoothing $\IIDiag$ or a $1$-smoothing $\XDiag$ to form a ``hypercube of states,'' which is itself an $|M|$-regular graph with $2^{|M|}$ vertices. To each vertex we associate a {\em state} that corresponds to an element $\alpha=(\alpha_1,\ldots,\alpha_{|M|}))\in \{0,1\}^{|M|}$ and is a set of immersed circles in the plane given by the choice of a $0$- or $1$-smoothing at each perfect matching edge (cf. \Cref{fig:theta_3}).

The hypercube is arranged in columns from the ``all zero smoothings'' state $(0,0,\ldots,0)\in\{0,1\}^{|M|}$ to the  ``all one smoothings'' state $(1,1,\ldots,1)\in\{0,1\}^{|M|}$, where columns are the states that have the same number of $1$-smoothings---let $|\alpha|$ be that number for each state. Next, replace circles in each state by a tensor product of copies of the algebra $V=\mathbbm{k}[x]/(x^n)$ and replace edges of the hypercube with maps between these algebras that depend upon what happens to the circles.  These maps gives rise to a differential $\partial$ between columns that turn the hypercube of states into a bigraded chain complex $(C^{i,j}(\Gamma_M;\mathbbm{k}),\partial)$. This differential preserves the quantum grading (the $j$-grading) and increases the homological grading $i$ by one.  The {\em bigraded $n$-color homology of $\Gamma_M$}, $CH^{i,j}_n(\Gamma_M;\mathbbm{k})$, is then the homology of this complex.

We can now state the second main theorem of this paper:

\bigskip
\begin{Theorem} \label{MainTheorem:Bigraded-n-color-homology}
Let $\Gamma_M$ be a perfect matching graph of a connected trivalent ribbon graph $\Gamma$ with a perfect matching $M$. Let $n\in \BN$ and $\mathbbm{k}$ be a ring in which $\sqrt{n}$ is defined.  Then the bigraded $n$-color homology is an invariant of $\Gamma_M$,  i.e., it depends only on the ribbon graph $\Gamma$ and perfect matching $M$.  Furthermore, the graded Euler characteristic of it is the $n$-color polynomial: 
$$\langle \Gamma_M \rangle_n = \chi_q(CH^{*,*}_n(\Gamma_M;\mathbbm{k})).$$
\end{Theorem}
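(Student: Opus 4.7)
The plan is to mirror the Khovanov-style construction. The complex is built from a hypercube of states indexed by $\{0,1\}^{|M|}$, where each state is obtained by choosing a $0$- or $1$-smoothing at every perfect matching edge. The first task is to verify that $(C^{i,j}(\Gamma_M;\mathbbm{k}),\partial)$ really is a chain complex, i.e.\ that $\partial^2 = 0$. I would do this in the standard way, by showing that each square face of the hypercube anticommutes once the usual edge signs are attached, and by reducing the square anticommutativity to the four local cases (merge-merge, merge-split, split-merge, split-split) for the Frobenius algebra $V = \mathbbm{k}[x]/(x^n)$. These in turn follow from associativity, coassociativity, and the Frobenius relation enforced by the local relations of the TQFT $\mathcal{F}:\mathcal{U}Cob_{/\ell}^n \to RMod$.

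For the Euler characteristic computation, I would work directly at the chain level since $\chi_q$ descends to homology. Writing
$$
\chi_q(C^{*,*}_n(\Gamma_M;\mathbbm{k})) \;=\; \sum_{\alpha \in \{0,1\}^{|M|}} (-1)^{|\alpha|}\, q^{|\alpha|}\, \qdim\!\bigl(V^{\otimes k_\alpha}\bigr),
$$
where $k_\alpha$ is the number of circles in the state $\alpha$ (with the appropriate $q$-grading shift absorbed into the $q^{|\alpha|}$ factor), I would match this term-by-term against the Kauffman-type recursion
$$
\langle \PMEdgeDiag \rangle_{\! n} = \langle \IIDiag \rangle_{\! n} - q\langle \XDiag \rangle_{\! n}, \qquad \langle \bigcirc \rangle_{\! n} = \qdim V.
$$
Each perfect matching edge contributes $+1$ for the $0$-smoothing and $-q$ for the $1$-smoothing, and the assignment $V$ to each circle has graded dimension $\qdim V$, so iterating the recursion over all $|M|$ edges produces exactly the sum above. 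This identifies $\chi_q$ with $\langle \Gamma_M\rangle_{\! n}$ essentially by construction.

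The hard part is the invariance statement. A perfect matching graph admits many ribbon diagrams, and one must show that the chain homotopy type of the complex is preserved under the moves relating any two such diagrams. The plan is to identify a finite list of local moves that generate ambient isotopy of ribbon diagrams respecting the perfect matching, and then, for each move, to construct an explicit chain homotopy equivalence between the two local complexes. These local equivalences should descend from the local relations imposed on $\mathcal{F}$, in the spirit of Bar-Natan's universal formalism (cf.\ \cite{BN3}), so the work reduces to verifying that those relations imply move invariance.

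The main obstacle I anticipate is handling the invariance uniformly in $n$: the Frobenius structure on $V = \mathbbm{k}[x]/(x^n)$ depends on $n$, and moves that produce or absorb circles (or permute strands in a multi-circle state) yield algebraic identities whose validity is exactly where the hypothesis that $\sqrt{n}$ is defined in $\mathbbm{k}$ becomes essential, presumably as a normalization factor in the comultiplication and counit. Once this is in place, passing from ribbon diagrams back to pairs $(\Gamma, M)$ follows from the same local invariance together with the Whitney-type moves for planar embeddings of ribbon structures, giving the claimed dependence only on the ribbon graph $\Gamma$ and the perfect matching $M$.
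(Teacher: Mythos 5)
The central gap in your proposal is that you model the construction after Khovanov homology and list only the four local squares ``merge-merge, merge-split, split-merge, split-split,'' but the differential in this theory has a \emph{third} type of edge map. When a $1$-smoothing is performed on a perfect matching edge whose two arcs lie on the \emph{same} circle of the state but the smoothing does not split it, the circle merely acquires a double point, and the induced map is the ``immersion'' map $\eta:V\to V$, $\eta(x^k)=\sqrt{n}\,x^{k+m}$. This map has no analogue in Khovanov homology (there the circles are embedded, not immersed), and it is precisely this map, not a counit normalization, that forces the hypothesis that $\sqrt{n}\in\mathbbm{k}$: the obstruction to $\partial^2=0$ is the square $\eta\circ\eta = m\circ\Delta$, where $m\circ\Delta(1)=n\,x^{n-1}$ when $n$ is odd, so $\eta$ must carry a $\sqrt{n}$ to square to multiplication by $n$. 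Your case analysis therefore omits all faces of the hypercube involving $\eta$ (the interactions $\eta$ with $m$, $\eta$ with $\Delta$, and $\eta\circ\eta$), which is where the genuinely new work of the proof lives; the Frobenius-algebra squares you list are indeed the routine ones. You should also note that the comultiplication here is the \emph{shifted} one, $\Delta(x^k)=\sum_{i+j=k+2m}x^i\otimes x^j$ (so e.g.\ $\Delta(1)=x\otimes x$ for $n=2$), not the counital comultiplication of the standard Frobenius structure, so the usual ``Frobenius relation'' identities cannot simply be quoted; the paper instead verifies degree-preservation and uses it to argue the squares commute.

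A smaller slip: the $n$-color bracket multiplies the $1$-smoothing by $-q^m$ with $m=n/2$ (resp. $(n-1)/2$), not by $-q$, and correspondingly the grading shift on $V_\alpha$ contributes $q^{m|\alpha|}$, not $q^{|\alpha|}$. Your displayed formulas are correct only for $n\le 3$. Once corrected, the term-by-term matching with the state sum does go through exactly as you describe, and your invariance strategy (verify the ribbon moves of \Cref{Def:ribbonmoves}) is the same as the paper's.
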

\bigskip

The bigraded $n$-color homology is a far stronger invariant than the $n$-color polynomial in the same way that Khovanov homology is stronger than the Jones polynomial in knot theory. At the same time, the $n$-color polynomial has more information in it than the Penrose polynomial, which only reports the value of the $n$-color polynomial when evaluated at one (cf. \Cref{prop:EulercharPenrose}). For example, the $n$-color polynomial can be nontrivial for each $n\in\BN$ even for examples where the Penrose polynomial is  zero for all $n$ (see \Cref{example:K33-computation-of-Penrose-poly}).

In \Cref{section:filtered-n-color-homology}, another differential, $\widetilde{\partial}$, is introduced on the same complex. This differential  no longer preserves the quantum grading but does preserve a filtration of the complex.  The differential $\widetilde{\partial}$  anti-commutes with $\partial$ and can be used to form a spectral sequence $(E_r(\Gamma_M),d_r)$.  Here $(E_0,d_0)$ is the original bigraded chain complex $C^{i,j}(\Gamma_M)$ with differential $d_0:=\partial$. The first page is $E_1(\Gamma_M)=CH^{i,j}_n(\Gamma_M)$ with differential $d_1:=\widetilde{\partial}$.  

In \Cref{subsection:colorbasis}, we combine the two differentials into $\widehat{\partial}:=\partial+\widetilde{\partial}$ on the same complex constructed from the hypercube, except we choose $\widehat{V}=\mathbbm{k}[x]/(x^n-1)$ as the algebra.  This is no longer a bigraded theory but a filtered one.  The {\em filtered $n$-color homology of $\Gamma_M$},  $\widehat{CH}^*_n(\Gamma_M;\mathbbm{k})$, is the homology of this complex. Then the $E_\infty$-page of the spectral sequence $(E_r(\Gamma_M),d_r)$ and $\widehat{CH}^*_n(\Gamma_M;\mathbbm{k})$ are the same:

\bigskip
\begin{Theorem}\label{MainTheorem:spectralsequence}
Let $\Gamma_M$ be a perfect matching graph of a connected trivalent ribbon graph $\Gamma$ with perfect matching $M$. Let $n\in \BN$ and $\mathbbm{k}$ be a ring in which $\sqrt{n}$ is defined. Then there exist a spectral sequence $(E_r(\Gamma_M),d_r)$ such that the $E_1$-page is isomorphic to $CH^{*,*}_n(\Gamma_M;\mathbbm{k})$ and the $E_\infty$-page is isomorphic to $\widehat{CH}_n(\Gamma_M;\mathbbm{k})$.
\end{Theorem}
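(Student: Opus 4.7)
The plan is to realize $\widehat{CH}^*_n(\Gamma_M)$ as the homology of a filtered total complex and then invoke the standard convergence theorem for bounded filtered complexes. All the pieces are already in place from the preceding discussion: a single underlying $\mathbbm{k}$-module $\widehat{C}^*$ built from the hypercube of states, together with the anti-commuting differentials $\partial$ and $\widetilde{\partial}$ whose sum defines $\widehat{\partial}$. First, I would verify, using $\partial^2 = 0$, $\widetilde{\partial}^2 = 0$, and $\partial\widetilde{\partial} + \widetilde{\partial}\partial = 0$, that $\widehat{\partial}^2 = 0$, so that $(\widehat{C}^*, \widehat{\partial})$ is a genuine chain complex whose homology is $\widehat{CH}^*_n(\Gamma_M)$.

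Next, I would equip $\widehat{C}^*$ with the decreasing filtration $F^p \widehat{C}^i := \bigoplus_{j \geq p} C^{i,j}$ by quantum degree and check that $\widehat{\partial}$ respects it. By construction, $\partial$ preserves the $j$-grading while $\widetilde{\partial}$ strictly lowers it: every cobordism map built from $\widehat{V} = \mathbbm{k}[x]/(x^n-1)$ agrees with the corresponding map built from $V = \mathbbm{k}[x]/(x^n)$ modulo terms of strictly smaller quantum degree, since the only difference between the two algebras is the relation $x^n = 1$ in place of $x^n = 0$. Because the hypercube is finite and each state carries finitely many circles, the filtration is bounded; the standard spectral sequence $(E_r, d_r)$ of a bounded filtered complex therefore exists and converges to $H^*(\widehat{C}^*, \widehat{\partial}) = \widehat{CH}^*_n(\Gamma_M)$.

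The third step is to identify the $E_0$ page as the associated graded $\bigoplus_p F^p/F^{p+1}$, which under the canonical isomorphism between the associated graded of $\widehat{V}$ and $V$ becomes $C^{*,*}(\Gamma_M)$ as a bigraded module; the induced differential $d_0$ is the quantum-grading-preserving part of $\widehat{\partial}$, which is exactly $\partial$. Hence $E_1 = H(E_0, d_0) = CH^{*,*}_n(\Gamma_M)$, giving the first isomorphism. For the second, convergence yields $E_\infty^{p,q} \cong \mathrm{gr}^p \widehat{CH}^{p+q}_n(\Gamma_M)$, and summing over bidegrees recovers $\widehat{CH}_n(\Gamma_M)$ up to the filtration, which is the sense in which the statement asserts the isomorphism.

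The hard part will be the explicit identification of $d_0$ with $\partial$ edge by edge in the hypercube. This amounts to comparing the merge, split, unit, and counit maps on $V$ and on $\widehat{V}$ in a compatible way: for instance $\widehat{m}(x^i \otimes x^j) = x^{i+j}$ when $i+j < n$ (matching $V$ on the nose) and $\widehat{m}(x^i \otimes x^j) = x^{i+j-n}$ when $i+j \geq n$ (a correction of strictly smaller quantum degree, while on $V$ this image is $0$), with a parallel analysis of $\widehat{\Delta}$. Once this bookkeeping is in place, the rest of the argument is purely formal spectral-sequence machinery, and the hypothesis $\sqrt{n} \in \mathbbm{k}$ plays no new role beyond ensuring that both homology theories are defined.
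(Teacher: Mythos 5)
Your approach is essentially the same as the paper's: filter the total complex $(\widehat{C}^*,\widehat{\partial})$ by quantum degree, identify $E_0$ with $(C^{*,*},\partial)$ via the associated graded, and invoke the convergence theorem for bounded filtered complexes (McCleary is the cited reference in the paper). The paper in fact devotes less text to this than you do; it simply records the filtration, asserts $E_1 = CH^{*,*}_n$, and cites the standard theorem for convergence.

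One detail is backwards, however, and it matters for your filtration to be preserved. You claim $\widetilde{\partial}$ \emph{strictly lowers} the quantum $j$-grading, reasoning from the $x$-exponent going down in $\widehat{m}(x^i\otimes x^j)=x^{i+j-n}$. But the quantum degree of $x^k$ is $m-k$ (up to shift), so \emph{lower} $x$-exponent means \emph{higher} $j$-degree; the paper records $\widetilde{\partial}\colon C^{i,j}\to C^{i+1,j+n}$, so $\widetilde{\partial}$ raises $j$ by exactly $n$, and more generally $j(\widehat{\partial}v)\geq j(v)$. With that corrected, your decreasing filtration $F^p = \bigoplus_{j\geq p} C^{i,j}$ is indeed preserved by $\widehat{\partial}$ — whereas if $\widetilde{\partial}$ genuinely lowered $j$, $\widehat{\partial}$ would fail to respect $F^p$ and the argument would break. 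Fix the sign of the shift and the rest of your argument goes through as written, including the careful caveat that $E_\infty$ recovers $\widehat{CH}_n$ only up to the filtration (i.e., as the associated graded), which is the correct reading of the theorem's conclusion.
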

\bigskip

\Cref{MainTheorem:spectralsequence} links the Penrose polynomial and bigraded $n$-color homology to the filtered $n$-color homology. It is at the filtered $n$-color homology level (see \Cref{fig:InvariantTable} again) that  colorings of faces of the CW complex begin to emerge from the theory explicitly.  To see the colors, we need the space of harmonic colorings of a state.  First, we make two simplifying assumptions:  (1) we work with the blowup $\Gamma^\flat_E$ of the ribbon graph $\Gamma$ of the graph $G(E,V)$ and (2)  take $\mathbbm{k}=\BC$. The first assumption ensures that the circles in each state can be identified with faces of a ribbon graph and the second choice ensures that the $n$th roots of unity exist, which are used to define a ``color basis.''

A Hermitian metric for the entire chain complex can be defined after changing the basis to the color basis (\Cref{def:colorbasis}). With this metric and basis, there is an adjoint operator $\widehat{\partial}^*:\widehat{C}^i(\Gamma) \ra \widehat{C}^{i-1}(\Gamma)$ and a Laplacian $\slashed{\Delta}:\widehat{C}^i(\Gamma) \ra \widehat{C}^{i}(\Gamma)$ defined by $\slashed{\Delta} = (\widehat{\partial}+\widehat{\partial}^*)^2$. Hence, one can define the {\em space of harmonic mixtures}, $\widehat{\mathcal{CH}}^i_n(\Gamma)$, and prove a Hodge decomposition theorem for the space $\widehat{C}^i(\Gamma)$ (see \Cref{lem:hodge-decomposition-theorem}). 

We use the term ``harmonic mixture'' for the elements of $\widehat{\mathcal{CH}}^i_n(\Gamma)$ because, in the color basis, elements are linear combinations of colorings on the same state {\em and across different states}  $\Gamma_\alpha$ where $|\alpha|=i$. In other words, it may be possible for a solution to $\slashed{\Delta}(\omega) = 0$ to be represented, say, by $\omega=\omega_\alpha+\omega_{\alpha'}\in \widehat{V}_\alpha \oplus \widehat{V}_{\alpha'} \subset \widehat{C}^i(\Gamma)$ for two  states $\Gamma_\alpha$ and $\Gamma_{\alpha'}$ in the hypercube such that $\partial(\omega_\alpha) \not=0$ and $\partial(\omega_{\alpha'})\not=0$, but $\partial(\omega)=0$. The solution does not live in either state independently, but through a ``mixture of colorings'' on each state.  \Cref{MainTheorem:Colorings-of-State-Graphs} below, which is the key link between topology and graph theory in \Cref{fig:InvariantTable}, rules this possibility out.

To isolate the colorings to each state $\Gamma_\alpha$, we define the {\em space of harmonic colorings of a state}, $\widehat{\mathcal{CH}}_n(\Gamma_\alpha)$, see \Cref{defn:harmonic-coloring-of-a-state}. Note the difference in notation between harmonic mixtures and colorings: the harmonic colorings reside {\em only} on the state.  Hence, for a state $\Gamma_\alpha$ for $|\alpha|=i$, this is the space of elements $\omega\in \widehat{V}_\alpha \subset \widehat{C}^i(\Gamma)$ such that $\widehat{\partial}(\omega)=0$ and $\widehat{\partial}^*(\omega)=0$. The main proof of this paper shows that the subspace
$$\bigoplus_{|\alpha|=i} \widehat{\mathcal{CH}}_n(\Gamma_\alpha) \subset \widehat{\mathcal{CH}}_n^i(\Gamma)$$
is actually the entire space. This is done by proving a ``Poincar\'{e} lemma'' for the homology theory that we suspect will be important in further research (cf. \Cref{prop:Poincare-Lemma}).

The space of harmonic colorings of a state $\Gamma_\alpha$ is generated by $n$-face colors on a ribbon graph of $G$ associated to that state. In \Cref{subsec:face-colorings-of-ribbon-graphs}, we describe a map $\mathcal{R}$ from the set of  states in the hypercube of $\Gamma_E^\flat$ to the set of ribbon graphs of $G$. The ribbon graphs in the image of $\mathcal{R}$ are called {\em state graphs}, which we continue to call $\Gamma_\alpha$. Each state graph determines a CW complex for the closed associated surface $\overline{\Gamma}_\alpha$. \Cref{MainTheorem:Colorings-of-State-Graphs} establishes that the number of $n$-face colorings of the $2$-cells of $\overline{\Gamma}_\alpha$ is  equal to the dimension of the space of harmonic colorings of the state:

\begin{Theorem}\label{MainTheorem:Colorings-of-State-Graphs}
Let $\Gamma$ be a connected ribbon graph of an abstract graph $G(V,E)$. Then the filtered $n$-color homology counts $n$-face colorings on the ribbon graphs associated to the blowup of $\Gamma$:
\begin{enumerate}
\item The $i^{th}$ homology group of the filtered $n$-color homology is isomorphic to the direct sum of the spaces of harmonic colorings of state ribbon graphs $\Gamma_\alpha$ where $|\alpha|=i$:
$$\widehat{CH}^i_n(\Gamma;\BC) \cong \oplus_{|\alpha|=i} \widehat{\mathcal{CH}}_n(\Gamma_\alpha).$$
In particular, if $\Gamma$ is planar, then the space of harmonic colorings of a state ribbon graph $\Gamma_\alpha$ of $\Gamma$ can be nonzero only when $|\alpha|$ is even.
\item The dimension of the space of harmonic colorings of a state graph $\Gamma_\alpha$ is equal to the number of $n$-face colorings of its closed associated surface $\overline{\Gamma}_\alpha$:
$$\dim \widehat{\mathcal{CH}}_n(\Gamma_\alpha)=\#\{\mbox{$n$-face colorings of $\overline{\Gamma}_\alpha$}\}.$$
\item The all-zero state $\Gamma_{\vec{0}}$, i.e., $\alpha=(0,0,\ldots,0)\in \{0,1\}^{|E|}$ of the blowup $\Gamma^\flat_E$ of $\Gamma$, is equivalent to the original ribbon graph $\Gamma$, and therefore
$$\dim \widehat{CH}^0_n(\Gamma;\BC) = \dim  \widehat{\mathcal{CH}}_n(\Gamma_{\vec{0}})=\#\{\mbox{$n$-face colorings of $\overline{\Gamma}$}\}.$$
In particular, if $\Gamma$ is a plane graph of $G$ and $n=4$, then $\dim \widehat{CH}^0_4(\Gamma;\BC)$ counts the number of $4$-face colorings of $\Gamma$. 
\end{enumerate}
\end{Theorem}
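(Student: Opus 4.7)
The plan is to prove the three parts of the theorem in sequence, reducing the statement about filtered homology to a statement about harmonic mixtures via the Hodge decomposition theorem (\Cref{lem:hodge-decomposition-theorem}), then showing that harmonic mixtures split as direct sums over states using a Poincar\'e lemma (\Cref{prop:Poincare-Lemma}), and finally identifying the single-state harmonic colorings with proper $n$-face colorings by an explicit calculation in the color basis (\Cref{def:colorbasis}).

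For Statement (1), the Hodge decomposition gives a canonical isomorphism $\widehat{CH}^i_n(\Gamma;\BC) \cong \widehat{\mathcal{CH}}_n^i(\Gamma)$, so it suffices to identify $\widehat{\mathcal{CH}}_n^i(\Gamma)$ with $\bigoplus_{|\alpha|=i} \widehat{\mathcal{CH}}_n(\Gamma_\alpha)$. The inclusion $\supseteq$ is immediate from the definitions: an element annihilated by both $\widehat{\partial}$ and $\widehat{\partial}^*$ on a single summand $\widehat{V}_\alpha$ lies in the kernel of the Laplacian $\slashed{\Delta}$. The reverse inclusion is the crux. Given a harmonic mixture $\omega = \sum_{|\alpha|=i} \omega_\alpha$, I would invoke the Poincar\'e lemma to argue that the contributions $\widehat{\partial}(\omega_\alpha)$ landing in adjacent summands $\widehat{V}_\beta$ with $|\beta|=i+1$, together with the corresponding $\widehat{\partial}^*$-contributions landing in $|\beta|=i-1$, cannot cancel nontrivially across distinct source states; the joint harmonicity conditions then force each $\omega_\alpha$ to be separately closed and co-closed. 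The planar parity statement is a consequence of the Euler characteristic of $\overline{\Gamma}_\alpha$ combined with the structure of the blowup's perfect matching, which obstructs $n$-face colorings whenever $|\alpha|$ is odd.

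For Statement (2), I would work state-by-state in the color basis. Over $\BC$ the algebra $\widehat{V} = \BC[x]/(x^n - 1)$ diagonalizes as a direct sum of one-dimensional eigenspaces of multiplication by $x$ with eigenvalues the $n$th roots of unity, and these eigenvectors are the $n$ \emph{colors}. A state ribbon graph $\Gamma_\alpha$ consists of a disjoint union of circles in the hypercube resolution which, after blowup (\Cref{def:blow-up-of-a-graph}), correspond bijectively to the $2$-cells of $\overline{\Gamma}_\alpha$. A pure tensor in $\widehat{V}_\alpha$, i.e.\ in the tensor product of one copy of $\widehat{V}$ per circle, with color-basis entries thus encodes an assignment of one of $n$ colors to each face. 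In this basis the multiplication and comultiplication maps of the underlying Frobenius algebra that combine to form $\widehat{\partial}$ and $\widehat{\partial}^*$ vanish on distinct-color inputs and act by scalars on same-color inputs. Consequently $\omega \in \widehat{V}_\alpha$ lies in $\widehat{\mathcal{CH}}_n(\Gamma_\alpha)$ exactly when its color-basis expansion is supported on assignments giving distinct colors to adjacent faces, yielding $\dim \widehat{\mathcal{CH}}_n(\Gamma_\alpha) = \#\{n\text{-face colorings of }\overline{\Gamma}_\alpha\}$.

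For Statement (3), the all-zero smoothing of the blowup $\Gamma^\flat_E$ reproduces the original ribbon graph: each edge of $\Gamma$ is blown up to a perfect matching edge whose $0$-smoothing is the trivial resolution, so $\Gamma_{\vec{0}} = \Gamma$ and $\overline{\Gamma}_{\vec{0}} = \overline{\Gamma}$. At homological grading zero only the vertex $\vec{0}$ contributes to the direct sum in (1), and applying (2) to that unique state yields the claimed count; the specialization to $n=4$ on a plane graph is then immediate. The main obstacle throughout the argument is proving the Poincar\'e lemma used in Statement (1): a priori a harmonic mixture could genuinely spread across multiple states with cancellations among the $\widehat{\partial}$- and $\widehat{\partial}^*$-contributions, and ruling this out requires detailed structural information about how the merge and split maps of the TQFT interact across the different $1$-smoothings of the hypercube.
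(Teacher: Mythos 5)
Your overall strategy---Hodge decomposition to reduce to harmonic mixtures, then a Poincar\'e-type lemma to split them over states, then a color-basis identification of single-state harmonics with proper face colorings---matches the paper's, but several load-bearing steps are either missing or wrong.

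The main gap is in Statement (1), where you assert that harmonicity ``forces each $\omega_\alpha$ to be separately closed and co-closed'' because contributions ``cannot cancel nontrivially across distinct source states.'' This is exactly what must be proved, and it does \emph{not} follow from the Hodge decomposition plus a generic appeal to the Poincar\'e lemma. The paper's actual mechanism is the Color Basis Lemma (\Cref{lemma:colorings-to-colorings-lemma}): each edge-differential (and each adjoint edge-differential), when restricted to colorings not in its kernel, is a \emph{bijection} onto colorings of the target state, with no two distinct source colorings sharing an image. Only with this fact can one isolate, from a purported cross-state harmonic $\omega$, a subsum $\omega'$ supported on a single color hypercube $\mathcal{H}(c_{\alpha})$ and show via \Cref{prop:Poincare-Lemma} that $\omega' \in \widehat{\del}\widehat{C}^{i-1}\oplus\widehat{\del}^*\widehat{C}^{i+1}$, hence $\omega'=0$. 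Without the injectivity-on-colorings fact, the ``no nontrivial cancellation'' claim is unsupported: in the bigraded theory (the $E_1$ page) cancellations across states absolutely do occur, which is precisely why Theorem D fails there and only holds on the $E_\infty$ page. You should notice that your argument, as stated, would appear to apply just as well to the bigraded differential $\del$, which is a sign it is missing the crucial input.

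Two further issues. First, your claimed Euler-characteristic proof of the planar parity statement is not the paper's and it is unclear it works: the paper's argument is topological, via the Jordan Curve Theorem. For a plane diagram, an odd number of $1$-smoothings forces some circle of $\Gamma_\alpha$ to have an odd number of self-intersections, and undoing one such smoothing identifies an incoming $\widehat{m}$ or $\widehat{\eta}$ edge into $\Gamma_\alpha$; by \Cref{proposition:wide-hat-m-and-wide-hat-delta-maps} a state with nonzero harmonic colorings has only incoming $\widehat{\Delta}$ maps, contradiction. An Euler characteristic count of $\overline{\Gamma}_\alpha$ does not obviously encode the parity of $|\alpha|$. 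Second, in Statement (2) you write that the maps forming $\widehat{\partial}$ and $\widehat{\partial}^*$ ``vanish on distinct-color inputs,'' but that is only true for $\widehat{m}$ (and dually $\widehat{\Delta}^*$). In the color basis, $\widehat{\Delta}$ and $\widehat{\eta}$ are \emph{injective}: $\widehat{\Delta}(c_i)=n\lambda^{-2mi}c_i\ot c_i$ and $\widehat{\eta}(c_i)=\sqrt{n}\lambda^{-mi}c_i$. One therefore needs the intermediate step (again \Cref{proposition:wide-hat-m-and-wide-hat-delta-maps}) that if $\widehat{\mathcal{CH}}_n(\Gamma_\alpha)\neq 0$ then all outgoing edge-differentials from $\Gamma_\alpha$ are $\widehat{m}$ maps and all incoming ones are $\widehat{\Delta}$ maps, before the ``distinct colors on adjacent faces'' characterization can be applied. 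Statement (3) as you present it is fine.
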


At the beginning of this subsection, we started with the bigraded $n$-color homology and ended with the fact that the filtered $n$-color homology ``counts'' face colors of ribbon graphs associated to $G$, including the face colors of the original ribbon graph using Statement (3) of  \Cref{MainTheorem:Colorings-of-State-Graphs}.  Importantly, we will see that the spectral sequence plays the role for which it is designed: many facts about the filtered $n$-color homology are hard to obtain (like showing that it is nontrivial). However, these properties are easily computable in the first approximation to it, i.e., the $E_1$-page or the bigraded $n$-color homology. Whether or not these properties survive to the $E_\infty$-page is then a {\em calculation} in homological algebra.  In the next subsection we show how the spectral sequence given by the TQFT can be applied to the Penrose polynomial to give it a simple description for both planar and nonplanar graphs. In the subsection after we investigate  approaches to  the four color theorem and other conjectures using the spectral sequence.   

\subsection{A complete characterization of the Penrose polynomial when $n>0$} Penrose defined what is now called the ``Penrose polynomial'' $P(G,n)$ of a planar trivalent graph $G$ over fifty years ago in \cite{Penrose}. He was investigating pictorial representations of abstract tensor systems. Penrose's paper, in many ways, is an ``origin story'' for many modern ideas in mathematics and physics, from the Kauffman bracket of the Jones polynomial to spin networks and quantum gravity.  Penrose showed that $P(G,n)$, when evaluated at $n=3$, was equal to the number of $3$-edge colorings of the plane graph. The number of $4$-face colorings is then equal to four times this number by a result that goes back to Tait \cite{Tait}. Hence the Penrose polynomial is reporting important information related to the four color theorem. 

The polynomial was heavily studied and generalized in the years that followed (see references within) and much was discovered about it: \Cref{thm:penrose-polynomial-values} gives a brief summary. The definitions of the polynomial were generally algorithmic, eg. ``create left-right paths based upon a procedure and count up all such paths.''  This helped generalize the polynomial to any-valence graphs but made the polynomial difficult to study and compute. Researchers eventually settled on working with medial graphs and admissible $n$-valuations to describe the Penrose polynomial \cite{Moffat2013}.  Jaeger \cite{Jaeger}, see also Proposition 4 of \cite{Aigner}, showed that $P(G,n)$ equals the admissible $n$-valuations of the medial graph $G'$ when $G$ is planar. Later, Ellis-Monaghan and Moffatt in \cite{EMM} showed that $n$-valuations can be associated to ``partial Petrials'' of the ribbon graph. In this paper, their results follow immediately as a corollary of \Cref{MainTheorem:Colorings-of-State-Graphs}, \Cref{proposition:even-degree-non-zero-n-face-colorings}, and the fact that the graded Euler characteristic of the bigraded $n$-color homology evaluated at $q=1$ is equal to the Penrose polynomial evaluated at $n$.  However, we go further and give a simple characterization of the polynomial: \Cref{Theorem:mainthmPenrose} describes the Penrose polynomial as sums of $n$-face colors of {\em all} ribbon graphs of $G$, where ribbon graphs are considered unique up to ribbon graph equivalence (\Cref{Subsec:ribbongraphs}). This statement is clean in that it avoids  double counting and/or  under counting $n$-face colors of ribbon graphs present in earlier formulations of the polynomial.

First, we set up the theorem.  Let $\Gamma$ be a ribbon graph of a connected trivalent graph $G$. Each ribbon graph of $G$ is either even or odd with respect to $\Gamma$ based upon the number of half-twists that are required to be inserted into its bands to make it equivalent to $\Gamma$ (cf. \Cref{def:even-odd-ribbon-graph}). Also a new result of this paper, we show how to define the Penrose polynomial for any connected abstract trivalent graph $G$ using the notion of a nonnegative ribbon diagram (cf. \Cref{def:penrose-poly-abstract-graph}).  Thus, the Penrose polynomial is a graph invariant, not just a ribbon graph invariant.

\begin{Theorem} \label{Theorem:mainthmPenrose}
Let $\Gamma$ be a nonnegative ribbon graph of a  connected trivalent graph $G$ with trivial automorphism group.  After bifurcating all ribbon graphs of $G$ into even or odd with respect to $\Gamma$,  the Penrose polynomial of the graph $G$, evaluated at $n\in\BN$, is 
$$P(G,n) =  \# \left\{\mbox{\parbox{1.55in}{$n$-face colorings of \\ all  even ribbon graphs with respect to $\Gamma$}}\right\} \ - \ \# \left\{\mbox{\parbox{1.45in}{$n$-face colorings of \\ all odd ribbon graphs with respect to $\Gamma$}}\right\}.$$
Furthermore, if $G$ is planar, then $P(G,n)$ is the total number of $n$-face colorings on all ribbon graphs of $G$.
\end{Theorem}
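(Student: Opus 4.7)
The plan is to chain together \Cref{MainTheorem:Bigraded-n-color-homology}, \Cref{MainTheorem:spectralsequence}, and \Cref{MainTheorem:Colorings-of-State-Graphs}, applied to the blowup $\Gamma_E^\flat$, so that the evaluation $P(G,n)=\langle\Gamma_E^\flat\rangle_n(1)$ becomes a signed sum of $n$-face colorings indexed by smoothing states of the hypercube. The remaining work is then to identify those states with ribbon graphs of $G$ and to check that the parity $|\alpha|\bmod 2$ matches the even/odd classification of $\Gamma_\alpha$ with respect to $\Gamma$.

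First I would combine the defining identity $P(G,n)=\langle\Gamma_E^\flat\rangle_n(1)$, which is how \Cref{def:penrose-poly-abstract-graph} extends the Penrose polynomial to abstract trivalent graphs, with the graded Euler characteristic formula of \Cref{MainTheorem:Bigraded-n-color-homology}, giving
$$P(G,n)=\sum_{i,j}(-1)^i\dim CH_n^{i,j}(\Gamma_E^\flat;\BC)=\sum_i(-1)^i\dim CH_n^i(\Gamma_E^\flat;\BC).$$
Because the ordinary Euler characteristic is preserved across every page of a spectral sequence, \Cref{MainTheorem:spectralsequence} lets me replace the $E_1$-page by the $E_\infty$-page to obtain
$$P(G,n)=\sum_i(-1)^i\dim\widehat{CH}_n^i(\Gamma_E^\flat;\BC).$$
Parts (1) and (2) of \Cref{MainTheorem:Colorings-of-State-Graphs} rewrite each dimension as $\sum_{|\alpha|=i}\#\{n\text{-face colorings of }\overline{\Gamma}_\alpha\}$, and therefore
$$P(G,n)=\sum_{\alpha\in\{0,1\}^{|E|}}(-1)^{|\alpha|}\,\#\{n\text{-face colorings of }\overline{\Gamma}_\alpha\}.$$

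The next step is to interpret this state sum as a sum over ribbon graphs of $G$. Using the map $\mathcal{R}$ from \Cref{subsec:face-colorings-of-ribbon-graphs}, each state $\Gamma_\alpha$ is a ribbon graph of $G$, and I would argue that $\alpha\mapsto\Gamma_\alpha$ descends to a bijection between $\{0,1\}^{|E|}$ and the set of ribbon graphs of $G$ taken up to ribbon equivalence: the only potential coincidences among the $\Gamma_\alpha$ come from automorphisms of $G$, and the hypothesis that $\mathrm{Aut}(G)$ is trivial rules these out. I would then verify, using \Cref{def:blow-up-of-a-graph} and \Cref{def:even-odd-ribbon-graph}, that a $1$-smoothing at a perfect matching edge in the blowup inserts exactly a half-twist into the corresponding band of $\Gamma$, so the total number of half-twists in $\Gamma_\alpha$ relative to $\Gamma$ is congruent to $|\alpha|$ modulo $2$. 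Grouping the signed state sum by parity then recovers the first assertion of the theorem. For the planar case, \Cref{MainTheorem:Colorings-of-State-Graphs}(1) forces $\widehat{\mathcal{CH}}_n(\Gamma_\alpha)=0$ whenever $|\alpha|$ is odd, so the odd terms vanish and $P(G,n)$ becomes the unsigned total number of $n$-face colorings across all ribbon graphs of $G$.

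The hard part will be the bijection-and-parity argument: rigorously showing that distinct smoothing states give distinct (non-equivalent) ribbon graphs of $G$, and that the nonnegativity of $\Gamma$ together with the trivial-automorphism hypothesis is sharp enough to prevent hidden coincidences. The parity statement itself follows from the bracket relation $\langle\PMEdgeDiag\rangle_n=\langle\IIDiag\rangle_n-q\langle\XDiag\rangle_n$ once the geometric meaning of the $1$-smoothing is unpacked, and the application of \Cref{MainTheorem:Bigraded-n-color-homology}--\Cref{MainTheorem:Colorings-of-State-Graphs} is essentially formal once that identification is in hand, with the planar case dropping out of the parity vanishing already built into the main theorem.
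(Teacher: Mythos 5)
Your plan reproduces the paper's own argument: the paper also proves the identity first at the level of state graphs (its Theorem~6.3, which is exactly the chain $P(\Gamma,n)=\chi_q(CH^{*,*}_n)(1)$ via \Cref{prop:EulercharPenrose}, Euler-characteristic invariance under the spectral sequence of \Cref{MainTheorem:spectralsequence}, then Statements~(1) and~(2) of \Cref{MainTheorem:Colorings-of-State-Graphs} to land on $\sum_\alpha (-1)^{|\alpha|}\#\{n\text{-face colorings of }\overline\Gamma_\alpha\}$), and then upgrades to a statement about abstract ribbon graphs using the bijectivity of $\mathcal{R}$ from \Cref{thm:map-R-theorem}. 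The planar case via \Cref{proposition:even-degree-non-zero-n-face-colorings} is also handled exactly as you describe. So the route is the same, not a new one.

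The one real gap is in your bijection step. You claim ``the only potential coincidences among the $\Gamma_\alpha$ come from automorphisms of $G$, and the hypothesis that $\mathrm{Aut}(G)$ is trivial rules these out.'' This is not what \Cref{thm:map-R-theorem}(2) says: injectivity of $\mathcal{R}$ requires $G$ to be trivalent, loopless, of valency $>2$, \emph{and} have trivial automorphism group. The loopless hypothesis is not redundant — a trivalent graph with a loop can still have trivial automorphism group (e.g.\ a loop attached by a bridge to a vertex of a graph that otherwise has no symmetries), yet the state graphs $\Gamma_\alpha$ obtained by twisting the loop edge can coincide as ribbon graphs even though the $\alpha$'s differ, breaking injectivity. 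The paper handles this by splitting into cases: when $G$ is loopless, \Cref{thm:map-R-theorem}(2) gives the bijection; when $G$ has a loop, the loop forces a bridge (since $G$ is trivalent), no ribbon graph of $G$ admits any $n$-face coloring, and $P(G,n)\equiv 0$, so the claimed identity is $0 = 0 - 0$. You need to add this case distinction; without it, passing from the state sum to ``all ribbon graphs of $G$'' is not justified. Your parity identification ($|\alpha|\bmod 2$ matching even/odd) is fine — in fact it is essentially the content of \Cref{def:even-odd-ribbon-graph}, so there is little to verify there.
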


The conditions that  $G$ has trivial automorphism group or that $G$ is trivalent are not major restrictions when studying the existence of $n$-face colorings of ribbon graph surfaces.  The reason for this is that a ribbon graph can always be blown up to get a trivalent ribbon graph, and one can further blowup the graph at different individual vertices until the resulting graph satisfies both conditions. If the new, blown up, ribbon graph has an $n$-face coloring, then the original ribbon graph did.

Due to the signs in the definition of the Penrose polynomial, it is possible that the polynomial is identically zero when the graph is nonplanar. For example, the Penrose polynomial of the $K_{3,3}$ graph evaluated at $n=3$ is zero even though $K_{3,3}$ has twelve $3$-edge colorings (compare to Statement (2) of \Cref{MainTheorem:n-color-polynomial}).  Mathematicians have tried to modify the definition of the Penrose polynomial to capture these counts again, but these modifications turn out not to be generalizations of the polynomial (cf. \cite{Kauffman}). We found such a generalization, which we discuss next.

There are are two natural ways  to sum the dimensions of the homology groups of the filtered $n$-color homology.  The Penrose polynomial evaluated at $n$ is equal to the Euler characteristic of the filtered $n$-color homology.  One could also take the Poincar\'{e} polynomial of the homology. This is an invariant of the ribbon graph $\Gamma$, and when the graph is trivalent, the evaluation of this Poincar\'{e} polynomial at one is an invariant of the abstract graph $G$ again. Thus, our homology theory allows us to define a new, stronger invariant than the Penrose polynomial: for a connected trivalent graph $G$, the {\em total face color polynomial}, $T(G,n)$, is the sum of the dimensions of the filtered $n$-color homology (see \Cref{definition:totalfacecolorpolynomial}). When $Aut(G)=1$, this polynomial counts the total number of $n$-face colors on {\em all} ribbon graphs of $G$, hence the reason for its name.  It is also the generalization of the Penrose polynomial: when $G$ is a connected trivalent planar graph, $T(G,n)= P(G,n)$. 

While the Penrose polynomial already exists at the level of the bigraded $n$-color homology and is easily computed from the hypercube of states, one needs to compute the filtered $n$-color homologies for several $n$ before the total face color polynomial can be completely determined (cf. \Cref{thm:bound-filtered-n-color-homology-calc}). Once established, however, it replaces the Penrose polynomial in many theorems that take advantage of the properties of the Penrose polynomial.  For example, $T(K_{3,3},3)=12$, which counts the number of $3$-edge colorings again as in Statement (2) of \Cref{MainTheorem:n-color-polynomial}. In particular, we show how the total face color polynomial can be used to rephrase other famous conjectures in graph theory.

\subsection{TQFT approaches to  the four color theorem and other graph coloring conjectures} \label{subsection:intro:famousgraphconjectures}Most proof-attempts (and all successful proofs) of the four color theorem are negative in nature: start with a counterexample and derive a contradiction. What if instead one could  do an algebra computation that generates a set of $4$-face colorings of a planar graph? This is the motivation of \Cref{Theorem:mainthm-four-color}. It offers enticing new ways to give non-computer-aided proofs of the four color theorem:

\begin{Theorem} \label{Theorem:mainthm-four-color}
Let $\Gamma$ be an oriented ribbon graph of a connected graph $G$. 
\begin{enumerate}
\item  The total space $CH^{*,*}_n(\Gamma;\BC)$ is nonzero (even when $G$ has a bridge) and, via the spectral sequence of \Cref{MainTheorem:spectralsequence},
$$\dim CH^{*,*}_n(\Gamma;\BC) \geq \dim \widehat{CH}^*_n(\Gamma;\BC).$$
In particular, there is a distinguished class $\psi(\Gamma) = [x^{n-1}\ot x^{n-1} \ot \cdots \ot x^{n-1}] \in CH^{0,k}(\Gamma;\BC)$ for $k$ a function of the number of faces of $\Gamma$ that is always nonzero.  
\item If $\Gamma$ is a trivalent plane graph and  $n=2^k$ for some $k\in \BN$, then $\dim \widehat{CH}^*_n(\Gamma;\BC)>0$ if and  only if $\Gamma$ is $n$-face colorable.
\item If $G$ is a trivalent graph and $n=2^k$ for some $k\in\BN$, then
$$\frac{1}{n}\cdot T(G,n-1) \leq \# \{\mbox{nowhere zero $K_n$-flows of $G$}\},$$
and, for $n=4$,  $T(G,3) = \#\{\mbox{$3$-edge colorings of $G$}\}$. In particular, for a trivalent graph $G$ with trivial automorphism group, the number of $3$-edge colorings of $G$ is equal to the sum of the counts of the $3$-face colorings on all ribbon graphs of $G$. 
\end{enumerate}
\end{Theorem}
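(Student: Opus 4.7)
The plan is to handle the three claims in turn, using Theorem~\ref{MainTheorem:Colorings-of-State-Graphs} and the spectral sequence of Theorem~\ref{MainTheorem:spectralsequence} as the principal tools.

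For Part (1), the first step is to verify directly that the element $\psi(\Gamma) = x^{n-1}\otimes\cdots\otimes x^{n-1}$ is a cycle in $CH^{0,k}_n(\Gamma;\BC)$. Because $\psi$ sits in the all-zero-smoothings state, every outgoing hypercube edge applies either a merging map or a splitting map to $\psi$. On a merging edge, $x^{n-1}\otimes x^{n-1}\mapsto x^{2n-2}=0$ inside $V=\mathbbm{k}[x]/(x^n)$, and on a splitting edge I would check from the TQFT conventions set up earlier that the comultiplication applied to $x^{n-1}$ (or the induced map on the relevant circle) annihilates $\psi$ by the same top-degree reason. Since $\psi$ lies in the lowest homological degree it cannot be a boundary, so $[\psi]\ne 0$. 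The dimension inequality $\dim CH^{*,*}_n(\Gamma;\BC)\ge \dim \widehat{CH}^*_n(\Gamma;\BC)$ is then immediate from the general fact that the $E_\infty$-page of a spectral sequence is a subquotient of its $E_1$-page, applied to Theorem~\ref{MainTheorem:spectralsequence}.

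For Part (2), the ``$\Leftarrow$'' direction is a direct application of Statement (3) of Theorem~\ref{MainTheorem:Colorings-of-State-Graphs}: if $\Gamma$ is $n$-face colorable, then $\dim \widehat{CH}^0_n(\Gamma;\BC)>0$, hence the total homology is nonzero. For the ``$\Rightarrow$'' direction, I would use Statement (1) of the same theorem, which tells us that for a planar $\Gamma$ the only nonzero contributions to $\widehat{CH}^*_n(\Gamma;\BC)$ come from states $\Gamma_\alpha$ with $|\alpha|$ even. Nonvanishing of the homology therefore produces some such state whose associated surface $\overline{\Gamma}_\alpha$ is $n$-face colorable. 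The plan is then to use the $n=2^k$ dual form of Tutte's theorem: for a trivalent graph, $n$-face colorability of any ribbon graph of $G$ is equivalent to the existence of a nowhere-zero $K_n$-flow on $G$, which is an intrinsic property of the abstract graph. Since the all-zero state of the blowup recovers $\Gamma$ itself, pulling back the flow gives an $n$-face coloring of the original plane graph $\Gamma$. The main obstacle here is making this pull-back precise---showing that intrinsic $K_n$-flow data suffices to color the specific plane embedding $\Gamma$, which is really where the planarity hypothesis does its work.

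For Part (3), by definition $T(G,n-1)=\dim \widehat{CH}^*_{n-1}(\Gamma;\BC)$, which by Theorem~\ref{MainTheorem:Colorings-of-State-Graphs} equals the total count of $(n-1)$-face colorings summed over all ribbon graphs of $G$ (using $\mathrm{Aut}(G)=1$ implicitly in the trivalent setup, or else adjusting by automorphism orbits). I would follow the template of Statement (5) of Theorem~\ref{MainTheorem:n-color-polynomial}: each $(n-1)$-face coloring of a ribbon graph of $G$ corresponds via the classical face-coloring/flow duality to a nowhere-zero $K_n$-flow on $G$, and because the stabilizer under the $K_n$-action (by recoloring via the identity color) has order $n$, summing across ribbon graphs and dividing by $n$ yields the asserted inequality. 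For the $n=4$ equality, I would combine this with the Tait-type identification of $3$-edge colorings of a trivalent graph with nowhere-zero $K_4$-flows, where $K_4=\BZ_2\times\BZ_2$, to conclude $T(G,3)=\#\{3\text{-edge colorings of }G\}$. The main anticipated obstacle is bookkeeping: carefully tracking the $\tfrac{1}{n}$ factor across the sum over ribbon graphs, and verifying that the factor becomes an equality (rather than an inequality) exactly at $n=4$ because the $K_4$-flow/$3$-edge-coloring correspondence is a clean $n$-to-one map.
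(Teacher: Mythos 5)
Your proposal follows the paper's overall plan for Parts (2) and (3), but there is a genuine gap in Part (1). You assert that the comultiplication ``annihilates $\psi$ by the same top-degree reason'' as the merge map. This is false when $n$ is odd: with the shifted comultiplication $\Delta(x^k)=\sum_{i+j=k+2m}x^i\ot x^j$ and $m=(n-1)/2$, one has $\Delta(x^{n-1})=x^{n-1}\ot x^{n-1}\neq 0$. (Only for $n$ even, where $m=n/2$, does the constraint $i+j=2n-1$ with $i,j\le n-1$ become vacuous and force $\Delta(x^{n-1})=0$.) This is exactly where the orientation hypothesis on $\Gamma$, which your argument never invokes, does its work: the paper shows that for an oriented ribbon graph one can choose a signed ribbon diagram with all edges positive, and then orient the circles of $\Gamma_{\vec 0}$ so that the two strands along each edge point oppositely; this rules out $\Delta$-type edge maps out of the all-zero state entirely, leaving only $m$ and $\eta$ maps (and $\eta(x^{n-1})=\sqrt n\,x^{n-1+m}=0$ for $n>1$). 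Without this step the claim that $\psi(\Gamma)$ is a cycle is simply wrong for odd $n$, and you also omit the $\eta$-map case from your case analysis altogether. The cycle--not--boundary reasoning (lowest homological degree) and the spectral-sequence inequality are fine.

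Parts (2) and (3) are broadly in line with the paper. For (2), the ``state coloring $\to$ nowhere-zero $K_n$-flow $\to$ Tait-recoloring of the plane graph'' chain is exactly the paper's argument, and you correctly identify that planarity enters at the final pull-back. For (3), your ``stabilizer of order $n$'' framing is imprecise: the $K_n$-translation action on state colorings is free, and the $1/n$ factor does not come from a stabilizer but from the fact that among the $n$ translates of an $(n-1)$-face coloring, some introduce a face colored $0$ and hence are not counted by $T(G,n-1)$, giving $T(G,n-1)\le n\cdot\#\{\text{flows}\}$. Also, the $n=4$ equality requires both an upper and a lower bound (the paper reuses the proof of Statement (2) of Theorem~\ref{MainTheorem:n-color-polynomial} for the reverse inequality); ``clean $n$-to-one map'' glosses over this.
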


There are two key ideas to a TQFT approach to the four color theorem  in the theorem above.  The first, derived from Statement (1), is whether or not the nonzero distinguished class $\psi(\Gamma)$, thought of as an element of $E_1$-page in the spectral sequence in \Cref{MainTheorem:spectralsequence}, lives to the $E_\infty$-page as a nonzero class, i.e., $[\psi(\Gamma)]_\infty \in\widehat{CH}^*_n(\Gamma;\BC)$. If it lives, then Statement (2)  implies the plane graph $\Gamma$ is $4$-face colorable. From all calculations we have done so far, $\psi(\Gamma)$ for a bridgeless plane graph $\Gamma$ lives to a nonzero linear combination of a basis where each basis element represents an $n$-face coloring of $\Gamma$ (cf. \Cref{subsection:colorbasis} for a discussion of the coloring basis). Furthermore, the class it lives to is predictable in the sense that each coefficient of the linear combination is some function of $n$ and the specific coloring basis element.  Also, this works in general in our computed examples, not just when $\Gamma$ is a plane graph and $n=4$, but for any oriented ribbon graph and for any $n\in \BN$. Thus, we conjecture: a ribbon graph $\Gamma$ supports $n$-face colorings if and only if $\psi(\Gamma)$ lives to a nonzero class on the $E_\infty$-page (cf. \Cref{conjecture:Psi-lives-to-E-infinity-page}). If this conjecture is true, and $\Gamma$ supports $n$-face colorings, then computing $d_1[\psi(\Gamma)]_1 = 0$, $d_2[\psi(\Gamma)]_2=0$, $d_3[\psi(\Gamma)]_3=0$, etc.  generates a set of $n$-face colorings of $\Gamma$ by the $E_\infty$-page, i.e., once you know the maps that make up the differentials and you have the class $\psi(\Gamma)$, you can ignore that  the  pages are related to a ribbon graph or that the final result is related to colorings---it is simply a calculation in homological algebra to produce colorings on a ribbon graph. Hence, if the conjecture is true,  this would give a  constructible proof of the four color theorem rather than one based upon negating it.

The second TQFT approach to the four color theorem, derived from Statement (2), uses the entire homology of $CH^{*,*}_4(\Gamma;\BC)$. Statement (2)  implies that if any nonzero class in any homological grading (not just $\psi(\Gamma)$ in degree zero) lives to the $E_\infty$-page, then there exists $4$-face colorings on all plane ribbon graphs of $G$.  Hence, a ``counterexample'' to the four color theorem would be a bridgeless trivalent plane graph $\Gamma$ that, for some $r>1$, has the property that the $E_{r-1}$-page is nontrival but the $E_r$-page and higher pages vanish completely.  Then \Cref{MainTheorem:Colorings-of-State-Graphs} together with \Cref{thm:map-R-theorem}  imply that there are no $4$-face colorings on {\em any} of the ribbon graphs of $G$, not just the planar ribbon graphs.  However, unlike a statement about trying to color ribbon graphs of a planar graph $G$, which is a ribbon-graph-by-ribbon-graph proposition, the vanishing of the entire homology of the $E_r$-page requires that all nontrivial classes in $E_{r-1}$ be exact with respect to $d_{r-1}$. This, in turn,  becomes a statement about the interplay of homology classes associated to different {\em sets} of ribbon graphs and how they interact in the hypercube of states, i.e., the TQFT comes with far more structure than just counting  $n$-face colorings of individual ribbon graphs of a graph $G$. Therefore, one must look for ways this rich structure can force the spectral sequence to vanish at the $E_r$-page.  One obvious way is if all ribbon graphs of $G$ shared the same property, like a bridge (cf. \Cref{conj:G-has-a-bridge-implies-E2-vanishes}).

Note that, even for $k=2$, Statement (2) does not follow from other theorems in this paper. In fact, Statement (2) gives a proof based upon TQFT to Aigner's \Cref{thm:4-color-number-positive}. Statement (2) is particular to {\em only} $n=2^k$ because it uses the Klein group $K_n=\BZ_2\times \BZ_2\times\cdots\times \BZ_2$ group explicitly. There are no corresponding statements for any other $n$ without assuming the existence of, say, an oriented cycle double cover.

Statement (3) of \Cref{Theorem:mainthm-four-color} describes a relationship between the filtered $n$-color homology and flows on a graph in a way that allows Tutte's $4$-flow conjecture (see \Cref{prop:Tutte-4-flow-equivalence}) and the cycle double cover conjecture (see \Cref{conjecture:total-color-poly-is-an-upper-bound}) to be restated and understood in a TQFT context. It also generalizes many of the statements of \Cref{MainTheorem:n-color-polynomial} to nonplanar graphs. Integral to the proof of this statement is again the  group structure of the Klein group.  In terms of only graph theory, the last part of Statement (3) hints at possible ways to generalize this count of $3$-edge colorings when the automorphism group is nontrivial to actions on the set of state graphs that takes state graphs to equivalent state graphs.

Finally, Statement (1) was inspired by our attempt to understand papers \cite{KM3, KM2, KM1} in the context of \cite{BaldCohomology}.  In those papers, Kronheimer and Mrowka defined instanton homology $J^\sharp(K)$ and $J^\sharp(K;\mathcal{L})$ for a web $K$ using gauge theory (a web is a trivalent graph embedded in $\BR^3$).  We briefly discuss their results in \Cref{section:comparisionofhomologies}, see   \Cref{eqn:JsharpleqJsharpL} and the three facts after it. The inequality in \Cref{eqn:JsharpleqJsharpL} and three facts are their intriguing attempt at a gauge theory proof of the four color theorem.  We have a similar inequality and three facts: Statement (1) together with \Cref{Theorem:mainthmPenrose} when $n=3$ is our version of their results. (The bigraded $3$-color homology is like their $J^\sharp(K)$ and the filtered $3$-color homology is like their  $J^\sharp(K;\mathcal{L})$.) However, these parallel facts turn out to be the only similarities between the two theories so far. There are many important differences between their theory and ours, which we highlight in \Cref{section:comparisionofhomologies}.  

\subsection{The first application of an unoriented TQFT to colorings of graphs} Unoriented (1+1)-dimensional TQFTs over an $R$-algebra $V$ were defined and classified by Turaev and Turner in \cite{TT}. In that paper, they modified the definition of a topological quantum field theory of Atiyah \cite{Atiyah} for oriented cobordisms to the unoriented case and showed that all unoriented (1+1)-dimensional TQFT over $R$ are in bijective correspondence with isomorphism classes of extended Frobenius algebras over $R$ (cf. Proposition 2.9 in \cite{TT}). They then applied unoriented TQFTs to knots and links.

In this paper we show that unoriented TQFTs provide the correct setting for exploring TQFTs of graphs and colorings of graphs. In particular, while knots and links are limited to Frobenius algebras of dimension two (via the ``$4Tu$'' relation), we show that there is a $(1+1)$-dimensional unoriented TQFT for certain extended Frobenius algebras of any positive dimension. 

To setup up \Cref{MainTheorem:TQFT}, we assume familiarity with the ideas presented in Bar-Natan \cite{BN3}.  Here are the basic notions: Given a ribbon diagram $\Gamma$ of a graph $G$, build the hypercube of resolutions out of the $0$- and $1$-smoothings of edges in $G$ using the blowup of $\Gamma$.   The directed edges of the hypercube generate certain cobordisms (with appropriate signs attached to each cobordism) from circles in the initial state to circles in the target state. The entire cube of sets of cobordisms is formally summed into a complex, called a {\em geometric complex}. The geometric complex will be denoted by $[[\Gamma]]$ in this paper. A well-written description of this type of process can be found in Sections 2 and 3 of \cite{BN3}. 

For each $n\in\BN$, the geometric category from which the bigraded and filtered $n$-color homology of ribbon graphs in this paper can be derived  is $\mbox{Kom}_{/h}(\mbox{Mat}(\mathcal{UC}ob^n_{/l}))$. This stands for the category of complexes ($\mbox{Kom})$, up to homotopy ($\mbox{}_{/h}$), built from columns and matrices of objects and morphisms respectively ($\mbox{Mat}$) taken from $\mathcal{UC}ob^n_{/l}$. The space $\mathcal{UC}ob^n_{/l}$ is the category of $2$-dimensional non-orientable cobordisms (morphisms) between one dimensional circles (objects), formally summed over a ground ring where $\sqrt{n}$ exists, modulo the following local ($\mbox{}_{/l})$ relations:

\begin{equation}
\label{eq:neckcutting}
\mbox{The Neck Cutting (NC) relation: \ } \begin{minipage}[c]{3in} \includegraphics[scale=0.65]{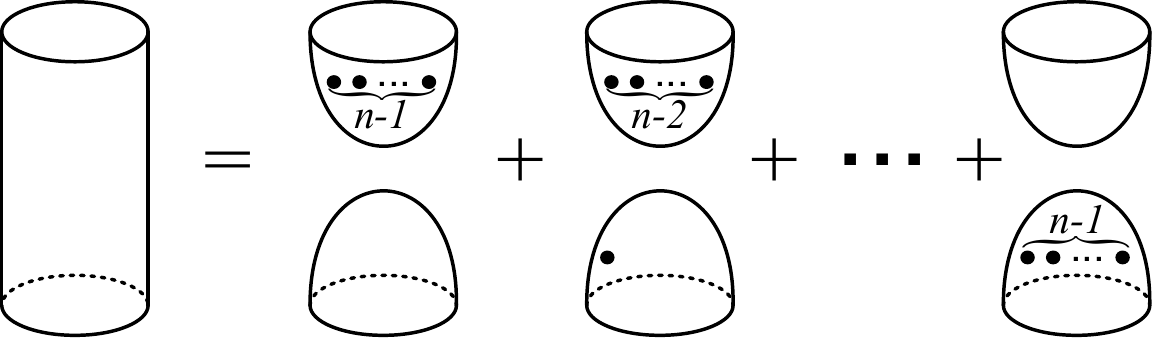}\end{minipage}
\end{equation}

\begin{equation}
\label{eq:S-relation}
\mbox{The S relation: \ } \begin{minipage}[c]{.75in} \includegraphics[scale=0.60]{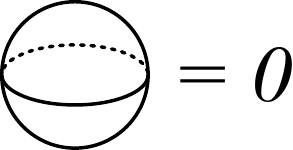} \end{minipage} \mbox{when $n>1$ \hspace{2.64in}}
\end{equation}

\begin{equation}
\label{eq:T-relation}
\mbox{The T relation: \ } \begin{minipage}[c]{.75in} \includegraphics[scale=0.55]{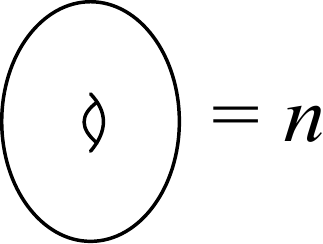} \end{minipage}, \mbox{ \ which implies \ }\begin{minipage}[c]{2.2in} \includegraphics[scale=0.25]{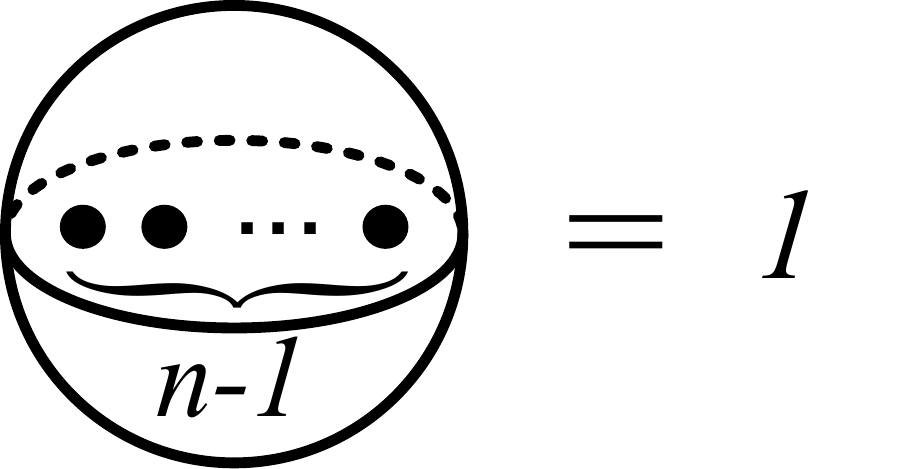} \end{minipage}
\end{equation}

\noindent In the pictures above, each dot ($\bullet$) represents a connect sum with one copy of $\BR P^2$. The names of these relations are based upon the $NC$, $S$, and $T$ relations of Section 4 of \cite{BN3}. For example, it is well known that a similarly-defined neck cutting relation in knot theory is equivalent to the $4Tu$ relation when $2$ is invertible in the ground ring and $n=2$.  Also, by applying the neck cutting relation once to the torus, one gets $n$ copies of $\sharp (n-1)\BR P^2$, which shows $\sharp (n-1)\BR P^2=1$. In our theory,  $n$ is the dimension of the  algebra, hence it generalizes the $T=2$ relation in knot theory, which uses a two dimensional algebra. 

There is one more relation that can be imposed. It is not part of the local relations, but we are free to impose it when necessary---it can be thought of as a free parameter:

\begin{equation}
\label{eq:U-relation}
\mbox{The U relation: \ } \begin{minipage}[c]{1in} \includegraphics[scale=0.30]{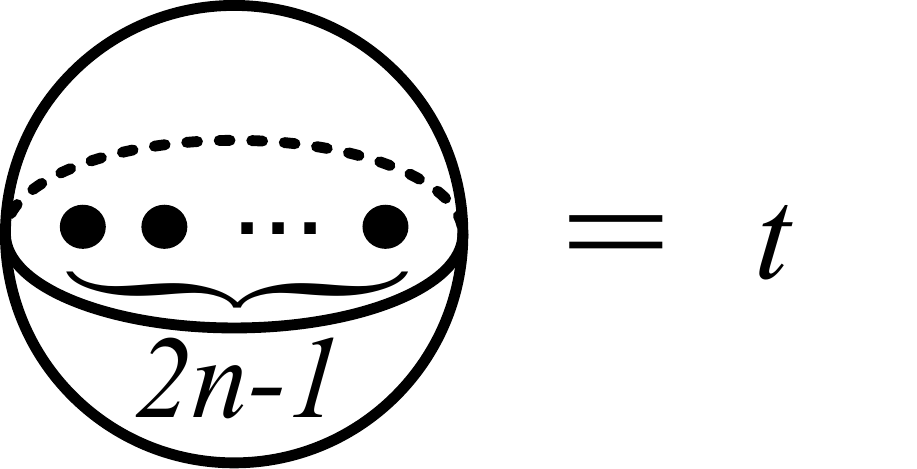} \end{minipage} \mbox{ for some $t\in \BC$ (usually $0\leq t \leq 1$) \ \ \ \ \ \ } 
\end{equation}

\noindent The $U$ relation plays a similar role as the genus $3$ surface does in \cite{BN3}.

This unoriented $(1+1)$-dimensional TQFT corresponds to an algebra over $R$ by Proposition 2.9 of \cite{TT}.  The ground ring will be taken to be $R=\BC$ in what follows. For $0  \leq t \leq 1$,  let $$V_t=\BC[x]/(x^n-t\cdot 1_{V_t})$$ be the (universal) Frobenius algebra with multiplication $m:V_t \ot V_t \ra V_t$ given by multiplication in $V_t$, $\iota:\BC \ra V_t$  given by $1_\BC \mapsto 1_{V_t}$, and $\epsilon:V_t \ra \BC$ given by $\epsilon(x^{n-1}) = 1_\BC$ and zero elsewhere. Following  Definition 2.5 of \cite{TT}, when $n=3$, take the involution $\phi:V_t \ra V_t$ to be the identity map and let the element $\theta=\sqrt{n}x$. This turns $(V_t, m,\iota,\epsilon, \phi, \theta)$ into an extended Frobenius algebra (cf. Example 2.6 of \cite{TT}).  When $n\not=3$, we define a new algebra, a {\em hyperextended  Frobenius algebra}, which is a generalization of an extended Frobenius algebra that retains its features. See \Cref{subsection:facts-about-UCOB} for details. 

The parameter $t$ determines the homology: the bigraded $n$-color homology of this paper corresponds to setting $t=0$ in the $U$ relation and the filtered $n$-color homology corresponds to setting $t=1$. 
\Cref{subsection:differential,subsection:defintion-of-tilde-del,subsection:colorbasis} describe implicitly how to take a ribbon diagram $\Gamma$ of an abstract graph $G$ to a geometric complex $[[\Gamma]] \in \mbox{Kom}_{/h}(\mbox{Mat}(\mathcal{UC}ob^n_{/l}))$.   In \Cref{subsection:geometricchaincomplex} we show how to do this explicitly. 
The category of complexes can also be turned into a graded category by defining gradings on $\mathcal{UC}ob^n$. The differentials of the complex preserve this grading when setting $t=0$ in the $U$ relation.   

In \Cref{subsection:functorFandproof}, we define a TQFT $\mathcal{F}:\mathcal{UC}ob^n \ra \BC\mbox{Mod}$ in the sense of Atiyah's definition described above, where $\BC\mbox{Mod}$ is the category of $\BC$-modules,  and show that it satisfies the local relations and preserves degrees. Therefore it extends to a functor from graded geometric complexes to graded complexes of $\BC\mbox{Mod}$. It is in this last category that one can compute homology: $H(\mathcal{F}([[\Gamma]]))$.

We are now ready to state the last of the main theorems. This is the theorem that  explicitly links the $n$-color homologies defined in this paper to TQFTs.


\begin{Theorem} \label{MainTheorem:TQFT} Let $\Gamma$ be a ribbon graph of $G(E,V)$ and $n>1$. The isomorphism class of the complex $[[\Gamma]]$ regarded in $\mbox{Kom}_{/h}(\mbox{Mat}(\mathcal{UC}ob^n_{/l}))$ is an invariant of the ribbon graph $\Gamma$ up to chain homotopy of geometric chain complexes. Furthermore, the functor $\mathcal{F}$ takes $[[\Gamma]]$ to a complex of modules whose homology is equal to the $n$-color homologies, that is:
$$H(\mathcal{F}([[\Gamma]])) = CH^{*,*}_n(\Gamma;\BC)$$
when $t$ is set to $0$ in the $U$ relation, and
$$H(\mathcal{F}([[\Gamma]])) = \widehat{CH}^*_n(\Gamma;\BC)$$
when $t$ is set to $1$ in the $U$ relation.
\end{Theorem}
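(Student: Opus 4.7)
The plan is to follow the framework developed by Bar-Natan for Khovanov homology in \cite{BN3}, adapted to the unoriented $(1+1)$-TQFT setting and extended to graphs via the blowup construction. The argument splits cleanly into two halves: the first half is a purely combinatorial/geometric statement about invariance of the formal complex $[[\Gamma]]$, and the second half is a purely algebraic statement about the functor $\mathcal{F}$ and identification of the resulting module complex with the chain complexes defining the two $n$-color homologies.

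For the invariance of $[[\Gamma]]$ up to chain homotopy, I would first isolate a finite list of elementary moves that generate ribbon graph equivalence at the level of ribbon diagrams (together with their canonical perfect matching after blowup). For each such move $\Gamma \rightsquigarrow \Gamma'$, I would write down explicit chain maps $f \colon [[\Gamma]] \to [[\Gamma']]$ and $g \colon [[\Gamma']] \to [[\Gamma]]$ assembled column by column from elementary cobordisms in $\mathcal{UC}ob^n_{/l}$, and construct explicit homotopies $h$ with $f g - \mathrm{Id} = d h + h d$ and $g f - \mathrm{Id} = d h' + h' d$. The verification that these formal identities hold reduces, move by move, to an application of the local relations \eqref{eq:neckcutting}, \eqref{eq:S-relation}, \eqref{eq:T-relation}: NC lets one cut any tube appearing in a composed cobordism, while S and T (and the projective-plane consequence) dispose of closed components that have no dots or have the wrong number of dots. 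This is the standard ``slide the homotopy across the cobordism'' verification, and the relations in $\mathcal{UC}ob^n_{/l}$ are engineered precisely so the bookkeeping closes up. I expect the main subtlety here, and the most delicate step of the whole argument, to be the interaction between ribbon graph moves and the perfect matching structure produced by the blowup $\Gamma_E^\flat$: one must check that any two blowups related by a ribbon graph move of $\Gamma$ can themselves be related by moves that stay within the category of perfect matching diagrams, so that the $|M|$-dimensional hypercube of states transforms in a controlled way.

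For the functor $\mathcal{F} \colon \mathcal{UC}ob^n \to \BC\mathrm{Mod}$, I would define it on objects by $\mathcal{F}(S^1) = V_t$ and on generating morphisms by sending the pair of pants to the multiplication $m$, its dual to the Frobenius comultiplication, the disc caps to $\iota$ and $\epsilon$, and a single $\bullet$ (connect-sum with $\RR P^2$) to the involution/element data from the extended (or hyperextended) Frobenius structure of Turaev--Turner reviewed in Subsection~\ref{subsection:facts-about-UCOB}. The content is to check that $\mathcal{F}$ descends through the local relations: NC corresponds to the algebraic identity $m \circ \Delta(1) = \sum e_i \otimes e^i$ applied along a chosen neck, S corresponds to vanishing of the trace of $\iota$ when $n>1$, and T corresponds to the torus evaluation in $V_t$, which in turn forces the $\RR P^2$-consequence via one application of NC to the torus. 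Since these relations are TQFT relations for closed surfaces, verifying them is a direct computation in $V_t = \BC[x]/(x^n - t)$ using $\epsilon(x^{n-1}) = 1$ and $\theta = \sqrt{n}\, x$; this is where the hypothesis $\sqrt{n} \in \mathbbm{k}$ is used.

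Finally, to identify the homology, I would apply $\mathcal{F}$ levelwise to the hypercube comprising $[[\Gamma]]$. The objects of $[[\Gamma]]$ at vertex $\alpha \in \{0,1\}^{|M|}$ are disjoint unions of circles, which $\mathcal{F}$ sends to tensor powers of $V_t$; the edge cobordisms (merges and splits of circles, possibly with $\bullet$-decorations from the sign conventions) are sent to $m$ and $\Delta$ respectively. Choosing $t = 0$ gives $V_0 = \BC[x]/(x^n)$, and matching against the construction in Subsection~\ref{subsection:differential} shows the resulting module complex coincides on the nose with $(C^{i,j}(\Gamma_M;\BC), \partial)$, recovering $CH^{*,*}_n(\Gamma;\BC)$. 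Choosing $t = 1$ gives $V_1 = \BC[x]/(x^n - 1)$ and the total differential $\widehat{\partial} = \partial + \widetilde{\partial}$ of Subsection~\ref{subsection:colorbasis}, recovering $\widehat{CH}^*_n(\Gamma;\BC)$. The hard part of this final step is bookkeeping the signs and the $\widetilde{\partial}$ contribution: the extra summand in $\widehat{\partial}$ appears precisely because the Frobenius counit $\epsilon$ detects $x^{n-1}$, and the $U$ relation with $t = 1$ allows closed tori/components to be traded for this detection, so that the same geometric hypercube yields two different algebraic differentials depending on the value of $t$.
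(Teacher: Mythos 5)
Your split into a geometric invariance half and an algebraic functor half matches the paper's structure, and your treatment of $\mathcal{F}$ (define it on generators, verify the NC, S, and T relations, apply levelwise, identify $t=0$ and $t=1$ with the two homologies) is essentially what appears in \Cref{subsection:functorFandproof}. On the invariance half you actually propose \emph{more} than the paper carries out: the paper asserts that the homotopy-invariance of $[[\Gamma]]$ ``follows directly'' from the well-definedness of $\mathcal{F}$ and otherwise defers the ribbon-move check to the reader (it is already left as an exercise in the proof of \Cref{MainTheorem:Bigraded-n-color-homology}). Your plan to exhibit explicit chain maps $f,g$ and homotopies $h$ for each ribbon move, verified using NC, S, and T, is the Bar-Natan-style argument the authors explicitly decided not to write out; if you carry it through you would be filling a real gap rather than reproducing the paper.

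There are two substantive omissions in your sketch. First, you list only merge and split cobordisms as edge maps, but the hypercube of a ribbon graph has a third kind of edge map: the twisted cylinder $\eta$, which occurs whenever passing from a $0$-smoothing to a $1$-smoothing merely introduces a self-intersection in a single immersed circle. This cobordism is a cylinder decorated with $\BR P^2$-connect-sums; it has no analogue in the knot-theory setting and is precisely where $\sqrt{n}$ enters, since $\eta_t = \sqrt{n}\,\rho_t^m$ must furnish a ``square root'' of $m_t\circ\Delta_t(1)$ so the commuting square $\eta\circ\eta = m\circ\Delta$ closes. Any verification that $\mathcal{F}([[\Gamma]])$ coincides on the nose with $(C^{*,*},\partial)$ or $(\widehat{C}^{*},\widehat{\partial})$ has to account for it. Second, your verification of the T relation implicitly takes the comultiplication appearing in the complex to be the one dual to the counit $\epsilon$. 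That is true only for $n$ odd. For $n$ even the pair-of-pants appearing in $[[\Gamma]]$ carries one extra $\BR P^2$, so the map the differential actually uses is a shifted comultiplication $\Delta_t = \Delta_\epsilon\circ\rho_t$ which is not counital; the T relation must be computed with $\Delta_\epsilon$ while the chain complex uses $\Delta_t$, and the two must be carefully distinguished. The paper singles out this even/odd asymmetry as the central reason its TQFT cannot simply be slotted into the existing machinery of Bar-Natan and Turaev--Turner, so it is the part of your proposal most in need of expansion.
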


We develop \Cref{MainTheorem:TQFT} at the end of this paper, working instead with the hyperextended Frobenius algebras (first $V_0$, then $V_1$)  explicitly throughout to develop the homology theories.  A category theorist may wonder why we did not take Bar-Natan's approach from the start and simply apply his machinery together with the unoriented $(1+1)$-dimensional TQFT developed by Turaev-Turner to come up with the homology invariant. The most important reason for not doing so is that our homologies do not fit perfectly into their machinery; one cannot simply apply already-defined functors to off-the-shelf theories, especially when $n$ is even. Therefore, in a real sense, the calculations leading up to \Cref{MainTheorem:TQFT} are making the case that hyperextended Frobenius algebras with counital and shifted comultiplications are interesting TQFT functors to investigate.  The second reason is that one must still carefully define the complexes, gradings, differentials, etc.  to define and compute the homologies. It is straightforward to express these concepts using algebras and vector spaces. Third, we still needed to rely heavily on the algebra to show that the filtered $n$-color homology is counting the number of $n$-face colorings of a ribbon graph and its state graphs.  This cannot be seen directly from the TQFTs as of yet. Fourth, we do not have to prove Reidemeister invariance as one does in knot theory. Those proofs turn out to be pictorial in \cite{BN3} using the TQFT, which is a {\em major} enticement for a category theoretic approach in the case of knot theory.

There is also a stylistic reason for approaching our homology theories from a computational-versus-categorical perspective: mathematicians who are not steeped in working with categories may not appreciate the beauty of working with such theories. We did not want to create any inadvertent discouragements to working with our new homology theories. These homologies are fun to calculate and our paper takes this perspective from the onset! The category perspective is shown at the end as a summative section with the hope that it may entice the uninitiated in TQFTs  into thinking about graphs in these terms. On the other hand, we encourage  researchers interested in ``categorification'' to read \Cref{section:UnorientedTQFTTheory} first to get a feel for the TQFT perspective of earlier theorems and proofs.

There are good reasons for topologists to investigate the TQFT of ribbon graphs from a purely topological perspective. The homology theories presented in this paper are induced from unoriented $(1+1)$-dimensional TQFT---circles mapping to circles. However, upon closer inspection of the hypercube of states, the states in the hypercube are not just circles but are actually ribbon graph surfaces with boundary (see state graphs in \Cref{subsec:face-colorings-of-ribbon-graphs}). Thus, the ``true'' cobordisms (morphisms) between two states in the hypercube of states are cobordisms/$3$-dimensional foams between 2-dimensional surfaces with boundaries (the objects), of which we are taking the ``boundary components'' parts of the objects and morphisms to get the $(1+1)$-dimensional theory.  The actual theory appears to be some form of a $(2+1)$-dimensional TQFT.  

Graph theorists have other reasons for studying TQFTs of graphs.  One of the reasons that topologists investigate TQFTs is that they have excellent functorial and composition properties. For example, the main point of \cite{BN3} was that TQFTs can be used to construct invariants of tangles.  In graph theory,  configurations play the analogous role of tangles in knot theory. Hence, our TQFT appears to be an important environment for study reducible configurations and unavoidable sets of the four color theorem. Future research will explore this domain.

\subsection{Background to the homologies in this paper}

In \cite{BaldCohomology}, the first author showed how to turn several number invariants of trivalent ribbon graphs into polynomial invariants.  These number invariants were  based upon abstract tensor systems  of Penrose \cite{Penrose} and were different ways to count the number of 3-edge colorings of a planar graph. The polynomial invariants of \cite{BaldCohomology} are stronger invariants than the number invariants in the sense that each number invariant is recovered from its associated polynomial invariant by evaluating the polynomial at one. 

That paper then went on to show that one of these polynomials, the {\em $2$-factor polynomial}, could be ``categorified'' into a bigraded homology theory whose graded Euler characteristic is the $2$-factor polynomial. This polynomial invariant and homology theory fits nicely into a suite of recent gauge theoretic invariants for trivalent graphs (cf. \cite{KM3, KM2, KM1, KR, RW}), but has the added benefit of being defined using a Kauffman-like bracket.  In fact, one can think of the $2$-factor polynomial and homology theory as the ``ribbon graph equivalent'' of the Jones polynomial and Khovanov homology of knot theory (cf. \cite{BKR}). Because of this direct link to knot theory, one can port theorems in knot theory to trivalent ribbon graphs that cannot be realized (as of yet) in the suite of other gauge theoretic theories. 

One of the main themes of \cite{BaldCohomology} was that there should be TQFTs for ribbon graphs for the other polynomial invariants described in that paper, that is, the homology theory for the $2$-factor polynomial was posited to only be an important {\em first example} of such theories. In this paper, we further fulfill this new direction in topology and graph theory by generalizing the $3$-color polynomial described in that paper to the  $n$-color polynomial and show how to categorify it to the bigraded and filtered $n$-color homologies. Our next paper will categorify another polynomial from that paper, called the {\em vertex polynomial}, using the TQFT of this paper.  The vertex polynomial is interesting because it counts the number of perfect matchings of a graph. Thus, all of these new homology theories should be viewed as having their origin in \cite{BaldCohomology}.

\bigskip

\subsection{Outline of this paper} The paper follows a straight path through Theorems A--G described above. After a preliminary section on definitions and notations used in this paper, each subsequent section addresses one of the main theorems above in the same order it was presented in the introduction. Since the audience for this paper ranges over many fields, i.e., topologists, graph theorists, representational theorists, etc., with distinctly different backgrounds, we have written the paper to be reasonably self-contained. We do not expect topologists to know specialized facts from graph theory or have a detailed knowledge of its literature, or vice versa.

\tableofcontents

\section{Preliminaries}\label{Sec:perfect-matching-graphs}
In this section we introduce the notion of a perfect matching graph, an equivalence class of decorated trivalent ribbon graphs. A {\em plane graph} $\Gamma$ is a specific embedding, $i:G\ra S^2$, of a connected planar graph $G$ into the sphere. Ribbon graphs are the natural generalizations of plane graphs in that they allow for non-planar embeddings of graphs into surfaces of any genus while  retaining a key aspect of plane graphs: $S^2 \setminus i(G)$ is a set of disks.  A ribbon graph decorated with a perfect matching is called a perfect matching graph (cf. \cite{BaldCohomology}).  This section describes how perfect matching graphs correspond to an equivalence class of immersed graphs in the plane  with thickened edges for the perfect matching edges. We will need these diagrams to define the homology theories.  Full details of the relationship between oriented ribbon graphs and diagrams can be found in \cite{BKR}.

In this paper, an abstract graph $G(V,E)$ is often thought of as a $1$-dimensional CW complex by identifying vertices of $V$ with points and edges with segments that are glued to their coincident vertices.  Also, all graphs are {\em multigraphs}, which are allowed to have loops (edges with a single incident vertex) and multiple edges incident to the same two distinct vertices. Finally, ``vertex-free'' edges are allowed, i.e., circles.

\subsection{Ribbon graphs}\label{Subsec:ribbongraphs}
A perfect matching graph is an equivalence class of trivalent graphs with extra structure. One of these structures is that of a ribbon graph. For a detailed introduction to ribbon graphs see \cite[Section 1.1.4]{Moffat2013}. 
\begin{definition}\label{Def:ribbongraph}
A {\em ribbon graph of a graph $G$} is an embedding $i:G\ra \Gamma$ where $G$ is thought of as a $1$-dimensional CW complex and $\Gamma$ is a surface with boundary where $\Gamma$ deformation retracts onto $i(G)$.  We say that \( G \) is the \emph{underlying graph} of \( \Gamma \), and that \( \Gamma \) is \emph{the surface associated to} the ribbon graph. 
\end{definition}

We will often refer to the ribbon graph simply by $\Gamma$ and think of $\Gamma$ as a surface with an embedded graph $G$. An orientation of a ribbon graph, if one exists, is an orientation of the surface. Let \( \overline{\Gamma} \) denote the closed smooth surface obtained by attaching discs to the boundary of $\Gamma$. The embedding of $G$ into the surface \( \overline{\Gamma} \) is known as a {\em 2-cell embedding}.\footnote{Such an embedding is also known as a {\em cellular embedding} or {\em cellular map}.}

Let \( \Gamma_1 \) and \( \Gamma_2  \) be ribbon graphs.  We say that \( \Gamma_1\) and \( \Gamma_2 \) are \emph{equivalent} ribbon graphs if there is a homeomorphism \( f : \overline{\Gamma}_1 \rightarrow \overline{\Gamma}_2 \)  that induces an isomorphism from  $G_1$  to $G_2$. Thus, one can define the {\em genus of a ribbon graph $\Gamma$} to be the genus of the associated closed smooth surface $\overline{\Gamma}$.  

\begin{definition}\label{definition:n-face-coloring}
An {\em $n$-face coloring of a ribbon graph $\Gamma$ (or $\overline{\Gamma}$)} is a choice of one of $n$ different colors (or more generally, labels) for each attaching disk of $\overline{\Gamma}$ such that no two disks adjacent to the same edge have the same color.
\end{definition}

Henceforth, unless otherwise noted, all ribbon graphs are assumed to have connected (often trivalent) underlying graphs. All trivalent graphs are assumed to possess at least one perfect matching. 

\begin{remark}
Initially, all ribbon graphs in this paper will be oriented, that is, the closed associated surface $\overline{\Gamma}$ to a ribbon graph $\Gamma$ is orientable with an orientation. The reader may have noticed that our main theorems, except for \Cref{Theorem:mainthm-four-color}, are stated for and are true for both orientable and non-orientable ribbon graphs.  In \Cref{subsection:non-orientable-surfaces}, we will show that all theorems in this paper for oriented ribbon graphs are also true for non-orientable ribbon graphs after making a simple modification to the definitions, see  \Cref{theorem:all-ribbon-graphs-theorem}. We take this approach since we loose nothing by initially restricting to oriented ribbon graphs while at the same time avoiding unnecessarily convoluted proofs that would arise from having to keep track of which formulas to apply in which cases. It is much easier to explain how to modify all the proofs at once and at the end than prove the most general form of each theorem from the beginning.
\end{remark}

Ribbon graphs get their name from the topological construction of attaching bands (ribbons) to disks.  Given a graph \( G \), a cyclic ordering of the edges at every vertex determines a ribbon graph \( \Gamma \). This ribbon graph is obtained by taking a disk for every vertex of \( G \), and attaching bands as prescribed by the edges and their cyclic ordering. Half twists may be added to the bands, provided that the resulting surface is equivalent to the original surface. Thus, the vertices (edges) of \( G \) are in bijection with the discs (bands) of \( \Gamma \), and we shall not distinguish between them, referring to \emph{vertices} and \emph{edges} of \( \Gamma \).

\Cref{Fig:ribbons} shows that two distinct ribbon graphs may have the same underlying abstract graph. These ribbon graphs are distinguished by the number of boundary components of their associated surfaces. The ribbon graph on the left of \Cref{Fig:ribbons} is planar, that is, the associated closed surface $\overline{\Gamma}_1$ to $\Gamma_1$ is a $2$-sphere. We will continue to call genus zero ribbon graphs  {\em plane graphs} or {\em plane ribbon graphs} when the context is clear.

\begin{figure}
\includegraphics[scale=0.75]{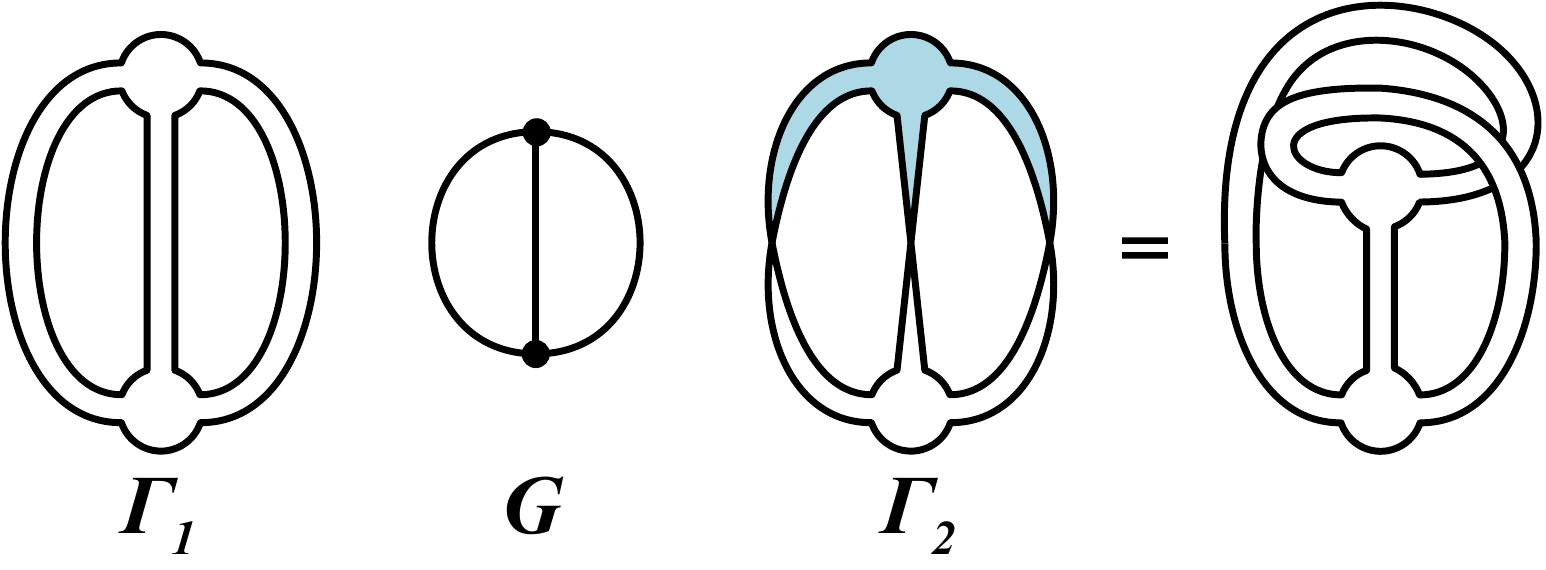}
\caption{Distinct ribbon graphs \( \Gamma_1 \) and \( \Gamma_2 \) with the same underlying graph \( G \).}
\label{Fig:ribbons}
\end{figure}

In this paper,  ribbon graphs are represented by the following diagrams (see \Cref{Fig:ribbondiagram}).

\begin{definition}[Ribbon diagram]\label{Def:ribbondiagram}
A \emph{ribbon diagram} is a graph drawn in the plane (with possible intersections between its edges), with vertices decorated by circular regions, \raisebox{-6pt}{\includegraphics{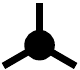}}, to distinguish between vertices and edge intersections. A cyclic ordering of the edges at a vertex is given implicitly by such a diagram, i.e.,\ it is given by their  ordering in the plane.
\end{definition}

\begin{figure}
\includegraphics{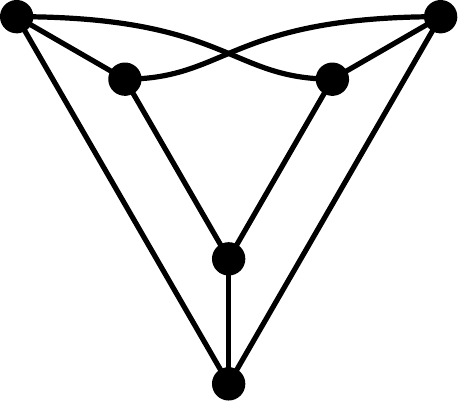}
\caption{A ribbon diagram of a $K_{3,3}$ ribbon graph. Note that the closed oriented surface associated to the ribbon graph is a torus.}
\label{Fig:ribbondiagram}
\end{figure}

A ribbon diagram can be used to construct an oriented ribbon graph using disks for vertices and bands for edges.  If \( \Gamma \) is obtained from a ribbon diagram, \( D \), in this manner we say that \( D \) \emph{represents} \( \Gamma \). 

\begin{proposition}[Baldridge, Kauffman, Rushworth, \cite{BKR}]\label{Prop:ribbonrep}
Every oriented ribbon graph is represented  by a ribbon diagram.
\end{proposition}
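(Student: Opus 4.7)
The plan is to reduce the statement to the standard fact that an oriented ribbon graph is completely determined (up to equivalence) by its underlying graph $G$ together with a rotation system, i.e.\ a cyclic ordering of the edge-ends at each vertex. Once this is in hand, producing a ribbon diagram amounts to drawing $G$ in the plane so that the planar cyclic order at each vertex realizes the given rotation system, allowing arbitrary crossings of edges away from the vertices.

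First I would recall the rotation-system description. For an oriented ribbon graph $\Gamma$ with underlying graph $G$, the orientation of the associated surface induces, at each vertex $v$, a canonical cyclic order on the edges incident to $v$ (read off by traversing the boundary of the vertex disc counter-clockwise). Conversely, given $G$ with such a rotation system and no half-twists on bands, the topological construction of gluing bands to discs reproduces an oriented ribbon graph, and two oriented ribbon graphs are equivalent if and only if their rotation systems agree up to graph isomorphism. This is the standard combinatorial encoding of oriented ribbon graphs.

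Next I would construct the diagram. Choose distinct points in $\mathbb{R}^2$ for the vertices of $G$ and surround each by a small closed disc, which will serve as the decoration \raisebox{-6pt}{\includegraphics{solid.pdf}} distinguishing vertices from edge crossings. At each vertex $v$, mark on the boundary of its disc one point per incident edge-end, placed so that their counter-clockwise order along the boundary matches the prescribed cyclic order from the rotation system of $\Gamma$; this is always possible because we are free to place finitely many points anywhere on a circle. For every edge $e$ of $G$ with endpoints $u,v$, draw a smooth arc in $\mathbb{R}^2$ from the appropriate marked point on $\partial u$ to the appropriate marked point on $\partial v$, disjoint from all vertex discs except at its endpoints. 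A standard general-position argument lets us perturb these arcs so that any two of them meet transversely in at most finitely many double points and no triple points occur. The resulting picture is a ribbon diagram $D$.

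Finally I would verify that $D$ represents $\Gamma$. Reconstructing a ribbon graph from $D$ attaches an untwisted band per edge to the vertex discs, with the cyclic order at each vertex dictated by the planar arrangement of the edge-ends on $\partial v$; by construction this order equals the rotation of $\Gamma$ at $v$. Thus the reconstructed ribbon graph has underlying graph $G$ and the same rotation system as $\Gamma$, and hence is equivalent to $\Gamma$ as an oriented ribbon graph. The main obstacle is purely conceptual rather than technical: one must be comfortable that the (unavoidable) non-planarity of $G$ poses no problem because the diagram conventions explicitly permit edge intersections away from vertices, and that no half-twists need to be introduced because $\Gamma$ is oriented. Everything else is a direct application of general position in the plane.
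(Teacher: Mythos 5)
Your proof is correct and follows the standard route that the cited reference \cite{BKR} takes: encode an oriented ribbon graph by its rotation system, realize that rotation system by placing edge-ends in the prescribed cyclic order around small vertex discs in the plane, and connect them by arcs in general position, relying on the diagram convention that edges may cross away from vertices. Since the paper does not reprove this proposition but simply cites \cite{BKR}, there is nothing further to compare; the argument you give is the expected one.
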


In order for ribbon diagrams to faithfully represent ribbon graphs up to equivalence, the following diagrammatic moves can be used to move between two distinct ribbon diagrams of the same ribbon graph.

\begin{definition}\label{Def:ribbonmoves}
The following moves on ribbon diagrams are known as the \emph{ribbon  moves}:
\begin{center}
\includegraphics[scale=0.9]{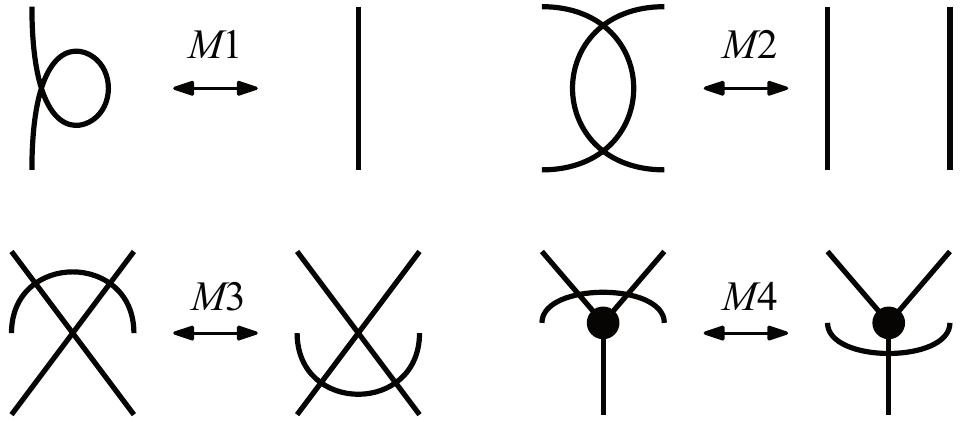}
\end{center}

\end{definition}

\begin{remark} In \cite{BKR}, a fifth move was included to simplify diagrams in order to keep them from becoming too unwieldy. We will not work with overly complicated examples and so it is safe to ignore the fifth move for the purposes of this paper. 
\end{remark}

Introducing the ribbon moves allows us to convert one diagram of a ribbon graph to another.

\begin{theorem}[Baldridge, Kauffman, Rushworth, \cite{BKR}]\label{Thm:ribbons}
Two  ribbon diagrams represent  equivalent oriented ribbon graphs if and only if they are related by a finite sequence of ribbon moves and planar isotopy.
\end{theorem}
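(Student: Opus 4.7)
The plan is to prove the two directions separately, with the forward direction being a direct inspection and the reverse direction requiring an analysis of ribbon graphs via rotation systems.

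For the forward direction, I would verify that each of the four ribbon moves of \Cref{Def:ribbonmoves} preserves the equivalence class of the oriented ribbon graph they represent. This is a picture-by-picture check: the moves involving edge intersections (analogous to Reidemeister II and III in knot theory) do not alter the underlying abstract graph nor the cyclic ordering of edges at any vertex, and hence do not change the data used to build the ribbon graph from the diagram. The move involving the cyclic ordering at a vertex must be checked to exchange two diagrams whose induced cyclic orderings agree once interpreted correctly in the plane. Planar isotopy is trivially compatible with the construction of \( \Gamma \) from \( D \), since the disk-and-band reconstruction depends only on combinatorial (not metric) data.

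For the reverse direction, the key observation is that an oriented ribbon graph is determined up to equivalence by the pair \( (G, \rho) \), where \( G \) is the underlying abstract graph and \( \rho \) is a rotation system, i.e., a cyclic ordering of the edges at each vertex. Given two ribbon diagrams \( D_1 \) and \( D_2 \) representing equivalent oriented ribbon graphs \( \Gamma_1 \cong \Gamma_2 \), I would first apply the induced graph isomorphism so that both diagrams have a common underlying graph \( G \), and verify that the cyclic orderings they induce at each vertex (reading the edges in counterclockwise order in the plane, using the marker \raisebox{-4pt}{\includegraphics[scale=0.8]{solid.pdf}} to distinguish vertices from crossings) coincide. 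The problem thus reduces to: any two planar realizations of a fixed pair \( (G, \rho) \) are related by ribbon moves and planar isotopy.

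The main strategy for this reduction is to bring each diagram to a standard form. First, choose a spanning tree \( T \subset G \) and, using the rotation system, use planar isotopy together with the vertex-ordering move to arrange \( T \) in a canonical crossingless planar embedding that realizes \( \rho \) on \( T \). Next, attach the remaining edges of \( G \setminus T \) one by one; each such edge may incur crossings with previously drawn edges, and the intersection ribbon moves (the analogues of Reidemeister II and III) can be used to transform any two placements of a given edge into one another, paralleling the proof of Reidemeister's theorem. The main obstacle I expect is the careful bookkeeping at vertices of high valence: when a new edge is inserted into a given angular slot determined by \( \rho \), one must verify that the ribbon moves allow arbitrary planar reorderings of the other edges around that vertex without changing the cyclic order. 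This will require a generic-position argument showing that any ambient isotopy of the planar drawing (allowing transversal crossings) decomposes into finitely many elementary moves, each of which falls under the four ribbon moves or planar isotopy. Once these local reductions are established, both \( D_1 \) and \( D_2 \) reach a common canonical form, completing the proof.
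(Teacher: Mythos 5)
The paper does not prove \Cref{Thm:ribbons}; it is cited directly from \cite{BKR}, so there is no ``paper proof'' to compare against. Taken on its own terms, your outline has the right high-level shape --- an easy forward check plus a canonical-form argument for the converse --- but two steps that you assert without argument are precisely where the mathematical content lives, and both need more than a sketch.

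First, you declare that after applying the induced graph isomorphism, ``the cyclic orderings they induce at each vertex \dots coincide.'' This is the heart of the only-if direction, and it is not automatic from \Cref{Def:ribbongraph}. Equivalence there is witnessed by \emph{any} homeomorphism $f:\overline{\Gamma}_1 \ra \overline{\Gamma}_2$ inducing a graph isomorphism; you must extract from $f$ the fact that the rotation systems are identified, possibly up to a global reversal if $f$ is orientation-reversing on the surface. Planar isotopy is orientation-preserving, and the ribbon moves only change the cyclic order locally at one vertex, so an honest argument needs to either restrict to orientation-preserving $f$ (and justify this reading of ``equivalent \emph{oriented} ribbon graphs'') or show that a global reversal is realizable by the moves --- it is far from clear that it is. As stated, this step is a nontrivial claim dressed as a routine reduction.

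Second, your ``generic-position argument showing that any ambient isotopy of the planar drawing \dots decomposes into finitely many elementary moves, each of which falls under the four ribbon moves or planar isotopy'' is essentially a restatement of the theorem itself. The standard way to fill this in (by analogy with virtual knot theory) is to derive a \emph{detour move}: a lemma asserting that an arc of an edge passing only through immersed crossings can be slid freely to any other such position via the intersection moves. Without stating and proving a detour lemma you cannot justify the claim that the spanning-tree attachment order and the positions of the non-tree edges can be freely rearranged, nor can you handle the bookkeeping at high-valence vertices that you flag as the main obstacle. In short, the proposal identifies the right ingredients (rotation systems, canonical form, Reidemeister-style moves) but leaves unproven exactly the two lemmas --- rotation-system rigidity under surface homeomorphism, and the detour property of the ribbon moves --- that the cited proof in \cite{BKR} would have to supply.
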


As a consequence of \Cref{Thm:ribbons}, we may equivalently define a ribbon graph as an equivalence class of ribbon diagrams, up to the ribbon moves. Therefore, in this paper, we will refer to both ribbon diagrams and ribbon graphs as ``ribbon graphs'' with the small caveat that a given ribbon diagram is assumed to be defined up to equivalent diagrams. This abuse of notation allows us to mimic Grothendieck's ``dessin d'enfants'' of surfaces using diagrams in the much the same way that a ``plane graph'' in practice refers to a specific drawing of a graph whose edges do not intersect each other.

\subsection{Perfect matching graphs}\label{Subsec:pm-graphs}
In \cite{BaldCohomology}, a plane graph with a perfect matching was called a perfect matching graph. This  notion of perfect matching graphs can be generalized to any ribbon graph by decorating the ribbon diagram with a perfect matching (cf. ``matched diagram'' of \cite{BKR} for an example of a perfect matching graph with further decorations). A perfect matching is a set of edges that ``match'' every vertex to exactly one other vertex:

\begin{definition}\label{Def:pm}
A \emph{perfect matching} of an abstract graph $G(V,E)$ is a subset of the edges of the graph, $M\subset E$, such that each vertex is incident to exactly one edge in the subset. 
\end{definition}

The term {\em matching} is used in graph theory for any subset of the edge set of graph; the term \emph{perfect} here refers to the fact that every vertex is incident to exactly one perfect matching edge.

The objects of the main theorems of this paper are equivalence classes of ribbon graphs together with perfect matchings of their underlying graphs. Henceforth we shall refer to these objects as \emph{perfect matching graphs}, and represent them pictorially using ribbon diagrams.

\begin{definition}\label{Def:pm-graph}
A \emph{perfect matching graph}, denoted $\Gamma_M$, is a ribbon graph, $i:G\ra \Gamma$, together with a perfect matching $M$ of the graph $G$. We represent the perfect matching in a ribbon diagram of $\Gamma$ using thickened edges. 
\end{definition}

\begin{figure}
\includegraphics[scale=0.65]{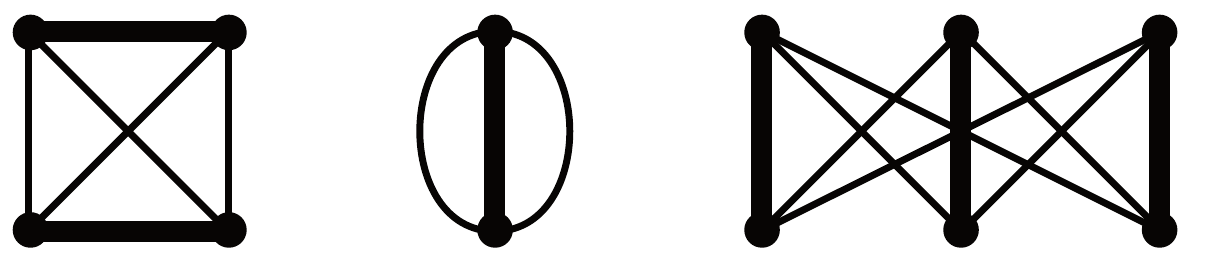}
\caption{Perfect matching graphs.}
\label{Fig:matcheddiagrams}
\end{figure}
Examples of perfect matching graphs are given in \Cref{Fig:matcheddiagrams}.  \Cref{Thm:ribbons} continues to hold for ribbon diagrams with thickened perfect matching edges.  Hence, we can think of a perfect matching graph simultaneously as a ribbon graph or a ribbon diagram up to ribbon move equivalence, together with a perfect matching, i.e., as the ordered pair $(\Gamma, M)$. 

\subsection{The blowup of a graph}\label{Subsec:blowup} While many of the main theorems stated in this paper are stated for perfect matching graphs, these theorems can always be used to provide results about the graphs themselves independent of a ``choice'' of perfect matching. This is because one can pass from a ribbon graph to an associated ribbon graph, called the {\em blowup of the graph}, which has a canonically defined perfect matching.  Since the blown-up graph often retains important features of the original ribbon graph, like its genus or nonzero count of $n$-face colorings, it is profitable to state some of the main theorems in their full generality and then use the blowup of a graph to get invariants of the graph itself when needed. 

\begin{definition}
Let $G(V,E)$ be a graph and $\Gamma$ be a ribbon graph of $G$ represented by a ribbon diagram.  Define the {\em blowup of $\Gamma$}, denoted $\Gamma^\flat$, to be the ribbon diagram  given by replacing every vertex of $\Gamma$ with a circle as in
\begin{center}
\includegraphics[scale=0.09]{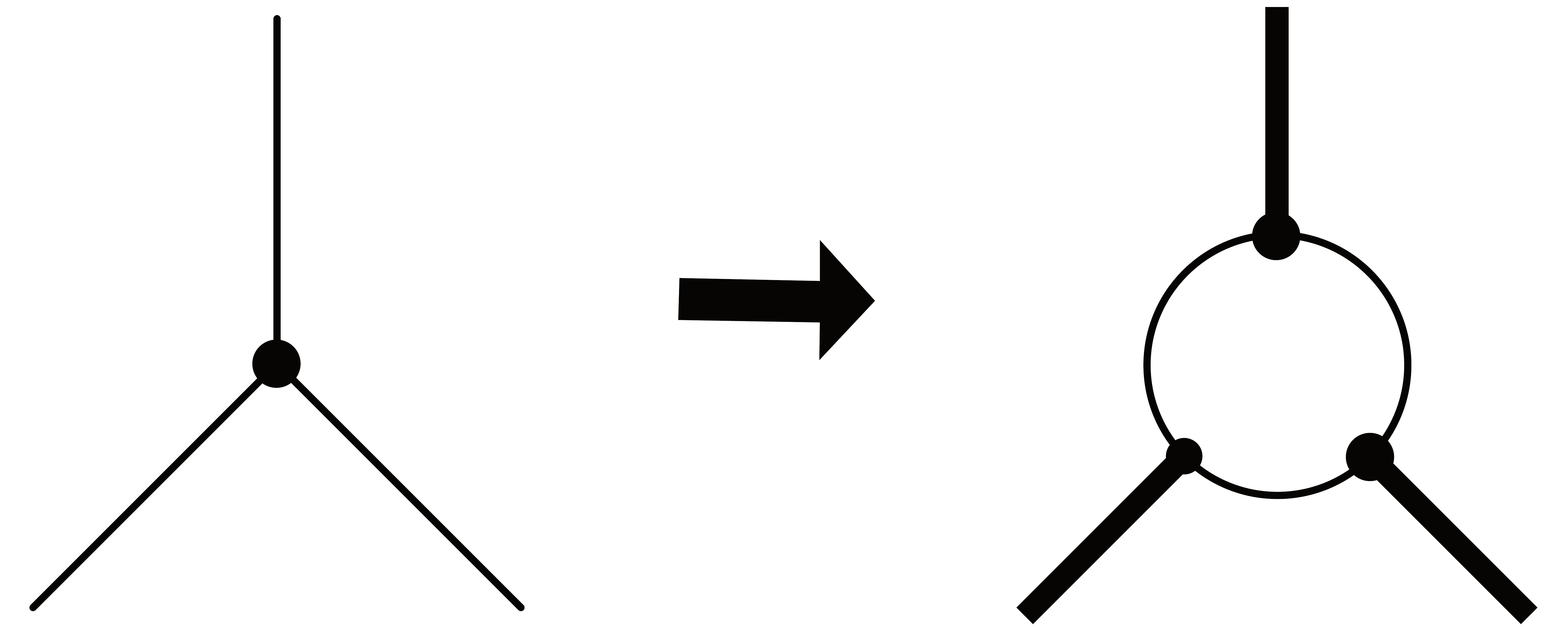}
\end{center}
A perfect matching can be associated to $\Gamma^\flat$  using the original edges $E$ of $\Gamma$ as shown in the picture above.  The  resulting perfect matching graph is $\Gamma^\flat_E$.  \label{def:blow-up-of-a-graph}
\end{definition}

The idea of a blowup originated from early arguments about the four color theorem: Kempe \cite{Kempe} used the related idea of ``patches'' in his attempted-proof of the four color theorem. Later, Tait \cite{Tait} used the blowup of a general plane graph to show that only trivalent plane graphs need be considered in proving the theorem. In fact, one clever observation about Tait's blowup was that the $3$-edge colorings of a trivalent graph are in one-to-one correspondence with the $3$-edge colorings of the blowup of the graph. To see this, label the edges in the left hand picture in the figure above with three different colors and note that there is only one way to properly $3$-color the edges in the blown-up picture on the right with the same colors on the corresponding edges.

\section{\Cref{MainTheorem:n-color-polynomial}: The $n$-color polynomial and its properties}\label{Sec:number-and-polynomial-invariants}

In the introduction  the $3$-color polynomial was briefly  introduced.  This section describes how to generalize this invariant to a family of number and polynomial invariants for every positive integer and briefly discusses some properties of the generalized polynomials. We show how they relate to  colorings of the edges and faces of a graph. At the end of the section we describe how this family of invariants relates to the Penrose polynomial and prove \Cref{MainTheorem:n-color-polynomial}.

\subsection{The definitions.} First, we define a family of polynomials invariants, one for each positive integer $n$. 

\begin{definition}
Let $G$ be a  trivalent graph with a perfect matching $M$, and let $\Gamma_M$ be a perfect matching graph for the pair $(G,M)$. For $n\in \BN$, the {\em $n$-color polynomial of $\Gamma_M$} is the element  $\langle \Gamma_M \rangle_{\! n} \in \BZ[q,q^{-1}]$ characterized by:
\begin{eqnarray}
\langle \PMEdgeDiag \rangle_{\! n} &=&   \langle \IIDiag \rangle_{\! n} \ - \ q^m \langle \XDiag \rangle_{\! n},  \ \ \ \ \ \ \mbox{$m=n/2$ if $n$ even, $m=(n-1)/2$ if odd,} \label{eq:EMformula}\\[.2cm]
\langle \bigcirc  \rangle_{\! n} & = & \left\{\begin{array}{ll} q^m+\cdots+q+1+q^{-1}+\cdots+q^{-m+1} \ \ \ \ & m=\frac{n}{2} \mbox{ if $n$ even} \label{eq:immersed_circle}\\[.3cm]
q^m+\cdots+q+1+q^{-1}+\cdots+q^{-m+1}+q^{-m} & m=\frac{n-1}{2} \mbox{ if $n$ odd},\\
\end{array}\right. \\[.2cm]
\langle \Gamma_1 \sqcup \Gamma_2 \rangle_{\! n} &=& \langle \Gamma_1 \rangle_{\! n} \cdot \langle \Gamma_2 \rangle_{\! n}. \label{eq:disjoint_graphs_identity} 
\end{eqnarray}
\label{defn:n-color-poly}
\end{definition}

The element $\langle \Gamma_M \rangle_{\! n}$ is called the {\em bracket} of $\Gamma_M$. Resolving a perfect matching edge by $\IIDiag$ is called a {\em $0$-smoothing} and by $\XDiag$ a {\em $1$-smoothing}. The immersed lines in the $1$-smoothing will sometimes be thought of as an immersion in the plane and sometimes as a virtual crossing (cf. \Cref{fig:saddle}). The bracket $\langle \Gamma_M \rangle_{\! n}$ depends only on the perfect matching graph and not the perfect matching diagram used to define it. In fact, by modifying the proof of Theorem~$1$ of \cite{BaldCohomology}, it can be shown that the $n$-color polynomial is invariant of the flip moves as well. 

The {\em loop value} of the $n$-polynomial is the evaluation of Equation~\ref{eq:immersed_circle} at one. Therefore, the loop value of the $n$-color polynomial is $n$. Loop values play a prominent role in $(1+1)$-dimensional  TQFTs in knot theory: Khovanov homology is based upon the $q$-variable Kauffman bracket that has a loop value of $2$; it is the dimension of the vector space that must be chosen to build the Khovanov complex (cf. \cite{Kho}). It is this loop value that gives the close relationship between the Jones polynomial in knot theory and the $2$-factor polynomial in graph theory  \cite{BaldCohomology, BKR}.  However, the $n$-color polynomials are new in the literature and quite different in character to polynomials like Jones polynomial in knot theory, {\em even when $n=2$}. We will see this difference emerge later---see \Cref{rem:different-than-khovanov} in \Cref{subsection:differential}.

In \cite{Penrose}, the loop value is the dimension of the abstract tensor system. Penrose presents an interesting example of a  negative dimensional abstract tensor system whose loop value is $-2$. One can ask if there are $n$-color polynomials for negative integers $n$.  We leave these interesting possibilities and potential categorifications into  homology theories for future research.

The $n$-color polynomial, as the name suggests, encodes information about the number of edge or face colorings, especially {\em when the graph is planar}.  To make this relationship explicit later, we make the following definition: 

\begin{definition}
Let $G$ be a trivalent graph with a perfect matching $M$, and let $\Gamma_M$ be a perfect matching graph for the pair $(G,M)$.   The {\em $n$-color number}, $[\Gamma_M]_{n}$, is the number found by evaluating the $n$-color polynomial at one, $[\Gamma_M]_{n} =\langle \Gamma_M \rangle_{\! n}(1)$, i.e., it is the same number calculated by applying the bracket $$\left[ \PMEdgeDiag \right]_{\! n} = \left[ \IIDiag  \right]_{\! n}  - \left[ \XDiag \right]_{\! n}$$ and $\left[ \bigcirc \right]_{\! n} = n$ to $\Gamma_M$.  \label{defn:n-color-number}
\end{definition}

The $n$-color number and $n$-color polynomial exist for any ribbon graph:  Let $G(V,E)$ be any connected graph (with any valence at each vertex), and let $\Gamma$ be a ribbon graph of $G$. Let the perfect matching graph $\Gamma^\flat_E$ be the blowup of $\Gamma$.  The blowup $\Gamma^\flat_E$ is trivalent with the canonical perfect matching given by the edges $E$ of $G$, hence we can use this blowup to define an invariant of the ribbon graph $\Gamma$ itself:

\begin{definition}
Let $G(V,E)$ be any connected graph (with any valence at each vertex), and let $\Gamma$ be a ribbon graph of $G$.  Then the {\em $n$-color polynomial} and {\em $n$-color number of $\Gamma$} are defined to be:
$$\langle \Gamma\rangle_{\! n} :=\langle \Gamma^\flat_E \rangle_{\! n}  \mbox{\ \ \  and  \ \ \ } \left[\Gamma\right]_n :=\left[\Gamma^\flat_E\right]_n.$$
\label{defn:n-color-poly-of-a-graph}
\end{definition}

\begin{remark}
The $n$-color number is a generalization of the {\em Penrose Formula}, $[G]$, which is the $n$-color number when $n=3$ (cf. \cite{Kauffman}). Penrose showed that  it counts the number of $3$-edge colorings (Tait colorings) when $G$ is planar \cite{Penrose}.  Ever since he defined this formula,  mathematicians have looked for ways to turn number invariants like the Penrose Formula into  polynomial invariants.  For example, Kauffman was studying the Penrose Formula when he discovered the Kauffman bracket in knot theory. 
\end{remark}

Before discussing the properties of the $n$-color number and polynomial, we show how to write both of them as a state sum. The state sum makes it clear that the polynomial is well defined independent of how the bracket is applied to the perfect matching graph.
 
 \subsection{The hypercube of states and state sum definition of the $n$-color polynomial}\label{section:smoothing-states-hypercubes} In this subsection, we introduce the hypercube of states to set notation, choices, and definitions that will be used throughout the paper.  The hypercube of states is generated from the smoothings $\IIDiag$ and $\XDiag$  (see \cite{BaldCohomology}).  The $n$-color polynomial can then be written as a sum over these states.  
  
Let $G(V,E)$ be a trivalent graph and $M$ be a perfect matching of $G$. The number of vertices is even and the number of perfect matching edges of $M$ is then $\ell=|V|/2$.  Order and label these edges  by $M=\{e_1, e_2, \dots, e_\ell\}.$ 
Let a perfect matching graph $\Gamma_M$ for $(G,M)$ be represented by a perfect matching diagram.  Resolve each perfect matching edge $e_i \in \Gamma_M$ in one of two possible ways according to the two smoothings, that is, replace a neighborhood of each perfect matching edge $e_i$ in $\Gamma_M$ with $\IIDiag$ or $\XDiag$. The resulting set of immersed circles in the plane is called a {\em state} of $\Gamma_M$.

There are $2^\ell$ states of $\Gamma_M$, each of which can be indexed by an $\ell$-tuple of $0$'s and $1$'s that stand for the type of smoothing.  For  $\alpha= (\alpha_1, \dots,\alpha_\ell)$ in $\{0,1\}^\ell$, let $\Gamma_\alpha$ denote the state where each perfect matching edge $e_i$ has been resolved by an $\alpha_i$-smoothing. We will often refer to the state $\Gamma_\alpha$ by $\alpha$ via this correspondence.

The bracket $\langle \Gamma_M \rangle_{\!n}$ can be expressed as follows:  Define $|\alpha|=\alpha_1+\cdots + \alpha_\ell$ to be the number of $1$'s in $\alpha$. Also, define a function $k$ from the set of states to the nonnegative integers that counts the number of circles in a state.  For notational convenience, let $k_\alpha$ be the number of (immersed) circles in $\Gamma_\alpha$, i.e., $k(\Gamma_\alpha)=k_\alpha$.  Then, for a perfect matching graph $\Gamma_M$ of a nonempty trivalent graph with perfect matching, the $n$-color polynomial is

\begin{equation}
\langle \Gamma_M \rangle_{\! n}(q) = \sum_{\alpha\in\{0,1\}^\ell} (-q^m)^{|\alpha|} (q^m+\cdots+q+1+q^{-1}+\cdots+q^{-m+1})^{k_\alpha}, \label{eqn:general_state-sum}
\end{equation}
where $m=n/2$ when $n$ is even.  There are similar sums for when $n$ is odd and for the $n$-color number.  If the graph is empty, set $\langle \emptyset \rangle_{\! n} = 1$.  If it is a vertex-free graph, i.e., a set of $k$ circles, then the perfect matching is the empty set, and for a perfect matching graph $\Gamma_\emptyset$ for $n$ even, $\langle \Gamma_\emptyset \rangle_{\!n} = \left(q^m+\cdots q^{-m+1}\right)^k$, with similar expression for $n$ odd. The right hand expression in \Cref{eqn:general_state-sum} is called a {\em state sum}.  

The set of states can be conceptualized as a hypercube in which each state is a vertex of the cube.  For example, \Cref{fig:theta_3} shows the cube of resolutions for the $\theta_3$ graph with indicated perfect matching.

\begin{figure}[H]
\includegraphics[scale=.2]{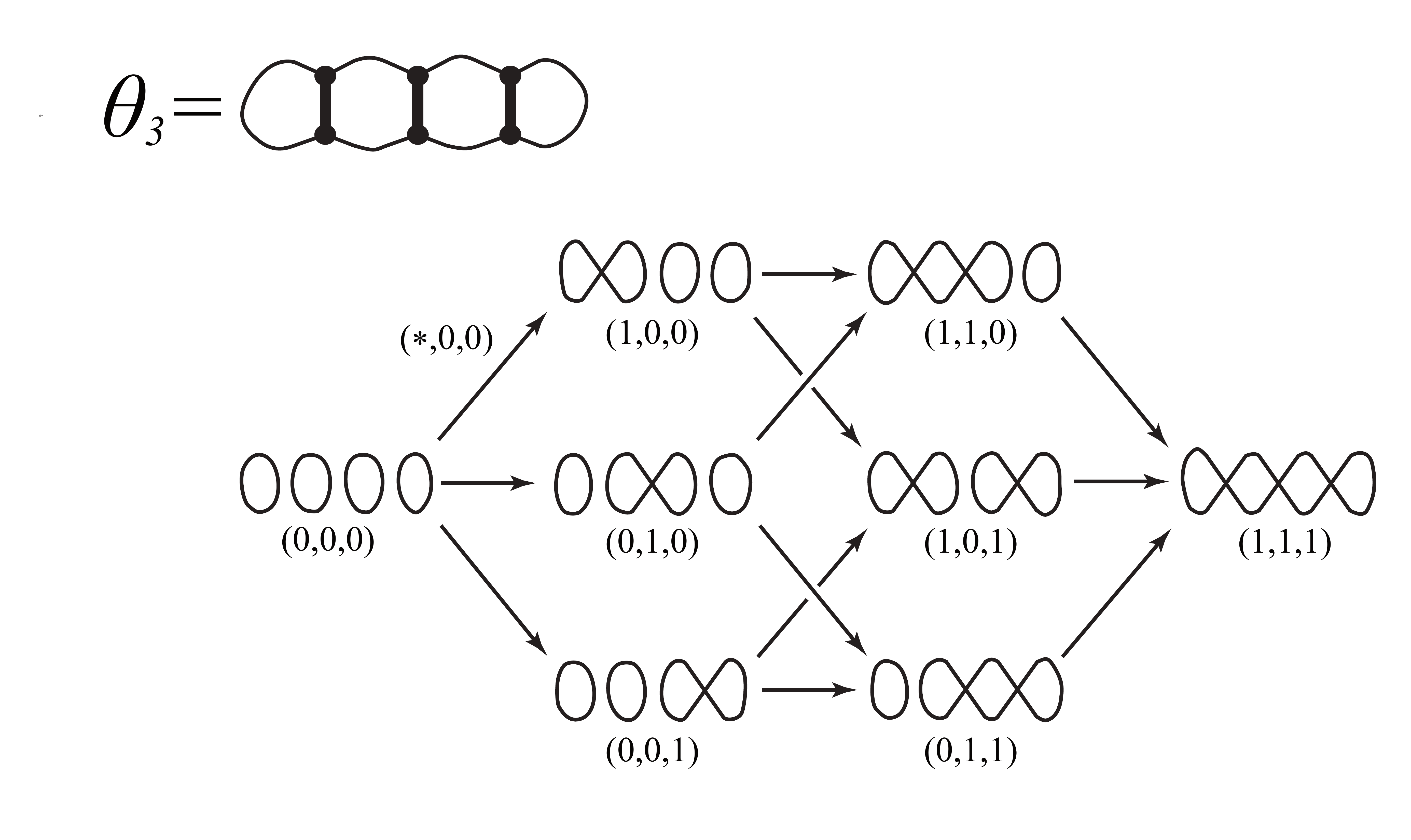}
\caption{Hypercube of resolutions for the graph the $\theta_3$ plane graph with indicated perfect matching.}
\label{fig:theta_3}
\end{figure}

The edges of the hypercube are determined as follows. Consider an edge $\zeta_{\alpha\alpha'}$ in the hypercube between two states $\Gamma_\alpha$ and $\Gamma_{\alpha'}$.  Edges occur when $\alpha_i=\alpha'_i$ for all $i$ except for one edge $e_k\in M$ where $\alpha_k=|\alpha'_k-1|$.  Label this edge by a tuple of $0$'s and $1$'s for the $\alpha_i$'s and $\alpha'_i$'s that are the same and  a ``$\ast$'' for for the $k$th position where $\alpha_k=|\alpha'_k-1|$.  For example, the edge in Figure~\ref{fig:theta_3} between $(0,0,0)$ and $(1,0,0)$ would be labeled $(\ast,0,0)$.  Turn each edge into a directed segment $\zeta_{\alpha\alpha'}:\Gamma_\alpha \rightarrow \Gamma_{\alpha'}$ by requiring the tail to be where $\ast=0$ and the head is where $\ast=1$, that is, the $0$-smoothing in $\Gamma_\alpha$ is changed into a $1$-smoothing in $\Gamma_{\alpha'}$.\\

\subsection{Examples} We provide examples of the $n$-color number and $n$-color polynomial for $n=2,3,4$ and describe some important features of each.

\begin{example}[The $2$-color number and polynomial] The $2$-color polynomial is found by applying the bracket $\langle \PMEdgeDiag \rangle_2 = \langle \IIDiag  \rangle_2  - q\langle \XDiag \rangle_2$ and $\langle \bigcirc \rangle_2 = q+1$. In general, the $n$-color polynomial and number depend upon the perfect matching and ribbon graph even when the graph is planar.    For example, computing the $2$-color number for the $\theta_2$ graph using two different perfect matchings 
\begin{figure}[H]
\includegraphics[scale = .6]{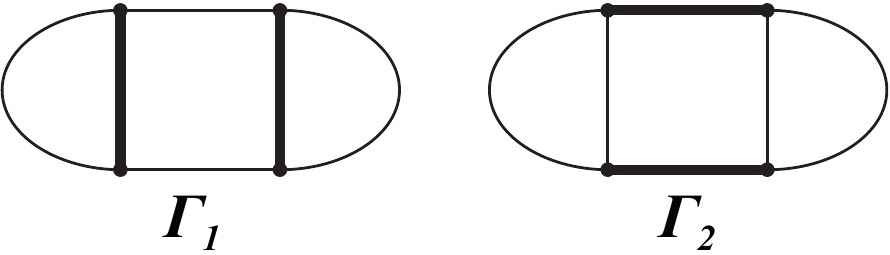} 
\caption{The $\theta_2$ graph with its two different perfect matchings.}
\label{fig:theta_2_with_different_pms}
\end{figure}
 
 \noindent shows that $[\Gamma_1]_2 = 2$ and $[\Gamma_2]_2 = 4$. In fact, given a perfect matching $M$ of a planar trivalent graph $G$ and a plane perfect matching graph $\Gamma_M$ of the pair $(G,M)$, if all of the cycles in $G\setminus M$ have an even number of edges, then it was shown in \cite[Proposition 3]{BLM} that  
 \begin{equation}
 [\Gamma_M]_2 = 2^{\mbox{\# of cycles of $G\setminus M$}}\label{eqn:2-factor-counts-even-cycles}
 \end{equation}
and is zero if any of the cycles in $G\setminus M$ have an odd number of edges. Therefore, if the $2$-color number is non-zero, $G$ is 3-edge colorable by coloring the perfect matching edges by one color, say purple, and coloring the cycle edges in alternating fashion: red, blue, red, etc. Hence, the $2$-color number provides information about specific types of edge colorings.
\end{example}

\begin{example}[The $3$-color number and polynomial] The $3$-color number of a planar perfect matching graph is one of the most important numbers in graph theory: It is equal to the Penrose Formula, which counts the number of $3$-edge colorings of a planar graph.  Unlike the other $n$-color numbers, Penrose showed that the $3$-color number is independent of the perfect matching chosen (cf. \Cref{fig:theta_2_with_different_pms} above).  This independence is a bit surprising and not obvious from the definitions. However, an  argument can be made for why $n=3$ is special using the proof of Statement (2) of \Cref{MainTheorem:n-color-polynomial} below.

One of the key strengths we will see of the bigraded and filtered $3$-color homologies is their ability to detect edge colorings on non-planar graphs when the $3$-color number does not.  As an example of this phenomenon  that will be used throughout the paper, a calculation of the $3$-color number of a perfect matching graph of  $K_{3,3}$ is 0 as shown in \Cref{fig:K33-is-zero}.

\begin{figure}[H]
\begin{eqnarray*}
\left[ \Kthreethree \right]_{\!3} &=& \left[ \Kthreethreezero \right]_{\!3} - \left[ \Kthreethreeone \right]_{\!3}\\
&=& \left[ \thetatwohor \right]_{\!3} - \left[ \thetatwovert \right]_{\!3}\\
&=& 12 -12\\
&=& 0.
\end{eqnarray*}
\caption{The $3$-color number of the $K_{3,3}$ perfect matching graph $\Gamma_M$ is zero.} \label{fig:K33-is-zero}
\end{figure}

While the $3$-color number is zero for the $K_{3,3}$ perfect matching graph $\Gamma_M$ in \Cref{fig:K33-is-zero}, the $3$-color polynomial is nonzero.  A similar bracket calculation to the one above shows that, for the perfect matching graph $\Gamma_M$ of $K_{3,3}$, the $3$-color polynomial is: 
\begin{equation} \langle \Gamma_M \rangle_{\!3}(q) =  q^{-3} + 3q^{-2} + 3q^{-1} +1 - q - 2 q^2 - 2 q^3 - 2 q^4 - q^5.\label{eqn:3-color-poly-of-K33}
\end{equation} 
This polynomial will become the graded Euler characteristic of the bigraded $3$-color homology theory discussed later. In general, the bigraded $n$-color homology is nontrivial for a plane perfect matching graph even when its $n$-color number is zero (cf. \Cref{theorem:CH-not-zero}).
\end{example}

\begin{example}[The $4$-color number and polynomial] Next we show how the $4$-color number (and hence $4$-color polynomial) relates to colorings on the graph.  Recall Tait's well-known relationship between edge colors and face colors of planar graphs using the Klein four-group $K_4=\BZ_2 \times \BZ_2$:  Suppose there is $3$-edge coloring of a bridgeless plane graph with the nonzero elements $\{ (1,0), (0,1), (1,1)\}$  of the Klein four-group (the edge ``colors''). Furthermore, choose one of the faces of the plane graph (the face ``at infinity'') to be labeled by the ``translucent'' color $(0,0)$. Then every other face can be colored using an adjacent edge color and an already-colored  face adjacent to that edge  by adding their elements from the Klein four-group together. Starting with a different color for the face at infinity results in a different $4$-face coloring of the plane graph. Hence,
\begin{equation}
\#\{\mbox{$4$-face colorings of $\Gamma_M$}\} = 4\cdot [\Gamma_M]_{3}\label{eqn:4-face-to-3-edge}
\end{equation}
when $\Gamma_M$ is any plane perfect matching graph of the plane graph $\Gamma$. Thus, the number of $4$-face colorings of a plane graph is related to the number of $3$-edge colorings of the graph.
\end{example}

The $4$-color number is linked to edge colorings through the blowup of a plane graph. Let $G(V,E)$ be a planar trivalent graph and $\Gamma$ be a plane ribbon graph, i.e., a plane graph for $G$. Then $\Gamma^\flat_E$ is the blowup of $\Gamma$ with perfect matching corresponding to the set of edges of $G$. Next, resolve $\Gamma^\flat_E$ by performing $0$-smoothings on all perfect matching edges to get a set of embedded circles in the plane. This set of circles is called the {\em all-zero state} of $\Gamma$ and is denoted $\Gamma_{\vec{0}}$.  (The same can be done for any perfect matching graph, but then the circles may possibly be immersed.) This set of circles corresponds to the disks that are glued into the ribbon graph to get the associated closed surface $\overline{\Gamma}$ discussed after \Cref{Def:ribbongraph}.

\begin{center}
\includegraphics[scale=.8]{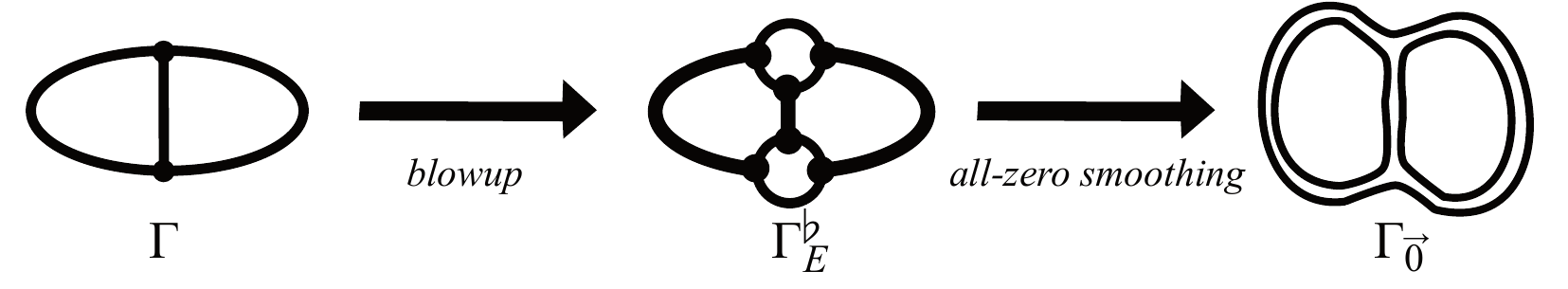}
\end{center}

Hence, an $n$-face coloring of $\Gamma$, or more generally, a choice of labelings of the faces of $\Gamma$, corresponds to a choice of colorings (choice of labelings) of the circles in $\Gamma_{\vec{0}}$. In the computation of the $4$-color number of $\Gamma^\flat_E$, the first term in the state sum in \Cref{eqn:general_state-sum} of $[\Gamma]_4$ ($= \langle\Gamma^\flat_E\rangle_{\! 4}(1)$)  is $4^k$, where $k$ is the number of circles in $\Gamma_{\vec{0}}$.  This suggests that $[\Gamma]_4$ potentially counts the number of $4$-face colorings of $\Gamma$ in some way.  This is indeed true:

\begin{theorem}[Aigner \cite{Aigner}, Proposition 7] Let $\Gamma$ be a connected plane graph. Then $[\Gamma]_4 >0$ if and only if $\Gamma$ is $4$-face colorable. \label{thm:4-color-number-positive}
\end{theorem}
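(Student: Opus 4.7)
Plan: The argument is to deduce Aigner's result as a corollary of the deeper structural theorems proved later in the paper, specifically \Cref{MainTheorem:Bigraded-n-color-homology,MainTheorem:spectralsequence,MainTheorem:Colorings-of-State-Graphs,Theorem:mainthm-four-color}. The idea is to rewrite $[\Gamma]_4$ as the ordinary Euler characteristic of the filtered $4$-color homology, then to exploit the parity vanishing for plane graphs together with the trivalent-plane characterization of $4$-face colorability via nontriviality of this homology.

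First I would translate the polynomial invariant into homology. By \Cref{MainTheorem:Bigraded-n-color-homology}, the $n$-color polynomial is the graded Euler characteristic of the bigraded $n$-color homology, so $[\Gamma]_4 = \langle \Gamma \rangle_4(1) = \chi(CH^{*,*}_4(\Gamma;\BC))$. The spectral sequence of \Cref{MainTheorem:spectralsequence} has $E_1$-page $CH^{*,*}_4(\Gamma;\BC)$ and $E_\infty$-page the associated graded of $\widehat{CH}^*_4(\Gamma;\BC)$; because each differential $d_r$ shifts total homological degree by one, the ordinary Euler characteristic is preserved from page to page, yielding
$$[\Gamma]_4 \;=\; \chi\bigl(\widehat{CH}^*_4(\Gamma;\BC)\bigr) \;=\; \sum_i (-1)^i \dim_{\BC} \widehat{CH}^i_4(\Gamma;\BC).$$
By \Cref{MainTheorem:Colorings-of-State-Graphs} (1), for a plane ribbon graph the summand $\widehat{\mathcal{CH}}_4(\Gamma_\alpha)$ is nonzero only when $|\alpha|$ is even, so $\widehat{CH}^i_4(\Gamma;\BC) = 0$ for all odd $i$. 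The alternating sum above therefore collapses to a sum of nonnegative integers, giving $[\Gamma]_4 \ge \dim_{\BC} \widehat{CH}^0_4(\Gamma;\BC)$, and by \Cref{MainTheorem:Colorings-of-State-Graphs} (3) this lower bound equals the number of $4$-face colorings of $\overline{\Gamma}$. Hence if $\Gamma$ is $4$-face colorable then $[\Gamma]_4 > 0$, which gives one direction.

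For the converse I would apply the definitional identity $[\Gamma]_4 = [\Gamma^\flat_E]_4$ from \Cref{defn:n-color-poly-of-a-graph} to pass to the trivalent plane graph $\Gamma^\flat_E$. \Cref{Theorem:mainthm-four-color} (2) at $n = 2^2$ forces $\Gamma^\flat_E$ to be $4$-face colorable whenever $[\Gamma^\flat_E]_4 > 0$, and restricting any such coloring of $\overline{\Gamma^\flat_E}$ to the large faces (those corresponding to faces of $\Gamma$) yields a proper $4$-face coloring of $\Gamma$. Combining both directions gives the equivalence claimed.

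The main obstacle is \Cref{Theorem:mainthm-four-color} (2), of which Aigner's statement is itself a special case. Its TQFT-based proof, developed in later sections, identifies harmonic colorings of state ribbon graphs with nowhere-zero $K_4 = \BZ_2 \times \BZ_2$ flows and then invokes the classical Tait correspondence between $3$-edge colorings and $4$-face colorings of bridgeless trivalent plane graphs to conclude that $\widehat{CH}^*_4(\Gamma;\BC)$ vanishes precisely when $\Gamma$ admits no $4$-face coloring. By contrast, the spectral sequence and parity-vanishing inputs used above are essentially formal once that machinery is in place; no attempt to massage the state sum directly into an inclusion-exclusion count of proper face colorings is needed.
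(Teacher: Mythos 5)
Your forward direction is sound and matches the mechanism the paper has in mind: $[\Gamma]_4=\chi(\widehat{CH}^*_4(\Gamma;\BC))$ via \Cref{MainTheorem:Bigraded-n-color-homology} and the Euler-characteristic invariance of the spectral sequence of \Cref{MainTheorem:spectralsequence}, then the even-degree vanishing of \Cref{proposition:even-degree-non-zero-n-face-colorings} turns the alternating sum into a nonnegative one bounded below by $\dim\widehat{CH}^0_4(\Gamma;\BC)=\#\{\text{$4$-face colorings of $\Gamma$}\}$. (The paper itself never proves \Cref{thm:4-color-number-positive} directly --- it cites Aigner and remarks that the result becomes a corollary of \Cref{Theorem:mainthm-four-color}(2), so your plan of deducing it from the later machinery is the intended one.)

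The converse direction, however, has a genuine gap coming from a conflation of two different uses of the notation $[\Gamma^\flat_E]_4$. In \Cref{defn:n-color-poly-of-a-graph} the equation $[\Gamma]_4=[\Gamma^\flat_E]_4$ defines the left side in terms of $\Gamma^\flat_E$ treated as a \emph{perfect matching graph}; its Euler-characteristic interpretation recovers $\chi(\widehat{CH}^*_4(\Gamma;\BC))$, i.e.\ the homology of $\Gamma$, not of $\Gamma^\flat_E$. But \Cref{Theorem:mainthm-four-color}(2), applied to the trivalent plane graph $\Gamma^\flat_E$ regarded as a \emph{ribbon graph}, requires $\dim\widehat{CH}^*_4(\Gamma^\flat_E;\BC)>0$, and by the paper's conventions that homology is built from the \emph{double} blowup $(\Gamma^\flat_E)^\flat_E$. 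These two quantities are not equal: for instance, $[\mathrm{Loop}]_4 = n(n-1)|_{n=4}=12$ while $[\theta]_4$ as a ribbon-graph invariant equals $P(\theta,4)=n(n-1)(n-2)|_{n=4}=24$, even though $\theta=\mathrm{Loop}^\flat_E$. You therefore have not actually established the hypothesis of \Cref{Theorem:mainthm-four-color}(2) for $\Gamma^\flat_E$, and establishing it would require knowing $\Gamma$ (or its blowup) is already $4$-colorable --- a circularity. The fix is to skip the extra blowup: from $\dim\widehat{CH}^*_4(\Gamma;\BC)>0$, \Cref{MainTheorem:Colorings-of-State-Graphs} gives a state graph $\Gamma_\alpha$ of $\Gamma^\flat_E$ (a ribbon graph of the underlying abstract graph $G$, possibly non-trivalent) whose closed surface is $4$-face colorable, and the $K_4$-flow/Tait argument in the proof of \Cref{prop:state-graph-colorable-implies-plane-graph-colorable} then $4$-face colors $\Gamma$ directly; that argument does not use the trivalence hypothesis stated there.
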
 

This theorem and proof is an existence result. Aigner's proof is nontrivial, in fact, we prove a generalization of it in Statement (2) of \Cref{Theorem:mainthm-four-color}. For now, we note that Statement (3) of \Cref{MainTheorem:Colorings-of-State-Graphs}, $$\dim \widehat{CH}^0_4(\Gamma;\BC) = \#\{\mbox{4-face colorings of $\Gamma$}\},$$  together with the fact that $\widehat{CH}_4^*(\Gamma;\BC)$ is nonzero only in even degrees for planar ribbon graphs (see \Cref{proposition:even-degree-non-zero-n-face-colorings}), is a partial answer to this question. Since $[\Gamma]_4$ is the Euler characteristic of $\widehat{CH}^*_4(\Gamma;\BC)$, it counts at least the number of $4$-face colorings and possibly more. However, \Cref{MainTheorem:Colorings-of-State-Graphs} does not imply that $\widehat{CH}^0_4(\Gamma;\BC)$ is nontrivial for planar bridgeless graphs, i.e., the four color theorem.

\subsection{The general case: the Penrose polynomial} The $n$-color numbers for the blowup of a ribbon graph are the values of the Penrose polynomial evaluated at $n$.  There is quite a bit of research on this polynomial, notably \cite{Aigner} or \cite{Moffat2013}, but see also \cite{Martin, Jaeger} for examples of what is known. Recent work on the Penrose polynomial has been on extending it to ribbon graphs and delta-matroids (cf. \cite{EMM, MM, EMMKM}). 

The Penrose polynomial as it is defined in the literature is generally difficult and nonintuitive to describe.  We give an intuitive  definition in this paper using brackets:

\begin{definition}\label{defn:penrose_poly}
Let $G(V,E)$ be any connected graph (with any valence at each vertex), and let $\Gamma$ be a ribbon graph of $G$. Then the {\em Penrose polynomial}, denote $P(\Gamma,n)$,  is found by applying the bracket $$\left[ \PMEdgeDiag \right]_{\! n} = \left[ \IIDiag  \right]_{\! n}  - \left[ \XDiag \right]_{\! n}$$ and $\left[ \bigcirc \right]_{\! n} = n$ to the blowup $\Gamma^\flat_E$ of the ribbon graph $\Gamma$.  
\end{definition}

In this definition, unlike the $n$-color polynomial,  $P(\Gamma,n)$ can be evaluated at negative values of $n$. For example, recall that Penrose showed that $P(\Gamma,-2)$ is a multiple of the total number of $3$-edge colorings \cite{Penrose} when the ribbon graph is planar.  

When $n$ is a positive integer, then $P(\Gamma,n)$ and the $n$-color number of the blowup are the same, i.e., 
\begin{equation}
P(\Gamma,n) = \left[\Gamma^\flat_{E}\right]_{\! n}.\label{eqn:Penrose-equals-n-color-number}
\end{equation}

Much is already known about the evaluation of $P(\Gamma,n)$ at different integral $n$, for example:

\begin{theorem}[cf. Aigner \cite{Aigner}, Penrose \cite{Penrose}, Jaeger \cite{Jaeger}] Let $\Gamma$ be a connected plane graph of a (possibly non-regular) graph $G(V,E)$. Then
\begin{enumerate}
\item $P(\Gamma, -2) = (-4)^{\frac12 |V|}[\Gamma]_3$ when $\Gamma$ is trivalent,
\item $P(\Gamma,-1)= \pm 2^{|E|}$ if $\Gamma$ is Eulerian and $|P(\Gamma,-1)| <2^{|E|}$ if not.
\item $P(\Gamma,0)=0$,
\item $P(\Gamma,1)=0$,
\item $P(\Gamma, 2)= 2^{|V|}$ if $\Gamma$ is Eulerian and zero otherwise,
\item $P(\Gamma,3) = \#\{\mbox{$3$-edge colorings of $\Gamma$}\}$ when $\Gamma$ is trivalent.
\item $P(\Gamma,4)>0$ if and only if $\Gamma$ is $4$-face colorable (see Theorem~\ref{thm:4-color-number-positive} above).
\item $P(\Gamma, n)\geq \chi(\Gamma^*,n)$ where $n\in\BN$  and $\chi(\Gamma^*, n)$ is chromatic polynomial of the geometric dual of the plane graph $\Gamma$.
\end{enumerate}\label{thm:penrose-polynomial-values}
\end{theorem}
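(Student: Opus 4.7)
The plan is to treat this theorem as a compendium of known facts about the Penrose polynomial, invoking the state sum definition
\[
P(\Gamma,n) \;=\; \sum_{\alpha\in\{0,1\}^{|E|}} (-1)^{|\alpha|}\, n^{k_\alpha}
\]
coming from \Cref{defn:penrose_poly} and the blowup $\Gamma^\flat_E$, and citing the original sources (Penrose, Aigner, Jaeger) where the combinatorics becomes involved. Statements (3) and (4) fall out of the state sum directly. Statement (7) is nothing but \Cref{thm:4-color-number-positive}. The remaining items (1), (2), (5), (6), (8) are the ones where the proof is substantive, and for each I would cite and briefly reproduce the structural idea from the literature.

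For (3) and (4) I would do the short calculations first. Every state of the blowup contains at least one circle because each original vertex contributes a circle after blowing up, so $n^{k_\alpha}=0$ for $n=0$, giving (3). For (4), substituting $n=1$ collapses the state sum to $\sum_\alpha (-1)^{|\alpha|} = (1-1)^{|E|}=0$ provided $|E|\ge 1$, which is guaranteed since $G$ is connected and has at least one edge. For (1), I would compare the two state sums directly: writing $(-2)^{k_\alpha} = (-1)^{k_\alpha}2^{k_\alpha}$ and using that on a trivalent graph the blowup introduces exactly $|V|$ new ``triangle'' circles in a controlled way, the state sum of $P(\Gamma,-2)$ factors as $(-4)^{|V|/2}$ times the state sum giving $[\Gamma]_3$; this is the bracket version of Penrose's original identity, and I would follow his derivation in \cite{Penrose}.

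For (2) and (5), the strategy is to follow Aigner's Euler subgraph expansion (\cite{Aigner}, Prop.~4--5). The point is that at $n=-1$ or $n=2$, the sign or value of each state in the state sum is controlled by the parity of $|\alpha|+k_\alpha$, and a short cocycle argument shows that for an Eulerian plane graph all states contribute with the same sign; in the non-Eulerian case there is nontrivial cancellation bounded below in absolute value by $1$. For (5) one then reads off $P(\Gamma,2)=2^{|V|}$ by grouping states according to which perfect matching edges of the blowup are smoothed $1$-wise. For (6), Penrose's classical argument \cite{Penrose} identifies the state sum on the blowup of a trivalent plane graph at $n=3$ with a tensor contraction that evaluates to the number of proper Tait colorings; I would reproduce that argument in the bracket language of \Cref{defn:penrose_poly}. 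For (8), I would use Jaeger's theorem \cite{Jaeger} that $P(\Gamma,n)$ equals the number of admissible $n$-valuations of the medial graph $\Gamma^m$, and observe that every proper $n$-coloring of the dual $\Gamma^*$ induces an admissible $n$-valuation of $\Gamma^m$, so $P(\Gamma,n)\ge \chi(\Gamma^*,n)$ as values in $\mathbb{N}$.

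The main obstacle is that (1), (2), and (5) all require careful parity/sign bookkeeping across the hypercube of states of the blowup. Rather than reconstruct Aigner's Eulerian sign analysis from scratch, I would rely on his combinatorial arguments and translate them into the bracket language of this paper. The cleanest parts are (3), (4), and (7); the arithmetic identity in (1) is the most delicate to verify directly from the state sum, and is where the proof would lean hardest on the classical literature.
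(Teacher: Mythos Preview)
Your proposal is sound as a literature survey, which matches how the theorem is stated (attributed to Aigner, Penrose, and Jaeger). The paper itself offers no formal proof environment for this theorem; it is presented as a compendium of known facts. Your direct arguments for (3), (4), and (7) are correct and essentially self-contained, and citing the original sources for (1), (2), (6), (8) is appropriate.

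However, the paper does take a genuinely different route for several of these statements, re-deriving them from its own machinery rather than from the classical combinatorics. Specifically: for (5), the paper uses its Proposition on the $2$-color number (that $[\Gamma_M]_2 = 2^k$ when the perfect matching is even and zero otherwise), observing that the blowup of an Eulerian graph has its canonical perfect matching complement consisting of even cycles, one per vertex. For (3)--(8) collectively, the paper argues that they follow from \Cref{MainTheorem:n-color-polynomial} via the identity $P(\Gamma,n)=\langle\Gamma\rangle_n(1)$, together with Statement~(2) of \Cref{Theorem:mainthm-four-color} for (7), and the identification $\chi(\Gamma^*,n)=\dim\widehat{CH}_n^0(\Gamma;\BC)$ from \Cref{MainTheorem:Colorings-of-State-Graphs} for (8). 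Statements (1) and (2) are not re-derived in the paper at all. So your approach (classical combinatorics and citations) and the paper's approach (deriving (3)--(8) as corollaries of the new homology theories) are complementary: yours is self-contained modulo the literature, while the paper's buys a unified explanation in terms of filtered $n$-color homology but depends on the heavier machinery built later.

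One small quibble: in your argument for (3), the justification ``each original vertex contributes a circle after blowing up'' is not quite the right picture---the circles in a state of $\Gamma^\flat_E$ are boundary components after resolving all perfect matching edges, not vertex-circles per se. The conclusion $k_\alpha\ge 1$ is still correct (any state of a connected nonempty ribbon graph is a nonempty collection of immersed circles), so (3) goes through.
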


The Penrose polynomial of a ribbon graph has a major drawback that prevents its use in proving important theorems like the four color theorem (a drawback that we feel is rectified with the homology theories defined in this paper). Due to the symmetry in the difference of the terms of the formula in the definition of the Penrose polynomial, it possible that it can be zero when evaluated at $n$ even if it the graph is planar. Of course, the four color theorem rules this possibility out for $n=3$ and $n=4$ for planar graphs. However, when the graph is non planar, the polynomial can be identically the  zero polynomial. There are well known examples of this when the ribbon graph is nonplanar:

\begin{example}\label{example:K33-computation-of-Penrose-poly} Let $\Gamma$ be a ribbon graph of the $K_{3,3}$ in \Cref{Fig:ribbondiagram}. Then $P(\Gamma,n) = 0$ for all $n\in \BN$.
\end{example}

This is a calculation found by applying the bracket of \Cref{defn:penrose_poly} using the blowup of the ribbon graph $\Gamma$ of $K_{3,3}$ in \Cref{Fig:ribbondiagram}.  Note that the $3$-color polynomial of $\Gamma$ is nonzero:

\begin{equation}
\langle \Gamma \rangle_{\! 3}(q) =\frac{1}{q^3} + \frac{2}{q^2} + \frac{1}{q}+4 - 2 q - 15 q^2 + 11 q^3 - 14 q^4 + 14 q^5 - 
 11 q^6 + 15 q^7 + 2 q^8 - 4 q^9 - q^{10} - 2 q^{11} - q^{12}\label{eqn:3-color-poly-of-blowup-of-K33}
\end{equation}

\begin{remark} This polynomial is different from the polynomial in \Cref{eqn:3-color-poly-of-K33} because it is the $3$-color polynomial of the ribbon graph $\Gamma$ shown in \Cref{Fig:ribbondiagram}, i.e., the $3$-color polynomial of the blowup of $K_{3,3}$, and not the perfect matching graph $\Gamma_M$ shown in \Cref{fig:K33-is-zero}, which is for that specific perfect matching $M$. 
\end{remark}

The reason the $3$-color polynomial is nontrivial even when the Penrose polynomial is identically zero is because  the $-q$ coefficient in front of the $1$-smoothing in Equation~\ref{eq:EMformula} breaks the symmetry generated by the Penrose bracket (where the coefficient is $-1$). In general, the $n$-color polynomials contain more information about the graph than the Penrose polynomial.  However,  one can combine the two notions to get a two-variable polynomial:

\begin{definition}
\label{defn:two_var_penrose_poly}
Let $G(V,E)$ be any connected graph (with any valence at each vertex), and let $\Gamma_M$ be a perfect matching graph of $G$. Then the {\em two-variable Penrose polynomial}, denote $P(\Gamma_M,q,n)$,  is found by evaluating the bracket $$\left[ \PMEdgeDiag \right]_{\! q,n} = \left[ \IIDiag  \right]_{\! q,n}  - q\left[ \XDiag \right]_{\! q, n}$$ and $\left[ \bigcirc \right]_{\! q, n} = n$. The $2$-variable Penrose polynomial of a ribbon graph $\Gamma$ is defined to be polynomial defined using the blowup $\Gamma^\flat_E$ of the ribbon graph.  
\end{definition}

We will not work with the two-variable Penrose polynomial in this paper, but note in passing that it too is a nonzero polynomial for $K_{3,3}$.

In the next section, after proving \Cref{MainTheorem:n-color-polynomial}, we explain how the main theorem can be used to prove most of the statements of \Cref{thm:penrose-polynomial-values}. As a warmup example, we use the $2$-color polynomial and \Cref{defn:penrose_poly} to prove Statement (5) above, i.e., $P(\Gamma,2)=2^{|V|}$ when $\Gamma$ is Eulerian.  First, Statement (1) of \Cref{MainTheorem:n-color-polynomial} follows from \Cref{eqn:2-factor-counts-even-cycles} and the definition of the $2$-color number when the perfect matching graph is planar:

\begin{proposition} Let $\Gamma_M$ be a perfect matching graph of a connected trivalent graph $G(V,E)$. If $M$ is an even perfect matching, i.e., all cycles in $G\setminus M$ have even length, then 
 $$ \langle \Gamma_M\rangle_{\! 2}(1)= 2^k,$$
\noindent where $k$ is the number of cycles of $G\setminus M$. If $M$ is odd, then the $2$-factor polynomial evaluates to $0$.\label{proposition:even-perfect-matching-is-a-power-of-2}
\end{proposition}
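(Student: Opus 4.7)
The plan is to evaluate the state-sum formula (Equation~\ref{eqn:general_state-sum}) directly at $q=1$, giving
$$\langle \Gamma_M\rangle_{\!2}(1) = \sum_{\alpha \in \{0,1\}^\ell} (-1)^{|\alpha|}\, 2^{k_\alpha}.$$
The first observation I would make is that the all-zero state has a particularly clean description: at a trivalent perfect matching edge, the $0$-smoothing $\IIDiag$ connects the two incident $G\setminus M$-edges at each endpoint straight through the matching edge, so the immersed circles of $\Gamma_{\vec 0}$ are in bijection with the cycles of the $2$-factor $G\setminus M$. In particular, $k_{\vec 0} = k$, so the all-zero state already contributes $2^k$.

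Next I would expand $2^{k_\alpha}$ as the number of $\{0,1\}$-colorings of the circles of $\Gamma_\alpha$ and swap the order of summation. Since each strand of a circle traces a sequence of edges in $G\setminus M$, a circle coloring induces a labeling $f\colon E(G\setminus M)\to\{0,1\}$ whose value on an edge is the color of the unique circle containing it. This rewrites
$$\langle \Gamma_M\rangle_{\!2}(1) = \sum_{f\colon E(G\setminus M)\to\{0,1\}} W(f),$$
where $W(f)=\prod_{e\in M} W_e(f)$ factors as a product of local weights. Doing the $4$-strand case analysis at each matching edge $e = uv$ (using the ribbon structure to identify which pair of $G\setminus M$-edges at $u$ and at $v$ are matched by $\IIDiag$ versus $\XDiag$), one finds $W_e(f) = +1$ when $f$ is compatible with $\IIDiag$ only, $W_e(f) = -1$ when compatible with $\XDiag$ only, and $W_e(f) = 0$ in the two remaining cases (all-equal strand colors cancel, genuinely incompatible strand colors give no state).

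The final and most delicate step is a sign-analysis of the nonzero local factors. Only labelings that are constant on each cycle of $G\setminus M$ can survive, since a labeling that changes color along a cycle must do so at some point where the two $G\setminus M$-edges at a vertex disagree, and propagating this through the matching structure forces an incompatibility at a matching edge. For a labeling $f$ that is constant on each cycle, the plan is to show that every $W_e(f) = +1$ exactly when the matching is even (so the product telescopes to $+1$), giving the desired $2^k$. The main obstacle, and where I expect the real work to be, is the odd case: when some cycle of $G\setminus M$ has odd length, I would construct a fixed-point-free sign-reversing involution on the surviving pairs $(\alpha,f)$, obtained by toggling the $0/1$-smoothing at a distinguished matching edge along an odd cycle; the odd parity of the cycle forces $W$ to flip sign under the toggle, canceling the sum to $0$. (An alternative, noted in the paper, is to appeal directly to \cite[Proposition 3]{BLM} via Equation~\ref{eqn:2-factor-counts-even-cycles}, which is equivalent to the sign-tracking just described.)
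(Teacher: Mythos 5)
Your route---expand $2^{k_\alpha}$ as a count of circle colorings, interchange sums, and factor the weight as $W(f)=\prod_{e\in M}W_e(f)$---is genuinely different from the paper's (which defers to Theorem~5 of \cite{BaldKauffMc} via the functor of \cite{BKR}), and the interchange itself is valid since compatibility of $f$ with a smoothing is a local condition at each matching edge. But two of your intermediate claims are false, and one of them derails the plan. The claim $k_{\vec 0}=k$ fails already for the theta graph: with $M$ the middle edge, $G\setminus M$ is a single $2$-cycle so $k=1$, but $\Gamma_{\vec 0}$ has two circles, and the state sum is $2^2-2^1=2=2^k\ne 2^{k_{\vec 0}}$. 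A state circle passes through a smoothing strand from a $G\setminus M$-edge at one endpoint of a matching edge to a $G\setminus M$-edge at the \emph{other} endpoint, whereas a cycle of $G\setminus M$ turns between the two $G\setminus M$-edges sharing a \emph{single} vertex; these are different partitions. More seriously, the surviving labelings are exactly the opposite of what you assert. Your own local table gives $W_e(f)=0$ when all four incident strand colors agree---but that is precisely what happens at every matching edge when $f$ is constant on cycles, since both $G\setminus M$-edges at any vertex lie on the same cycle of the $2$-factor. The labelings with $W(f)\ne 0$ are instead the \emph{proper} $2$-colorings of the cycle cover: $f$ must alternate around each cycle. This immediately yields $2^k$ candidate labelings when every cycle is even and zero when some cycle is odd; no sign-reversing involution is needed in the odd case, since each individual $f$ already has $W(f)=0$.

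With the premise corrected, the genuinely hard step is showing $\prod_{e\in M}W_e(f)=+1$ for each of the $2^k$ alternating $f$, where each $W_e(f)\in\{+1,-1\}$ is determined by the cyclic orderings at the two endpoints of $e$ in the ribbon diagram, and one must verify the signs cancel globally for an \emph{arbitrary} ribbon structure. You flag this as the delicate part but do not carry it out, and the fallback you mention---\cite[Proposition~3]{BLM} via \Cref{eqn:2-factor-counts-even-cycles}---does not close the gap: the paper explicitly notes that result covers only the planar case, whereas the present proposition is asserted without that hypothesis.
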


Note that the statement of this theorem does not require the graph to be planar.  The proof in this case is not trivial either.  It is proved using Theorem 5 of \cite{BaldKauffMc} by applying the functor described in \cite{BKR} and noting that all polynomials used in the proofs, when evaluated at one, are all equal to the $2$-color number of $\Gamma_M$.

This proposition can be used to prove Statement (5) of \Cref{thm:penrose-polynomial-values} for planar or nonplanar graphs. If $\Gamma$ is Eulerian, then the valence of each vertex is even. The blowup of $\Gamma$ therefore is a trivalent graph $\Gamma^\flat_E$ such that $\Gamma^\flat_E\setminus E$ is a set of even cycles.  By the proposition and the fact that $\langle \Gamma_E^\flat\rangle_{\! 2}(1) = P(\Gamma,2)$, we have $P(\Gamma,2) = 2^{|V|}$ since the cycles are all even and in one-to-one correspondence with the set of vertices of $\Gamma$.  If the graph was not Eulerian, then at least one vertex has an odd valence. This leads to an odd cycle and $P(\Gamma,2)=0$.

\subsection{Proof of \Cref{MainTheorem:n-color-polynomial}} \label{subsection:proof-of-theorem-A}

In this section we prove the first of the main theorems. 

To prove the first sentence of \Cref{MainTheorem:n-color-polynomial}, one must prove that the polynomial is independent of the choice of ordering of the perfect matching $M$ and that it is independent of the perfect matching diagram chosen.  The first follows from the state sum formula and the second follows from checking the moves of  \Cref{Thm:ribbons}. This is left as an exercise for the reader.

Statement (1) of \Cref{MainTheorem:n-color-polynomial} was  proven above in \Cref{proposition:even-perfect-matching-is-a-power-of-2}.  We leave the proof of Statement (2) until after the proof of Statement (5). Next, we prove Statement (3).

\begin{proof}[Proof of Statement (3)] Let $\Gamma_M$ be a not-necessarily-planar connected trivalent perfect matching graph of $G$. Suppose $G$ contains a bridge edge $e$. The graph $G' =(V,E\setminus M)$ is a collection of cycles. If a bridge edge $e$ was not contained in $M$, then $e$ is part of a cycle in $G'$. Thus, $e$ was part of a cycle in the original graph $G$, which contradicts the fact that bridges are not contained within cycles. Hence, $e\in M$.

Suppose $|M|=\ell$.  Resolving the edge $e\in M$ bifurcates the hypercube of states into two sub-hypercubes, each with $2^{\ell-1}$ states, together with an edge from each state with a $0$-smoothing of $e$, $\Gamma_\alpha$, in the first sub-hypercube to a corresponding state with a $1$-smoothing of $e$, $\Gamma_{\alpha'}$, in the second sub-hypercube.  Since $e$ is not in a cycle, each of these edges in the hypercube only represent introducing a self-intersection in the circle associated with the $0$-smoothing circle of edge $e$ to get the $1$-smoothing circle.  In particular, the number of circles in each of the corresponding states are the same.  Since one state comes with a positive sign and the other comes with a negative sign in the state sum (cf.  \Cref{eqn:general_state-sum}) when evaluated at $q=1$, i.e., $|\alpha'| = |\alpha|+1$, the theorem follows. 
\end{proof}

Statements (4) and (5) require the full power of the perfect matching version of \Cref{MainTheorem:Colorings-of-State-Graphs} to prove.  For a perfect matching graph (and not just the blowup of a graph), \Cref{MainTheorem:Colorings-of-State-Graphs} counts the number of ways to color the circles in a state with $n$ colors so that adjacent circles (circles that share a $0$- or $1$-smoothing edge) are labeled with different colors. Because they require the full machinery of the homology theories, readers may find it beneficial to wait until the end of \Cref{section:A-TQFT-approach-to-the-4CT} before reading these proofs.  

\begin{proof}[Proof of Statement (4)] When $\Gamma_M$ is a plane graph $\Gamma$ with perfect matching $M$, $\langle\Gamma_M\rangle_{n-1}(1)$ is equal to the nonnegative sum of dimensions of harmonic colorings of all states, $\widehat{\mathcal{CH}}_{n-1}(\Gamma_\alpha)$, of the hypercube of states generated by $\Gamma_M$ (not $\Gamma^\flat_E$).  This follows from \Cref{MainTheorem:Bigraded-n-color-homology}, \Cref{MainTheorem:spectralsequence}, and the perfect matching graph version of \Cref{MainTheorem:Colorings-of-State-Graphs} and  \Cref{proposition:even-degree-non-zero-n-face-colorings}.  Therefore, $\langle\Gamma_M\rangle_{n-1}(1)>0$ if and only if there exists an $\alpha\in\{0,1\}^{|M|}$ such that $\dim \widehat{\mathcal{CH}}_{n-1}(\Gamma_\alpha)>0$.  Thus, if the evaluation is nonzero, then there exists a coloring of the state $\Gamma_\alpha$ (see \Cref{subsection:colorbasis}) that represents an $(n-1)$-coloring of the circles in the state $\Gamma_\alpha$ so that no two adjacent circles have the same color.

The colors of the $(n-1)$-coloring of the state can be identified with nonzero elements of the generalized Klein group $K_n:=\BZ_2\times\cdots\times \BZ_2$. Note that the adjacent circles at each perfect matching edge are labeled with different nonzero elements of $K_n$.  These elements then determine a nowhere zero $K_n$-flow on $G$ as follows: (a) for each edge in $G\setminus M$, label the edge with the element of $K_n$ associated to the circle that contains (part of) that edge, and (b) for  each edge in $M$, add the two elements together associated to the two adjacent circles of that perfect matching edge. Since both elements are nonzero and different, the sum will also be nonzero and different from both.  By the construction and since $G$ is trivalent, the sum of all three elements of $K_n$ adjacent to each vertex must then be $0$ (mod $2$). Hence, this defines a nowhere zero $K_n$-flow.

A nowhere zero $K_n$-flow on $G$ can then be used to $n$-face color the plane ribbon graph $\Gamma$ using a generalization of Tait's algorithm (choose one face to be $0\in K_n$ and use the additive structure of $K_n$ and the flow to label each remaining face).  See the proof of Statement (2) of \Cref{Theorem:mainthm-four-color} (or \Cref{prop:state-graph-colorable-implies-plane-graph-colorable}) or the proof of Statement (5) below for the basic idea behind the algorithm.
\end{proof}

The proof of Statement (5) is almost a corollary of the proof of Statement (4). 

\begin{proof}[Proof of Statement (5)] Choose a one-to-one correspondence between the $n$ colors $\{c_0,c_1,\ldots,c_{n-1}\}$ and elements of $K_n$ such that $c_0$ corresponds to $0\in K_n$.  Using colors $\{c_1,\ldots,c_{n-1}\}$ and the correspondence, the proof of Statement (4) above shows that each $(n-1)$-coloring  of a state corresponds to a nowhere zero $K_n$-flow.  It is clear by the construction that each of the colorings on a given state corresponds to a unique nowhere zero $K_n$-flow.

To prove the lower bound, we need to show that nowhere zero $K_n$-flows generated on different states are unique with no double counting of flows. Suppose that there are two states $\Gamma_\alpha$ and $\Gamma_{\alpha'}$ and colorings on each that induce the same nowhere zero $K_n$-flow. Since $\alpha\not=\alpha'$ in $\{0,1\}^{|M|}$, there is, say, an edge $e_i\in M$ such that $\alpha_i=0$ and $\alpha'_i=1$, i.e., the first state has a $0$-smoothing at edge $e_i$ and the second has a $1$-smoothing. If the two colorings  induce the same nowhere zero $K_n$-flow, then the colors of the two circles adjacent to the edge $e_i$ in both states must be the same color. This contradicts the fact that they were a valid coloring of each state to begin with. Thus, all nowhere zero $K_n$-flows derived from valid colorings on and across each of the states must be different, proving the the lower bound.

For the upper bound, begin with a nowhere zero $K_n$-flow.   Choose any element $a \in K_n$ (possibly zero) for the outer face of the plane graph $\Gamma$.  Starting with the outer face, each adjacent face can be inductively colored using Tait's algorithm.  For example, at each vertex the edges are labeled by distinct nonzero elements of $K_n$, call them $r, s, t\in K_n$, such that $r+s+t = 0 \in K_n$.  Suppose that the outer face is adjacent to edges labeled $r$ and $s$. Then the other face adjacent to edge $r$ is colored $a+r$ and the other face adjacent to edge $s$ is colored $a+s$. The third edge, edge labeled $t$, transforms the colored face $a+r$ to $a+r+t$, which is equal to $a+s$ in $K_n$. Continuing this process at each new vertex provides a labeling of the faces of $\Gamma$ by elements of $K_n$ such that no two adjacent faces are labeled with the same element, i.e., the nowhere zero $K_n$-flow and the initial choice $a$ gives rise to an $n$-face coloring of $\Gamma$.  Since each nowhere zero $K_n$-flow generates $n$ different $n$-face colorings of $\Gamma$, and $\langle\Gamma\rangle_{\! n}(1)$ counts all $n$-face colorings of $\Gamma$ (and more) by \Cref{MainTheorem:Bigraded-n-color-homology}, 
\Cref{MainTheorem:spectralsequence}, and \Cref{MainTheorem:Colorings-of-State-Graphs}, this proves the upper bound.
\end{proof}

\begin{remark} The lower bound holds even when $\Gamma$ is nonplanar. The upper bound does not. See \Cref{eqn:3-color-poly-of-K33} for example.
\end{remark}

We can now  prove Statement (2). The proof below relies only on the $3$-color polynomial and its properties, not on facts already known in the literature. 

\begin{proof}[Proof of Statement (2)] Since the number of nowhere zero $K_4$-flows is equal to the number of $3$-edge colorings of $G$, Statement (5) implies that $$\langle\Gamma_M\rangle_{\! 3}(1) \leq \#\{\mbox{$3$-edge colorings of $G$}\}.$$

We prove that $\langle\Gamma_M\rangle_{\! 3}(1)$ is also an upper bound. Let $K_4=\BZ_2\times\BZ_2$ be presented as the group of elements $\{0=(0,0),a=(1,0),b=(0,1),c=(1,1)\}$.  Given a $3$-edge coloring of $G$, i.e., a ``coloring'' by $\{a,b,c\}$ of the edges of $\Gamma_M$, inspect a perfect matching edge of $e_i\in M$.  If the edge is labeled by, say, $c$, then the edges incident to each vertex of $e_i$ must be labeled $a$ and $b$. This must be true for both vertices since there are exactly two nonzero elements of $K_4$, $a$ and $b$, that add up to $c$.  Hence, the picture of $\Gamma_M$ at $e_i$ must look like (up to a permutation of the labels $a$, $b$, $c$) one of the two pictures in the left column of \Cref{fig:k4-flow-to-coloring-of-state}.

\begin{figure}[H]
\includegraphics[scale=1]{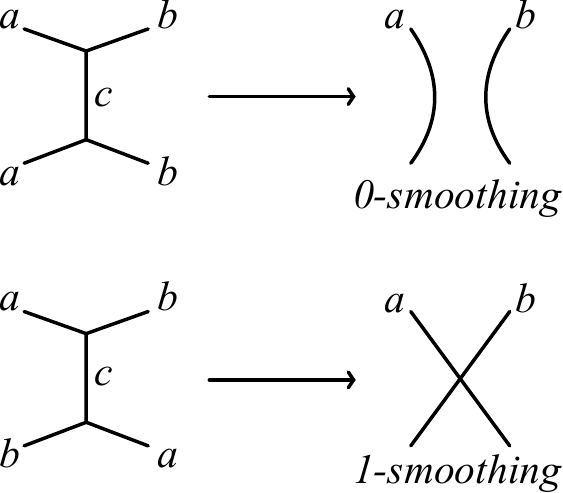}
\caption{A nowhere zero $K_4$-flow induces a coloring on some state in the hypercube of states of $\Gamma_M$.}
\label{fig:k4-flow-to-coloring-of-state}
\end{figure}

The right column of the figure shows how each local picture of the $K_4$-flow corresponds to a coloring of the two circles associated with $e_i$ in a state. Resolving each perfect matching edge of $\Gamma_M$ using the right column of \Cref{fig:k4-flow-to-coloring-of-state} produces a valid coloring on a specific state $\Gamma_\alpha$  in the hypercube of states of $\Gamma_M$.  This coloring is a basis element of $\widehat{\mathcal{CH}}_3(\Gamma_\alpha)$. Since this basis element is uniquely determined from the nowhere zero $K_n$-flow, and $\langle\Gamma_M\rangle_{\! 3}(1)$ is the sum of the dimensions of valid colorings on states $\Gamma_\alpha$, it is also an upper bound. 
\end{proof}

\begin{remark}
The proofs of Statement (5)  and Statement (2) hinge on the fact that there are exactly two nonzero elements  of $K_4$ that add up to $c$. This is not the case for $K_n$ when $n>4$; the proof cannot be easily generalized. However, using the blowup of $\Gamma$, one can make a similar statement and conjecture that the total face color polynomial evaluated at one is bounded from below by the number of nowhere zero $K_n$-flows (see \Cref{conjecture:total-color-poly-is-an-upper-bound} and \Cref{Theorem:mainthm-four-color}).
\end{remark}

We end this section with a few comments about \Cref{MainTheorem:n-color-polynomial} and the $n$-color polynomial:
\begin{itemize}
\item The proofs of Statements (3)--(8) of \Cref{thm:penrose-polynomial-values}  follow  from \Cref{MainTheorem:n-color-polynomial} using the facts: (a) $P(\Gamma,n)=\langle\Gamma\rangle_{\! n}(1)$,  (b)  $3$-edge colorings of the blowup are in one-to-one correspondence with $3$-edge colorings of the original graph when the graph is trivalent, Statement (2) of \Cref{Theorem:mainthm-four-color}, and (d)  $\chi(\Gamma^*,n)=\dim \widehat{CH}_n^0(\Gamma;\BC)$ from \Cref{MainTheorem:Colorings-of-State-Graphs}. Hence, one recovers many of the  known results about the Penrose polynomial when $n>1$ from the $n$-color polynomial.

\item The $n$-color polynomial/number for a perfect matching graph $\Gamma_M$ requires far fewer computations to calculate than the Penrose polynomial (which starts with the blowup $\Gamma_E^\flat$), in fact, the number of computations are on the order of a factor of $2^{2|E|/3}$. These invariants are also easier-to-calculate and less computation-heavy than the homology theories and other polynomials defined in this paper. Thus, the polynomial and number can be thought of as computationally-assessable, excellent first approximations to the Penrose polynomial and total face color polynomial. 

\item In this paper we do not address the research value of the $n$-color polynomial invariant beyond \Cref{MainTheorem:n-color-polynomial} and its relationship to the Penrose polynomial in \Cref{thm:penrose-polynomial-values}.  Like the Jones polynomial in knot theory, the polynomial itself can and should be studied for secrets it may unlock about perfect matchings of a graph and of the graph.   Hence we see  \Cref{MainTheorem:n-color-polynomial} as a first step in this line of research.


\end{itemize}

The $n$-color polynomial and $n$-color number are the easiest and simplest of the new invariants of this paper to calculate, but  not the strongest. That invariant, the bigraded $n$-color homology, is addressed next.

\section{\Cref{MainTheorem:Bigraded-n-color-homology}: Bigraded $n$-color homology}
\label{section:bigraded-n-color-homology}

We are now ready to define the bigraded $n$-color homology.  In short, the bigraded $n$-color homology replaces the hypercube generated for the $n$-color polynomial with a chain complex of graded vector spaces, whose graded Euler characteristic is the $n$-color polynomial.  Readers familiar with Khovanov homology will recognize this construction  (cf. \cite{Kho,DB1}).  The important result of this section, \Cref{MainTheorem:Bigraded-n-color-homology}, is in defining the chain complex $(C^{i,j},\del)$ for a perfect matching graph and showing $\del^2=0$.  The bigraded $n$-color homology is then the homology of this chain complex.

\subsection{Finite dimensional graded vector spaces} 
\label{subsection:finite-dim-graded-vs}
For bigraded $n$-color homology, we work with the $n$-dimensional algebra/vector space $V=\mathbbm{k}[x]/(x^n)=\langle 1, x, x^2, \dots, x^{n-1}\rangle$ where $\mathbbm{k}$ is any ring where $\sqrt{n}$ is defined. Later, in order to write down a color basis for the chain complex, $\mathbbm{k}=\BC$ will be taken. The quantum gradings of $V$ depend on whether $n$ is even or odd. Define the gradings as shown in \Cref{table:quantum-gradings}.

\medskip
\begin{table}[H]
\begin{tabular}{|l|l|}
\hline
{\bf $n$ even, $m=n/2$} & {\bf $n$ odd, $m=(n-1)/2$ }\\ \hline
$\deg 1 =m$ & $\deg 1 = m$ \\ 
$\deg x = m-1$ & $\deg x = m-1$\\
\ \ $\vdots$ & \ \ $\vdots$\\ 
 $\deg x^m = 0$ &  $\deg x^m = 0$\\
\ \ $\vdots$ & \ \ $\vdots$\\ 
$\deg x^{n-1} = 1-m$ & $\deg x^{n-2} = 1-m$\\
\mbox{} & $\deg x^{n-1} = -m$\\
\hline
\end{tabular}
\caption{Quantum gradings of $V$ when $n$ is even or odd.}
\label{table:quantum-gradings}
\end{table}

\medskip

The key to understanding the gradings in the table above is that  $\deg x^m = 0$ for  $n$ even or odd.  This ``centers'' the degrees around $x^m$, which will become important later in defining the $\Delta$ and $\eta$ maps between vector spaces.

Recall that the {\em graded (or quantum) dimension}, $\qdim$, of a graded vector space $V=\oplus_i V^i$ is the polynomial in $q$ defined by $$\qdim(V)=\sum_i q^i\dim(V^i).$$

\noindent For a graded vector space $V$, we can shift the grading up or down by $\ell$ by $(V\{\ell\})^i=V^{i-\ell}$.
Clearly, $\qdim(V\{\ell\}) = q^\ell\cdot\qdim(V)$.  The  $\qdim$ is a polynomial with integer powers. Thus, for example, when $n$ is even,  $$\qdim V^{\otimes k} = (q^m+\cdots + 1 +\cdots q^{1-m})^k.$$
Compare this formula to Equation~\ref{eq:immersed_circle} and \Cref{eqn:general_state-sum}.

\subsection{The differential chain complex for $\Gamma_M$}  
\label{subsection:differential}

We are now ready to associate graded vector spaces to the states of a perfect matching graph $\Gamma_M$ to build the chain complex for the $n$-color homology.  Recall from \Cref{section:smoothing-states-hypercubes} that $M=\{e_1,\dots,e_\ell\}$, $\alpha \in \{0,1\}^\ell$, and $k_\alpha$ is the number of circles in the state $\Gamma_\alpha$. Associate the  vector space $\large V_\alpha = \large V^{\otimes k_\alpha}\!\!\left\{m|\alpha|\right\}$ to the state $\Gamma_\alpha$.
For example, in Figure~\ref{fig:theta_3}, the vector space associated to $\Gamma_{(1,1,0)}$ for $9$-color homology is $V_{(1,1,0)} = V^{\ot2}\{8\}$.

Define the complex $C^{*,*}(\Gamma_M)$ by

$$C^{i,*}(\Gamma_M)=\bigoplus_{\substack{\alpha\in\{0,1\}^n \\ i=|\alpha|}}V_\alpha.$$
The internal grading ($q$-grading) is defined by the grading of the elements in $V_\alpha$.  The homological grading $i$ is also integer valued.   For an element $v\in V_\alpha\subset C^{*,*}(\Gamma_M)$, the homological grading $i$ and the $q$-grading $j$ satisfy:
\begin{eqnarray*}
i(v) & =& |\alpha|,\\
j(v) &=& \deg(v)+m|\alpha|,
\end{eqnarray*}
where $\deg(v)$ is the degree of $v$ as an element of  $V^{\otimes k_\alpha}$ of $V_\alpha=V^{\otimes k_\alpha}\{m|\alpha|\}$ before shifting the grading by $m|\alpha|$. The complex is trivial outside of $i=0, \dots, \ell$.

The differential can now be defined. Each  $C^{i,*}(\Gamma_M)$ is the direct sum of vector spaces of the hypercube given by $i=|\alpha|$.  The edges $\zeta_{\alpha\alpha'}$ in the hypercube of states (see \Cref{section:smoothing-states-hypercubes}) correspond to maps between the graded vector spaces $V_\alpha \subset C^{i,*}(\Gamma_M)$ to vector spaces in $V_{\alpha'}\subset C^{i+1,*}(\Gamma_M)$, where $|\alpha'|=|\alpha|+1 =i+1$.  On the level of vector spaces, the directed segment $\zeta_{\alpha\alpha'}$ between $\Gamma_\alpha$ and $\Gamma_{\alpha'}$ in the hypercube corresponds to a linear map, $\del_{\alpha\alpha'}:V_\alpha \rightarrow V_{\alpha'}$.  For example, in $9$-color homology, the edge $(\ast, 0,0)$ in \Cref{fig:theta_3} that takes $\alpha = (0,0,0)$ to $\alpha' = (1,0,0)$ corresponds to the map $$\del_{\alpha\alpha'}: V^{\otimes 4}\{0\} \ra V^{\otimes 3}\{4\}$$
where $V=\langle 1, x, x^2, \dots, x^8\rangle$. Unlike TQFTs in knot theory in which  the $\dim V$ must be two, the dimensions for graphs can grow large quickly: $\del_{\alpha\alpha'}$ in this example is a map from a 6,561 dimensional vector space to a 729 dimensional vector space. We will show in \Cref{section:filtered-n-color-homology} how filtered $n$-color homology can be used to make calculations with $\del_{\alpha\alpha'}$ simpler, manageable, and more meaningful with respect to edge- and face-colorings.

To define $\del_{\alpha\alpha'}$,  note that each circle $C$ in the state $\Gamma_\alpha$ has a corresponding vector space $V_C=\mathbbm{k}[x]/(x^n)$ as one of the tensors in  $V^{\otimes k_\alpha}$. The process of replacing the $0$-smoothing in $\Gamma_\alpha$ with the $1$-smoothing in $\Gamma_{\alpha'}$ either fuses two of these circles in $\Gamma_\alpha$ together, splits one circle into two, or introduces a double point to a circle.  The corresponding linear maps $\del_{\alpha\alpha'}$ between the vector spaces are determined by these three processes (here $m=n/2$ if $n$ is even, $m=(n-1)/2$ if $n$ is odd):
\begin{enumerate}
\item If the process fuses two circles $C_1,C_2$ in $\Gamma_\alpha$ into one circle $C'_1$ in $\Gamma_{\alpha'}$, define a multiplication map $m:V_{C_1}\otimes V_{C_2} \rightarrow V_{C'_1}$ for this situation by multiplication in the algebra $V=\mathbbm{k}[x]/(x^n)$, that is, $m(a\ot b) = a\cdot b$.
\item If the process splits one circle $C_1$ in $\Gamma_\alpha$ into two circles $C'_1,C'_2$ in $\Gamma_{\alpha'}$,  define a map $\Delta: V_{C_1} \rightarrow V_{C'_2}\ot V_{C'_3}$ by comultiplication: $$\Delta(x^k)= \sum_{\substack{i+j=k+2m\\0\leq i,j \leq n-1}} x^i \ot x^j.$$
\item If the process introduces a double point in a circle $C_1$ in $\Gamma_\alpha$ to get a circle $C'_1$ in $\Gamma_{\alpha'}$, define a map $\eta:V_{C_1}\rightarrow V_{C'_1}$ by: $$\eta(x^k) = \sqrt{n}\cdot x^{k+m}.$$
\end{enumerate}
The map $\del_{\alpha\alpha'}:V_\alpha\rightarrow V_{\alpha'}$ is defined on basis elements of $V_\alpha$ as the tensor product of maps given by the identity on the vector spaces associated with circles that do not change from $\Gamma_\alpha$ to $\Gamma_{\alpha'}$, and either $m, \Delta$ or $\eta$ on the vector spaces associated to circles that are modified by the change from a $0$-smoothing in $\Gamma_\alpha$ to a $1$-smoothing $\Gamma_{\alpha'}$. This map is then extended linearly.

\begin{remark}
Note that, because of the definition of $\eta$, it is necessary to choose a ring $\mathbbm{k}$ like $\BR$ or $\BC$ when $n$ is not a perfect square. For the filtered $n$-color homology, it is useful to choose $\mathbbm{k}=\BC$.  
\end{remark}

\begin{remark}\label{rem:different-than-khovanov}
The $\Delta$ map for the $2$-color homology is different than what one encounters in Khovanov theory \cite{Kho} or the first author's $2$-factor homology theory \cite{BaldCohomology}. In those theories, $\Delta(1) = 1\ot x + x\ot 1$ and $\Delta(x) = x\ot x$. However, in bigraded $2$-color homology, $\Delta(1) = x\ot x$ and $\Delta(x) =0$. This is because we are using a shifted comultiplication (see \Cref{def:HENFA}) in a hyperextended Frobenius algebra in our TQFT instead of the counital comultiplication in the standard Frobenius algebra used in knot theory (see \Cref{section:UnorientedTQFTTheory}).
\end{remark}

\begin{proposition} $\del_{\alpha\alpha'}$ preserves the quantum degree, i.e., for $v\in V_\alpha \subset C^{*,*}(\Gamma_M)$, $j(v) = j(\del_{\alpha\alpha'}(v))$.\label{prop:preserve-quantum-degree}
\end{proposition}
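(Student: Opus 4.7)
The plan is a direct, entry-by-entry verification: show that each of the three local maps $m,\Delta,\eta$ decreases the internal (unshifted) degree on $V^{\otimes k_\alpha}$ by exactly $m$, and then observe that this drop is compensated by the jump $m(|\alpha'|-|\alpha|)=m$ in the shift $m|\alpha|$ that appears in the definition of $j$. Recall $j(v)=\deg(v)+m|\alpha|$ for $v\in V_\alpha$, where $\deg$ is the degree inherited from $V^{\otimes k_\alpha}$ with the gradings of \Cref{table:quantum-gradings}, namely $\deg(x^k)=m-k$ (valid for both parities of $n$, in the range $0\le k\le n-1$).

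The computation on each basic map is immediate. First I would compute $m:V_{C_1}\otimes V_{C_2}\to V_{C_1'}$: on a basis element $x^a\otimes x^b$ we have internal degree $(m-a)+(m-b)=2m-a-b$, while $m(x^a\otimes x^b)=x^{a+b}$ has degree $m-(a+b)$, so the drop is exactly $m$ (and when $a+b\ge n$ the image is zero, which trivially preserves the degree equality). Next I would check $\Delta:V_{C_1}\to V_{C_1'}\otimes V_{C_2'}$: each summand $x^i\otimes x^j$ of $\Delta(x^k)$ satisfies $i+j=k+2m$, hence has degree $(m-i)+(m-j)=2m-(i+j)=-k$, compared with $\deg(x^k)=m-k$; again the drop is $m$. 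Finally $\eta(x^k)=\sqrt{n}\,x^{k+m}$ sends an element of degree $m-k$ to one of degree $m-(k+m)=-k$, also a drop of~$m$.

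Since $\partial_{\alpha\alpha'}$ is defined as the tensor product of one of $\{m,\Delta,\eta\}$ on the circle(s) affected by the $0\to 1$ smoothing change, together with the identity on the remaining tensor factors, and the identity preserves internal degree, it follows that $\partial_{\alpha\alpha'}$ decreases $\deg$ by exactly $m$ on every basis element (hence by linearity on every element). Therefore, for $v\in V_\alpha$ with $\partial_{\alpha\alpha'}(v)\in V_{\alpha'}$,
\[
j(\partial_{\alpha\alpha'}(v)) = \deg(\partial_{\alpha\alpha'}(v)) + m|\alpha'| = (\deg(v)-m) + m(|\alpha|+1) = \deg(v)+m|\alpha| = j(v),
\]
proving that $\partial_{\alpha\alpha'}$ preserves the quantum grading.

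The only mild subtlety worth checking is uniformity across the two parities of $n$: the formula $\deg(x^k)=m-k$ must hold for $0\le k\le n-1$ in both cases. For even $n=2m$ this gives degrees $m,m-1,\ldots,1-m$ as listed, and for odd $n=2m+1$ it gives $m,m-1,\ldots,1-m,-m$, both matching \Cref{table:quantum-gradings}. This is the one place one has to be careful, since the comultiplication formula and the definition of $\eta$ both use this centered $m$-shift; once this is in hand the three local computations are routine.
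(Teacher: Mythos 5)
Your proposal is correct and takes essentially the same approach as the paper's proof: verify on each of the three local maps $m$, $\Delta$, $\eta$ that the internal degree on $V^{\otimes k_\alpha}$ drops by exactly $m$, and observe that this is absorbed by the grading shift $m|\alpha'| = m|\alpha| + m$. Your write-up is a bit more explicit than the paper's (which carries out only the $\eta$ case in detail and sketches $\Delta$), and the closing parity check is a sensible addition, but the underlying argument is the same.
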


Readers familiar with Khovanov homology \cite{Kho} in knot theory should recognize the maps $m$ and $\Delta$.  The extra map, $\eta:V\ra V$, is due to the fact that states of perfect matching graphs have immersed circles rather than simple embedded circles as in Khovanov homology.  Immersed circles (circles with ``virtual crossings'') do show up in virtual knot theory.  As was noted in Proposition 1 of \cite{BKR}, the partial differential $\del_{\alpha\alpha'}$ is uniquely determined by the condition that $\del_{\alpha\alpha'}$ preserves the $q$-grading on enhanced states. In the case of virtual knot theory (cf. \cite{BaldKauffMc}), this means that $\del_{\alpha\alpha'}=\eta$ must be the zero map. It is a unique phenomenon that in ribbon graphs the $\eta$ map is nonzero without any special modifications to the chain complex or differentials. Hence, the $n$-color homology is an example of how TQFTs can differ in graph theory from those found in knot theory.

\begin{proof}
Since $\del_{\alpha\alpha'}$ for each $\zeta_{\alpha\alpha'}$ is one of three maps, $m$, $\Delta$, or $\eta$, the proposition is proven if it is true for each map.  The map $\eta$ takes $x^k$ of degree $m-k$ to $\sqrt{n}x^{k+m}$ of degree $m-(k+m)=-k$ in $V^{\ot k_{\alpha'}}$.  However, to get $V_{\alpha'}$, this vector space is shifted up by $\{m\}$ from where $V_\alpha$ was shifted ($m|\alpha'| = m|\alpha| +m$).  Therefore, $j(x^k)=\deg(x^k)+m|\alpha| = m-k + m|\alpha|$ and $j(\del_\eta(x^k)) = \deg(\sqrt{n}x^{k+m}) + m|\alpha'|= -k + m + m|\alpha|$. Hence $j(x^k) = j(\del_\eta(x^k))$.

The other two maps preserve the quantum degree by similar reasoning.  For example, in the case of $\Delta$, note that a different way to describe $\Delta(x^k)$ is the sum of all monomials $x^i\ot x^j$ of degree $m-i+m-j = -k$.
\end{proof}

The differential, $\del^i:C^{i,*}(\Gamma_M) \rightarrow C^{i+1,*}(\Gamma_M)$, is defined as the sum of appropriate $\del_{\alpha\alpha'}$'s.  For $v\in V_\alpha \subset C^{i,*}(\Gamma_M)$, 
\begin{equation}
\label{eqn:delta-definition}
\del^i(v) = \sum_{\zeta_{\alpha\alpha'}} \sign(\zeta_{\alpha\alpha
'}) \del_{\alpha\alpha'}(v).
\end{equation}
where $\sign(\zeta_{\alpha\alpha'}) = (-1)^{\#\{\mbox{$1$'s to the left of $\ast$ in $\zeta_{\alpha\alpha'}$}\}}$.

We now have enough to prove  the main theorem of this section:

\begin{theorem}  $(C^{i,*}(\Gamma), \del^i)$ is a  chain complex with differential that increases the homological grading by one and preserves the quantum grading, i.e., it has bigrading $(1,0)$. \label{thm:A-Chain-Complex}
\end{theorem}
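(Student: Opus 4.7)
The grading claims follow immediately from the definitions. Each partial differential $\del_{\alpha\alpha'}$ in the sum \Cref{eqn:delta-definition} sends $V_\alpha\subset C^{i,*}(\Gamma_M)$ (with $|\alpha|=i$) into $V_{\alpha'}\subset C^{i+1,*}(\Gamma_M)$ (with $|\alpha'|=i+1$), so $\del^i$ raises the homological grading by one. Quantum-grading preservation is \Cref{prop:preserve-quantum-degree} applied summand-by-summand, using that the shifts $\{m|\alpha|\}$ are already built into the definition of each $V_\alpha$.

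The substantive content is $\del^{i+1}\circ\del^i=0$, which I would prove by the standard hypercube sign-cancellation argument. Fix two distinct coordinates $p,q$ of $\{0,1\}^\ell$ and let $\alpha_{00},\alpha_{01},\alpha_{10},\alpha_{11}$ denote the four corners of the associated two-dimensional face (the two indices recording the values at positions $p$ and $q$, all other coordinates held fixed). A short count shows that the sign convention $\sign(\zeta_{\alpha\alpha'})=(-1)^{\#\{1\text{'s to the left of }\ast\}}$ assigns opposite total signs to the two directed paths $\alpha_{00}\to\alpha_{10}\to\alpha_{11}$ and $\alpha_{00}\to\alpha_{01}\to\alpha_{11}$. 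Hence $\del^{i+1}\circ\del^i=0$ reduces to showing that for every such face the two unsigned compositions agree as maps $V_{\alpha_{00}}\to V_{\alpha_{11}}$:
$$\del_{\alpha_{01}\alpha_{11}}\circ\del_{\alpha_{00}\alpha_{01}} \ = \ \del_{\alpha_{10}\alpha_{11}}\circ\del_{\alpha_{00}\alpha_{10}}.$$

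Commutativity of each such square is a finite case analysis based on how the two perfect matching edges $e_p,e_q\in M$ interact with the circles of $\Gamma_{\alpha_{00}}$. When the edges affect disjoint sets of circles, the two partial differentials act on disjoint tensor factors and commute trivially. When they share one or more circles, one enumerates the local topological pictures and verifies each using the explicit formulas for $m$, $\Delta$, and $\eta$ on $V=\mathbbm{k}[x]/(x^n)$. The purely $(m,\Delta)$-subcases reduce to associativity of $m$ (fuse--fuse), coassociativity of $\Delta$ (split--split), and the shifted Frobenius compatibility $\Delta\circ m=(m\otimes\mathrm{id})\circ(\mathrm{id}\otimes\Delta)$ (fuse--split), each verifiable directly on the monomial basis. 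The subcases involving $\eta$ reduce to identities of the form $\eta\circ m=m\circ(\eta\otimes\mathrm{id})$ and $\Delta\circ\eta=(\eta\otimes\mathrm{id})\circ\Delta$, plus the trivial equality of $\eta\circ\eta$ with itself; each of these is a one-line monomial computation from $\eta(x^k)=\sqrt{n}\,x^{k+m}$ together with the formulas for $m$ and $\Delta$.

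The main obstacle is the bookkeeping for the mixed topological cases: when the resolution of $e_p$ alters which circle is incident to $e_q$, the intermediate states $\Gamma_{\alpha_{01}}$ and $\Gamma_{\alpha_{10}}$ can have visibly different local pictures, and one must track carefully which tensor factor in $V_{\alpha_{11}}$ each output slot lands in before the comparison can be made. The same commutativity is obtained more conceptually at the end of the paper in \Cref{MainTheorem:TQFT}, by observing that the two compositions around each $2$-face come from diffeomorphic cobordisms in $\mathcal{UC}ob^n_{/l}$ and therefore must agree after applying the TQFT functor $\mathcal{F}$; at the present stage of the paper, however, the direct case enumeration is the cleanest route.
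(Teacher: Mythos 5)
Your overall structure is right: grading claims are immediate, the sign convention gives anticommutativity of each $2$-face, so everything reduces to showing the unsigned compositions agree square by square, and the purely $(m,\Delta)$ squares reduce to associativity, coassociativity, and a (shifted) Frobenius compatibility that can be checked on monomials. You also correctly note that the paper obtains the same commutativity more conceptually from cobordisms later (in \Cref{section:UnorientedTQFTTheory}).

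However, your enumeration of the $\eta$-squares has a genuine gap, and it is precisely at the step the paper singles out as the crux. You list $\eta\circ m=m\circ(\eta\otimes\mathrm{id})$, $\Delta\circ\eta=(\eta\otimes\mathrm{id})\circ\Delta$, and ``the trivial equality of $\eta\circ\eta$ with itself,'' and assert each is a one-line monomial computation. But the square in which both resolutions change a single circle and the two paths around it realize $\eta\circ\eta$ and $m\circ\Delta$ respectively (the configuration in \Cref{fig:P_3_A-squared-ex}) is neither trivial nor ``$\eta\circ\eta$ with itself.'' It is the genuinely mixed case, and it is the only reason $\eta$ is normalized by $\sqrt{n}$: one needs $\eta(\eta(x^k))=n\,x^{k+2m}$ to match $m(\Delta(x^k))$, which is $n\,x^{n-1}$ on $x^0$ when $n$ is odd and zero otherwise. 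With an unnormalized $\eta(x^k)=x^{k+m}$ your proposal would still ``prove'' all the squares you listed, yet the chain complex would fail to be a complex — so the omission is not a matter of bookkeeping. Your sketch should be amended to isolate the $\eta\circ\eta=m\circ\Delta$ face, note it is the one forcing $\sqrt{n}$, and verify it directly. (You also drop the paper's remark that some a priori possible $2\times2$-face configurations are excluded by the Jordan curve theorem; that is minor but worth recording, since it is what keeps the case list finite.)
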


\begin{proof}
To show that the square of the differential is zero, each diagram of maps corresponding to each possible face in the hypercube of states must be shown to commute.  The $\sign(\zeta_{\alpha\alpha'})$ is chosen to make each face anticommute and thus $\del^{i+1}\circ\del^i = 0$ as desired.

Recall the standard TQFT/Frobenius algebra argument in Khovanov homology (cf. \cite{DB1}): since saddles appear at different soothing sites and commute in the cobordism category, the induced maps must also commute. We can make a similar appeal using the TQFT of this homology, which is carefully developed in \Cref{section:UnorientedTQFTTheory}. Since the TQFT is new in the literature, it takes several pages to build in that section. Therefore, we give shorter arguments here using examples and calculations in the algebra so that the reader can get a feel for why they work in general.

Each possible pair of compositions of the maps $m, \Delta$ and $\eta$ can be analyzed on a case-by-case basis and be shown to lead to commuting diagrams, or ruled out as possible diagrams by the Jordan curve theorem.  Fortunately, all diagrams involving only $m$ and $\Delta$ commute due to what is already known about TQFTs and Frobenius algebras \cite{Kho}. The maps are different than in Khovanov (cf. \Cref{rem:different-than-khovanov}), but the key property for why the diagrams commute in Khovanov homology remains true for our diagrams: $\Delta$ of a basis element is the sum of {\em all possible} homogeneous tensors $x^i\ot x^j$ of a given degree.  This is enough to insure these diagrams commute.

The remaining diagrams are interactions of $m$ and $\Delta$ with the $\eta$ map.  An analysis of possible commuting diagrams involving $\eta$ with $m$ and $\Delta$ shows that they either (1) do not come from a hypercube face because they violate the Jordan curve theorem, (2)  commute due to the fact that the degree changes by $2m$ over two edges, or (3) they are the diagram $\eta\circ \eta = m\circ \Delta$.  

In Case (2), the change in degree by $2m$ over a composition of two maps is enough to zero out the extraneous terms.  For example, for $n=5$, the diagram $(\eta \ot Id) \circ \Delta = \Delta \circ \eta$ commutes nontrivially for $1, x, x^2$ and is trivial (both compositions are the zero map) for all other elements. The $\eta$ map is multiplication by $\sqrt{5}x^2$ in $V$, which is enough to leave only the homogeneous elements of the correct degree as on the second line in the equation below:
\begin{eqnarray*}
(\eta \ot Id)\circ \Delta (1)&=& (\eta\ot Id)(1\ot x^4+x^4\ot1+x\ot x^3+x^3\ot x +x^2\ot x^2) \\ 
&=& \sqrt{5}\left(x^2\ot x^4+0+x^3\ot x^3 +x^4\ot x^2\right),\\
\Delta\circ \eta(1) &=& \Delta(\sqrt{5} x^2) = \sqrt{5}(x^2\ot x^4+x^3\ot x^3+x^4\ot x^2).
\end{eqnarray*}
The other diagrams of this case commute using the formula $\Delta(v) = (v\ot 1)\Delta(1)$.

The last case, $\eta \circ\eta = m\circ\Delta$, is well-known to virtual knot theorists. It also appears frequently in hypercubes of perfect matching graphs. An example of such a diagram is shown in \Cref{fig:P_3_A-squared-ex}. First, since $\eta\circ\eta$ is multiplication by $n x^{2m}$ in $V$, $\eta\circ\eta(x^k) = 0$ for all but when $k=0$ and $n$ is odd. In that situation, $\Delta(1) = 1\ot x^{n-1} + x\ot x^{n-2} +\cdots +x^{n-2}\ot x + x^{n-1}\ot 1$, which is the sum of $n$ terms.  Hence $m(\Delta(1))= n x^{n-1}$. Since $n-1=2m$, $m\circ \Delta = \eta\circ\eta$.

\begin{figure}[h]
\psfragscanon
\psfrag{A}{$A$}
\psfrag{m}{$m$}
\psfrag{D}{$\Delta$}
\includegraphics[scale=.5]{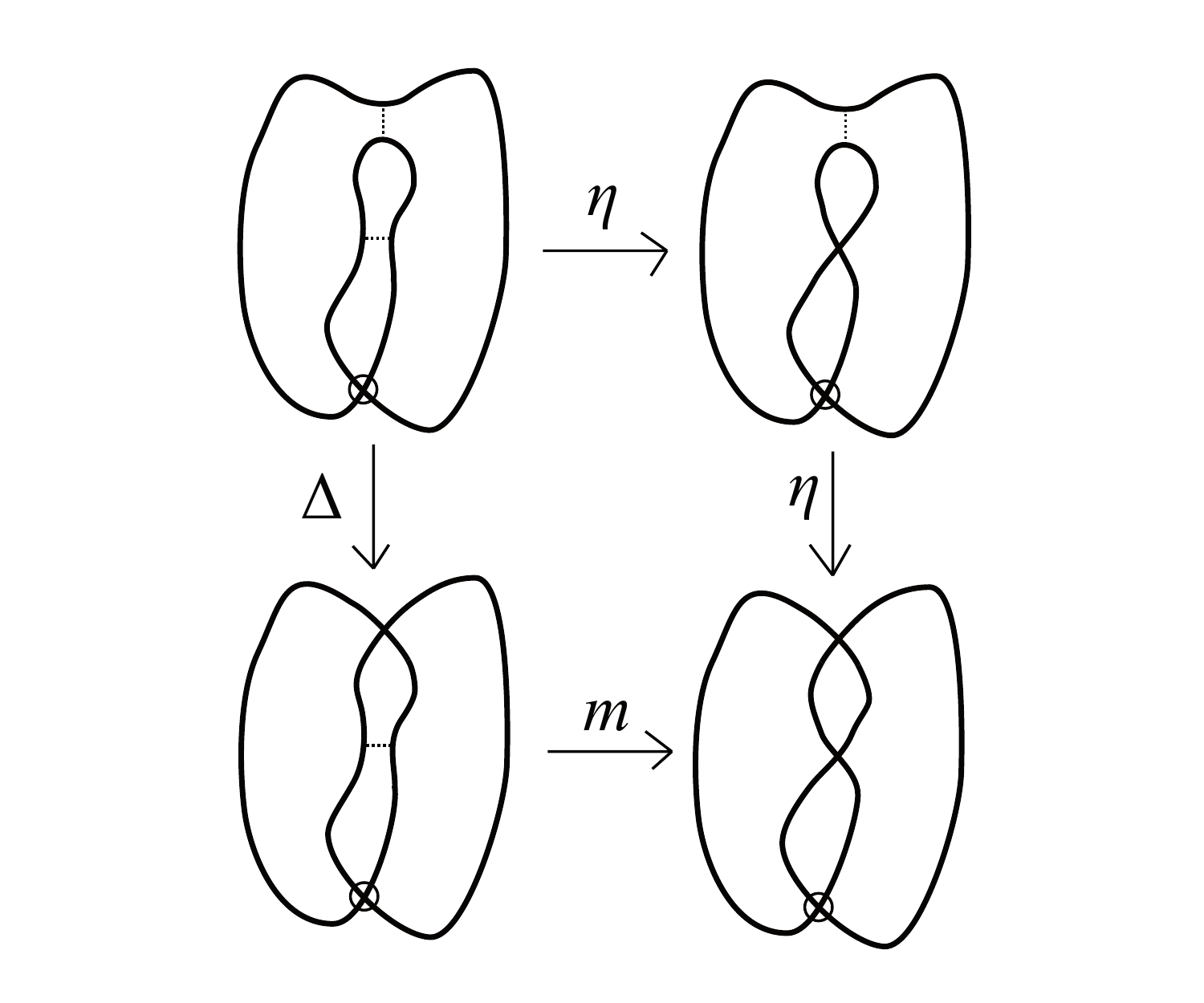}
\caption{An example of states that lead to a $\eta\circ\eta = m\circ\Delta$ commuting diagram. The dotted lines represent the locations of $0$-smoothings.}
\label{fig:P_3_A-squared-ex}
\end{figure}

Finally,  \Cref{prop:preserve-quantum-degree} shows that the bigrading of $\del^i$ is $(1,0)$. 
\end{proof}

\begin{remark} The choice of $\sqrt{n}$ in the definition of $\eta$ stems from the need to make $\eta\circ\eta =m\circ\Delta$ commute, which is why $\mathbbm{k}$ needs to be a ring that includes $\sqrt{n}$, like $\BR$ or $\BC$. If $n$ is a perfect square, like $n=4$, one could choose to work with $\mathbbm{k}=\BZ$ instead. Indeed, for reasons that will become evident shortly, $n=4$ is an interesting case to explore with $\mathbbm{k}=\BZ$. 
\end{remark}

\subsection{The bigraded $n$-color homology definition and proof of \Cref{MainTheorem:Bigraded-n-color-homology}} 
\label{section:cohomology-of-graphs-with-perfect-matchings}
We are now ready to define the bigraded $n$-color homology of a perfect matching graph $\Gamma_M$ of the pair $(G,M)$.  

\begin{definition} Let $(G,M)$ be a trivalent graph $G$ with perfect matching $M$.  Let $\Gamma_M$ be a perfect matching diagram of it.  The {\em bigraded $n$-color homology of $\Gamma_M$}  is
$$CH_n^{i,j}(\Gamma_M;\mathbbm{k}) = \frac{\ker \del:C^{i,j}(\Gamma_M) \ra C^{i+1,j}(\Gamma_M)}{\Ima \del:C^{i-1,j}(\Gamma_M) \ra C^{i,j}(\Gamma_M)}.$$
\end{definition}

Checking \Cref{Thm:ribbons} for perfect matching diagrams shows that the homology depends only on the ribbon graph $\Gamma$ up to equivalence and perfect matching $M$.  Therefore the bigraded $n$-color homology is an invariant of the perfect matching graph itself, which proves the first part of \Cref{MainTheorem:Bigraded-n-color-homology}.  The second part of the theorem, that the graded Euler characteristic is equal to the $n$-color polynomial, follows from the definition of $V_\alpha$, the gradings of $C^{*,*}(\Gamma_M)$, and that the $q$-dimension of $V$ is the same as the expressions in Equation~\ref{eq:immersed_circle}.

We do not know if the bigraded $n$-color homology is invariant under flip moves, or more generally, $2$-isomorphism moves. We leave this as a question for future research:

\begin{question}[See Baldridge~\cite{BaldCohomology}] Is the bigraded $n$-color homology of a perfect matching graph invariant under flip moves ($2$-isomorphism moves)? \label{question:invariant-under-flip-moves-q}
\end{question}

If it was invariant under $2$-isomorphism moves, this invariant would provide interesting information about $2$-cycles and perfect matchings in an abstract trivalent graph.

\subsection{The bigraded $n$-color homology of any ribbon graph}  The definition of bigraded $n$-color homology in the previous  subsection was for a perfect matching graph of any trivalent graph and {\em any} perfect matching for that graph. This can be used to distinguish different perfect matchings on the same graph, for example.  In this subsection, we consider the blowup of a ribbon graph together with its canonical perfect matching to get an invariant of the ribbon graph itself. We then explore the meaning of the graded Euler characteristic of this invariant.

\begin{definition} Let $G(V,E)$ be any connected graph (with any valence at each vertex), and let $\Gamma$ be a ribbon graph of $G$.  Then the {\em bigraded $n$-color homology of the ribbon graph $\Gamma$} is $$CH_n^{*,*}(\Gamma;\mathbbm{k}) : = CH^{*,*}_n(\Gamma^\flat_E;\mathbbm{k}).$$
\end{definition}

The graded Euler characteristic of the bigraded $n$-color homology of a ribbon graph $\Gamma$ is the $n$-color polynomial of $\Gamma$ (see~\Cref{defn:n-color-poly-of-a-graph}). In particular, substituting $q=1$ into the $n$-color polynomial and equating the $n$-color number with the Penrose polynomial gives the following proposition:

\begin{proposition}\label{prop:EulercharPenrose}
Let $G(V,E)$ be any connected graph (with any valence at each vertex), and let $\Gamma$ be a ribbon graph of $G$. Then $$\chi_q(CH^{*,*}_n(\Gamma;\mathbbm{k}))(1) = P(\Gamma,n)$$
for any integer $n$ with $n>1$.
\end{proposition}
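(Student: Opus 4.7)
The plan is to chain together three facts already in place: Theorem B (which identifies the graded Euler characteristic of $CH^{*,*}_n(\Gamma_M;\mathbbm{k})$ with the $n$-color polynomial $\langle \Gamma_M\rangle_n$), the definition of $CH^{*,*}_n(\Gamma;\mathbbm{k})$ via the blowup $\Gamma^\flat_E$, and equation~\eqref{eqn:Penrose-equals-n-color-number}, which says $P(\Gamma,n) = [\Gamma^\flat_E]_n$ whenever $n$ is a positive integer.

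\textbf{Step 1.} Note that $\Gamma^\flat_E$ is, by construction (Definition~\ref{def:blow-up-of-a-graph}), a connected trivalent ribbon graph equipped with the canonical perfect matching $E$. Thus Theorem~B applies to the perfect matching graph $\Gamma^\flat_E$, yielding
\[
\chi_q\bigl(CH^{*,*}_n(\Gamma^\flat_E;\mathbbm{k})\bigr) \;=\; \langle \Gamma^\flat_E\rangle_n(q).
\]

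\textbf{Step 2.} By Definition~\ref{defn:n-color-poly-of-a-graph}, the bigraded $n$-color homology of the ribbon graph $\Gamma$ is declared to be that of its blowup, so $CH^{*,*}_n(\Gamma;\mathbbm{k}) = CH^{*,*}_n(\Gamma^\flat_E;\mathbbm{k})$. Evaluating the graded Euler characteristic at $q=1$ therefore gives
\[
\chi_q\bigl(CH^{*,*}_n(\Gamma;\mathbbm{k})\bigr)(1) \;=\; \langle \Gamma^\flat_E\rangle_n(1).
\]

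\textbf{Step 3.} By Definition~\ref{defn:n-color-number}, the value $\langle \Gamma^\flat_E\rangle_n(1)$ equals the $n$-color number $[\Gamma^\flat_E]_n$, since setting $q=1$ in the skein relation collapses the coefficient $q^m$ to $1$ and the loop value in \eqref{eq:immersed_circle} to $n$. Finally, equation~\eqref{eqn:Penrose-equals-n-color-number} identifies $[\Gamma^\flat_E]_n$ with $P(\Gamma,n)$ for each integer $n>1$, completing the chain of equalities. There is essentially no obstacle here: the proposition is a direct corollary of Theorem~B together with the two definitional identifications (homology-of-graph via blowup, and Penrose-polynomial via the bracket applied to the blowup), so the task reduces to writing out these three identifications in sequence.
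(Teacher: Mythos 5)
Your proof is correct and follows exactly the chain the paper intends: Theorem~B applied to the perfect matching graph $\Gamma^\flat_E$, the definitional identification $CH^{*,*}_n(\Gamma;\mathbbm{k}) := CH^{*,*}_n(\Gamma^\flat_E;\mathbbm{k})$, evaluation at $q=1$ to get the $n$-color number, and equation~\eqref{eqn:Penrose-equals-n-color-number}. One small citation slip: the identification of $CH^{*,*}_n(\Gamma;\mathbbm{k})$ with the homology of the blowup is given in the unnamed definition at the start of the subsection on bigraded $n$-color homology of a ribbon graph, not in Definition~\ref{defn:n-color-poly-of-a-graph} (which concerns the polynomial rather than the homology).
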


Thus, \Cref{thm:penrose-polynomial-values} implies that the  bigraded $n$-color homology is nontrivial when the ribbon graph is planar and $n=2$ (when it is Eulerian), $n=3$, and $n=4$ (by the four color theorem).  We will see that it is nontrivial for all $n>4$ for all plane graphs.  It can be  nontrivial for non-plane graphs even when the Penrose polynomial is identically zero, see \Cref{Theorem:mainthm-four-color} or \Cref{theorem:CH-not-zero}. For example, the graded Euler characteristic of the $3$-color homology of $K_{3,3}$ is given in \Cref{eqn:3-color-poly-of-blowup-of-K33}, which implies the bigraded $3$-color homology is nontrivial. In the next section, we will explore the color homologies of plane graphs and the $K_{3,3}$ ribbon graph. 

\subsection{Examples of bigraded $n$-color homology for $n=2,3,$ and $4$}\label{sec:n-color-homology-examples-for-n-2-3-4}
In this section we work out a few  examples. These examples will help provide intuition for our work with the filtered $n$-color theory as the infinity page of a spectral sequence (\Cref{MainTheorem:spectralsequence}).  

\begin{example}[An example of bigraded $2$-color homology]\label{sec:example-of-2-color-homology} The bigraded $2$-color homology uses the maps on $V=\mathbbm{k}[x]/(x^2)$:
\begin{eqnarray}\nonumber
m_2(1\ot 1)=1, \ \ \ m_2(1\ot x) =x, &\mbox{} &  m_2(x\ot1)  = x, \ \ \ m_2(x\ot x)=0,\\ \nonumber
\Delta_2(1)=x\ot x, &\mbox{} & \Delta_2(x)=0, \\ \nonumber
\eta_2(1)=\sqrt{2} x, &\mbox{}& \eta_2(x)=0.
\end{eqnarray}
Note that these maps are different from Khovanov homology (cf. \Cref{rem:different-than-khovanov}) or even Lee's $\Phi$ map \cite{LeeHomo}.

Consider the Eulerian plane graph $\Gamma$ that has two vertices and two edges between them.  This plane graph has two faces, one at infinity and one between the edges. The blowup is a $\theta_2$ graph with perfect matchings the original edges of the graph, in fact, it is $\Gamma_2$ in \Cref{fig:theta_2_with_different_pms}.  The hypercube of the blowup is made up of one state in homological degree zero with two circles, two states in degree one with one circle each, and one state in degree three with two circles.  The edges of the hypercube consist of two $m$ maps followed by two $\Delta$ maps. Computing the bigraded $2$-color homology of blowup is then:

\begin{table}[H]
\renewcommand{\arraystretch}{1}
\caption{The bigraded $2$-color homology of $\Gamma_2$ in \Cref{fig:theta_2_with_different_pms} is $CH^{*,*}_2(\Gamma)$.}
\centering
\begin{tabular}{| c || c | c | c | c | c |}
\hline
$4$ & & &  $<1\ot 1>$ \mbox{}  \\ \hline
$3$ & & & $<1\ot x, x\ot 1>$  \\ \hline
$2$ & &   &  \\ \hline
$1$ & $<1\ot x - x\ot 1>$ & $<(x,0)>$   &   \\ \hline
$0$ & $<x\ot x>$ &&\\ \hhline{|=||=|=|=|=|}
\diagbox[dir=SW]{$j$}{$i$} & $0$& $1$ & $2$ \\ \hline
\end{tabular} \label{table:2-color-homology-of-theta-2}
\end{table}

Observe that $[\Gamma]_2 = 4$. The all-zero state $\Gamma_{\vec{0}}$ is two circles, which if colored with two different colors, say red and blue,  accounts for the two $2$-face colorings of $\Gamma$.  Where are the other two colorings in $[\Gamma]_2$?  They emerge in the homological degree $2$ state (the all-one smoothing state) after passing to filtered $2$-color homology described in the next section: the $E_1$ differential $d_1$ will map $d_1[(x,0)]_1= [1\ot x+ x\ot 1]_1$ in the bigraded $2$-color homology in \Cref{table:2-color-homology-of-theta-2}.  We will see this again in working out the $3$-color and $4$-color homology below.

\end{example}

\begin{example}[An example of bigraded $3$-color homology]\label{sec:example-of-3-color-homology} The bigraded $3$-color homology uses the following maps on $V=\mathbbm{k}[x]/(x^3)$:

\hspace{.5in}\begin{minipage}{2.in}
\begin{eqnarray}\nonumber
\Delta_3(1) &=&1\ot x^2 + x^2 \ot 1 + x\ot x \\ \nonumber
\Delta_3(x) &=& x\ot x^2 +x^2\ot x\\ \nonumber
\Delta_3(x^2) &=& x^2\ot x^2\\ \nonumber
\end{eqnarray}
\end{minipage}\hspace{.5in}\begin{minipage}{2in}\noindent
\begin{eqnarray}\nonumber
\eta_3(1)&=& \sqrt{3} x \\ \nonumber
\eta_3(x) &=& \sqrt{3} x^2\\ \nonumber
\eta_3(x^2) &=& 0\\ \nonumber
\end{eqnarray}
\end{minipage}

\noindent with the multiplication map given by multiplication in $V$.

We compute the bigraded $3$-color for the $K_{3,3}$ perfect matching graph $\Gamma_M$ displayed in \Cref{fig:K33-is-zero}. 

\begin{table}[H]
\renewcommand{\arraystretch}{1}
\caption{The bigraded $3$-color homology of $\Gamma_M$ of $K_{3,3}$ in \Cref{fig:K33-is-zero}.}
\centering
\begin{tabular}{| c || c | c | c | c | c |}
\hline
$5$ & & &  & $\mathbbm{k}$ \\ \hline
$4$ & & &  &$\mathbbm{k}\oplus\mathbbm{k}$  \\ \hline
$3$ & & & & $\mathbbm{k}\oplus\mathbbm{k}$ \\ \hline
$2$ & &  $\mathbbm{k}$  &  &$\mathbbm{k}$ \\ \hline
$1$ &  & $\mathbbm{k}$  &  & \mbox{} \\ \hline
$0$ & $\mathbbm{k}$ & &  & \mbox{}  \\ \hline
$-1$ & $\mathbbm{k}\oplus\mathbbm{k}\oplus \mathbbm{k}$ & &  & \mbox{}  \\ \hline
$-2$ & $\mathbbm{k}\oplus\mathbbm{k}\oplus \mathbbm{k}$ & &  & \mbox{}  \\ \hline
$-3$ & $\mathbbm{k}$  && &\mbox{} \\ \hhline{|=||=|=|=|=|}
\diagbox[dir=SW]{$j$}{$i$} & $0$& $1$ & $2$ & $3$ \\ \hline
\end{tabular} \label{table:3-color-homology-of-K33}
\end{table}

Note that the graded Euler characteristic of this homology is the $3$-color polynomial in \Cref{eqn:3-color-poly-of-K33}. The $-2q^2$ term in this polynomial is split between degree $1$ and degree $3$ in the $3$-color homology, which shows that  information is lost when taking the graded Euler characteristic of bigraded $3$-color homology. This idea holds in general: the information in the bigraded $n$-color homology is far richer than the information in the $n$-color polynomial, much like Khovanov homology is a more refined invariant than the Jones polynomial (cf. \cite{KM4}). 

One can guess what the filtered $3$-color homology is  due to the empty column in degree $2$ of \Cref{table:3-color-homology-of-K33}: 6 colors in homological degree 0 and $6$ colors in degree 3. So while the Penrose formula from \cite{Kauffman, Penrose} computes $0$ for the $K_{3,3}$, the filtered $3$-color homology is picking up all $12$ colors but in different degrees that differ by an odd number of homological degrees. 
\end{example}

\begin{example}[An example of bigraded $4$-color homology] \label{sec:example-of-4-color-homology}The bigraded $4$-color homology uses the following maps on $V=\mathbbm{k}[x]/(x^4)$:

\hspace{.5in}\begin{minipage}{2.in}
\begin{eqnarray}\nonumber
\Delta_4(1) &=&x\ot x^3 + x^3 \ot x + x^2\ot x^2 \\ \nonumber
\Delta_4(x) &=& x^2\ot x^3 +x^3\ot x^2\\ \nonumber
\Delta_4(x^2) &=& x^3\ot x^3\\ \nonumber
\Delta_4(x^3) &=& 0 \\ \nonumber
\end{eqnarray}
\end{minipage}\hspace{.5in}\begin{minipage}{2in}\noindent
\begin{eqnarray}\nonumber
\eta_4(1)&=& 2x^2 \\ \nonumber
\eta_4(x) &=& 2x^3\\ \nonumber
\eta_4(x^2) &=& 0\\ \nonumber
\eta_4(x^3) & = & 0\\ \nonumber
\end{eqnarray}
\end{minipage}

\noindent with the multiplication map given by multiplication in $V$.

\begin{remark} For $n=4$, one can use the ring $\mathbbm{k}=\BZ$ instead of a field.  This opens the possibility for torsion in $4$-color homology.  An interesting question is whether there are examples of ribbon graphs that have torsion in their $4$-color homology and what that might mean for such graphs. In Khovanov theory, torsion plays an important role. 
\end{remark}

To compute examples of this homology, unless the graph is small, requires a computer program. (Such a program is available upon request.) A small computable example is the plane graph $Loop$ with one vertex and one edge.  The blowup of this graph is the $\theta$ graph with perfect matching edge the original edge of the graph.  There are two states for this perfect matching graph: one state in degree $0$ with two circles, and one in degree $1$ with one circle.  There are 16 generators in degree $0$ ($1\ot 1$, $1\ot x$, $x \ot 1$, $\dots$, $x^3\ot x^3$), and $m$ maps $1\ot 1 \ra 1$, $1\ot x \ra x$, $1\ot x^2 \ra x^2$, $1 \ot x^3 \ra x^3$, and so on. From this the dimension of the bigraded $4$-homology can be calculated to be $16-4=12$, i.e., $CH_4^{0,*}(Loop;\mathbbm{k}) = \mathbbm{k}^{12}$ and is zero otherwise. Of course, the number of $4$-face colorings of $Loop$ is $12$. That the dimension of the homology reports exactly this number is a consequence of the fact that the bigraded $4$-color homology is isomorphic to the filtered $4$-color  homology in this case and the fact that the homology is only supported in homological grading zero.  However, this count in homological grading zero continues to hold for all graphs and for all $n$ as Statement (3) of \Cref{MainTheorem:Colorings-of-State-Graphs} describes.  Note that in this example it is easy to see that $\dim_{\mathbbm{k}} CH^{0,*}_n(Loop;\mathbbm{k}) = n(n-1)$, which is the number of $n$-face colorings of the $Loop$ plane graph.  
\end{example}

\subsection{Distinguishing ribbon graphs with the same underlying graph} There are two ribbon graphs in \Cref{Fig:ribbons} of the $\theta$ graph: $\Gamma_1$ and $\Gamma_2$. While there are many ways to distinguish these two ribbon graphs, they can also be used as an illustrative example of how bigraded $n$-color homology can be used to distinguish ribbon graphs.

First, we can choose any edge on either one to be a perfect matching (the other choices lead to  equivalent perfect matching graphs). The bigraded $4$-color homology of $\Gamma_1$ with a perfect matching $M_1$ was calculated just above: $CH^{0,*}_4(\Gamma_1, M_1) = \mathbbm{k}^{12}$ and zero elsewhere.  The states of $(\Gamma_2, M_2)$ are the reverse of $(\Gamma_1,M_1)$: there is one circle in the homological degree zero state and two circles in degree one. Calculating the homology of $(\Gamma_2,M_2)$ gives $CH^{0,*}_4(\Gamma_2,M_2) = \mathbbm{k}$ and $CH^{1,*}_4(\Gamma_2,M_2) = \mathbbm{k}^{13}$.

Thus, $\Gamma_1$ is different than $\Gamma_2$. This is a simple example that still illustrates the general rule: the $n$-color homologies of ribbon graphs, even ribbon graphs of the same genus and same underlying abstract graph, can be used to distinguish ribbon graphs.

We have sufficiently motivated the power of the bigraded $n$-color homology as an invariant of ribbon graphs---the homologies are rich and can be used to distinguish ribbon graphs. In some cases, it calculates the number of $n$-face colorings of the ribbon graph, and even when it does not, it looks like it almost does. In the next section we build the filtered $n$-color homology as the $E_\infty$-page of a spectral sequence based upon the bigraded $n$-color homology. This will show that both of these homologies are more than just powerful invariants, they are also meaningful in what they report about a graph.

\section{\Cref{MainTheorem:spectralsequence}: Filtered $n$-color  homology} 
\label{section:filtered-n-color-homology}

We now define a new operator, $\widehat{\del}:=\del+\widetilde{\del}$, on the chain complex $C^{*,*}(\Gamma_M)$. This map does not preserve the quantum grading, in fact, the $\widetilde{\del}$ differential satisfies
$$\widetilde{\del}:C^{i,j}(\Gamma_M) \ra C^{i+1,j+n}(\Gamma_M).$$
Still, for $v \in C^{*,*}(\Gamma_M)$, $$j(\widehat{\del}(v)) \geq j(v).$$
Thus $j$ can be used to filter the original chain complex, turning $\widehat{\del}$ into a filtered differential operator. This section describes how to use this new operator to define a spectral sequence and a homology theory that bears many similarities to Lee homology \cite{LeeHomo}.  In the context of graphs and in particular planar graphs, however, the Lee-type homology theory calculates something striking about the original graph: it counts $n$-face colorings on different ribbon graphs of the underlying abstract graph.

\subsection{The operator $\widetilde{\del}$} \label{subsection:defintion-of-tilde-del} We continue to work with the vector space $V=\mathbbm{k}[x]/(x^n)=\langle 1, x, x^2, \dots, x^{n-1}\rangle$, but now for convenience, we choose $\mathbbm{k}=\BR$ or $\mathbbm{k}=\BC$.  Later, when changing to the color basis, we will restrict further to $\mathbbm{k}=\BC$. The quantum gradings of $V$ depend on whether $n$ is even or odd: set $m=n/2$ if $n$ is even and $m=(n-1)/2$ if $n$ is odd as in previous sections.

On the chain level, $\widetilde{\del}$ is defined in the same way as $\del$.  Let $\zeta_{\alpha\alpha'}$ be an edge in the hypercube of states from $\Gamma_\alpha $ to $\Gamma_{\alpha'}$.  This gives rise to a linear map, $\widetilde{\del}_{\alpha\alpha'}:V_\alpha \rightarrow V_{\alpha'}$, which is defined based upon the circles in $\Gamma_\alpha$:

\begin{enumerate}
\item If $\zeta_{\alpha\alpha'}$ represents the fusing of two circles in $\Gamma_\alpha$ into one circle in $\Gamma_{\alpha'}$, define  multiplication by
$$\widetilde{m}(x^i\ot x^j) = x^{i+j-n} \mbox{\ \  if $i+j\geq n$},$$
and zero if $i+j<n$.
\item If $\zeta_{\alpha\alpha'}$ represents the splitting of a circle  in $\Gamma_\alpha$ into two circles in $\Gamma_{\alpha'}$,  define comultiplication by $$\widetilde{\Delta}(x^k)= \sum_{\substack{i+j=k+2m-n\\0\leq i,j < n}} x^i \ot x^j \mbox{\ \  if $k+2m \geq n$},$$
and zero if $i+2m<n$.
\item If $\zeta_{\alpha\alpha'}$ represents introducing a double point in a circle in $\Gamma_\alpha$ to get a circle in $\Gamma_{\alpha'}$, define the map by $$\widetilde{\eta}(x^k) = \sqrt{n}\cdot x^{k+m-n} \mbox{\ \  if $k+m\geq n$},$$
and zero if $k+m<n$.
\end{enumerate}
Like $\del_{\alpha\alpha'}$, the map $\widetilde{\del}_{\alpha\alpha'}:V_\alpha\rightarrow V_{\alpha'}$ is defined on basis elements of $V_\alpha$ as the tensor product of maps given by the identity on the vector spaces associated with circles that do not change from $\Gamma_\alpha$ to $\Gamma_{\alpha'}$, and either $\widetilde{m}, \widetilde{\Delta}$ or $\widetilde{\eta}$ on the vector space(s) associated to circles that are modified by the change from a $0$-smoothing in $\Gamma_\alpha$ to a $1$-smoothing $\Gamma_{\alpha'}$. Extend this map linearly.

The definition of $\widetilde{\del}^i:C^{i,*}(\Gamma_M) \ra C^{i+1,*}(\Gamma_M)$ is defined in the same way as in \Cref{eqn:delta-definition}.

\begin{remark} Note that when taking the operators $\del$ and $\widetilde{\del}$ together, it is helpful to think of multiplication in $V$ as multiplication in a slightly different algebra: $\widehat{V}=\mathbbm{k}[x]/(x^n-1)$.  This helps to explain why $\widetilde{m}$ and $\widetilde{\eta}$ are defined as they are. Furthermore, $\widetilde{\Delta}$ is the ``complement'' of $\Delta$ in the sense that $(\Delta+\widetilde{\Delta})(x^k)$ is always the sum of exactly $n$ terms whose powers on each term sum up to $k+2m\mod n$. For example, when $n=4$, $$(\Delta+\widetilde{\Delta})(x) = (x^2\ot x^3 +x^3\ot x^2)+(1\ot x + x\ot 1).$$
\end{remark}

\begin{theorem} The map $\widetilde{\del}$ satisfies $\widetilde{\del}^{i+1}\circ \widetilde{\del}^i = 0$ with bidegree $(1,n)$. \label{theorem:tilde-differential}
\end{theorem}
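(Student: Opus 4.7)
The proof will closely parallel the argument for $\del^{i+1}\circ\del^i=0$ in \Cref{thm:A-Chain-Complex}. First I would verify the bidegree claim by direct computation on each local building block. For $v\in V_\alpha$ with $j(v)=\deg(v)+m|\alpha|$, applying $\widetilde{m}$, $\widetilde{\Delta}$, or $\widetilde{\eta}$ lowers the underlying exponent in $V$ by $n$, while moving from $\Gamma_\alpha$ to $\Gamma_{\alpha'}$ shifts the grading by $+m$; a brief case split on the parity of $n$ shows that both contributions combine to change $j$ by exactly $+n$. The homological grading clearly increases by $1$, so $\widetilde{\del}$ has bidegree $(1,n)$.

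The cleanest route to $\widetilde{\del}^{i+1}\circ\widetilde{\del}^i=0$ is to introduce the combined operators $\widehat{m}=m+\widetilde{m}$, $\widehat{\Delta}=\Delta+\widetilde{\Delta}$, $\widehat{\eta}=\eta+\widetilde{\eta}$, all naturally living on the algebra $\widehat{V}=\mathbbm{k}[x]/(x^n-1)$, and assembled into a differential $\widehat{\del}=\del+\widetilde{\del}$ on the same underlying complex. The first goal is to show $\widehat{\del}\circ\widehat{\del}=0$ by re-running the face-by-face analysis of \Cref{thm:A-Chain-Complex} verbatim in $\widehat{V}$ instead of $V$. Faces corresponding to disjoint smoothing sites commute automatically. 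Faces involving only $\widehat{m}$ and $\widehat{\Delta}$ commute by the standard Frobenius compatibility on $\widehat{V}$, together with the fact that $\widehat{\Delta}$ of a basis element is once again a complete sum of homogeneous monomials of the correct total degree modulo $n$. The key identity $\widehat{\eta}\circ\widehat{\eta}=\widehat{m}\circ\widehat{\Delta}$ follows because $\widehat{\Delta}(1)$ is a sum of exactly $n$ terms whose entries multiply to $1$ in $\widehat{V}$, so $\widehat{m}(\widehat{\Delta}(1))=n\cdot 1$, matching $\widehat{\eta}^2(1)=\sqrt{n}\cdot\sqrt{n}$. The same sign convention $\sign(\zeta_{\alpha\alpha'})$ that worked before makes every face anticommute, giving $\widehat{\del}^2=0$.

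To isolate $\widetilde{\del}^2$, expand
\begin{equation*}
0 \;=\; \widehat{\del}^2 \;=\; \del^2 \;+\; \bigl(\del\,\widetilde{\del}+\widetilde{\del}\,\del\bigr) \;+\; \widetilde{\del}^{\,2}.
\end{equation*}
By the bidegree computation above and \Cref{thm:A-Chain-Complex}, the three summands carry quantum bidegrees $(2,0)$, $(2,n)$, and $(2,2n)$ respectively, so they land in pairwise distinct quantum gradings of $C^{*,*}(\Gamma_M)$. Hence each summand must vanish separately; in particular $\widetilde{\del}^{\,i+1}\circ\widetilde{\del}^{\,i}=0$. As a bonus, this also yields $\del\,\widetilde{\del}+\widetilde{\del}\,\del=0$, which is exactly the anticommutation needed to set up the spectral sequence of \Cref{MainTheorem:spectralsequence}.

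The main obstacle is the face-by-face verification in $\widehat{V}$: the definitions of $\widetilde{m}$, $\widetilde{\Delta}$, $\widetilde{\eta}$ contain truncation conditions (``zero unless the exponent is large enough'') that obscure the Frobenius structure if one tries to verify the relations directly in $V$. Working in $\widehat{V}$, where the relation $x^n=1$ implements the wrap-around automatically, sidesteps these truncations and reduces every face check to a computation in a Frobenius algebra. Everything else is then a direct transcription of the proof of $\del^2=0$, with the quantum grading decomposition doing the final separation.
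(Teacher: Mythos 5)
Your proposal is correct, but it takes a genuinely different route from the paper's. The paper's proof is a one-liner: re-run the face-by-face analysis of \Cref{thm:A-Chain-Complex} with $\widetilde{m},\widetilde{\Delta},\widetilde{\eta}$ in place of $m,\Delta,\eta$. You instead prove that the combined operator $\widehat{\del}=\del+\widetilde{\del}$ squares to zero, and then separate $\widehat{\del}^2 = \del^2 + (\del\widetilde{\del}+\widetilde{\del}\del) + \widetilde{\del}^2$ by quantum degree on $C^{*,*}(\Gamma_M)$: since the three summands have bidegrees $(2,0)$, $(2,n)$, $(2,2n)$ and the whole sum vanishes, each term vanishes separately. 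What your route buys: it eliminates the delicate truncation bookkeeping in the $\widetilde{\,\cdot\,}$ maps (replaced by the single clean relation $x^n=1$ in $\widehat{V}$), and it proves $\del^2=0$, the anticommutation $\del\widetilde{\del}+\widetilde{\del}\del=0$ (which the paper proves as a separate theorem immediately afterward), and $\widetilde{\del}^2=0$ all at once. What the paper's route buys: it avoids needing $\widehat{\del}^2=0$ as an independent prerequisite, which is worth noting because the paper only introduces $\widehat{\del}$ \emph{after} both anticommutation statements are in hand. Your argument is not circular, since you explicitly propose to establish $\widehat{\del}^2=0$ by a direct face-by-face check in $\widehat{V}$; but that check is a nontrivial claim you assert rather than carry out, and it must contend with the same subtlety the paper flags in \Cref{thm:A-Chain-Complex} and \Cref{subsection:ExtendedNearlyFrobeniusAlgebras}: when $n$ is even, $\widehat{\Delta}$ is a \emph{shifted} comultiplication, not the counital one, so ``standard Frobenius compatibility'' must be replaced by the shifted version.

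One detail error in your verification of $\widehat{\eta}\circ\widehat{\eta}=\widehat{m}\circ\widehat{\Delta}$: you assert that the entries of each term of $\widehat{\Delta}(1)$ multiply to $1$, giving $\widehat{m}(\widehat{\Delta}(1))=n\cdot 1 = \widehat{\eta}^2(1)$. This is correct only for $n$ even, where $2m=n$. For $n$ odd one has $2m=n-1$, the entries multiply to $x^{n-1}$, and both sides equal $n\,x^{n-1}$ rather than $n\cdot 1$. The identity $\widehat{\eta}^2=\widehat{m}\circ\widehat{\Delta}$ still holds for every $n$ — on $x^k$ both sides evaluate to $n\,x^{k+2m}$ in $\widehat{V}$ — so the logic is unaffected, but the stated reason is parity-dependent and should be corrected.
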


\begin{proof} The  proof follows the same reasoning as \Cref{thm:A-Chain-Complex}.
\end{proof}

\begin{theorem} The map $\widetilde{\del}$ anti-commutes with $\del$, that is, $\widetilde{\del}\circ\del + \del\circ\widetilde{\del} = 0$.
\end{theorem}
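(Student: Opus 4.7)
The plan is to deduce this anticommutation from a single master identity, namely $\widehat{\partial}^2 = 0$ where $\widehat{\partial} := \partial + \widetilde{\partial}$. Expanding gives
\[
\widehat{\partial}^2 \;=\; \partial^2 \;+\; (\partial\widetilde{\partial} + \widetilde{\partial}\partial) \;+\; \widetilde{\partial}^2,
\]
and since \Cref{thm:A-Chain-Complex} and \Cref{theorem:tilde-differential} already give $\partial^2 = 0$ and $\widetilde{\partial}^2 = 0$, the desired anticommutator is precisely the obstruction to $\widehat{\partial}^2$ vanishing. So I would prove $\widehat{\partial}^2 = 0$ and then subtract.

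The key observation that makes this clean is that $\widehat{\partial}$ is built on the hypercube of states in exactly the same way as $\partial$, but now with the edge maps governed by the algebra $\widehat{V} := \mathbbm{k}[x]/(x^n - 1)$ in place of $V = \mathbbm{k}[x]/(x^n)$. Indeed, the combined maps $\widehat{m} = m + \widetilde{m}$, $\widehat{\Delta} = \Delta + \widetilde{\Delta}$, and $\widehat{\eta} = \eta + \widetilde{\eta}$ satisfy $\widehat{m}(x^i \otimes x^j) = x^{(i+j) \bmod n}$, $\widehat{\Delta}(x^k) = \sum_{i+j \equiv k+2m \,(\mathrm{mod}\, n)} x^i \otimes x^j$, and $\widehat{\eta}(x^k) = \sqrt{n}\,x^{(k+m) \bmod n}$; these are the natural multiplication, shifted comultiplication, and twist for $\widehat{V}$, with the same signs $\mathrm{sign}(\zeta_{\alpha\alpha'})$ from \Cref{eqn:delta-definition}.

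To establish $\widehat{\partial}^2 = 0$ I would repeat verbatim the face-by-face analysis from the proof of \Cref{thm:A-Chain-Complex}, now for the hatted maps. The $\widehat{m}$/$\widehat{\Delta}$ faces commute because $\widehat{V}$ is a commutative associative Frobenius-type algebra and $\widehat{\Delta}$ is still the sum of all homogeneous tensor monomials of the correct total degree (now mod $n$). Faces involving $\widehat{\eta}$ with $\widehat{m}$ or $\widehat{\Delta}$ either are excluded by the Jordan curve theorem or commute because $\widehat{\eta}$ is, up to the scalar $\sqrt{n}$, multiplication by $x^m$ in $\widehat{V}$, a ring automorphism. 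The only nontrivial $\widehat{\eta}\widehat{\eta} = \widehat{m}\widehat{\Delta}$ face is verified from $\widehat{\eta}^2(x^k) = n\,x^{(k+2m)\bmod n}$ and $\widehat{m}\,\widehat{\Delta}(x^k) = n\,x^{(k+2m)\bmod n}$ in $\widehat{V}$ (summing $n$ identical monomials). Importantly, no terms are killed by an $x^n = 0$ relation, which streamlines the bookkeeping relative to the $V$-case.

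The main obstacle is psychological rather than computational: one must be convinced that the mixed faces with one $\partial$-edge and one $\widetilde{\partial}$-edge are handled uniformly without separate case analysis. The resolution is the additivity $\widehat{m} = m + \widetilde{m}$, etc.: each face equation $\widehat{f}_2 \widehat{f}_1 = \widehat{g}_2 \widehat{g}_1$ expands into four equations, of which the pure pieces reproduce $\partial^2 = 0$ and $\widetilde{\partial}^2 = 0$ while the mixed pieces are exactly the face-local content of the desired anticommutator. Thus, as soon as $\widehat{\partial}^2 = 0$ is established on $C^{*,*}(\Gamma_M)$, subtracting the two known vanishings yields $\partial\widetilde{\partial} + \widetilde{\partial}\partial = 0$. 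A final remark: the anticommutator has bidegree $(2,n)$ on $C^{i,j}(\Gamma_M)$, consistent with the filtration used to set up the spectral sequence in \Cref{MainTheorem:spectralsequence}.
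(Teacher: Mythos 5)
Your approach is correct and genuinely different from the paper's. The paper verifies the mixed-term identities directly: it lists the six face equations of the form $\del\widetilde{\del}+\widetilde{\del}\del$ (one for each possible pair of edge types) and checks them by explicit calculation in small examples, appealing to the pattern for general $n$. You instead reorder the logic: first establish $\widehat{\del}^2=0$ by running the face-by-face argument of \Cref{thm:A-Chain-Complex} over $\widehat{V}=\mathbbm{k}[x]/(x^n-1)$ with the hatted maps $\widehat{m},\widehat{\Delta},\widehat{\eta}$, then obtain the anticommutator by subtracting the two known vanishings $\del^2=0$ and $\widetilde{\del}^2=0$ from the operator expansion $\widehat{\del}^2=\del^2+(\del\widetilde{\del}+\widetilde{\del}\del)+\widetilde{\del}^2$. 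This is legitimate because $\widehat{m}=m+\widetilde{m}$, $\widehat{\Delta}=\Delta+\widetilde{\Delta}$, and $\widehat{\eta}=\eta+\widetilde{\eta}$ as maps on the same underlying vector space, so $\widehat{\del}=\del+\widetilde{\del}$ pointwise. What your route buys is that the $\widehat{V}$-computations are cleaner — there is no $x^n=0$ truncation, so e.g.\ the $\widehat{\eta}\circ\widehat{\eta}=\widehat{m}\circ\widehat{\Delta}$ face is a one-line verification $n\,x^{(k+2m)\bmod n}=n\,x^{(k+2m)\bmod n}$ rather than the degree-parity casework the paper uses. What the paper's route buys is that it works directly on the object of interest without invoking the hatted complex before it is officially introduced, and makes the spectral-sequence anticommutation statement self-contained. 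Both require a face-by-face enumeration, so the total labor is comparable.

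One wording issue: you describe $\widehat{\eta}$ as ``up to $\sqrt{n}$, multiplication by $x^m$, a ring automorphism.'' Multiplication by the unit $x^m$ in $\widehat{V}$ is a $\widehat{V}$-\emph{module} automorphism, not a ring automorphism (it does not preserve multiplication). The commuting diagrams involving $\widehat{\eta}$ still go through — they use only that $\widehat{\eta}$ is multiplication by a fixed element of a commutative algebra and that $\widehat{\Delta}(v)=(v\otimes 1)\widehat{\Delta}(1)$ — so this is a misstatement rather than a gap, but it should be corrected before this is written up.
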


\begin{proof} 
The theorem follows if, for each diagram that corresponds to a face of the hypercube diagram, the  statement corresponding to $\del\widetilde{\del} +\widetilde{\del}\del$ for one path is equal to the statement corresponding to $\del\widetilde{\del} +\widetilde{\del}\del$ for the other path through the diagram. A direct call to similar calculations in Lee's proof of her homology (cf. \cite{LeeHomo}) cannot be made since the maps $m$, $\Delta$, $\widetilde{m}$, and $\widetilde{\Delta}$ are different than Lee's maps. 

Some diagrams automatically commute. For example, the face that corresponds to the commutative diagram, $\Delta\circ \eta = \Delta\circ \eta$, requires proving $$\Delta\circ\widetilde{\eta} + \widetilde{\Delta}\circ \eta =\Delta\circ\widetilde{\eta} + \widetilde{\Delta}\circ \eta,$$ which obviously commutes.  Besides these diagrams, the main diagrams that need to be checked (up to simple permutations like changing $\eta\ot Id$ to $Id\ot \eta$ for example) are:

\begin{enumerate}
\item $m\circ(\widetilde{m}\ot Id) + \widetilde{m}\circ (m\ot Id) = m\circ (Id \ot \widetilde{m})+\widetilde{m}\circ(Id \ot m),$
\item $(\Delta \ot Id) \circ \widetilde{\Delta} + (\widetilde{\Delta}\ot Id)\circ \Delta = (Id\ot\Delta)\circ \widetilde{\Delta} + (Id \ot \widetilde{\Delta})\circ\Delta$,
\item $\Delta \circ \widetilde{m} + \widetilde{\Delta}\circ m = (m\ot Id)\circ (Id \ot \widetilde{\Delta}) + (\widetilde{m}\ot Id)\circ(Id \ot \Delta)$,
\item $\eta\circ \widetilde{\eta} + \widetilde{\eta}\circ \eta = m\circ \widetilde{\Delta} + \widetilde{m}\circ \Delta$,
\item $m\circ(\widetilde{\eta}\ot Id) + \widetilde{m}\circ(\eta \ot Id) = \eta\circ \widetilde{m} + \widetilde{\eta}\circ m$, and
\item $\Delta\circ\widetilde{\eta} + \widetilde{\Delta}\circ\eta = (\eta \ot Id) \circ \widetilde{\Delta} + (\widetilde{\eta} \ot Id) \circ \Delta$.
\end{enumerate}

Showing each of these equations hold is a series of calculations from the definitions above and therefore left to the reader to verify. Alternatively, readers who are familiar with Bar Natan's work \cite{BN3} can work through the ideas presented in \Cref{section:UnorientedTQFTTheory} with $t>0$ to see why the theorem holds from a category theory/cobordism perspective.  In particular, \Cref{fig:cobordism-with-U} shows an example of a cobordism associated to a $\widetilde{\Delta}$ map.  We work through three examples to highlight some aspects of these calculations and why they work in general. 

The first three equations are the same equations as in Section 4.2.1 of \cite{LeeHomo} and are true for similar (but slightly different) calculations shown in Lee's paper. We highlight examples of the last three equations, starting with an example of Equation~(4) when $n=4$:

\begin{center}
\begin{tabular}{cccccc}
$n=4$ \ \  & &$\eta\circ\widetilde{\eta}$ \ \  & $\widetilde{\eta}\circ\eta$ \ \  & $m\circ \widetilde{\Delta}$ \ \  & $\widetilde{m}\circ \Delta$\ \  \\
$1$ &$\mapsto$ & $0$ & $4$ & $1$ & $3$\\
$x$ &$\mapsto$ & $0$ & $4x$ & $2x$ & $2x$\\
$x^2$ &$\mapsto$ & $4x^2$ & $0$ & $3x^2$ & $1x^2$\\
$x^3$ &$\mapsto$ & $4x^3$ & $0$ & $4x^3$ & $0$\\
\end{tabular}
\end{center}
See the maps defined in \Cref{sec:example-of-4-color-homology} to help with these calculations.  Note how the $\sqrt{4}$ coefficient on the $\eta$ and $\widetilde{\eta}$ maps in the first two columns of the table compensates for the different coefficient sums in the last two columns. This works the same way for any $n$.

For Equation~(5), we display the example where $n=3$ (cf. \Cref{sec:example-of-3-color-homology} for maps):
\begin{center}
\begin{tabular}{cccccc}
$n=3$ \ \  & & $m\circ(\widetilde{\eta}\ot Id)$ \   & $\widetilde{m}\circ(\eta\ot Id)$ \   & $\eta\circ\widetilde{m}$ \   & $\widetilde{\eta}\circ m$\   \\
$1\ot 1$ &$\mapsto$ & $0$ & $0$ & $0$ & $0$\\
$1\ot x$ &$\mapsto$ & $0$ & $0$ & $0$ & $0$\\
$x\ot 1$ &$\mapsto$ & $0$ & $0$ & $0$ & $0$\\
$x^2\ot 1$ &$\mapsto$ & $1$ & $0$ & $0$ & $1$\\
$1\ot x^2$ &$\mapsto$ & $0$ & $1$ & $0$ & $1$\\
$x\ot x$ &$\mapsto$ & $0$ & $1$ & $0$ & $1$\\
$x^2\ot x$ &$\mapsto$ & $x$ & $0$ & $x$ & $0$\\
$x\ot x^2$ &$\mapsto$ & $0$ & $x$ & $x$ & $0$\\
$x^2\ot x^2$ &$\mapsto$ & $x^2$ & $0$ & $x^2$ & $0$\\
\end{tabular}
\end{center}
In general,  the maps $\eta$ and $\widetilde{\eta}$ act as multiplication by $x^m$ modulo $x^n=1$.  This matches with the multiplication maps $m$ and $\widetilde{m}$, which together work as multiplication modulo $x^n=1$ as well.

For Equation~(6), the example when $n=4$ is given below:
\begin{center}
\begin{tabular}{cccccc}
$n=4$ \ \  & & $\Delta \circ \widetilde{\eta}$ \   & $\widetilde{\Delta}\circ\eta$ \   & $(\eta \ot Id)\circ \widetilde{\Delta}$ \   & $(\widetilde{\eta}\ot Id)\circ \Delta$\   \\[.2cm]
$1$ &$\mapsto$ & $0$ & $1\!\ot\! x^2\!+\! x^2\!\ot\! 1 \! +\! x\!\ot\! x$ & $x^2\ot 1$ & $x\ot x + 1 \ot x^2$\\[.2cm]
$x$ &$\mapsto$ & $0$ & $\substack{x\ot x^2+ x^2 \ot x\\+x^3\ot 1 + 1\ot x^3}$ & $x^2\!\ot\! x \! +\! x^3\! \ot\! 1$ & $1\! \ot\! x^3\! +\! x\! \ot\! x^2$\\[.2cm]
$x^2$ &$\mapsto$ & ${x\!\ot\! x^3}\!+\!{x^3\!\ot\! x}\!+\!{x^2 \!\ot\! x^2}$ & $0$ & $x^2\!\ot\! x^2\!+\!x^3\!\ot\! x$ & $x\!\ot\! x^3$\\[.2cm]
$x^3$ &$\mapsto$ & $x^2\!\ot\! x^3\! + \! x^3\!\ot\! x^2$ & $0$ & $x^3\!\ot\! x^2\!+\! x^2\!\ot\! x^3$ & $0$\\
\end{tabular}
\end{center}
The reason the equation holds for any $n$ is harder to see, but the elements of why it is true can be seen from this case. First, notice that the first two columns start with $\eta$ or $\widetilde{\eta}$, and therefore the result is {\em always} the image of $\Delta$ or $\widetilde{\Delta}$ for some $x^k$. This means that the result is the sum of {\em all} terms of a certain degree, say $\ell$.  The last two columns, after applying $\widetilde{\Delta}$ (third column) or $\Delta$ (fourth column), are also sums of all terms of a certain degree. Applying $\eta\ot Id$ (third column) or $\widetilde{\eta}\ot Id$  (fourth column) to these sums results in terms of degree $\ell$ as well.  

The remaining issue to see in the general case is that the sum of the third column and fourth column is the sum of {\em all} terms of degree $\ell$.  Because one begins with the computation of $\eta\ot Id$ (third column) or $\widetilde{\eta}\ot Id$ (fourth column) with a sum of all terms of a certain degree,  each of these maps exactly compensate for the other in those calculations.  For example, applying the third column operator to $x$ gives:
$$(\eta\ot Id)\circ\widetilde{\Delta}(x) = (\eta\ot Id)(1\ot x + x\ot 1) = x^2\ot x + x^3\ot 1,$$
leaving only the terms $1\ot x^3 $ and $x\ot x^2$ of the $q$-degree $1$ terms. For the fourth column operator,  $\widetilde{\eta}\ot Id$ acts on the sum of all terms of degree $-1$ terms, which picks out exactly the needed degree $1$ terms:
$$(\widetilde{\eta}\ot Id)\circ \Delta(x) = (\widetilde{\eta}\ot Id)(x^2\ot x^3 + x^3\ot x^2) = 1\ot x^3 + x\ot x^2.$$

This pattern holds for all $n$ as can be computed directly from the definitions.
\end{proof}

\subsection{A spectral sequence for $n$-color homology} In \cite{LeeHomo}, Lee introduced a deformation of the Khovanov chain complex by changing the Frobenius algebra to $V=\BQ[x]/(x^2-1)$ and working with a filtered differential.  We can do the same in this paper using the quantum-filtered operator $\widehat{\del}:=\del +\widetilde{\del}$ and $\widehat{V}=\mathbbm{k}[x]/(x^n-1)$, and defining {\em filtered $n$-color homology} to be
\begin{equation}
\widehat{CH}_n^*(\Gamma_M,\mathbbm{k}):=H(\widehat{C}^{*,*}(\Gamma_M), \widehat{\del}),
\end{equation}
where the chain complex $\widehat{C}^{*,*}(\Gamma_M)$ is the the same as $C^{*,*}(\Gamma_M)$ but with each vector space corresponding to a state constructed using $\widehat{V}=\mathbbm{k}[x]/(x^n-1)$ instead of $V=\mathbbm{k}[x]/(x^n)$.  This complex can be viewed as the original complex with additional differentials, resulting in a spectral sequence $E_r(\Gamma_M)$ from the bigraded $n$-color homology $CH_n^{*,*}(\Gamma_M,\mathbbm{k})$ to the filtered $n$-color homology $\widehat{CH}_n(\Gamma_M,\mathbbm{k})$.  

The spectral sequence is constructed as follows: The $E_0$-page is the original bigraded complex $C^{*,*}(\Gamma_M)$ with the usual differential $d_0:=\del$. Thus, the first page $E_1$ is just the bigraded $n$-color homology.  The higher differentials $d_r$ on $E_r(\Gamma_M)$ have bigrading $(1,rn)$ and are based upon $\widetilde{\del}$.

A standard theorem (cf. \cite{McCleary}) of spectral sequences then implies:

\begin{theorem} The $E_\infty$-page of this spectral sequence is isomorphic to $\widehat{CH}_n(\Gamma_M;\mathbbm{k})$.\label{thm:spectral-sequence-for-filtered-homology}
\end{theorem}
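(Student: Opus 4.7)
The plan is to realize $(\widehat{C}^{*}(\Gamma_M),\widehat{\partial})$ as a filtered chain complex and invoke the standard convergence theorem for spectral sequences of bounded filtrations. First I would verify that $\widehat{\partial}^2 = 0$: expanding $(\partial+\widetilde{\partial})^2 = \partial^2 + \partial\widetilde{\partial}+\widetilde{\partial}\partial + \widetilde{\partial}^2$ and then applying \Cref{thm:A-Chain-Complex} ($\partial^2 = 0$), \Cref{theorem:tilde-differential} ($\widetilde{\partial}^2 = 0$), and the anticommutation relation $\partial\widetilde{\partial}+\widetilde{\partial}\partial = 0$ proven just above. Strictly speaking, one must also confirm that the vector spaces underlying $C^{*,*}(\Gamma_M)$ and $\widehat{C}^{*,*}(\Gamma_M)$ may be identified (they share the same basis, differing only in how multiplication is interpreted), so that $\partial$ and $\widetilde{\partial}$ act on a common chain module; this is immediate from the definitions of the three maps $m,\Delta,\eta$ and their tilde counterparts.

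Next I would introduce the filtration. Define
\[
F^p\widehat{C}^{i}(\Gamma_M)=\bigoplus_{j\geq p}\widehat{C}^{i,j}(\Gamma_M).
\]
Since $\partial$ has bidegree $(1,0)$ and $\widetilde{\partial}$ has bidegree $(1,n)$, each summand of $\widehat{\partial}$ preserves or strictly raises the $j$-filtration, so $\widehat{\partial}(F^p\widehat{C}^{i})\subseteq F^p\widehat{C}^{i+1}$. The filtration is bounded: $\Gamma_M$ has finitely many perfect matching edges, so $\widehat{C}^*(\Gamma_M)$ is finite-dimensional in each homological degree, and there are only finitely many nonzero $j$-levels, making $F^p$ exhaustive and eventually stable in both directions.

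I would then run the standard filtered-complex machinery (e.g.\ McCleary \cite{McCleary}): the associated graded $\mathrm{gr}^p\widehat{C}^{i} = F^p\widehat{C}^i/F^{p+1}\widehat{C}^i$ inherits the differential induced by $\partial$ alone, since $\widetilde{\partial}$ raises filtration by $n\geq 1$. This identifies $(E_0,d_0)$ with the bigraded complex $(C^{*,*}(\Gamma_M),\partial)$, and therefore $E_1 \cong CH_n^{*,*}(\Gamma_M;\mathbbm{k})$ as required. The higher differentials $d_r$ are the usual ones built from $\widehat{\partial}$ restricted to filtration $p$ elements whose $\widehat{\partial}$-image lies in filtration $p+rn$; they carry the bidegree $(1,rn)$ claimed.

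Finally, the classical convergence theorem for bounded filtrations yields
\[
E_\infty^{p,q} \cong \mathrm{gr}^p H^{p+q}(\widehat{C}^*(\Gamma_M),\widehat{\partial}) = \mathrm{gr}^p\widehat{CH}_n^{p+q}(\Gamma_M;\mathbbm{k}),
\]
so the associated graded of the filtered $n$-color homology is $E_\infty$, which is the stated isomorphism after totaling over $p$. The only real content beyond bookkeeping is checking the compatibility of $\partial$ and $\widetilde{\partial}$ on the shared chain module and that the filtration behaves as claimed; since both $\partial^2=0$ and the anticommutation have already been established, the main obstacle is purely notational—carefully separating the algebra structure ($V=\mathbbm{k}[x]/(x^n)$ vs.\ $\widehat{V}=\mathbbm{k}[x]/(x^n-1)$) from the underlying graded vector space on which the filtration lives, so that the $E_0$-page is unambiguously the bigraded $n$-color complex.
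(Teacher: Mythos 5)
Your proposal is correct and takes essentially the same route as the paper: filter $\widehat{C}^{*}$ by quantum grading, note $\partial$ preserves and $\widetilde{\partial}$ strictly raises filtration, identify $E_0$ with the bigraded complex and $E_1$ with $CH_n^{*,*}$, and invoke the standard bounded-filtration convergence theorem (the paper simply cites McCleary). Your explicit remarks about identifying the underlying vector spaces of $V$ and $\widehat{V}$ and about passing from the associated graded to the total space ``after totaling over $p$'' fill in details the paper leaves implicit, but introduce no new ideas.
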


We now compute the filtered $n$-color homology of the examples of \Cref{sec:n-color-homology-examples-for-n-2-3-4} using this theorem.

\begin{example} Recall that the blowup of the Eulerian plane graph $\Gamma$ with two vertices and two edges between them in \Cref{sec:example-of-2-color-homology} is $\Gamma_2$ in \Cref{fig:theta_2_with_different_pms}. Inspecting \Cref{table:2-color-homology-of-theta-2}, the spectral sequence for this graph collapses at the $E_2$-page ($d_2$ has grading $(1,4)$). Thus, the $E_\infty$-page is the homology of the $E_1$-page with operator $d_1:=\widetilde{\del}$. The $E_1$-page is given in \Cref{table:2-color-homology-of-theta-2}. Since $d_1$ has grading $(1,2)$, the only potential nonzero map is between gradings $(1,1)$ and $(1,3)$.  In this case, $d_1[(x,0)]_1 =[\widetilde{\Delta}_2(x,0)]_1 = [1\ot x+x\ot 1]_1$. Thus, $\widehat{CH}^i_2(\Gamma,\mathbbm{k}):=\widehat{CH}^i_2(\Gamma_E^\flat,\mathbbm{k})$  is $\mathbbm{k}\oplus\mathbbm{k}$ in homological grading $i=0$ and $i=2$ as was suggested at the end of \Cref{sec:example-of-2-color-homology}.
\end{example}

\begin{example}
The bigraded $3$-color homology of the $K_{3,3}$ perfect matching graph in \Cref{fig:K33-is-zero} is given in \Cref{table:3-color-homology-of-K33} of \Cref{sec:example-of-3-color-homology}. A little bit of work shows that $d_1$ is nonzero from degree $(0,-2)$ to degree $(1,1)$ and also from degree $(0,-1)$ to degree $(1,2)$. The differentials $d_r =0$ for all $r>1$.  Hence, $\widehat{CH}^0_3(K_{3,3};\mathbbm{k})=\mathbbm{k}^6$ and $\widehat{CH}^3_3(K_{3,3};\mathbbm{k}) = \mathbbm{k}^6$. Thus, while the Penrose Formula (the $3$-color number) of $K_{3,3}$ is zero,  the filtered $3$-color homology is counting $6$ Tait colorings in degree $0$ and $6$ Tait colorings in degree 3. The sum, 12, is equal to the number of $3$-edge colorings of $K_{3,3}$, which hints at \Cref{cor:3-edge-color-equals-sum-of-3-face-color} proven later.  Note that the closed surface given by the ribbon graph  for  $K_{3,3}$ in \Cref{fig:K33-is-zero} has exactly six $3$-face colorings as can be computed by hand. 
\end{example}

\begin{example} Recall from \Cref{sec:example-of-4-color-homology} that the dimension of the bigraded $4$-color homology of the $Loop$ graph is $CH_4^{0,*}(Loop;\mathbbm{k}) = \mathbbm{k}^{12}$ and is zero otherwise. Hence, the spectral sequence collapses at the $E_1$-page and the bigraded $4$-color homology is isomorphic to the filtered $4$-color  homology.  
\end{example}

\subsection{The color basis for filtered $n$-color theory} \label{subsection:colorbasis} In the previous section, each example showed that the degree zero homology of the filtered $n$-color homology of the blowup of a ribbon graph counts the number of $n$-face colorings  for the closed surface associated to that ribbon graph. In this section, we introduce a new basis, called the color basis, that can be used to derive this fact.

For the remainder of this paper, we specialize to $\mathbbm{k}=\BC$ when talking about filtered $n$-color homology, taking $\widehat{V}=\BC[x]/(x^n-1)$ as the algebra. Furthermore, we will now only work with the blowup of a ribbon graph to get invariants of the ribbon graph itself, i.e., given a ribbon graph $\Gamma$ of a graph $G(V,E)$, 
\begin{equation}
\widehat{CH}^i_n(\Gamma;\BC) := \widehat{CH}^i_n(\Gamma^\flat_E;\BC).
\end{equation}
This restriction to the blowup is to allow us to work in the setting of coloring $2$-cells (faces) of the $2$-dimensional CW complex $\overline{\Gamma}$ of the ribbon graph $\Gamma$. However, all theorems and proofs below that employ perfect matching graphs $\Gamma_M$ continue to hold for such structures and not just blowups, i.e., the theorems are  not just statements about $\Gamma^\flat_E$.  For such theorems, the results are about coloring collections of certain cycles of the perfect matching graph $\Gamma_M$ instead, which is clearly less intuitive than coloring faces of $\overline{\Gamma}$.

The operator for filtered $n$-color  homology, $\widehat{\del}=\del+\widetilde{\del}$, can be succinctly written using the maps,
\begin{eqnarray} \widehat{m}(x^i \ot x^j) &=&  x^{i+j}, \label{eq:wide-hat-differential-m}\\
\widehat{\Delta}(x^k) &=&  \sum_{\substack{0 \leq i,j < n \\ i+j \equiv (k + 2m) \!\!\!\!  \mod n}} x^i \ot x^j, \nonumber \label{eq:wide-hat-differential-delta}\\
\widehat{\eta}(x^k)&=& \sqrt n x^{k+m}, \nonumber \label{eq:wide-hat-differential-eta}
\end{eqnarray}
for tensor powers of $\widehat{V}$  where $x^n=1$ in the algebra.

To interpret the meaning of the elements in the vector space $\widehat{V}_\alpha$ for a state $\Gamma_\alpha$, it is  advantageous to switch to a different basis.  In this basis, the elements can be thought of as coloring the circles in the state $\Gamma_\alpha$. For the all-zero state, this can then be interpreted as coloring the faces with $n$ different colors of the closed surface associated to the ribbon graph $\Gamma$: the circles in the all-zero state correspond to the boundaries of the disks glued to the ribbon graph to construct the closed surface. First, the definition:

\begin{definition} Let $n$ be a positive integer with $n>1$ and set $\lambda = e^{\frac{2\pi \mathrm{i}}{n}}$. The {\em color basis} of $\widehat{V}=\BC[x]/(x^n-1)$ is
$$c_i := \frac{1}{n}\left(1+ \lambda^i x +\lambda^{2i}x^2+ \lambda^{3i}x^3+\cdots+\lambda^{(n-1)i}x^{n-1}\right)$$
for $0\leq i \leq n-1$. \label{def:colorbasis}
\end{definition}

The $c_i$'s are the different colors of the theory.  Hence, when $n=4$, there are four colors $\{c_0, c_1, c_2, c_3\}$ for filtered $4$-color homology and so on. Also, note that choosing $\mathbbm{k}=\BC$ is now necessary to make the $c_i$'s well-defined for $n>2$ since $\lambda$ is an $n$th root of unity.

\begin{lemma}\label{lem:widehat-maps}
In the color basis, the following equations hold:
\begin{enumerate}
\item $c_i \cdot c_j =  \delta^{ij} c_j,$ hence $\widehat{m}(c_i \ot c_j) =  \delta^{ij} c_j$,
\item $\widehat{\Delta}(c_i) =  n \lambda^{-2mi} c_i \ot c_i,$
\item $\widehat{\eta}(c_i) = \sqrt n \lambda^{-mi} c_i$, and
\item $(\lambda^i x) \cdot c_i =  c_i$.
\end{enumerate}
\end{lemma}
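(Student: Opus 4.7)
\medskip

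\noindent\textbf{Proof proposal.} The plan is to tackle the four identities in an order that lets the later ones reuse information extracted from the earlier ones. Statement (4) is the cleanest place to start: expanding $(\lambda^i x)\cdot c_i$ using the definition of $c_i$ and the relation $x^n=1$ in $\widehat{V}$, one sees that the multiplication simply reindexes the sum by $k\mapsto k+1$, and $\lambda^{ni}=1$ closes the sum back to $c_i$. Thus (4) is a one-line calculation. Its content is the crucial fact that $c_i$ is an eigenvector of left multiplication by $x$ with eigenvalue $\lambda^{-i}$.

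From this I would derive (1) in a structural way. Because multiplication by $x$ has $n$ distinct eigenvalues $\lambda^{-0},\lambda^{-1},\dots,\lambda^{-(n-1)}$ and $\widehat{V}$ is $n$-dimensional, the eigenspaces are lines and the $c_i$ form a basis of eigenvectors. Since $c_i\cdot c_j$ is an eigenvector of multiplication by $x$ with \emph{both} eigenvalues $\lambda^{-i}$ and $\lambda^{-j}$, it must vanish when $i\neq j$. For $i=j$, I would verify $c_i^2=c_i$ directly: the double sum $c_i^2=\frac{1}{n^2}\sum_{k,\ell}\lambda^{(k+\ell)i}x^{k+\ell}$, regrouped by $s=k+\ell\bmod n$, produces the inner sum $\sum_{k}\lambda^{si}=n\lambda^{si}$, which gives back $c_i$. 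This is equivalent to recognizing the $c_i$'s as the primitive idempotents of the CRT decomposition $\mathbb{C}[x]/(x^n-1)\cong\prod_{i=0}^{n-1}\mathbb{C}[x]/(x-\lambda^{-i})$, but the direct computation is sufficient.

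For (2) and (3) I would simply unwind the definitions of $\widehat{\Delta}$ and $\widehat{\eta}$ from \Cref{eq:wide-hat-differential-m} on the basis $c_i$ and reindex. For (3), writing
\[
\widehat{\eta}(c_i)=\frac{\sqrt{n}}{n}\sum_{k=0}^{n-1}\lambda^{ki}x^{k+m}
\]
and substituting $\ell=k+m\bmod n$ pulls out the factor $\lambda^{-mi}$, leaving exactly $\sqrt{n}\,\lambda^{-mi}c_i$. For (2), the sum
\[
\widehat{\Delta}(c_i)=\frac{1}{n}\sum_{k=0}^{n-1}\lambda^{ki}\!\!\sum_{\substack{0\le a,b<n \\ a+b\equiv k+2m}}x^a\otimes x^b
\]
has the feature that the constraint determines $b$ uniquely once $a$ and $k$ are fixed. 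Changing variables from $k$ to $(a,b)$ via $k\equiv a+b-2m\bmod n$, one gets $\frac{\lambda^{-2mi}}{n}\sum_{a,b}\lambda^{(a+b)i}x^a\otimes x^b$, which is precisely $n\lambda^{-2mi}\,c_i\otimes c_i$.

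The main obstacle, and the only place where care is needed, is the bookkeeping in (2): one must verify that summing $k$ over $\{0,\dots,n-1\}$ against the constraint $a+b\equiv k+2m\pmod{n}$ is the same as summing freely over all pairs $(a,b)\in\{0,\dots,n-1\}^2$ after the substitution $k\equiv a+b-2m$. Because the map $k\mapsto (a,b)$ with $a$ free and $b$ forced is an $n$-to-one covering of $(a,b)$-space that matches the $n$ values of $k$, this reindexing is in fact a clean bijection once $a$ is chosen, and the shift by $-2m$ in the exponent of $\lambda$ produces the claimed $\lambda^{-2mi}$ prefactor. Everything else in the lemma is direct computation.
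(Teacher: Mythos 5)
Your proof is correct. Statements (2), (3), (4) are handled essentially as in the paper: (4) is a one-line reindexing, and (2) and (3) are direct substitutions/reindexings (the paper derives (3) from iterating (4), i.e. $x^m\cdot c_i=\lambda^{-mi}c_i$, but that is the same calculation packaged slightly differently). Your treatment of (1) is the one genuine departure. The paper derives (1) from (4) via the identity $(\lambda^{ik}x^k)\cdot c_j=\lambda^{k(i-j)}c_j$, which converts $c_i\cdot c_j$ directly into the geometric series $\frac1n\sum_{k=0}^{n-1}\lambda^{k(i-j)}c_j$ and reads off the Kronecker delta. You instead argue structurally: for $i\neq j$, $c_i\cdot c_j$ is simultaneously an eigenvector of multiplication by $x$ for two distinct eigenvalues $\lambda^{-i}$ and $\lambda^{-j}$ (using commutativity), hence zero; for $i=j$ you verify idempotence by the double-sum regrouping. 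Your remark that the $c_i$ are the primitive idempotents of $\mathbb{C}[x]/(x^n-1)\cong\prod_i\mathbb{C}[x]/(x-\lambda^{-i})$ is a clean conceptual summary that the paper does not spell out; it makes (1) essentially obvious and illuminates why this basis diagonalizes $\widehat m$, $\widehat\Delta$, and $\widehat\eta$ at once. The paper's geometric-series route is shorter on the page; yours buys a clearer structural explanation with negligible extra cost. Both are fine, and the harder bookkeeping you flag in (2) is handled correctly: the constraint $a+b\equiv k+2m$ determines $k$ uniquely from $(a,b)$, so the change of variables is a bijection and the shift by $-2m$ produces exactly the $\lambda^{-2mi}$ prefactor.
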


\begin{proof}
Equation (4) follows from the definition of $c_i$ and can be used to prove other equations. For example, Equation~(4) implies that $x^m\cdot c_i = \lambda^{-mi}c_i$, from which Equation~(3) immediately follows. 

Also, from Equation~(4), the following equation holds,
$$(\lambda^{ik}x^k)\cdot c_j = \lambda^{k(i-j)}c_j.$$
Applying each term of $c_i$ to $c_j$ using this equation gives
$$c_i\cdot c_j = \frac{1}{n}\left(1+(\lambda^{(i-j)})^1 + (\lambda^{(i-j)})^2+\cdots+(\lambda^{(i-j)})^{n-1}\right)c_j.$$
Since $\lambda$ is an $n$th root of unity, the sum of the $\lambda$-terms in the expression above is $0$ if $i\not=j$ and $n$ if $i=j$.  

Finally, Equation~(2) is computed as follows:
\begin{eqnarray*}
n\lambda^{-2mi}c_i \ot c_i & = & n\lambda^{-2mi}\left(\frac{1}{n^2} \sum_{0 \leq r,s < n} (\lambda^{ri} x^r) \ot (\lambda^{si} x^s)\right),\\
&=& \frac{1}{n}  \sum_{0 \leq r,s < n} \lambda^{(r+s-2m)i} (x^r \ot  x^s),\\
&=& \frac{1}{n}\sum_{k=0}^{n-1}  \sum_{\substack{0 \leq r,s < n \\ r+s-2m \equiv k \!\!\!\!  \mod n}} \lambda^{ki} x^r \ot x^s,\\
&=& \frac{1}{n}\sum_{k=0}^{n-1} \widehat{\Delta}(\lambda^{ki}x^k),\\
&=&\widehat{\Delta}(c_i).
\end{eqnarray*}
\end{proof}
Note that the formulas in \Cref{lem:widehat-maps} verify $\widehat{\eta}\circ\widehat{\eta} = \widehat{m}\circ \widehat{\Delta}$ for the special diagram shown in \Cref{fig:P_3_A-squared-ex}.

Using the color basis, we can think of the vector space $\widehat{V}_\alpha$ as generated by tensor products of colors $c_i$. That is, if there are $k$ circles in a state $\Gamma_\alpha$, then we can write $\widehat{V}_\alpha$ as
$$\widehat{V}_\alpha = \langle c_{i_1}\ot c_{i_2} \ot \cdots \ot c_{i_k} \ \ | \ \ 0\leq i_1,i_2,\ldots i_k <n \rangle.$$
Call each basis element $c_{i_1}\ot c_{i_2} \ot \cdots \ot c_{i_k}$ a {\em coloring  of the state}. Call a linear combination of colorings on a state and/or between states a {\em mixture}. We will often abbreviate colorings to $c_I$ where $I\in \{0,1,\dots,{n-1}\}^k$.

The color basis  simplifies calculations in $\widehat{CH}^*_n(\Gamma;\BC)$ and gives meaning to the differentials between chain groups $\widehat{C}^*(\Gamma)$.  For example, if there is a directed edge $\zeta_{\alpha\alpha'}$ between $\Gamma_\alpha$ and $\Gamma_{\alpha'}$ in the hypercube of states and this edge corresponds to the fusion of the first two circles in $\Gamma_\alpha$ into one circle in $\Gamma_{\alpha'}$, then the corresponding map in $\widehat{\del}$ is $\widehat{\del}_{\alpha\alpha'} = \widehat{m}$. Thus, by Equation~(1) of \Cref{lem:widehat-maps}, a coloring $c_{i_1}\ot c_{i_2} \ot \cdots \ot c_{i_k}$ is in the kernel of $\widehat{m}_{12}$ if and only if $c_{i_1}$ is a different color from $c_{i_2}$. Suppose further that these two circles correspond to two disks used to construct the associated closed surface of the ribbon graph.  Then to be in the kernel of $\widehat{m}$ means that these two faces must be two different colors. Next, we will use this idea to see that a coloring of the all-zero state is in the kernel of $\widehat{\del}:\widehat{C}^0(\Gamma) \ra \widehat{C}^1(\Gamma)$ for a plane graph $\Gamma$, then that coloring corresponds to a valid $n$-face coloring of the graph.

\subsection{The meaning of the classes in the filtered $n$-color homology} \label{subsection:proof-of-main-thm2} 
In this section, we show that the dimension of $\widehat{CH}^0_n(\Gamma;\BC)$ of a plane graph $\Gamma$ is equal to the number of $n$-face colorings of $\Gamma$, thus proving the main part of Statement (3) of \Cref{MainTheorem:Colorings-of-State-Graphs}.  The proof follows from investigating  the edge maps from the all-zero state to degree $1$ states of the blowup of a plane graph.  Let $G(V,E)$ be a connected planar graph and let $\Gamma$ be a plane graph of $G$, i.e., a plane ribbon graph represented by a ribbon diagram with no immersed intersections of vertices or edges. Let $\Gamma_E^\flat$ be the blowup of $\Gamma$ with perfect matching given by the edges of $E$. Label these edges $E=\{e_1,\dots, e_\ell\}$.   

The all-zero state $\Gamma_{\vec{0}}$ of the blowup of $\Gamma$ is then a set of embedded circles in the plane. These circles correspond to the discs that are glued to $\Gamma$ to form $S^2$. Hence if $\Gamma$ has $f$ faces, then there are $f$ circles in $\Gamma_{\vec{0}}$.

Assume for a moment that $\Gamma$ is bridgeless.  For each edge $e_i\in E$, denote the  state  given by $\vec{\alpha}_i=(0,\dots,0,1,0,\dots,0)$ with the ``$1$''  in the $i$-th position by $\Gamma_{\vec{\alpha}_i}$. Then for each edge $e_i\in E$ of $\Gamma$, there is an edge in the hypercube of states from $\Gamma_{\vec{0}} \ra \Gamma_{\vec{\alpha}_i}$ in the blowup of $\Gamma$.  Each $\Gamma_{\vec{\alpha}_i}$ state can be characterized by having $(f-1)$-circles where one of the circles has an immersed double point coming from the $1$-smoothing $\XDiag$ of two circles  in $\Gamma_{\vec{0}}$ that are adjacent along the edge $e_i$ in $\Gamma$. (Since $\Gamma$ is bridgeless, there are always two faces adjacent to each edge.) Hence, for all edges $e_i \in E$, the map $\widehat{\del}_{\vec{0}\vec{\alpha}_i}:\widehat{V}_{\vec{0}} \ra \widehat{V}_{\vec{\alpha}_i}$ is a multiplication map, i.e., $\widehat{\del}_{\vec{0}\vec{\alpha}_i}=\widehat{m}$.  

\begin{theorem} Let $G(E,V)$ be a connected planar graph and $\Gamma$ be a plane graph of it. Then
$$\dim \widehat{CH}_n^0(\Gamma;\BC) = \#\{\mbox{$n$-face colorings of $\Gamma$}\}.$$
\label{prop:dim-of-CH0-equals-n-face-colorings}
\end{theorem}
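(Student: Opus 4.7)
The plan is to compute $\ker \widehat{\del}^0$ directly in the color basis, using the geometric description of the degree-zero and degree-one states set up just above the theorem. Since $\widehat{C}^{-1}(\Gamma) = 0$, the homology $\widehat{CH}_n^0(\Gamma;\BC)$ equals this kernel, so it suffices to exhibit an explicit basis of $\ker \widehat{\del}^0$ in bijection with the $n$-face colorings of $\Gamma$.

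First, I would unwind the chain group. The all-zero state $\Gamma_{\vec{0}}$ of the blowup $\Gamma^\flat_E$ is a disjoint union of $f$ embedded circles in the plane, one for each face of $\Gamma$, so $\widehat{C}^0(\Gamma;\BC) = \widehat{V}_{\vec{0}} = \widehat{V}^{\otimes f}$. In the color basis of \Cref{def:colorbasis}, a basis for $\widehat{V}^{\otimes f}$ is given by tensor products $c_I := c_{i_1} \otimes \cdots \otimes c_{i_f}$ with $I = (i_1,\dots,i_f) \in \{0,\dots,n-1\}^f$, which I interpret as assignments of one of $n$ colors to each face of $\Gamma$.

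Next I would compute $\widehat{\del}^0(c_I)$ edge by edge. By the discussion preceding the theorem, for each $e_i \in E$ there is an edge of the hypercube from $\Gamma_{\vec{0}}$ to the state $\Gamma_{\vec{\alpha}_i}$, and because $\Gamma$ is bridgeless the two faces adjacent to $e_i$ are distinct, so the corresponding partial differential $\widehat{\del}_{\vec{0},\vec{\alpha}_i}$ is a fusion map, acting as $\widehat{m}$ on the two tensor factors corresponding to these two faces and as the identity on the remaining $f-2$ factors. By Equation (1) of \Cref{lem:widehat-maps}, $\widehat{m}(c_a \otimes c_b) = \delta^{ab} c_b$, so $\widehat{\del}_{\vec{0},\vec{\alpha}_i}(c_I) = 0$ precisely when the two faces adjacent to $e_i$ carry different colors in $c_I$, and otherwise it sends $c_I$ to a (single) basis coloring on $\Gamma_{\vec{\alpha}_i}$ obtained by merging the two equally-colored circles. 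Since the targets $\widehat{V}_{\vec{\alpha}_i}$ for distinct edges $e_i$ are distinct direct summands of $\widehat{C}^1(\Gamma;\BC)$, the overall signed sum $\widehat{\del}^0(c_I) = \sum_i \pm \widehat{\del}_{\vec{0},\vec{\alpha}_i}(c_I)$ vanishes if and only if each $\widehat{\del}_{\vec{0},\vec{\alpha}_i}(c_I)$ vanishes, i.e.\ if and only if $c_I$ assigns different colors to any two faces sharing an edge of $\Gamma$. This is exactly the definition of an $n$-face coloring.

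The remaining point, which is the only place one could worry about a subtlety, is to rule out cancellation between distinct colorings in a general linear combination $\omega = \sum_I a_I c_I \in \ker \widehat{\del}^0$. Fix an edge $e_i$ and let $\mathcal{B}_i$ denote the set of those $c_I$ that assign a common color to the two faces adjacent to $e_i$; on each such $c_I$, the map $\widehat{\del}_{\vec{0},\vec{\alpha}_i}$ produces a distinct basis coloring of $\Gamma_{\vec{\alpha}_i}$ (since two colorings agreeing on the two merged faces and producing the same image after merging must have agreed on every face to begin with). Hence the images $\{\widehat{\del}_{\vec{0},\vec{\alpha}_i}(c_I) : c_I \in \mathcal{B}_i\}$ are linearly independent, and $\widehat{\del}_{\vec{0},\vec{\alpha}_i}(\omega) = 0$ forces $a_I = 0$ for every $c_I \in \mathcal{B}_i$. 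Intersecting these conditions over all edges $e_i$ shows that $\ker \widehat{\del}^0$ is spanned by precisely the colorings $c_I$ in which every pair of faces sharing an edge have different colors. This yields $\dim \widehat{CH}^0_n(\Gamma;\BC) = \#\{n\text{-face colorings of }\Gamma\}$, as desired. The main (modest) obstacle is this last linear-independence step; once it is in place, everything else is a direct application of \Cref{lem:widehat-maps}.
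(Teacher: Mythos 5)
Your argument for the bridgeless case is correct and closely tracks the paper's own proof: identify $\widehat{C}^0$ with $\widehat{V}^{\otimes f}$, observe that each edge-differential is $\widehat{m}$, use $\widehat{m}(c_a\otimes c_b)=\delta^{ab}c_b$ to characterize the kernel as colorings where adjacent faces differ, and rule out cross-cancellation by noting that the edge-differential is injective on the distinct color basis elements it does not kill. Your linear-independence observation is exactly what the paper encodes in the Color Basis Lemma (\Cref{lemma:colorings-to-colorings-lemma}).

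However, there is a genuine gap: the theorem has no bridgeless hypothesis, yet you assume bridgelessness without justification (``because $\Gamma$ is bridgeless the two faces adjacent to $e_i$ are distinct'') and never return to the case where $G$ has a bridge. If $e_k$ is a bridge, the corresponding smoothing does not fuse two circles but rather introduces a double point on a single circle, so $\widehat{\del}_{\vec{0}\vec{\alpha}_k}$ is the $\widehat{\eta}$ map rather than $\widehat{m}$, and your analysis does not apply. The fix is short and is exactly what the paper supplies: by Equation (3) of \Cref{lem:widehat-maps}, $\widehat{\eta}$ is injective, so $\ker\widehat{\del}^0 = 0$, which matches the fact that a plane graph with a bridge has no $n$-face colorings (a bridge has the same face on both sides). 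You should add this case to complete the proof.
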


\begin{remark}
Note that  \Cref{prop:dim-of-CH0-equals-n-face-colorings} proves the main part of Statement (3) of \Cref{MainTheorem:Colorings-of-State-Graphs}. We present this proof here to help build the intuition needed for the more delicate proofs of Statements (1) and (2) of \Cref{MainTheorem:Colorings-of-State-Graphs} for (nonplanar) ribbon graphs. Their proofs are the content of \Cref{section:colorings-of-state-graphs}.
\end{remark}

\begin{proof}
First, assume that $\Gamma$ is bridgeless so that the discussion above the theorem holds.  Let $c_I=c_{i_1}\ot \cdots \ot c_{i_f}$ be a coloring of $\Gamma_{\vec{0}}$ where $I=(i_1,\dots,i_f)$ is a multi-index. Then $c_I$ is in $\widehat{CH}_n^0(\Gamma;\BC)$ if it is in the kernel of each $\widehat{\del}_{\vec{0}\vec{\alpha}_i}$.  This means that for every two faces adjacent to an edge $e_i \in \Gamma$, the two faces must be colored with different colors from $\{c_0,c_1,\ldots,c_{n-1}\}$. But this is exactly the condition that $c_I$ is an $n$-face coloring of $\Gamma$. Hence, $\dim \widehat{CH}_n^0(\Gamma;\BC)$ must be at least as large as the number of $n$-face colorings of $\Gamma$.

The only thing left to show is that a linear combination of colorings for $\Gamma_{\vec{0}}$ is in the kernel of $\widehat{\del}$ if and only if each coloring in the sum is in the kernel of $\widehat{\del}$. For example, it may be that  two different colorings $c_I, c_J$ of $\Gamma_{\vec{0}}$  satisfy $\widehat{\del}(c_I) = \widehat{\del}(c_J)\not=0$, and thus $c_I-c_J$ is in the kernel of $\widehat{\del}$. Such a sum is not an $n$-face coloring of $\Gamma$. However, we show that this cannot happen.

By  Equation~(1) of \Cref{lem:widehat-maps}, for a coloring $c_I$ of $\Gamma_{\vec{0}}$, $\widehat{\del}_{\vec{0}\vec{\alpha}_i}(c_I)$ must either be $0$ or a coloring of $\Gamma_{\vec{\alpha}_i}$. Also by Equation~(1) of \Cref{lem:widehat-maps}, for a coloring $d_{I'}$ of $\Gamma_{\vec{\alpha}_i}$, there is exactly one coloring $c_I$ of $\Gamma_{\vec{0}}$ such that $\widehat{\del}_i(c_I)=d_{I'}$.  Hence, if $c=\sum_I a^I c_I \in \widehat{C}^0(\Gamma)$ where $I\in \{0,\dots,n-1\}^f$ and $a^I\in \BC$, and $\widehat{\del}(c)=0$, then for each $I$  either $\widehat{\del}(c_I)=0$ or, if $\widehat{\del}(c_I)\not=0$, then $a^I=0$. (A robust version of this idea is discussed in \Cref{lemma:colorings-to-colorings-lemma}.)

Thus, $\dim \widehat{CH}_n^0(\Gamma;\BC)$ counts the number of $n$-face colorings of a plane graph $\Gamma$, which concludes the proof when $\Gamma$ is bridgeless.

If $\Gamma$ has a bridge at an edge, say $e_k$, then the edge in the hypercube $\Gamma_{\vec{0}}\ra \Gamma_{\vec{\alpha}_k}$ corresponds to the map $\widehat{\eta}_{\vec{0}\vec{\alpha}_k}:\widehat{V}_{\vec{0}} \ra \widehat{V}_{\vec{\alpha}_k}$.  Since $\widehat{\eta}_{\vec{0}\vec{\alpha}_k}$ is injective by Equation~(3) of \Cref{lem:widehat-maps}, we have that $\ker \widehat{\del}:\widehat{C}^0(\Gamma) \ra \widehat{C}^1(\Gamma)$ is trivial, which implies $\dim \widehat{CH}_n^0(\Gamma;\BC) = 0$. This equals the total number of $n$-face colorings of $\Gamma$.
\end{proof}

In the next section we will see how to generalize this theorem to each state in the hypercube of states.

\section{\Cref{MainTheorem:Colorings-of-State-Graphs}: Filtered $n$-color homology counts $n$-face colorings of state graphs}\label{section:colorings-of-state-graphs}

In this section we prove Statements (1) and (2) of \Cref{MainTheorem:Colorings-of-State-Graphs}. (The heart of Statement (3) was proved just above.)  We will use \Cref{MainTheorem:Colorings-of-State-Graphs} to characterize the Penrose polynomial described in \Cref{Theorem:mainthmPenrose}, which is proved in \Cref{section:a-TQFT-approach-to-the-Penrose-Polynomial}.

We build up to the proof of \Cref{MainTheorem:Colorings-of-State-Graphs}  over the next few subsections. The first subsection defines a metric on the chain complex that can be used to define a Laplacian and different orthogonal subspaces. The second subsection describes how colorings on different states map to each other. The third subsection proves a ``Poincar\'{e} Lemma'' for elements of the chain group $\widehat{C}^i(\Gamma)$ that fit nicely into a sub-hypercube of the total hypercube of $\Gamma^\flat_E$.  This is the main engine for proving \Cref{MainTheorem:Colorings-of-State-Graphs}. In the fourth subsection, the space of harmonic colorings of a state, $\widehat{\mathcal{CH}}_n(\Gamma_\alpha)$, is defined and shown to be nontrivial only in even homology gradings when the graph is a plane graph. The fifth subsection uses the Poincare\'{e} Lemma to prove a restatement of Statement (1) of \Cref{MainTheorem:Colorings-of-State-Graphs}. In the sixth subsection, we show how to turn the data given in a state of the hypercube into a ribbon graph of the original graph, which is used to prove Statement (2) of \Cref{MainTheorem:Colorings-of-State-Graphs}.   And in the final subsection, we generalize all theorems in this paper to {\em any} ribbon graph, oriented or not, using {\em signed perfect matching graphs}. Assembling the theorems and propositions of the earlier subsections together proves \Cref{MainTheorem:Colorings-of-State-Graphs}. 

\subsection{A metric on the chain complex and a Hodge decomposition} In Section~4.4.2 of \cite{LeeHomo}, Lee described a metric on the chain complex of a knot and used it to define a Hodge-dual operator to her version of a filtered differential.  She only mentioned the metric and dual's existence but did not do much with them in that paper.  However, a similar adjoint operator plays an essential role in this paper.

Let $G(E,V)$ be a connected graph and $\Gamma$ be a ribbon graph of $G$. Let  $\langle c_I\rangle$  be a basis of colorings for the vector space $\widehat{V}_\alpha$. As before, ${I\in \{0,\ldots, n-1\}^{k}}$ is a multi-index that is short for $c_I=c_{i_1}\ot\cdots \ot c_{i_{k}}$  where $k$ is the number of circles in the state $\Gamma_\alpha$. We continue to work with $\mathbbm{k}=\BC$ throughout this section.

\begin{definition} 
For two colorings $c_I, c_J \in \widehat{V}_\alpha$ and two complex numbers $a,b\in\BC$, define a Hermitian metric $\langle \ , \ \rangle: \widehat{V}_\alpha \ot \widehat{V}_\alpha \ra \BC$
by $$\langle ac_I, bc_J \rangle = a\bar{b}\delta_{IJ},$$ where ${\delta_{IJ}=1}$ if $I=J$ and is zero otherwise. Extend this metric to all of $C^*(\Gamma)$.
\end{definition}

We introduce this metric to define harmonic elements of homology classes. The metric defines an adjoint operator to $\widehat{\del}$ as follows: the adjoint $\widehat{\del}^*:\widehat{C}^i(\Gamma) \ra \widehat{C}^{i-1}(\Gamma)$ is the operator that satisfies
$$\langle \widehat{\del}(c),d \rangle = \langle c , \widehat{\del}^*(d)\rangle.$$ This operator can be described by edge-differentials:

\begin{lemma}
For an edge in the hypercube of states given by $\widehat{\del}_{\alpha\alpha'}:\widehat{V}_{\alpha} \ra \widehat{V}_{\alpha'}$ with $|\alpha| = |\alpha'| -1$, the adjoint of $\widehat{\del}_{\alpha\alpha'}$, denoted $\widehat{\del}^*_{\alpha\alpha'}:\widehat{V}_{\alpha'} \ra \widehat{V}_{\alpha}$, is given by the following adjoints of $\widehat{m}$, $\widehat{\Delta}$, and $\widehat{\eta}$ on colors $c_i, c_j$:
\begin{eqnarray}\label{eqn:adjoint-of-m-delta-eta}
\widehat{m}^*(c_i) &=& c_i\ot c_i\\ \nonumber
\widehat{\Delta}^*(c_i\ot c_j) &=& n \lambda^{2mi}\delta^{ij}c_i\\ \nonumber
\widehat{\eta}^*(c_i) &=& \sqrt{n} \lambda^{mi}c_i.\nonumber
\end{eqnarray}
\end{lemma}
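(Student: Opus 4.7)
The plan is to reduce the verification of these adjoint formulas to a direct matrix-coefficient computation in the color basis, using the formulas for $\widehat{m}$, $\widehat{\Delta}$, and $\widehat{\eta}$ already established in \Cref{lem:widehat-maps}. First I would observe that with respect to the Hermitian metric $\langle ac_I, bc_J\rangle = a\bar{b}\delta_{IJ}$, the set of colorings $\{c_I\}$ of any state $\Gamma_\alpha$ forms an orthonormal basis of $\widehat{V}_\alpha$. Next, since $\widehat{\del}_{\alpha\alpha'}$ is by construction a tensor product of the identity map (on the vector spaces associated to circles that do not change along $\zeta_{\alpha\alpha'}$) with one of $\widehat{m}$, $\widehat{\Delta}$, or $\widehat{\eta}$, and since identity is self-adjoint and adjoints commute with tensor products, it suffices to verify the three claimed identities on the factor where the nontrivial map acts.

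For each of the three claims, I would check the defining property $\langle \widehat{\del}(c), d\rangle = \langle c, \widehat{\del}^*(d)\rangle$ on basis pairs. For $\widehat{m}^*(c_i) = c_i\ot c_i$, substitute $\widehat{m}(c_j\ot c_k) = \delta^{jk}c_k$ and compute both sides to see they equal $\delta^{jk}\delta_{ki}$. For $\widehat{\Delta}^*(c_i\ot c_j) = n\lambda^{2mi}\delta^{ij}c_i$, substitute $\widehat{\Delta}(c_k) = n\lambda^{-2mk}c_k\ot c_k$; using that $|\lambda| = 1$ so $\overline{\lambda^{-2mk}} = \lambda^{2mk}$, both sides reduce to $n\lambda^{-2mk}\delta_{ki}\delta_{kj}$. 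The case for $\widehat{\eta}^*$ is analogous, using $\overline{\lambda^{mi}} = \lambda^{-mi}$.

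I do not expect any real obstacle: the only subtlety is keeping track of complex conjugation (since the metric is conjugate-linear in the second variable, $\lambda$-coefficients pick up a sign in the exponent when passed across the metric), and this is exactly what produces the change from $\lambda^{-2mi}$ in $\widehat{\Delta}$ to $\lambda^{2mi}$ in $\widehat{\Delta}^*$ and from $\lambda^{-mi}$ in $\widehat{\eta}$ to $\lambda^{mi}$ in $\widehat{\eta}^*$. Once this is noted, the verification is a line of algebra per case.
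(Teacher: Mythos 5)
The paper states this lemma without giving a proof, so there is nothing to compare against; your proposal fills the gap correctly. Reducing to the single tensor factor where the nontrivial map acts (using that adjoints commute with tensor products and that the identity is self-adjoint on each unaffected factor) is the right reduction, and the three verifications against the defining identity $\langle \widehat{\del}(c),d\rangle = \langle c,\widehat{\del}^*(d)\rangle$ are each a correct one-line computation. The conjugation bookkeeping is handled properly: with the metric conjugate-linear in the second slot, passing the coefficient $\lambda^{-2mk}$ (respectively $\lambda^{-mk}$) across it indeed flips the sign of the exponent, and the Kronecker deltas force $i=j=k$ so that the two exponents agree when nonzero. No gap.
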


The adjoint operator $\widehat{\del}^*:\widehat{C}^i(\Gamma)\ra \widehat{C}^{i-1}(\Gamma)$ is then the sum of such adjoints by modifying \Cref{eqn:delta-definition} appropriately.  Since the maps of \Cref{eqn:adjoint-of-m-delta-eta} are, in essence, slight modifications of the $\widehat{m}$, $\widehat{\Delta}$, and $\widehat{\eta}$ maps, $\widehat{\del}^* \circ \widehat{\del}^* =0$. Define the Laplace operator to be $\slashed{\Delta} := (\widehat{\del}+\widehat{\del}^*)^2$. Denote the space of {\em harmonic mixtures}, i.e., $c\in \widehat{C}^i(\Gamma)$ such that $\slashed{\Delta}(c)=0$, by $\widehat{\mathcal{CH}}_n^i(\Gamma)$.  Standard arguments using the metric show that $\slashed{\Delta}(c)=0$ if and only if $\widehat{\del}(c)=0$ and $\widehat{\del}^*(c)=0$. Furthermore, again by standard arguments, the space $\widehat{C}^i(\Gamma)$ can be decomposed into subspaces using a Hodge-like Theorem for $\widehat{\del}$:

\begin{lemma}[Hodge Decomposition Theorem for $\widehat{\del}$] \label{lem:hodge-decomposition-theorem} The space $\widehat{C}^i(\Gamma)$ can be decomposed as
$$\widehat{C}^i(\Gamma) = \widehat{\mathcal{CH}}_n^i(\Gamma)\oplus \widehat{\del}\widehat{C}^{i-1}(\Gamma)\oplus \widehat{\del}^*\widehat{C}^{i+1}(\Gamma).$$
In particular, each element $c\in\widehat{C}^i(\Gamma)$ can be uniquely decomposed into the sum $c=c_h +\widehat{\del}c_- + \widehat{\del}^*c_+$.
\end{lemma}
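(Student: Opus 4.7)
The plan is to prove this as a standard finite-dimensional Hodge-type decomposition, using only the facts that $\widehat{C}^*(\Gamma)$ is a finite-dimensional complex over $\BC$ equipped with a positive-definite Hermitian form, that $\widehat{\del}^2=0$, and that $\widehat{\del}^*$ is by construction the adjoint of $\widehat{\del}$ (which immediately implies $(\widehat{\del}^*)^2=0$). Everything else is formal linear algebra. I expect no serious obstacle; the one thing to be a bit careful about is to verify the identity $\slashed{\Delta}=\widehat{\del}\widehat{\del}^*+\widehat{\del}^*\widehat{\del}$ before chaining inner-product arguments.

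First I would expand
\[
\slashed{\Delta}=(\widehat{\del}+\widehat{\del}^*)^2=\widehat{\del}^2+\widehat{\del}\widehat{\del}^*+\widehat{\del}^*\widehat{\del}+(\widehat{\del}^*)^2=\widehat{\del}\widehat{\del}^*+\widehat{\del}^*\widehat{\del},
\]
and then for any $c\in\widehat{C}^i(\Gamma)$ compute
\[
\langle\slashed{\Delta}(c),c\rangle=\langle\widehat{\del}(c),\widehat{\del}(c)\rangle+\langle\widehat{\del}^*(c),\widehat{\del}^*(c)\rangle.
\]
Positive-definiteness of the Hermitian form then gives the basic equivalence
\[
c\in\widehat{\mathcal{CH}}_n^i(\Gamma)\ \Longleftrightarrow\ \widehat{\del}(c)=0\ \text{and}\ \widehat{\del}^*(c)=0.
\]

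Next I would verify pairwise orthogonality of the three summands. For $a\in\widehat{C}^{i-1}(\Gamma)$ and $b\in\widehat{C}^{i+1}(\Gamma)$, the computation $\langle\widehat{\del}(a),\widehat{\del}^*(b)\rangle=\langle\widehat{\del}^2(a),b\rangle=0$ handles $\widehat{\del}\widehat{C}^{i-1}\perp\widehat{\del}^*\widehat{C}^{i+1}$, while for $h\in\widehat{\mathcal{CH}}_n^i(\Gamma)$ the identities $\langle\widehat{\del}(a),h\rangle=\langle a,\widehat{\del}^*(h)\rangle=0$ and $\langle\widehat{\del}^*(b),h\rangle=\langle b,\widehat{\del}(h)\rangle=0$ show that $\widehat{\mathcal{CH}}_n^i$ is orthogonal to both images. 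In particular the sum on the right is automatically a direct sum, and uniqueness of the decomposition of any $c$ follows from this.

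Finally I would prove that the three subspaces span $\widehat{C}^i(\Gamma)$. Since $\widehat{C}^i(\Gamma)$ is finite-dimensional, it suffices to identify the orthogonal complement of $\widehat{\del}\widehat{C}^{i-1}(\Gamma)\oplus\widehat{\del}^*\widehat{C}^{i+1}(\Gamma)$ with $\widehat{\mathcal{CH}}_n^i(\Gamma)$. An element $c$ lies in this orthogonal complement iff $\langle c,\widehat{\del}(a)\rangle=0$ for all $a\in\widehat{C}^{i-1}(\Gamma)$ and $\langle c,\widehat{\del}^*(b)\rangle=0$ for all $b\in\widehat{C}^{i+1}(\Gamma)$; by the adjoint property these conditions read $\widehat{\del}^*(c)=0$ and $\widehat{\del}(c)=0$, which by the equivalence established in the first step is exactly $c\in\widehat{\mathcal{CH}}_n^i(\Gamma)$. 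This completes the decomposition and simultaneously gives the uniqueness clause $c=c_h+\widehat{\del}c_-+\widehat{\del}^*c_+$ as the orthogonal projection onto the three summands.
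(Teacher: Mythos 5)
Your proof is correct and is exactly the ``standard argument'' that the paper invokes without spelling out: derive $\slashed{\Delta}=\widehat{\del}\widehat{\del}^*+\widehat{\del}^*\widehat{\del}$ from $\widehat{\del}^2=0=(\widehat{\del}^*)^2$, show $\ker\slashed{\Delta}=\ker\widehat{\del}\cap\ker\widehat{\del}^*$ via $\langle\slashed{\Delta}c,c\rangle=\|\widehat{\del}c\|^2+\|\widehat{\del}^*c\|^2$, establish pairwise orthogonality, and identify the orthogonal complement of $\operatorname{im}\widehat{\del}\oplus\operatorname{im}\widehat{\del}^*$ with the harmonic space by adjunction. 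One small improvement over the paper's phrasing: you deduce $(\widehat{\del}^*)^2=0$ cleanly as the adjoint of $\widehat{\del}^2=0$, rather than the paper's more hand-wavy remark that the adjoint maps are ``slight modifications'' of $\widehat{m},\widehat{\Delta},\widehat{\eta}$.
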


Finally, this Hodge decomposition can be used to show $\widehat{CH}_n^i(\Gamma;\BC)\cong \widehat{\mathcal{CH}}_n^i(\Gamma)$.  Hence there is a unique harmonic element of a class in $\widehat{CH}_n^i(\Gamma;\BC)$ with respect to the metric $\langle \cdot, \cdot \rangle$.

\subsection{Mapping colorings to colorings lemma} We are now ready to generalize one of the ideas used in \Cref{subsection:proof-of-main-thm2} for the proof of \Cref{prop:dim-of-CH0-equals-n-face-colorings}. It is a key lemma for the proof of \Cref{MainTheorem:Colorings-of-State-Graphs} in the sense that it allows one to focus on a single coloring in a linear combination of like colorings (i.e., colorings on the same state) without worrying that one of the other colorings in that linear combination will map to the same nonzero coloring when applying a differential.  It, together with the uniqueness of the Hodge decomposition, is the reason why one can work with colorings in a vector space/basis sense and at the same time think of them as ``labelings of the faces by colors'' in a graph theory sense. 

\begin{lemma}[Color Basis Lemma] \label{lemma:colorings-to-colorings-lemma}Let $G(V,E)$ be a connected graph and $\Gamma$ a ribbon graph of $G$ represented by a ribbon diagram.  Let $\widehat{\del}_{\alpha\alpha'}:\widehat{V}_\alpha \ra \widehat{V}_{\alpha'}$ be the edge-differential ($\widehat{m}$, $\widehat{\Delta}$, $\widehat{\eta}$) corresponding to an edge in the hypercube of states of the blowup of $\Gamma$ from $\Gamma_\alpha$ to $\Gamma_{\alpha'}$.  Let $\langle c_I \rangle$ and $\langle c'_I \rangle$ be bases of colorings of $\widehat{V}_\alpha$ and $\widehat{V}_{\alpha'}$ respectively. Then  for all $c_I$, either
\begin{enumerate}
\item $\widehat{\del}_{\alpha\alpha'}(c_I)=0$, or if nonzero,
\item $\widehat{\del}_{\alpha\alpha'}(c_I)$ is a nonzero multiple of one and only one coloring $c'_I \in \widehat{V}_{\alpha'}$. 
\end{enumerate}
In particular, let $a=\sum_I a^I c_I \in \widehat{V}_\alpha$ such that $a^I\in \BC$ for all $I$. If $\widehat{\del}_{\alpha\alpha'}(a)=0$, then for each $I$,  either $\widehat{\del}_{\alpha\alpha'}(c_I)=0$ or, if $\widehat{\del}_{\alpha\alpha'}(c_I)\not=0$, then $a^I=0$.
 All statements hold for $\widehat{\del}^*_{\alpha\alpha'}: \widehat{V}_{\alpha} \ra \widehat{V}_{\alpha'}$ as well.
\end{lemma}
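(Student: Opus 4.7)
The plan is to reduce the lemma to a direct computation on the three basic maps $\widehat{m}$, $\widehat{\Delta}$, $\widehat{\eta}$. Since $\widehat{\del}_{\alpha\alpha'}$ acts as the identity on the tensor factors corresponding to circles that do not change between $\Gamma_\alpha$ and $\Gamma_{\alpha'}$, and as exactly one of $\widehat{m}$, $\widehat{\Delta}$, $\widehat{\eta}$ on the one or two factors that do change, the dichotomy (1)--(2) for $\widehat{\del}_{\alpha\alpha'}$ on a coloring $c_I$ is the tensor product of the identity on the unchanged slots with the dichotomy for the relevant map on the affected slots. Thus it suffices to check the two statements for each of $\widehat{m}$, $\widehat{\Delta}$, $\widehat{\eta}$ in isolation.

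For those three maps, I would simply invoke \Cref{lem:widehat-maps}: the formula $\widehat{m}(c_i\otimes c_j)=\delta^{ij}c_j$ is either zero (when $i\neq j$) or a nonzero multiple of the single target basis coloring $c_j$; the formula $\widehat{\Delta}(c_i)=n\lambda^{-2mi}c_i\otimes c_i$ is always a nonzero multiple of the single target basis coloring $c_i\otimes c_i$; and $\widehat{\eta}(c_i)=\sqrt{n}\lambda^{-mi}c_i$ is always a nonzero multiple of the single target coloring $c_i$. Tensoring with identities on the other factors preserves these properties, establishing (1)--(2).

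For the ``in particular'' statement, the key additional observation is that distinct input colorings whose images are nonzero are sent to \emph{distinct} target colorings. For $\widehat{m}$, the nonzero-image inputs are precisely those of the form $c_i\otimes c_i$, and two such inputs with $i\neq i'$ map to $c_i$ and $c_{i'}$, which are distinct. For $\widehat{\Delta}$ and $\widehat{\eta}$ this is immediate from the formulas. After tensoring with identities, two distinct nonzero-image inputs $c_I$ and $c_{I'}$ either differ in an unchanged slot (so the images differ there as well) or differ only in the affected slots (where we have just seen the images differ). Hence the nonzero images form a linearly independent subset of the target color basis, so $\widehat{\del}_{\alpha\alpha'}(\sum_I a^I c_I)=0$ forces $a^I=0$ for every $I$ with $\widehat{\del}_{\alpha\alpha'}(c_I)\neq 0$.

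For the adjoint statements the proof is identical, since the adjoint formulas $\widehat{m}^*(c_i)=c_i\otimes c_i$, $\widehat{\Delta}^*(c_i\otimes c_j)=n\lambda^{2mi}\delta^{ij}c_i$, and $\widehat{\eta}^*(c_i)=\sqrt{n}\lambda^{mi}c_i$ exhibit the same ``zero-or-single-basis-image'' and ``distinct-inputs-go-to-distinct-outputs'' structure on the color basis. The proof is essentially computational; the only point requiring care is the multi-index bookkeeping when verifying that distinct nonzero-image inputs under the tensored differential land on distinct target colorings, so that linear independence can be invoked in the last step.
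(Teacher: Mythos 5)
Your proposal is correct and follows exactly the route the paper indicates: the paper's proof of this lemma is the one-line statement that it ``follows from equations for the maps given in \Cref{lem:widehat-maps} and \Cref{eqn:adjoint-of-m-delta-eta},'' and what you have written is a careful expansion of that computation, including the tensor-with-identity bookkeeping and the linear-independence observation needed for the ``in particular'' clause.
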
 

This lemma implies that no two unique colorings $c_I, c_J\in \widehat{V}_\alpha$ map to the same coloring $c'_I$ of $\widehat{V}_\alpha'$ or vice versa. In this sense, the edge-differentials $\widehat{\del}_{\alpha\alpha'}$ and $\widehat{\del}^*_{\alpha\alpha'}$ are one-to-one on color basis elements that are not in their kernels.

\begin{proof}The proof of \Cref{lemma:colorings-to-colorings-lemma}  follows from equations for the maps given in \Cref{lem:widehat-maps} and \Cref{eqn:adjoint-of-m-delta-eta}.
\end{proof}

\begin{remark} There is no lemma equivalent to \Cref{lemma:colorings-to-colorings-lemma} for bigraded $n$-color homology and therefore there is no equivalent \Cref{MainTheorem:Colorings-of-State-Graphs} for bigraded $n$-color homology either.  In fact, it turns out that most homology classes in $CH^{i,j}(\Gamma;\mathbbm{k})$ are a linear combination of elements from different states (the harmonic elements of the states in a particular degree do not capture all nonzero homology classes in that degree). Thus, only once reaching the $E_\infty$-page of the spectral sequence does \Cref{MainTheorem:Colorings-of-State-Graphs}  fully hold. We will see that the homology classes that  live to the $E_\infty$-page are harmonic colorings on the different states themselves (see \Cref{defn:harmonic-coloring-of-a-state} below).
\end{remark}

Based upon \Cref{lemma:colorings-to-colorings-lemma}, we make the following definition:

\begin{definition} An edge differential $\widehat{\del}_{\alpha\alpha'}:\widehat{V}_\alpha \ra \widehat{V}_{\alpha'}$  is called a {\em nonzero mapping} for a coloring $c_{\alpha}$ if it satisfies Part (2) of  \Cref{lemma:colorings-to-colorings-lemma}, that is, $\widehat{\del}_{\alpha\alpha'}(c_{\alpha})$ is a nonzero multiple of some coloring $c_{\alpha'}\in \widehat{V}_{\alpha'}$.  Likewise, the edge differential  is also called a nonzero mapping for $c_{\alpha'}\in \widehat{V}_{\alpha'}$ if there exists a $c_\alpha\in \widehat{V}_\alpha$ such that $\widehat{\del}_{\alpha\alpha'}(c_{\alpha})$ is a nonzero multiple of $c_{\alpha'}$. An edge differential is called a {\em zero mapping} for coloring  $c_{\alpha}$  (or coloring $c_{\alpha'}$) if $c_{\alpha}$ is in the kernel (or if $c_{\alpha'}$ is not in the image).
\end{definition}

\subsection{A ``Poincar\'{e} Lemma'' for elements of $\widehat{C}^i(\Gamma)$ with nonzero mappings} \label{subsection:Poincare-Lemma} If a coloring has a nonzero mapping from or to it, then there exits a sub-hypercube of the total hypercube of $\Gamma^\flat_E$ based upon that coloring.  This subsection defines this sub-hypercube and explains why it consists of only nonzero mappings based upon the coloring. This result will be useful for the proof of \Cref{prop:direct-sum-equals-all-harmonics}, which is Statement (1) of \Cref{MainTheorem:Colorings-of-State-Graphs}.

\begin{lemma} Let $\Gamma^\flat_E$ be the blowup of a ribbon graph $\Gamma$ of a connected graph, $G(V,E)$. Suppose that $c_\alpha$ is a coloring of $\widehat{V}_\alpha$ that has a nonzero mapping to or from it.  Then there exists a sub-hypercube of states of the original hypercube of $\Gamma$ with two properties:
\begin{enumerate}
\item the sub-hypercube is nontrivial, i.e., it has more than one state, and
\item there is a unique coloring on each state of the sub-hypercube such that the coloring on one state gets mapped to (via Part (2) of \Cref{lemma:colorings-to-colorings-lemma}) the coloring on another state if and only if there exists an edge in the sub-hypercube between the two states.
\end{enumerate}
Furthermore, any other coloring in the sub-hypercube of states generates the same sub-hypercube. \label{lem:sub-hypercube-lemma}
\end{lemma}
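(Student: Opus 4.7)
The plan is to identify a set $S$ of \emph{free coordinates} at $c_\alpha$ and take the sub-hypercube to be the subcube of $\{0,1\}^{|E|}$ consisting of those states differing from $\alpha$ only in coordinates in $S$. A coordinate $k$ will be declared free precisely when the hypercube edge incident to $\alpha$ in direction $k$ sends $c_\alpha$ to a nonzero color---via $\widehat{\del}$ if $\alpha_k = 0$, and via its adjoint $\widehat{\del}^*$ if $\alpha_k=1$. By hypothesis $c_\alpha$ admits at least one such nonzero mapping, so $S\neq\emptyset$, which gives Property~(1). The coloring on every other state of the sub-hypercube is then defined by transporting $c_\alpha$ along a path of nonzero edge-differentials within the sub-hypercube; by \Cref{lemma:colorings-to-colorings-lemma} each step lands on a single color basis element up to a nonzero scalar.

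The central step is to show this transport is path-independent and equivalently that every state of the sub-hypercube has the same free-coordinate set $S$. Both should follow from the anticommutativity of $\widehat{\del}$ on the $2$-faces of the hypercube. For any two coordinates $k,l$ with $\alpha_k = \alpha_l = 0$, the identity $\widehat{\del}_{\alpha_l\alpha_{kl}}\widehat{\del}_{\alpha\alpha_l} = -\widehat{\del}_{\alpha_k\alpha_{kl}}\widehat{\del}_{\alpha\alpha_k}$ coming from $\widehat{\del}^2 = 0$, combined with the Color Basis Lemma, forces the two compositions applied to $c_\alpha$ to yield the same color basis element on $\alpha_{kl}$ whenever either is nonzero---otherwise a nonzero coloring would equal zero. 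This simultaneously ensures that $k$ stays free at $\alpha_l$ whenever $k,l\in S$, and unambiguously defines $c_{\alpha_{kl}}$. An induction on the size of the flipped coordinate subset then propagates $c_\alpha$ to a unique coloring on every state of the sub-hypercube.

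With well-definedness in hand, Property~(2) is immediate: the ``if'' direction is built in, and for the ``only if,'' a nonzero mapping $c_\beta \mapsto c_\gamma$ changes exactly one coordinate $k$ that is free at $\beta$, so by the face argument $k\in S$ and $\gamma$ lies in the sub-hypercube. The closing statement follows because $S$ is intrinsic to the sub-hypercube: the free coordinates of any state in it coincide with $S$, so any coloring on any state regenerates the same candidate sub-hypercube.

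The main obstacle I anticipate is the careful handling of \emph{mixed} $2$-faces where one of the two coordinate flips uses $\widehat{\del}$ (because $\alpha_k = 0$) while the other uses $\widehat{\del}^*$ (because $\alpha_l = 1$). The bare identity $\widehat{\del}^2 = 0$ does not cover this case, and the corresponding compatibility on color basis elements must instead be verified directly from the explicit formulas for $\widehat{m}, \widehat{\Delta}, \widehat{\eta}$ and their adjoints given in \Cref{lem:widehat-maps}. The sharply diagonal action of each such map on the color basis---each either annihilates a basis element or sends it to a nonzero multiple of a single basis element---should reduce this mixed compatibility to a small scalar computation, leaving the rest of the argument routine.
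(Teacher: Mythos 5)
Your set $S$ of free coordinates coincides with the paper's criterion (a coordinate is in the paper's ``first set'' precisely when the circles of $c_\alpha$ meeting that smoothing site all carry the same color, which by \Cref{lem:widehat-maps} and \Cref{eqn:adjoint-of-m-delta-eta} is exactly when the corresponding edge-differential or its adjoint does not kill $c_\alpha$), and your sub-hypercube is the same one. Where you diverge from the paper is in how you verify well-definedness and the stability of $S$. The paper argues combinatorially: each free smoothing site is surrounded by arcs of a single color, so toggling the smoothing there merely re-pairs monochromatic arc-ends, which neither disturbs the coloring elsewhere nor changes which other sites are monochromatic. This gives an intrinsic description of the coloring on every state of the sub-hypercube in one stroke and makes path-independence and stability of $S$ immediate, with no appeal to $\widehat{\del}^2=0$, to adjoints, or to any algebraic computation.

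Your algebraic route has two gaps. First, even in the same-direction case ($\alpha_k=\alpha_l=0$), the identity from $\widehat{\del}^2=0$ restricted to the relevant $2$-face shows only that the two compositions $\widehat{\del}_{\alpha_l\alpha_{kl}}\widehat{\del}_{\alpha\alpha_l}(c_\alpha)$ and $\widehat{\del}_{\alpha_k\alpha_{kl}}\widehat{\del}_{\alpha\alpha_k}(c_\alpha)$ agree up to sign --- it does not by itself rule out that both are zero. You still need a separate argument that the second edge-differential in each path is a nonzero mapping on the transported coloring, i.e., that $l$ is still free at $\alpha_k$. This is exactly what your induction is supposed to establish, so as written the induction is circular: $\widehat{\del}^2=0$ gives ``both or neither,'' but not ``both.'' Second, as you yourself flag, the mixed $2$-faces are not governed by $\widehat{\del}^2=0$ at all; the needed compatibility between an edge-differential at one site and an adjoint edge-differential at a disjoint site would have to be checked from the formulas case by case. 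Both gaps are filled painlessly by the paper's local color argument, which you should either adopt outright or reproduce in place of the ``should follow'' and ``should reduce to a scalar computation'' claims.
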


Call this sub-hypercube of nonzero mappings between colorings on states the {\em color hypercube of $c_\alpha$} and denote it $\mathcal{H}(c_\alpha)$. Note that if $c_\beta$ is any other coloring in the color hypercube, then $\mathcal{H}(c_\alpha)=\mathcal{H}(c_\beta)$.

\begin{proof} Let $\Gamma^\flat_E$ be represented by a ribbon diagram and set $m=|E|$.  Then  the state $\Gamma_\alpha$ is a set of (immersed) circles in the plane and $c_\alpha$ can be represented as an ``enhanced state,'' i.e., the set of circles of $\Gamma_\alpha$ with each circle labeled with a color from $\{c_0,c_1,\dots,c_{n-1}\}$. Bifurcate and relabel if necessary the $\alpha_i$'s in the state $\alpha=(\alpha_1,\alpha_2,\dots,\alpha_m)$ into two sets, $\{\alpha_1,\alpha_2,\dots,\alpha_k\}$ and $\{\alpha_{k+1},\ldots,\alpha_m\}$, according to the following rule: $\alpha_i$ belongs to the first set ($1\leq i\leq k$) if the colors of the circle(s) of $c_\alpha$ associated to the smoothing $\alpha_i$ are the same color and $\alpha_i$ belongs to the second set ($k+1\leq i \leq m$) if the colors are different. Note that the size $k$ is positive since $c_\alpha$ has a nonzero mapping to or from it.

The sub-hypercube is given by $2^k$ states starting with the ``all-zero state'' $(0,0,\dots,0,\alpha_{k+1},\dots,\alpha_m)$, taking all possible $0$- and $1$-smoothings for the first $k$-entries, and ending with the ``all-one state'' $(1,1,\dots,1,\alpha_{k+1},\dots,\alpha_m)$. For $k+1\leq i\leq m$, keep $\alpha_i$ fixed throughout.  Since the color is the same on each circle at the smoothing site of $\alpha_i$ for $1\leq i\leq k$, there is a well-defined coloring on the state regardless if $\alpha_i=0$ or $\alpha_i=1$. By the rule, the sub-hypercube of states has a unique coloring on each state and every map to another state in the sub-hypercube is a nonzero mapping. All other edges in the complement of the sub-hypercube are zero mappings with respect to the sub-hypercube colorings.

Finally, since the colorings on the states in the sub-hypercube are defined by the rule, choosing any other coloring  on one of the other states in the sub-hypercube will generate the same sub-hypercube.
\end{proof}

\begin{proposition}[A ``Poincar\'{e} Lemma''] \label{prop:Poincare-Lemma} Let $\Gamma^\flat_E$ be the blowup of a ribbon graph $\Gamma$ of a connected graph, $G(V,E)$. Let $\mathcal{H}(c_\alpha)$ be a color hypercube for the coloring $c_\alpha$ on a state $\Gamma_\alpha$ where $|\alpha|=i$.  Let $k>0$ be the size of the color hypercube $\mathcal{H}(c_\alpha)$ and $\ell$ be the number of $1$'s in $\alpha$ from $1$ to $k$ in the tuple $\alpha\in\{0,1\}^{|E|}$. Let $\omega\in\widehat{C}^i(\Gamma)$ such that $$\omega=\sum_{j=1}^{C(k,\ell)} a_j c_{\alpha_j},$$
where the $c_{\alpha_j}$'s are the colorings on the states $\Gamma_{\alpha_j}$ in the color hypercube that have the same degree as $\alpha$ (set $\alpha_1:=\alpha$) and $a_j \in \BC$. Then there exists a $\mu\in \widehat{C}^{i-1}(\Gamma)$ and $\psi\in C^{i+1}(\Gamma)$ such that $$\omega = \widehat{\del}(\mu) + \widehat{\del}^*(\psi).$$
\end{proposition}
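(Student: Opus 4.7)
The plan is to isolate a sub-complex of $\widehat{C}^*(\Gamma)$ determined by the color hypercube $\mathcal{H}(c_\alpha)$, show that this sub-complex is acyclic, and then apply the Hodge decomposition in \Cref{lem:hodge-decomposition-theorem} to conclude that the harmonic part of $\omega$ vanishes.

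First I would introduce the graded subspace $W \subseteq \widehat{C}^*(\Gamma)$ spanned by the unique colorings $c_\beta$ on the states $\Gamma_\beta$ of $\mathcal{H}(c_\alpha)$ guaranteed by \Cref{lem:sub-hypercube-lemma}. Note that $\omega$ lies in $W^i$. The next step would be to check that $W$ is stable under both $\widehat{\del}$ and $\widehat{\del}^*$. For this, I would consider any edge-differential $\widehat{\del}_{\beta\beta'}$ emanating from a state $\Gamma_\beta$ in $\mathcal{H}(c_\alpha)$. If the flipped coordinate lies among the first $k$ ``active'' positions identified in the proof of \Cref{lem:sub-hypercube-lemma}, then $\Gamma_{\beta'}$ lies in $\mathcal{H}(c_\alpha)$ and \Cref{lemma:colorings-to-colorings-lemma}(2) gives a nonzero scalar multiple of $c_{\beta'} \in W$. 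If the flipped coordinate lies among the remaining positions, then the two circles at that smoothing carry distinct colors in $c_\beta$, and the relevant formulas of \Cref{lem:widehat-maps} force $\widehat{\del}_{\beta\beta'}(c_\beta) = 0$. The analogous statement for $\widehat{\del}^*$ follows from the adjoint formulas in \Cref{eqn:adjoint-of-m-delta-eta}, which are also ``diagonal'' in the color basis.

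Next I would argue that $(W,\widehat{\del}|_W)$ is acyclic. Indexing the states of $\mathcal{H}(c_\alpha)$ by subsets $S \subseteq \{1,\dots,k\}$, we have $W \cong \bigoplus_{S} \BC \cdot c_S$, and by \Cref{lem:widehat-maps} each edge-differential acts on a single coloring as multiplication by a nonzero scalar (a pure scalar for $\widehat{m}$, a scalar multiple of $c_i \otimes c_i$ for $\widehat{\Delta}$, and a scalar multiple of $c_i$ for $\widehat{\eta}$), modified only by the sign $\sign(\zeta_{\beta\beta'}) = \pm 1$. Thus $(W,\widehat{\del}|_W)$ is isomorphic to the tensor product $\bigotimes_{j=1}^k \bigl(\BC \xrightarrow{\lambda_j} \BC\bigr)$ of $k$ acyclic two-term complexes (the signs reassemble into the standard Koszul sign convention since $\widehat{\del}^2 = 0$ on the full complex forces $\widehat{\del}|_W^2 = 0$ here as well). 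A tensor product of acyclic complexes over $\BC$ is acyclic, so $H^*(W,\widehat{\del}|_W) = 0$.

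Finally, since the color basis is orthonormal with respect to the Hermitian metric on $\widehat{C}^*(\Gamma)$, the restriction $\widehat{\del}^*|_W$ is precisely the adjoint of $\widehat{\del}|_W$ inside $(W,\langle\cdot,\cdot\rangle|_W)$. Applying the Hodge decomposition of \Cref{lem:hodge-decomposition-theorem} to the finite-dimensional acyclic complex $W$ yields
\[
W^i \;=\; \widehat{\mathcal{CH}}^i_n(W) \oplus \widehat{\del}(W^{i-1}) \oplus \widehat{\del}^*(W^{i+1}),
\]
and acyclicity forces $\widehat{\mathcal{CH}}^i_n(W) = 0$. Consequently $\omega = \widehat{\del}(\mu) + \widehat{\del}^*(\psi)$ for some $\mu \in W^{i-1} \subseteq \widehat{C}^{i-1}(\Gamma)$ and $\psi \in W^{i+1} \subseteq \widehat{C}^{i+1}(\Gamma)$, as required. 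The main obstacle will be verifying that the sign pattern inherited from $\sign(\zeta_{\beta\beta'})$ truly gives the signed cubical structure (equivalently, that the squared differential vanishes on $W$ in a way compatible with the Koszul identification); once that sign bookkeeping is organized, the acyclicity reduces to a standard fact about tensor products of two-term complexes.
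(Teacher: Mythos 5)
Your approach is correct and takes a genuinely different route from the paper's. The paper constructs $\mu$ and $\psi$ directly by writing down a linear system — one equation per state in the middle slice $W^i$ of the color hypercube, with unknowns the coefficients in $\mu$ and $\psi$ — and concludes solvability from the dimension count $C(k,\ell-1) + C(k,\ell+1) \geq C(k,\ell)$. You instead isolate the sub-complex $W$ spanned by the distinguished colorings of $\mathcal{H}(c_\alpha)$, show it is $\widehat{\del}$- and $\widehat{\del}^*$-invariant, prove acyclicity of $(W,\widehat{\del}|_W)$ via the Koszul/tensor-product (equivalently, iterated mapping cone) structure of a $k$-cube with nonvanishing edge scalars, and then invoke Hodge theory on $W$. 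What your route buys is precisely the step the paper's dimension count does not establish: having more unknowns than equations does not by itself give a solution — one still needs the coefficient matrix to have full row rank, i.e.\ the map $W^{i-1}\oplus W^{i+1}\xrightarrow{\widehat{\del}\oplus\widehat{\del}^*} W^i$ to be surjective. That surjectivity is exactly the vanishing of the harmonic space of $W$ in cube-degree $\ell$, which is what your acyclicity argument delivers. In this sense your proof supplies a rigorous justification that the paper's counting argument assumes implicitly.

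Two details you gloss over that are worth pinning down. First, invariance of $W$ under $\widehat{\del}^*$: for a state $\Gamma_\beta \in \mathcal{H}(c_\alpha)$ whose inactive coordinate $i$ has $\alpha_i = 1$, the incoming edge from the state $\Gamma_\gamma$ with $\gamma_i = 0$ is a $\widehat{\Delta}$-type map (one circle splits to two of different colors), and $\widehat{\Delta}(c_j) = n\lambda^{-2mj}c_j\otimes c_j$ never hits a coloring with distinct colors on the two resulting circles; so $\widehat{\del}^*_{\gamma\beta}(c_\beta) = 0$, but this is not the same mechanism as the outgoing $\widehat{m}$ case you cite and deserves a separate sentence. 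Second, the identification of $(W,\widehat{\del}|_W)$ with a tensor product $\bigotimes_j(\BC\xrightarrow{\lambda_j}\BC)$ requires that the edge scalar in direction $j$ be the same across all states, which it need not be a priori (the map type $\widehat{m}$, $\widehat{\Delta}$, $\widehat{\eta}$ at a given active coordinate can change with the state). This is repaired by rescaling each one-dimensional vertex space using the cocycle condition that $\widehat{\del}|_W^2 = 0$ forces on the edge scalars; alternatively, the mapping-cone-of-an-isomorphism induction avoids the tensor-product normalization entirely and gives acyclicity directly. With those two points made explicit, your argument is complete.
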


This proposition says that, as long as $\omega$ is constructed out of any linear combination of colorings in the color hypercube of the same degree as $\alpha$, then the form is not harmonic according to the Hodge decomposition theorem for $\widehat{\del}$.  It is like a Poincar\'{e} Lemma in that, if $\ell > (k+1)/2$ with $k>1$, then $\omega$ can be written as $\omega=\widehat{\del}(\mu)$ and if $\ell< (k-1)/2$, then $\omega=\widehat{\del}^*(\psi)$.  It is only in the ``middle'' column of the color hypercube where  both $\widehat{\del}(\mu)$ and $\widehat{\del}^*(\psi)$ may be needed.

\begin{proof} Form the following elements of $\widehat{C}^{i-1}(\Gamma)$ and $\widehat{C}^{i+1}(\Gamma)$ respectively:
$$
\mu = \sum_{j=1}^{C(k,\ell-1)} c_j c_{\gamma_j} \mbox{ \ \ \ \ \ and \ \ \ \ \ } \psi = \sum_{j=1}^{C(k,\ell+1)} b_j c_{\beta_j},
$$
where $c_j, b_j\in \BC$, $c_{\gamma_j}$'s are colorings on states $\Gamma_{\gamma_j}$ in the color hypercube with $|\gamma_j|=|\alpha|-1$, and $c_{\beta_j}$'s are the colorings on the states $\Gamma_{\beta_j}$ in the color hypercube with $|\beta_j|=|\alpha|+1$. 

Since the only nonzero mappings from $\widehat{C}^{i-1}(\Gamma)$ to $\widehat{C}^{i}(\Gamma)$ in $\mu$ correspond to edges in the color hypercube, we have that 
$$\widehat{\del}(\mu)|_{\Gamma_{\alpha_j}}$$
is a multiple of $c_{\alpha_j}$.  Note that there may be multiple edge-differentials to the state $\Gamma_{\alpha_j}$, but each of them will take their corresponding term in $\mu$ to a multiple of $c_{\alpha_j}$ and no other coloring.  Any edge-differential outside of the color hypercube will take the elements $c_j c_{\gamma_j}$ to zero, so these edge-differentials do not count in the calculation.

Similarly, the only nonzero mappings from $\widehat{C}^{i+1}(\Gamma)$ to $\widehat{C}^i(\Gamma)$ for colorings in $\psi$ correspond to edges in the color hypercube such that
$$\widehat{\del}^*(\psi)|_{\Gamma_{\alpha_j}},$$
and these will also a multiple of $c_{\alpha_j}$.  

Hence, for each state $\Gamma_{\alpha_j}$ in the color hypercube, we must solve the following linear equation:
$$\widehat{\del}(\mu)|_{\Gamma_{\alpha_j}} + \widehat{\del}^*(\psi)|_{\Gamma_{\alpha_j}} = a_j c_{\alpha_j},$$
for constants $a_j$ and variables $c_j$ and $b_j$.  There are $C(k,\ell)$ such equations.  On the other hand, there are $C(k,\ell-1)+C(k,\ell+1)$ number of variables. Since $$C(k,\ell-1)+C(k,\ell+1) \geq C(k,\ell),$$ there are {\em always} at least as many variables as equations, and therefore one can find a (not necessarily unique) solution.   
\end{proof}

\subsection{The harmonic colorings of a state} Next we define the harmonic colorings of a state, $\widehat{\mathcal{CH}}_n(\Gamma_\alpha)$, which can be thought of as the harmonic elements that exists only on the state $\Gamma_\alpha$. This subspace of $\widehat{\mathcal{CH}}_n^i(\Gamma)$ is the  harmonic elements of the vector space $\widehat{V}_\alpha$ that do not depend on elements of other state vector spaces in $\widehat{C}^i(\Gamma)=\oplus_{|\alpha| = i}\widehat{V}_\alpha$ to form a harmonic mixture in $\widehat{\mathcal{CH}}_n^i(\Gamma)$. 

Let $\Gamma_\alpha$ be a state of the hypercube for the blowup of a ribbon graph $\Gamma$.  Consider all states $\Gamma_{\alpha'}$ such that $|\alpha'|=|\alpha|+1$ where there is an edge in the hypercube between $\Gamma_\alpha$ and $\Gamma_{\alpha'}$.  Denote the union of these states by $\Gamma_\alpha^+ = \cup \Gamma_{\alpha'}$.  Then $\widehat{C}^{i+1}(\Gamma^+_\alpha) \subset \widehat{C}^{i+1}(\Gamma)$ is made up of the direct sum of vector spaces $\oplus V_{\alpha'}$. The restriction of the metric $\langle \ , \ \rangle$ to this subspace remains a metric.

Similarly, define $\widehat{C}^{i-1}(\Gamma_\alpha^-) \subset \widehat{C}^{i-1}(\Gamma)$ consisting of all vector spaces $\widehat{V}_{\gamma}$ such that there is an edge from $\Gamma_\gamma$ to $\Gamma_\alpha$ in the hypercube of states. 

Define $\widehat{\del}_\alpha:\widehat{V}_\alpha \ra \widehat{C}^{i+1}(\Gamma^+_\alpha)$ by taking the sum of all  differentials from  $\widehat{V}_\alpha$ to the $(|\alpha|+1)$-states. Similarly, define  $\widehat{\del}_\alpha^*: \widehat{V}_\alpha \ra \widehat{C}^{i-1}(\Gamma^-_\alpha)$ to be the sum of all nontrivial adjoint maps from $\widehat{V}_\alpha$ to $(|\alpha|-1)$-states.

\begin{definition}\label{defn:harmonic-coloring-of-a-state}
The {\em harmonic colorings of a state $\Gamma_\alpha$}, denoted $\widehat{\mathcal{CH}}_n(\Gamma_\alpha)$, is the set of elements of $\widehat{V}_\alpha$ that is in the kernel of $\widehat{\del}_\alpha$ and the kernel of $\widehat{\del}^*_\alpha$.
\end{definition}

\Cref{prop:dim-of-CH0-equals-n-face-colorings} shows that $\widehat{\mathcal{CH}}_n(\Gamma_\alpha)$ can be nonzero: for any connected ribbon graph $\Gamma$, $\widehat{\mathcal{CH}}_n(\Gamma_{\vec{0}})\cong\widehat{CH}^0_n(\Gamma)$. Thus, by \Cref{prop:dim-of-CH0-equals-n-face-colorings} and the four color theorem,  $\widehat{\mathcal{CH}}_n(\Gamma_{\vec{0}})$ is nonzero when $n\geq 4$ for all plane graphs $\Gamma$ that do not contain a bridge. This raises the natural question of when other states might have nonzero harmonic colorings.

\begin{proposition} \label{proposition:wide-hat-m-and-wide-hat-delta-maps} Let $\Gamma$ be a ribbon graph of $G(V,E)$. If $\widehat{\mathcal{CH}}_n(\Gamma_\alpha) \not=0$, then
\begin{enumerate}
\item all maps $\widehat{\del}_{\alpha\alpha'}:\widehat{V}_\alpha \ra \widehat{V}_{\alpha'}$ where $|\alpha'|=|\alpha|+1$ are $\widehat{m}$ maps, and
\item all maps $\widehat{\del}_{\alpha'\alpha}:\widehat{V}_{\alpha'} \ra \widehat{V}_{\alpha}$ where $|\alpha'|=|\alpha|-1$ are $\widehat{\Delta}$ maps.
\end{enumerate}
\end{proposition}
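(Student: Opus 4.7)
I would argue by contrapositive for each statement, exploiting the fact that the codomain of $\widehat{\del}_\alpha$ (respectively $\widehat{\del}^*_\alpha$) is the direct sum $\bigoplus_{\alpha'} \widehat{V}_{\alpha'}$ over states adjacent to $\alpha$, so that vanishing of $\widehat{\del}_\alpha(c)$ forces $\widehat{\del}_{\alpha\alpha'}(c)=0$ componentwise, and similarly for the adjoint. Thus a nonzero harmonic coloring $c\in \widehat{V}_\alpha$ must lie in the kernel of every individual edge differential out of $\Gamma_\alpha$ and every individual adjoint edge differential into $\Gamma_\alpha$.

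For Statement (1), the key observation is that among the three possible edge differentials, only $\widehat{m}$ has nontrivial kernel on the color basis. Indeed, \Cref{lem:widehat-maps}(2) shows that on a single factor $\widehat{\Delta}(c_i)=n\lambda^{-2mi}c_i\otimes c_i$, which sends distinct color basis elements to nonzero scalar multiples of distinct color basis elements, hence is injective; and \Cref{lem:widehat-maps}(3) shows $\widehat{\eta}(c_i)=\sqrt n\,\lambda^{-mi}c_i$, which is diagonal in the color basis with nonzero eigenvalues, hence injective. Tensoring with the identity on the remaining circle factors preserves injectivity. Consequently, if any $\widehat{\del}_{\alpha\alpha'}$ with $|\alpha'|=|\alpha|+1$ were a $\widehat{\Delta}$ or $\widehat{\eta}$ map, then $\widehat{\del}_{\alpha\alpha'}(c)=0$ would force $c=0$, contradicting $\widehat{\mathcal{CH}}_n(\Gamma_\alpha)\neq 0$. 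Hence all such outgoing maps must be $\widehat m$ maps.

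For Statement (2) the reasoning is dual, using \Cref{eqn:adjoint-of-m-delta-eta}: the adjoint $\widehat m^*(c_i)=c_i\otimes c_i$ is injective on the color basis (sending each color to a unique nonzero tensor), and $\widehat{\eta}^*(c_i)=\sqrt n\,\lambda^{mi}c_i$ is diagonal with nonzero eigenvalues, hence also injective; only $\widehat{\Delta}^*$ has nontrivial kernel (annihilating all $c_i\otimes c_j$ with $i\neq j$). So if any incoming edge $\widehat{\del}_{\alpha'\alpha}$ with $|\alpha'|=|\alpha|-1$ were an $\widehat m$ or $\widehat\eta$ map, its adjoint (tensored with identity on unchanged factors) would inject $\widehat{V}_\alpha$ into $\widehat{V}_{\alpha'}$, forcing $c=0$ again. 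Therefore every incoming edge to $\Gamma_\alpha$ must be a $\widehat{\Delta}$ map.

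\textbf{Main obstacle.} There is no genuine obstacle here; the entire argument is driven by the explicit formulas of \Cref{lem:widehat-maps} and \Cref{eqn:adjoint-of-m-delta-eta}. The only subtlety worth flagging is that $\widehat{\del}_{\alpha\alpha'}$ acts as one of $\widehat m,\widehat\Delta,\widehat\eta$ on the circle factor(s) affected by the smoothing change while acting as the identity on the other factors, so one must verify (trivially) that injectivity on the affected factor lifts to injectivity of the full tensor map. Given this, the proposition is immediate.
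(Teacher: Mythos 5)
Your proof is correct and follows essentially the same approach as the paper: both identify, from the explicit color-basis formulas in \Cref{lem:widehat-maps} and \Cref{eqn:adjoint-of-m-delta-eta}, that only $\widehat{m}$ (resp.\ $\widehat{\Delta}^*$) has a nontrivial kernel among outgoing (resp.\ adjoint incoming) edge maps. The paper additionally cites \Cref{lemma:colorings-to-colorings-lemma} at this point, but that lemma is itself a consequence of the same formulas, so your direct injectivity argument is an equivalent route.
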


Thus, when $\widehat{\mathcal{CH}}_n(\Gamma_\alpha) \not=0$, all ``incoming'' edges into $\Gamma_\alpha$ in the hypercube correspond to $\widehat{\Delta}$ maps and all ``outgoing'' edges out of $\Gamma_\alpha$ in the hypercube correspond to $\widehat{m}$ maps. It is possible that a state may have this property about incoming edges and outgoing edges and still have no harmonic colorings. For example, many plane graphs of many connected planar graphs do not have $3$-face colorings. Likewise, only Eulerian plane graphs $\Gamma$ have $\widehat{\mathcal{CH}}_2(\Gamma_{\vec{0}})\not=0$ out of all plane graphs.

\begin{proof} Let $\Gamma$ be represented by a ribbon diagram and consider the hypercube of the blowup of it. If $a\in\widehat{\mathcal{CH}}_n(\Gamma_\alpha)$, $a\not=0$, then $\widehat{\del}_\alpha(a)=0$, which means $\widehat{\del}_{\alpha\alpha'}(a)=0$ for all edge-differentials $\widehat{\del}_{\alpha\alpha'}:\widehat{V}_\alpha \ra \widehat{V}_{\alpha'}$ that make up $\widehat{\del}_\alpha$. The equations for $\widehat{m}$, $\widehat{\Delta}$, and $\widehat{\eta}$ in \Cref{lem:widehat-maps} together with \Cref{lemma:colorings-to-colorings-lemma} show that only $\widehat{m}$ maps can have a kernel. Hence, $\widehat{\del}_{\alpha\alpha'} = \widehat{m}$ for each edge from $\Gamma_\alpha$ to $\Gamma_\alpha'$ where $|\alpha'|=|\alpha|+1$.  
Similarly, if $a\in \ker \widehat{\del}^*_\alpha$, $a\not=0$, then by the equations in \Cref{eqn:adjoint-of-m-delta-eta} and  \Cref{lemma:colorings-to-colorings-lemma}, the only maps between states that can have kernel are $\widehat{\Delta}^*$ maps. Reversing these maps shows that incoming maps must be $\widehat{\Delta}$ maps.
\end{proof}

We now have enough machinery to show that for plane graphs, $n$-face colorings of a state only exist on states in even degrees. This proves the second part of Statement (1) of \Cref{MainTheorem:Colorings-of-State-Graphs}.  It is also the reason that taking the Euler characteristic of the filtered $n$-color homology reduces to a sum of nonnegative terms, which is needed for several other theorems described in this paper when restricting to planar graphs, including main theorems like \Cref{Theorem:mainthmPenrose}. 

\begin{proposition} Let $\Gamma$ be a plane graph of a connected graph $G(E,V)$. If $\widehat{\mathcal{CH}}_n(\Gamma_\alpha) \not=0$, then $|\alpha|$ is even. \label{proposition:even-degree-non-zero-n-face-colorings}
\end{proposition}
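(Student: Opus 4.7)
The plan is to invoke \Cref{proposition:wide-hat-m-and-wide-hat-delta-maps} to pin down the local structure of the state $\Gamma_\alpha$ under the hypothesis $\widehat{\mathcal{CH}}_n(\Gamma_\alpha)\neq 0$, and then to extract the parity of $|\alpha|$ from the Jordan curve theorem applied to the circles of $\Gamma_\alpha$. First I would observe that \Cref{proposition:wide-hat-m-and-wide-hat-delta-maps} forces every outgoing edge from $\Gamma_\alpha$ in the hypercube to be a fusion $\widehat{m}$ and every incoming edge to be a split $\widehat{\Delta}$; in particular no map from or to $\Gamma_\alpha$ is an $\widehat{\eta}$. Geometrically this says: at each $0$-smoothing site of $\alpha$, the two arcs of $\IIDiag$ lie on two different circles of $\Gamma_\alpha$ (otherwise the outgoing flip would not merge two distinct circles), and at each $1$-smoothing site of $\alpha$, the two arcs of $\XDiag$ again lie on two different circles of $\Gamma_\alpha$ (since reversing the flip merges them, i.e.,\ the forward map is a split).

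Next I would fix a ribbon diagram of the blowup $\Gamma_E^\flat$ drawn in the plane without crossings, which is possible because $\Gamma$ is plane and therefore so is $\Gamma_E^\flat$. In this drawing the state $\Gamma_\alpha$ is a system of closed curves whose only transverse self-intersections in the plane are the $|\alpha|$ crossings coming from the $\XDiag$'s at the $1$-smoothing sites; every other feature ($0$-smoothings $\IIDiag$, vertex circles of the blowup) is a crossing-free arc. The conclusion of the previous paragraph says that at every one of these $|\alpha|$ crossings the two strands belong to two different circles of $\Gamma_\alpha$. Consequently no single circle of $\Gamma_\alpha$ has a self-intersection, and each circle is a simple closed curve in the plane.

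The finishing step is a double-counting argument via the Jordan curve theorem. For any two simple closed curves $C_i,C_j$ in the plane meeting transversally, each must cross between the interior and exterior of the other at every intersection, and since it returns to its starting region, the geometric intersection number $\#(C_i\cap C_j)$ must be even. Partitioning the $|\alpha|$ crossings of $\Gamma_\alpha$ by the unordered pair $\{C_i,C_j\}$ they connect gives
\[
|\alpha| \;=\; \sum_{\{C_i,C_j\}} \#(C_i\cap C_j),
\]
a sum of even integers, hence $|\alpha|$ is even.

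I do not expect any serious obstacle. The one place where I would be careful is in Step~2, namely verifying that a plane ribbon diagram of $\Gamma_E^\flat$ can be arranged so that the only geometric crossings of $\Gamma_\alpha$ are those at the $1$-smoothing sites, and that the identification of ``fusion/split'' with ``two strands on different/same circle'' is correct at every site. The latter uses that no $\widehat{\eta}$ map appears around $\Gamma_\alpha$, which is exactly what \Cref{proposition:wide-hat-m-and-wide-hat-delta-maps} guarantees under the hypothesis $\widehat{\mathcal{CH}}_n(\Gamma_\alpha)\neq 0$, so the degenerate ``double point'' case never intervenes.
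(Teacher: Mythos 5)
Your proof is correct and uses exactly the same two ingredients as the paper's proof — \Cref{proposition:wide-hat-m-and-wide-hat-delta-maps} to control the local structure at each smoothing site, and the Jordan curve theorem to extract parity — just organized directly rather than by contradiction. The paper assumes $|\alpha|$ odd, uses Jordan to produce a circle with a self-intersection, and then observes that undoing that intersection would yield an incoming $\widehat{m}$ or $\widehat{\eta}$ edge, contradicting \Cref{proposition:wide-hat-m-and-wide-hat-delta-maps}; you instead read off from \Cref{proposition:wide-hat-m-and-wide-hat-delta-maps} that every crossing of $\Gamma_\alpha$ is between two distinct circles (so each circle is simple), and then sum the pairwise even intersection numbers. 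Your direct formulation is arguably cleaner and makes the counting explicit; the paragraph on $0$-smoothing sites is correct but unnecessary, since only the $1$-smoothing sites (the actual crossings of $\Gamma_\alpha$) enter the parity count.
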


\begin{proof} Without loss of generality, assume $\Gamma$ is represented by a ribbon diagram with no crossings, i.e., the graph is embedded in $\BR^2$ with no immersed intersections. Then the all-zero state $\Gamma_{\vec{0}}$ of the blowup of $\Gamma$ is a set of disjoint circles in the plane.  All other states $\Gamma_\alpha$ will have $|\alpha|$ $0$-smoothings $\IIDiag$ in $\Gamma_{\vec{0}}$ replaced with $1$-smoothings $\XDiag$. If $|\alpha|$ is odd, then by the Jordan Curve Theorem, $\Gamma_\alpha$ must have at least one immersed circle with an odd number of immersed intersections, i.e., it has at least one circle with at least one self-intersection.  This self-intersection corresponds to a $1$-smoothing along some edge in $\Gamma$. Undoing this $1$-smoothing  gives a state $\Gamma_\gamma$ with $|\gamma| = |\alpha|-1$.  The only way to introduce a self-intersection in a single circle when going from $\Gamma_\gamma$ to $\Gamma_\alpha$ is if the map $\widehat{\del}_{\gamma\alpha}:\widehat{V}_\gamma\ra\widehat{V}_\alpha$ is an $\widehat{m}$ map or an $\widehat{\eta}$ map. (A $\widehat{\Delta}$ map would correspond to the creation of two circles, which violates the fact that we ended with one circle with a self-intersection.) Both of these possibilities contradict \Cref{proposition:wide-hat-m-and-wide-hat-delta-maps}. Hence, if $\widehat{\mathcal{CH}}_n(\Gamma_\alpha) \not=0$, then $|\alpha|$ must be even.
\end{proof}

Note that the proposition {\em does not hold} for non-planar graphs. For example, for the ribbon graph of $K_{3,3}$ in \Cref{Fig:ribbondiagram}, the $\dim \widehat{CH}^i_4(K_{3,3};\BC)$ is for each $i\geq 0$ equal to $$0,0,24,48,24,24,48,24,0,0.$$
The $48$ $4$-face colorings for $i=3$ and $i=6$ are supported on two states with $24$ $4$-face colorings each in each of those degrees (cf. \Cref{section:table-of-examples}). The remaining nontrivial homological dimensions have $24$ $4$-face colorings supported on a single state in each degree. Furthermore, the Euler characteristic of $\widehat{CH}^i_4(K_{3,3};\BC)$ is zero, which equals what was computed in \Cref{example:K33-computation-of-Penrose-poly} for $P(\Gamma,4)$.

\subsection{The proof of Statement (1) of \Cref{MainTheorem:Colorings-of-State-Graphs}} \Cref{prop:direct-sum-equals-all-harmonics} below is one of the key propositions of this paper for understanding how the filtered $n$-color homology ``counts'' $n$-face colorings of ribbon graphs of a graph.  In this subsection, this count is about colorings of the circles in a state but later in \Cref{subsec:face-colorings-of-ribbon-graphs} we show how these immersed circles can also be thought of as ribbon graphs themselves, called state graphs.   \Cref{prop:direct-sum-equals-all-harmonics} is also the main step in proving Statement (1) of \Cref{MainTheorem:Colorings-of-State-Graphs}.

\begin{proposition} Let $\Gamma$ be a ribbon graph of a connected graph $G(V,E)$.  Then $$\widehat{\mathcal{CH}}^i_n(\Gamma) = \bigoplus_{|\alpha|=i} \widehat{\mathcal{CH}}_n(\Gamma_\alpha).$$ 
\label{prop:direct-sum-equals-all-harmonics}
\end{proposition}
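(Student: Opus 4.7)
The containment $\bigoplus_{|\alpha|=i}\widehat{\mathcal{CH}}_n(\Gamma_\alpha) \subseteq \widehat{\mathcal{CH}}^i_n(\Gamma)$ is immediate: an element of $\widehat{V}_\alpha$ killed by both $\widehat{\del}_\alpha$ and $\widehat{\del}_\alpha^*$ is automatically killed by the full operators $\widehat{\del}$ and $\widehat{\del}^*$ on $\widehat{C}^i(\Gamma)$, because the ``missing'' components of those operators land in summands disjoint from $\widehat{V}_\alpha$. All the content is in the reverse inclusion, and my plan is to reduce it to the Poincar\'{e} Lemma (\Cref{prop:Poincare-Lemma}) combined with the Color Basis Lemma (\Cref{lemma:colorings-to-colorings-lemma}) and the uniqueness of the Hodge decomposition (\Cref{lem:hodge-decomposition-theorem}).

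First, I would expand an arbitrary $\omega\in\widehat{\mathcal{CH}}^i_n(\Gamma)$ in the coloring basis inherited from $\widehat{C}^i(\Gamma)=\bigoplus_{|\alpha|=i}\widehat{V}_\alpha$ and partition the coloring basis into equivalence classes under the relation ``$c_\alpha\sim c_{\alpha'}$ whenever there is a finite chain of nonzero edge-mappings $\widehat{\del}_{\beta\beta'}$ or $\widehat{\del}^*_{\beta\beta'}$ connecting them.''  By \Cref{lem:sub-hypercube-lemma}, each nontrivial equivalence class is exactly the set of distinguished colorings on the states of a single color hypercube $\mathcal{H}(c_\alpha)$; trivial classes are singletons consisting of colorings that admit no nonzero mapping into or out of them.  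The Color Basis Lemma is precisely what is needed to show that $\widehat{\del}$ and $\widehat{\del}^*$ respect this partition, since it guarantees that every edge-differential sends a coloring to a scalar multiple of a single other coloring.  Consequently the equations $\widehat{\del}(\omega)=0$ and $\widehat{\del}^*(\omega)=0$ decouple across classes, so each projection $\omega_C$ of $\omega$ onto a class $C$ is already harmonic on its own.

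For a trivial class $C=\{c_I^\alpha\}$, the component $\omega_C$ is a scalar multiple of $c_I^\alpha$ in $\widehat{V}_\alpha$ that lies in $\ker\widehat{\del}_\alpha\cap\ker\widehat{\del}_\alpha^*$ automatically, so $\omega_C\in\widehat{\mathcal{CH}}_n(\Gamma_\alpha)$.  For a nontrivial class supported in a color hypercube $\mathcal{H}$ of size $k\geq 1$, $\omega_C$ is exactly the kind of sum handled by \Cref{prop:Poincare-Lemma}, which produces $\mu\in\widehat{C}^{i-1}(\Gamma)$ and $\psi\in\widehat{C}^{i+1}(\Gamma)$ (both supported on $\mathcal{H}$) with $\omega_C=\widehat{\del}(\mu)+\widehat{\del}^*(\psi)$.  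But $\omega_C$ is harmonic, so the Hodge decomposition of $\omega_C$ has harmonic part equal to $\omega_C$ itself while simultaneously the right-hand side shows its harmonic part is zero; by uniqueness, $\omega_C=0$.  Summing over all classes yields $\omega\in\bigoplus_{|\alpha|=i}\widehat{\mathcal{CH}}_n(\Gamma_\alpha)$.

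The main obstacle I anticipate is the bookkeeping in the first step, namely verifying rigorously that the hypercube differentials genuinely preserve the color-hypercube partition of the coloring basis.  This rests entirely on the one-to-one property of $\widehat{m},\widehat{\Delta},\widehat{\eta}$ and their adjoints in the color basis given by \Cref{lem:widehat-maps} and \Cref{lemma:colorings-to-colorings-lemma}: because the image of a coloring under any nonzero edge map is a \emph{single} coloring (up to scalar), no cancellation in $\widehat{\del}(\omega)$ can mix contributions from different color hypercubes.  Once that diagonal structure is in place, the Poincar\'{e} Lemma does essentially all the work, and the uniqueness clause of the Hodge decomposition promotes ``exact'' to ``identically zero harmonic part,'' completing the argument.
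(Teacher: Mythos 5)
Your proof is correct and follows essentially the same strategy as the paper: both hinge on the Color Basis Lemma to decouple the harmonic condition, the Poincar\'{e} Lemma to exhibit the ``mixed'' components as exact, and the uniqueness of the Hodge decomposition to kill them. The only difference is cosmetic — you organize the argument globally by partitioning the coloring basis into color-hypercube equivalence classes, whereas the paper extracts and eliminates one color hypercube at a time via the auxiliary subspace $W'$ — but the underlying mechanism and all the lemmas invoked are identical.
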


The rest of this subsection is devoted to proving this proposition.\\

Let $\Gamma$ be represented by a ribbon diagram and consider the hypercube of states of its blowup $\Gamma_E^\flat$.  First, note that $\widehat{\del}_\alpha:\widehat{V}_\alpha \ra \widehat{C}^{i+1}(\Gamma^+_\alpha)$ is the subset of maps $\widehat{\del}_{\alpha\alpha'}:\widehat{V}_\alpha \ra \widehat{V}_{\alpha'}$ that are used to define $\widehat{\del}$. The same is true for $\widehat{\del}^*_\alpha$ in reference to $\widehat{\del}^*$. Hence, $\ker (\widehat{\del}_\alpha+\widehat{\del}^*_\alpha) \subset \ker (\widehat{\del}+\widehat{\del}^*)$ and 
\begin{equation}\label{eqn:subset-of-harmonic-mixtures}
\bigoplus_{|\alpha|=i} \widehat{\mathcal{CH}}_n(\Gamma_\alpha) \subset \widehat{\mathcal{CH}}_n^i(\Gamma).
\end{equation}
We need to show \Cref{eqn:subset-of-harmonic-mixtures} is an equality. Use the metric to orthogonally decompose harmonic filtered $n$-color homology into the direct sum: 
$$
 \widehat{\mathcal{CH}}_n^i(\Gamma) = \left(\bigoplus_{|\alpha|=i} \widehat{\mathcal{CH}}_n(\Gamma_\alpha)\right) \oplus W,
$$
where $W$ are the elements $\omega \in \widehat{\mathcal{CH}}^i(\Gamma)$ such that there is a nonzero mapping to or from $\omega$ for some edge-differential.  (The first summand is the set of all $\omega$ that have no nonzero mappings, i.e., every edge differential or adjoint edge-differential maps $\omega$ in the first summand to zero.)

We will show that there is a subset $W' \subset W$ with the following two properties:  
\begin{enumerate}
\item For every $\omega\in W$ with $\omega\not=0$, there exists a $\omega' \in W'$ such that $\omega'\not=0$, $\omega'$ can be written as a linear combination of elements from a color hypercube $\mathcal{H}(c_\alpha)$ for some $c_\alpha$, and $\omega=\omega'+\nu$ for some $\nu \in \widehat{\mathcal{CH}}_n^i(\Gamma)$.
\item If $\omega' \in W'$, $\omega'\not=0$, then there exists a $\mu\in \widehat{C}^{i-1}(\Gamma)$ and $\psi\in \widehat{C}^{i+1}(\Gamma)$ such that $$\omega' = \widehat{\del}(\mu) + \widehat{\del}^*(\psi).$$ 
\end{enumerate}

These properties can be used to immediately prove that $W=\{0\}$. By (2),  $\omega'= \widehat{\del}(\mu) + \widehat{\del}^*(\psi)$ for all $\omega' \in W'$, therefore $$W'\subset \widehat{\del}\widehat{C}^{i-1}(\Gamma)\oplus \widehat{\del}^*\widehat{C}^{i+1}(\Gamma).$$
By the Hodge decomposition theorem, since $W' \subset \widehat{\mathcal{CH}}^i(\Gamma)$, $W' = \{0\}$. By (1),  every $\omega\in W$ with $\omega\not=0$ can be written as a sum with a term $\omega'\in W'$ in it with $\omega'\not=0$. Thus, the only element that can exist in $W$ is the zero element.

Property (1) follows from the Color Basis Lemma (\Cref{lemma:colorings-to-colorings-lemma}) and \Cref{lem:sub-hypercube-lemma}.  
Property (2) follows from the ``Poincar\'{e} Lemma'' (\Cref{prop:Poincare-Lemma}) once it is shown how to construct $\omega'$ from $\omega$. 

We prove Property (1). Since $\omega\in W$, there exists some nonzero edge-differential such that $$\widehat{\del}_{\alpha_1\beta_1}(\omega|_{\Gamma_{\alpha_1}}) \not=0,$$ for some coloring $c_{\alpha_1}$ on some state $\Gamma_{\alpha_1}$ where $\omega|_{\Gamma_{\alpha_1}} = a_1 c_{\alpha_1} + (\mbox{other terms})$ and $a_1\not=0$. 

\begin{remark} Suppose the nonzero mapping is {\em to} the coloring $c_{\alpha_1}$ on $\Gamma_{\alpha_1}$ {\em instead}.  If $\widehat{\del}_{\gamma_1\alpha_1}(c_{\gamma_1})$ is a nonzero multiple of $c_{\alpha_1}$, then $\widehat{\del}^*_{\gamma_1\alpha_1}(c_{\alpha_1})$ is a nonzero multiple of $c_{\gamma_1}$.   Since $\widehat{\del}^*(\omega)=0$, there must be another coloring $c_{\alpha_2}$ such that $\widehat{\del}(c_{\alpha_2})|_{\Gamma_{\gamma_1}}$ is also a nonzero multiple of $c_{\gamma_1}$. This nonzero mapping must exist to cancel out (possibly partially) the term coming from $\widehat{\del}^*_{\gamma_1\alpha_1}(c_{\alpha_1})$.  Using the ideas in the proof of \Cref{lem:sub-hypercube-lemma}, there must exists a $c_{\beta_1}$ on a state $\Gamma_{\beta_1}$ such that $$\widehat{\del}^*_{\alpha_1\beta_1}(c_{\beta_1}) \mbox{ \ \ \ \ and \ \ \ \ }  \widehat{\del}^*_{\alpha_2\beta_1}(c_{\beta_1})$$
are both nonzero and multiples of $c_{\alpha_1}$ and $c_{\alpha_2}$ respectively.  Reversing the direction of the first map, this means that $\widehat{\del}_{\alpha_1\beta_1}$ is a nonzero mapping from $c_{\alpha_1}$ to $c_{\beta_1}$. Hence, we can assume without loss of generality that the nonzero mapping is always a ``from'' map.\label{remark:wlog-nonzero-mappings}
\end{remark}

Since there is a nonzero mapping for $c_{\alpha_1}$, there exists a nontrivial color hypercube $\mathcal{H}(c_{\alpha_1})$ by  \Cref{lem:sub-hypercube-lemma}.  Following \Cref{prop:Poincare-Lemma}, let $k$ be the size of the color hypercube and $\ell$ be the number of $1$'s in $\alpha_1$ from $1$ to $k$ in the tuple $\alpha_1 \in \{0,1\}^{|E|}$. (Here assume that the first $k$ entries make up the hypercube as constructed in the proof of \Cref{lem:sub-hypercube-lemma}.)  By \Cref{remark:wlog-nonzero-mappings},  $k\geq 2$ for $c_{\alpha_1}$.

Use $\mathcal{H}(c_{\alpha_1})$ and $\omega$ to define $\omega'$.  Define $\omega' \in\widehat{C}^i(\Gamma)$ such that
$$\omega' = \sum_{j=1}^{C(k,\ell)} a_j c_{\alpha_j},$$
where $c_{\alpha_j}$'s are the colorings in $\mathcal{H}(c_{\alpha_1})$ with the same degree as $\alpha_1$ and where $a_j$ is the coefficient of the $c_{\alpha_j}$-term in $\omega$.  The color hypercube $\mathcal{H}(c_{\alpha_1})$ captures every nonzero mapping to and from each $c_{\alpha_j}$ in $\omega'$. By \Cref{lemma:colorings-to-colorings-lemma}, none of the other colorings in $\omega$ map to the same terms that the terms in $\omega'$ map to under the differential or its adjoint, or vice versa.  Thus, we must have $$\widehat{\del}(\omega')=0 \mbox{\ \ \ \ and  \ \ \ \ } \widehat{\del}^*(\omega') = 0.$$
Furthermore, $\omega'\in W$ because $c_{\alpha_1}$ has a nonzero mapping from it and $a_1\not=0$. Therefore,  $\omega-\omega'\in \widehat{\mathcal{CH}}^i_n(\Gamma)$, proving Property (1).

By construction, $\omega'$ satisfies the hypotheses of \Cref{prop:Poincare-Lemma}. Therefore $$\omega' \in\widehat{\del}\widehat{C}^{i-1}(\Gamma)\oplus \widehat{\del}^*\widehat{C}^{i+1}(\Gamma),$$ proving Property (2).

\subsection{Face colorings of ribbon graphs for all ribbon graphs of a graph} \label{subsec:face-colorings-of-ribbon-graphs} We have already seen that, for a plane graph $\Gamma$,  the all-zero state $\Gamma_{\vec{0}}$ of the blowup of $\Gamma$ is a set of embedded circles in the plane that correspond to the faces of the plane graph $\Gamma$ (cf. \Cref{subsection:proof-of-main-thm2}). In this section, we generalize this idea to all states in the hypercube for any connected ribbon graph, whether it is planar or not. Such ribbon graphs will be called state graphs. We also interpret the dimension of the harmonic colorings of a state as $n$-face colorings of the state graph (Statement (2) of \Cref{MainTheorem:Colorings-of-State-Graphs}). Finally, we establish conditions under which the state graphs are in one-to-one correspondence with all ribbon graphs up to equivalence.

Let $G(V,E)$ be a connected graph and let $\Gamma$ be a ribbon diagram of it. Let $\Gamma^\flat_E$ be the blowup of $\Gamma$. In this subsection, we define the map $$\mathcal{R}:\{\mbox{states of the hypercube of $\Gamma^\flat_E$}\} \ra \{\mbox{ribbon graphs of }G(V,E)\}.$$
To describe this map, first consider $\Gamma_{\vec{0}}$, the all-zero state of $\Gamma^\flat_E$.  As described before, this state is a set of (immersed) circles where each circle corresponds to a face of the ribbon graph.  We can build this ribbon graph explicitly from the ribbon diagram as follows: replace each vertex of $\Gamma$ with a disk and each edge with a flat ribbon, attached in the order around the disk given by the ribbon diagram (as in \Cref{Def:ribbondiagram}) to get the ribbon surface, which we will also call $\Gamma_{\vec{0}}$. The boundary of this surface, projected to the plane, will then be in one-to-one correspondence with the circles of the state $\Gamma_{\vec{0}}$, as in \Cref{fig:state-surface}.

\begin{figure}[h]
\includegraphics[scale=.9]{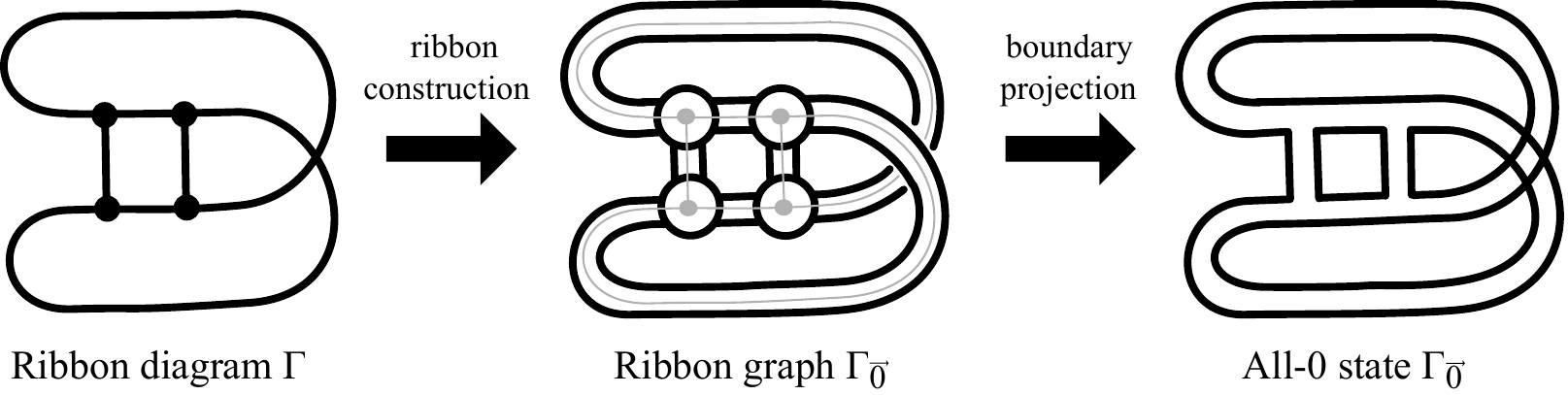}
\caption{Building the all-zero state surface $\Gamma_{\vec{0}}$ from the ribbon diagram $\Gamma$.}
\label{fig:state-surface}
\end{figure}

Define $\mathcal{R}(\Gamma_{\vec{0}})$ to be the ribbon graph, also named $\Gamma_{\vec{0}}$, where it is thought of as the embedding $i:G\ra \Gamma_{\vec{0}}$ (it is the reverse of the second arrow in \Cref{fig:state-surface}).  If $\Gamma_{\alpha}$ is another state in the hypercube with $\alpha=(\alpha_1,\dots,\alpha_{|E|})\in \{0,1\}^{|E|}$, then define the ribbon graph, which we will also continue to call $\Gamma_\alpha$, found by inserting half-twists into the ribbons of the surface $\Gamma_{\vec{0}}$ that correspond to the edges where $\alpha_i = 1$. Thus, $\mathcal{R}(\Gamma_\alpha)$ is the ribbon graph  $i:G\ra \Gamma_\alpha$. We can think of the symbol $\Gamma_\alpha$ as either a state or a ribbon graph, and  context will determine which one is intended.  For example, the boundary projection of $\Gamma_\alpha$ to the plane is the state $\Gamma_\alpha$. Unlike the surfaces associated to ribbon diagrams, the surface $\Gamma_\alpha$ is often not orientable. It cannot be represented by ribbon diagram, which is always orientable, but by a signed ribbon diagram, see \Cref{def:signedribbongraph} below.

In the literature, the ribbon graphs $\Gamma_\alpha$ are examples of what are called {\em partial Petrials} (cf. \cite{EMM}). We call them state graphs to emphasize the relationship with the states they correspond to:

\begin{definition} Let $\Gamma$ be a ribbon graph of $G(V,E)$ represented by a ribbon diagram, and let $\Gamma^\flat_E$ be the blowup  of it.  A {\em state graph $\Gamma_\alpha$ of  $\Gamma$} is the ribbon graph $i:G\ra \Gamma_\alpha$ for the state $\Gamma_\alpha$ of the hypercube of $\Gamma^\flat_E$. \label{defn:state-graph}
\end{definition}

Since states are synonymous with state graphs, one should expect the dimension of the harmonic colorings of a state,   $\dim \widehat{\mathcal{CH}}_n(\Gamma_\alpha)$, to count $n$-face colorings of the state graph. This is the content of \Cref{prop:dim-of-CHhat-state-equals-n-face-colorings} below. In fact, it is Statement (2) of \Cref{MainTheorem:Colorings-of-State-Graphs}. This is a  general version of \Cref{prop:dim-of-CH0-equals-n-face-colorings}, which was only about planar graphs. Recall that the closed associated surface $\overline{\Gamma}_\alpha$ to $\Gamma_\alpha$ is constructed by gluing disks into the boundary of $\Gamma_\alpha$ (cf. \Cref{Def:ribbongraph}).

\begin{proposition}  Let $G(E,V)$ be a connected graph and $\Gamma$ be a ribbon diagram of it. For all states $\Gamma_\alpha$ of the hypercube of states of $\Gamma^\flat_E$, the number of $n$-face colorings of the closed associated surface $\overline{\Gamma}_\alpha$ to the state graph $\Gamma_\alpha$ is equal to the dimension of the harmonic colorings of the state $\Gamma_\alpha$, i.e., 
$$\dim \widehat{\mathcal{CH}}_n(\Gamma_\alpha) = \#\{\mbox{$n$-face colorings of $\overline{\Gamma}_\alpha$}\}.$$
In particular,  $\dim \widehat{CH}^0_n(\Gamma;\BC) =\#\{\mbox{$n$-face colorings of $\overline{\Gamma}$}\}$.
\label{prop:dim-of-CHhat-state-equals-n-face-colorings}
\end{proposition}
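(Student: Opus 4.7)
The plan is to mimic the proof of Theorem~5.8, but applied locally at an arbitrary state $\Gamma_\alpha$ rather than only at $\Gamma_{\vec 0}$. The essential geometric input is that the circles comprising the state $\Gamma_\alpha$ (viewed as the projection of the boundary of the ribbon surface $\Gamma_\alpha$ of Definition~\ref{defn:state-graph}) are in bijection with the boundary components of that ribbon surface, hence with the attaching disks of $\overline{\Gamma}_\alpha$, i.e. with the faces of $\overline{\Gamma}_\alpha$. A color basis element $c_I = c_{i_1}\otimes\cdots\otimes c_{i_{k_\alpha}} \in \widehat V_\alpha$ is therefore literally an assignment of labels from $\{c_0,\dots,c_{n-1}\}$ to the faces of $\overline{\Gamma}_\alpha$, and the content of the proposition is that such a labeling lies in $\widehat{\mathcal{CH}}_n(\Gamma_\alpha)$ if and only if no two faces meeting across an edge of $G$ receive the same label.

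Next, I would translate the conditions $\widehat{\partial}_\alpha(c_I)=0$ and $\widehat{\partial}^*_\alpha(c_I)=0$ into adjacency conditions. Each edge $e\in E(G)$ contributes one hypercube edge at $\Gamma_\alpha$: an outgoing edge if $\alpha_e=0$ and an incoming edge if $\alpha_e=1$. Proposition~\ref{proposition:wide-hat-m-and-wide-hat-delta-maps} is exactly what is needed: if $\widehat{\mathcal{CH}}_n(\Gamma_\alpha)\neq 0$, outgoing edge-differentials are $\widehat m$ (fusing two distinct circles adjacent across $e$ in $\Gamma_\alpha$) and incoming edge-differentials are $\widehat\Delta$ (likewise witnessing two distinct circles of $\Gamma_\alpha$ meeting across $e$). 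Using Lemma~\ref{lem:widehat-maps}(1), $\widehat m(c_i\otimes c_j)=\delta^{ij}c_j$ vanishes iff the two fused circles carry different colors; using the adjoint formula $\widehat\Delta^*(c_i\otimes c_j)=n\lambda^{2mi}\delta^{ij}c_i$, the incoming $\widehat\Delta$-adjoint vanishes under the same condition. If instead some edge of $G$ yields a self-intersection at $\Gamma_\alpha$ (the $\widehat\eta$-case), then $\widehat\eta$ is injective and $\widehat\eta^*$ is surjective onto color basis vectors, so $\widehat{\mathcal{CH}}_n(\Gamma_\alpha)=0$; on the graph side, that same edge of $G$ identifies a face of $\overline{\Gamma}_\alpha$ with itself as an adjacency, which precludes any $n$-face coloring, so both sides of the equation are zero and we are done in this degenerate case.

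Having identified the harmonicity condition on a single color basis element $c_I$ with the condition that $c_I$ encodes an honest $n$-face coloring of $\overline{\Gamma}_\alpha$, the remaining step is the standard linear-algebra one: I would invoke the Color Basis Lemma (Lemma~\ref{lemma:colorings-to-colorings-lemma}) to conclude that for an arbitrary element $\omega=\sum_I a^I c_I\in \widehat V_\alpha$, the conditions $\widehat{\partial}_\alpha(\omega)=0$ and $\widehat{\partial}^*_\alpha(\omega)=0$ are equivalent to $a^I=0$ whenever $c_I$ fails the face-coloring condition for some edge. This gives a basis of $\widehat{\mathcal{CH}}_n(\Gamma_\alpha)$ indexed exactly by the $n$-face colorings of $\overline{\Gamma}_\alpha$, yielding the dimension formula.

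For the particular case, the chain group $\widehat{C}^0(\Gamma)$ consists of the single summand $\widehat V_{\vec 0}$ (there is no incoming differential at $\Gamma_{\vec 0}$), so by the Hodge decomposition (Lemma~\ref{lem:hodge-decomposition-theorem}) together with Proposition~\ref{prop:direct-sum-equals-all-harmonics}, $\widehat{CH}^0_n(\Gamma;\BC)\cong \widehat{\mathcal{CH}}^0_n(\Gamma)=\widehat{\mathcal{CH}}_n(\Gamma_{\vec 0})$, and the construction of $\Gamma_{\vec 0}$ in Figure~\ref{fig:state-surface} shows $\overline{\Gamma}_{\vec 0}$ is homeomorphic to $\overline{\Gamma}$ as a $2$-cell embedding of $G$ (no half twists are inserted), so the counts match. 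The main obstacle I anticipate is being careful about the $\widehat\eta$-edges and about the identification of boundary circles of $\Gamma_\alpha$ with faces of $\overline{\Gamma}_\alpha$ when half twists are present in the bands; once these bookkeeping issues are settled, the argument reduces cleanly to the two computations $\widehat m(c_i\otimes c_j)=\delta^{ij}c_j$ and $\widehat\Delta^*(c_i\otimes c_j)=n\lambda^{2mi}\delta^{ij}c_i$ combined with Lemma~\ref{lemma:colorings-to-colorings-lemma}.
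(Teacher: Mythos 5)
Your proposal is correct and follows essentially the same route as the paper's own proof: it uses \Cref{proposition:wide-hat-m-and-wide-hat-delta-maps} to force outgoing edge-differentials to be $\widehat m$ and incoming ones to be $\widehat\Delta$, reduces the harmonicity condition to the adjacency computations in \Cref{lem:widehat-maps} and the adjoint formulas, and then invokes the Color Basis Lemma (\Cref{lemma:colorings-to-colorings-lemma}) to pass from single color basis elements to general linear combinations. Your explicit treatment of the degenerate $\widehat\eta$-case (where both sides vanish) is a nice clarification that the paper leaves implicit in the hypothesis $\widehat{\mathcal{CH}}_n(\Gamma_\alpha)\neq 0$, but it is the same argument.
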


\begin{proof} This theorem follows from the proof of \Cref{prop:dim-of-CH0-equals-n-face-colorings} using  \Cref{defn:harmonic-coloring-of-a-state} together with \Cref{proposition:wide-hat-m-and-wide-hat-delta-maps} and \Cref{lemma:colorings-to-colorings-lemma}. If $\widehat{\mathcal{CH}}_n(\Gamma_\alpha)\not=0$, then the outgoing maps and the adjoint of the incoming maps to $\widehat{V}_\alpha$ are ``multiplication'' maps via \Cref{proposition:wide-hat-m-and-wide-hat-delta-maps} and \Cref{eqn:adjoint-of-m-delta-eta}.  Thus, a coloring of $\Gamma_\alpha$ that is in the kernel of $\widehat{\del}_\alpha$ and $\widehat{\del}_\alpha^*$ must represent an $n$-face coloring of $\overline{\Gamma}_\alpha$: every two faces adjacent to an edge must be colored with two different colors in order to be in the kernel of a multiplication map. \Cref{proposition:wide-hat-m-and-wide-hat-delta-maps} then shows that the only harmonic colorings of \Cref{defn:harmonic-coloring-of-a-state} are linear combinations of colorings in the kernel of these maps following the same argument as in the proof of \Cref{prop:dim-of-CH0-equals-n-face-colorings}.
\end{proof}

\begin{remark}
When a state graph is oriented, the proof above can be used to show that $\widehat{CH}^0_n(\Gamma_\alpha;\BC) \cong  \widehat{\mathcal{CH}}_n(\Gamma_\alpha)$, where $\Gamma_\alpha$ on the left hand side of the equation is a ribbon graph and $\Gamma_\alpha$ on the right hand side of the equation is a state. Furthermore, the orientation condition can be dropped by using signed ribbon diagrams discussed later in this section (cf. \Cref{theorem:all-ribbon-graphs-theorem}). Therefore, we can define $\widehat{\mathcal{CH}}_n(\Gamma):=\widehat{CH}^0_n(\Gamma;\BC)$ and call this group the {\em harmonic colorings of a ribbon graph}. Thus, the space of harmonic colorings of a ribbon graph can be thought of by itself or as part of a hypercube of states.
\end{remark}

Two state graphs of different states in the hypercube of states of $\Gamma_E^\flat$ can be equivalent to each other as ribbon graphs. Hence, $\mathcal{R}$ is usually not injective. (The simplest example is rotating the state graphs of the theta graph by $180^\circ$.) Furthermore, for a ribbon graph $\Gamma$ of a generic graph $G$, the image of $\mathcal{R}$ for it is rarely the entire set of ribbon graphs of $G$. However, for the most important class of graphs, i.e., trivalent ones, more can be said:

\begin{theorem}
Let $\Gamma$ be a ribbon graph of a connected graph $G(V,E)$ represented by a ribbon diagram.  
\begin{enumerate}
\item If $G$ is trivalent, then $\mathcal{R}$ is surjective.  
\item If the graph automorphism group of $G$ is trivial, $G$ is loopless,  and the valency of each vertex is greater than two, then $\mathcal{R}$ is injective.
\end{enumerate}\label{thm:map-R-theorem}
\end{theorem}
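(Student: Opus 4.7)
The plan is to handle the two claims separately, with surjectivity being a consequence of the flip ribbon move and injectivity requiring a careful analysis of the ribbon-move equivalence classes of state graphs.

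For claim (1), I would take any ribbon graph $\Gamma'$ of the trivalent graph $G$ and, by Proposition~\ref{Prop:ribbonrep}, represent it by a ribbon diagram $D'$. Because $G$ is trivalent, the flip move of Definition~\ref{Def:ribbonmoves} at a vertex $v$ exchanges the two cyclic orderings of the three edge-ends at $v$ at the cost of inserting a half-twist into each of the three incident bands. After applying one flip at every vertex where the cyclic ordering of $D'$ disagrees with that of $\Gamma$, the resulting diagram has rotation system identical to $\Gamma$ and differs from $\Gamma$ only by the presence or absence of half-twists on some subset of edges. That diagram is, by the construction of \Cref{defn:state-graph}, precisely some $\mathcal{R}(\Gamma_\alpha)$ with $\alpha\in\{0,1\}^E$, proving surjectivity. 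I expect this step to go through cleanly since the flip move is exactly the tool needed to normalize rotation data.

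For claim (2), suppose $\mathcal{R}(\Gamma_\alpha)$ and $\mathcal{R}(\Gamma_{\alpha'})$ are equivalent ribbon graphs. Both diagrams already carry the same rotation system as $\Gamma$, so the equivalence between them must be realized by a finite sequence of ribbon moves and planar isotopies. Planar isotopy and the non-flip ribbon moves preserve both the rotation system and the $\mathbb{F}_2$-twist vector of each edge, so only the flip moves can matter. A product of flips over a set $S\subseteq V$ reverses the cyclic ordering at each $v\in S$ and toggles the twist of each edge having exactly one endpoint in $S$. I would then argue: because every vertex has valency at least $3$, the reversed cyclic ordering at a single vertex is genuinely distinct from the original, so the rotation system can only be restored if $S=\emptyset$; because $G$ is loopless, the edge-toggle rule is well-defined (no edge has both endpoints in $\{v\}$); and because $\mathrm{Aut}(G)=1$, no relabeling of vertices can disguise a nonzero $S$ as an element of $B\leq\mathbb{F}_2^E$. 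Hence the only admissible sequence of flips is trivial, and $\alpha=\alpha'$.

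The main obstacle is the second part, and specifically the step that rules out subtler ribbon-move sequences in which flips and other moves conspire to restore the rotation system globally while modifying twists. Making this watertight requires a rotation-system-plus-twist classification of ribbon diagrams up to ribbon equivalence (essentially a repackaging of \Cref{Thm:ribbons}), after which the hypotheses of trivial automorphism group, bridgelessness of loops, and minimum valency~$3$ exactly eliminate the three ways this classification can fail to be injective on the subspace $(\alpha,\vec 0)$. Once that classification is in hand, both (1) and (2) follow as described; without it, one is forced into a combinatorial audit of each ribbon move in Definition~\ref{Def:ribbonmoves} and its effect on the twist vector $\alpha$.
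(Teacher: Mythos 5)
Your surjectivity argument is essentially the paper's: the key observation that reversing the cyclic order at a trivalent vertex is realized by inserting half-twists into the three incident bands (with doubled twists cancelling) is exactly what the paper uses via its \Cref{fig:R-is-surjective-2}. Two small issues: the operation you invoke is not one of the ribbon moves of \Cref{Def:ribbonmoves} (those relate diagrams of \emph{equivalent} ribbon graphs) but rather the vertex-flip for signed ribbon diagrams described in \Cref{subsection:non-orientable-surfaces}; and since you appeal only to \Cref{Prop:ribbonrep}, which covers \emph{oriented} ribbon graphs, you should add the short extra step the paper includes for non-orientable ribbon graphs, which arise from further half-twists and collapse to a state graph the same way.

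For injectivity your route is genuinely different from the paper's. The paper fixes the graph via $\mathrm{Aut}(G)=1$ and then argues directly from the face-path structure: adding a half-twist at $e_i$ swaps the continuations $b$ and $d$ of the two face walks through $e_i$, and preserving face paths forces $b=d$, which only happens at a loop or a valency-$2$ vertex (Lemma 4.1.2 of \cite{MT}). You instead appeal to the classification of embeddings by signed rotation systems modulo vertex flips. That route can work, and you rightly flag that the classification itself isn't supplied by \Cref{Thm:ribbons} (which covers only oriented diagrams) and would need to be cited from, e.g., \cite{MT}.

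There is, however, a genuine error in how you locate the looplessness hypothesis. You say looplessness is needed so "the edge-toggle rule is well-defined," but toggling is perfectly well-defined for a loop (both endpoints lie in $\{v\}$, so the sign of the loop is toggled twice and unchanged). The real reason looplessness is needed is upstream, in your claim that "the reversed cyclic ordering at a single vertex is genuinely distinct from the original, so the rotation system can only be restored if $S=\emptyset$." If $v$ carries a loop $e$, then the swap of the two half-edges of $e$ is the identity on $(V,E)$ — hence invisible to $\mathrm{Aut}(G)=1$ — yet it conjugates $\pi_v$ to its reverse. Concretely, with $\pi_v=(e^+,e^-,f)$ at a valency-$3$ vertex, flipping $v$ gives $(e^+,f,e^-)$, and the half-edge swap $e^+\leftrightarrow e^-$ returns this to $(e^+,e^-,f)$, while the sign of $f$ has been toggled. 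So without looplessness, a nonempty $S$ \emph{can} restore the rotation system while changing the twist vector, and your deduction that $S=\emptyset$ fails. The valency $>2$ hypothesis handles the self-inverse cyclic orders on two elements; looplessness handles the half-edge-swap symmetry; and $\mathrm{Aut}(G)=1$ lets you assume the identity on vertices and edges in the first place. As written, your proof would let a loop slip through. The paper's face-path argument catches the loop case explicitly (its case (1)); your rotation-system argument must catch it at the $S=\emptyset$ step.
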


\begin{proof}
For Statement (1), first we show that $\mathcal{R}$ is onto the set of oriented ribbon graphs of $G(V,E)$.  Recall that oriented ribbon graphs are represented by ribbon diagrams where the edges are attached in a specific cyclic order around each vertex. In the case of trivalent graphs, there are only two possible cyclic orders for attaching the three edges to each vertex of a ribbon diagram, as in shown in \Cref{fig:R-is-surjective-1}. 

\begin{figure}[h]
\includegraphics[scale=.17]{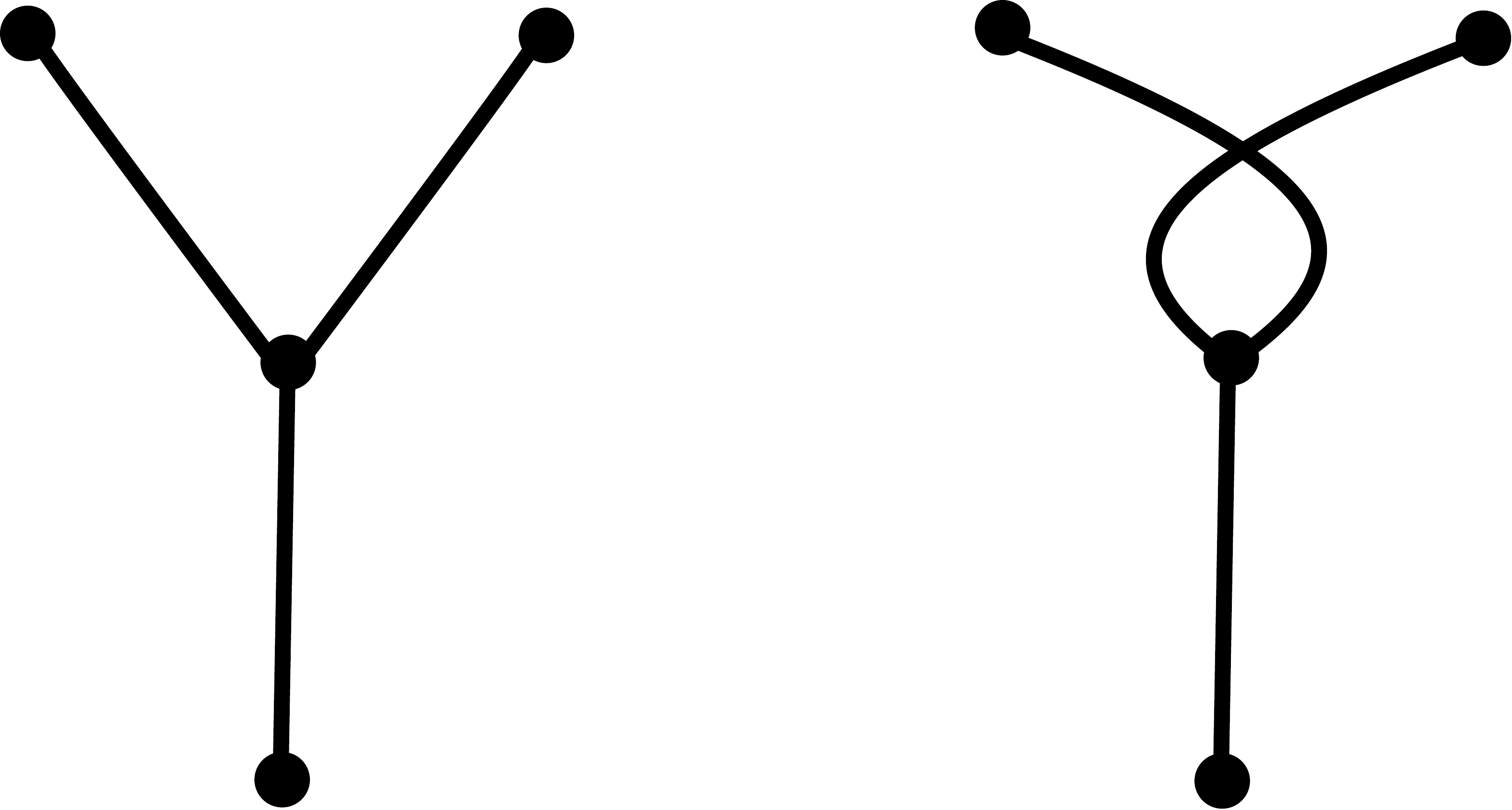}
\caption{The two cyclic orders of edges at a vertex for a ribbon diagram.}
\label{fig:R-is-surjective-1}
\end{figure}

Thus, after fixing an initial ribbon diagram, there are $2^{|V|}$ number of possibly different ribbon diagrams given by choosing one of the two cyclic orders at each vertex---either by keeping it the same as the initial ribbon diagram or switching to the other cyclic order. Let $\Gamma$ be the initial ribbon diagram. The state graph $\Gamma_{\vec{0}}$ of its blowup is the surface for this ribbon diagram, i.e., $i:G\ra \Gamma_{\vec{0}}$.   Let $\Gamma'$ be represented by one of the other $2^{|V|}$  ribbon diagrams.  This ribbon diagram can be realized as state graph of the hypercube of states for $\Gamma^\flat_E$ as follows: For each vertex of $\Gamma'$, if the edges of the ribbon diagram $\Gamma'$ are attached in the same cyclic order as $\Gamma$ as in the left hand picture in \Cref{fig:R-is-surjective-1}, then do nothing. If the edges at the vertex are attached using the other cyclic order when compared to $\Gamma$, then insert a half-twist into each of the three attaching ribbons of $\Gamma_{\vec{0}}$ at that vertex. The three half-twisted ribbon graph  is then equivalent to the picture on the right hand side of \Cref{fig:R-is-surjective-1}, see \Cref{fig:R-is-surjective-2}.

\begin{figure}[h]
\includegraphics[scale=.2]{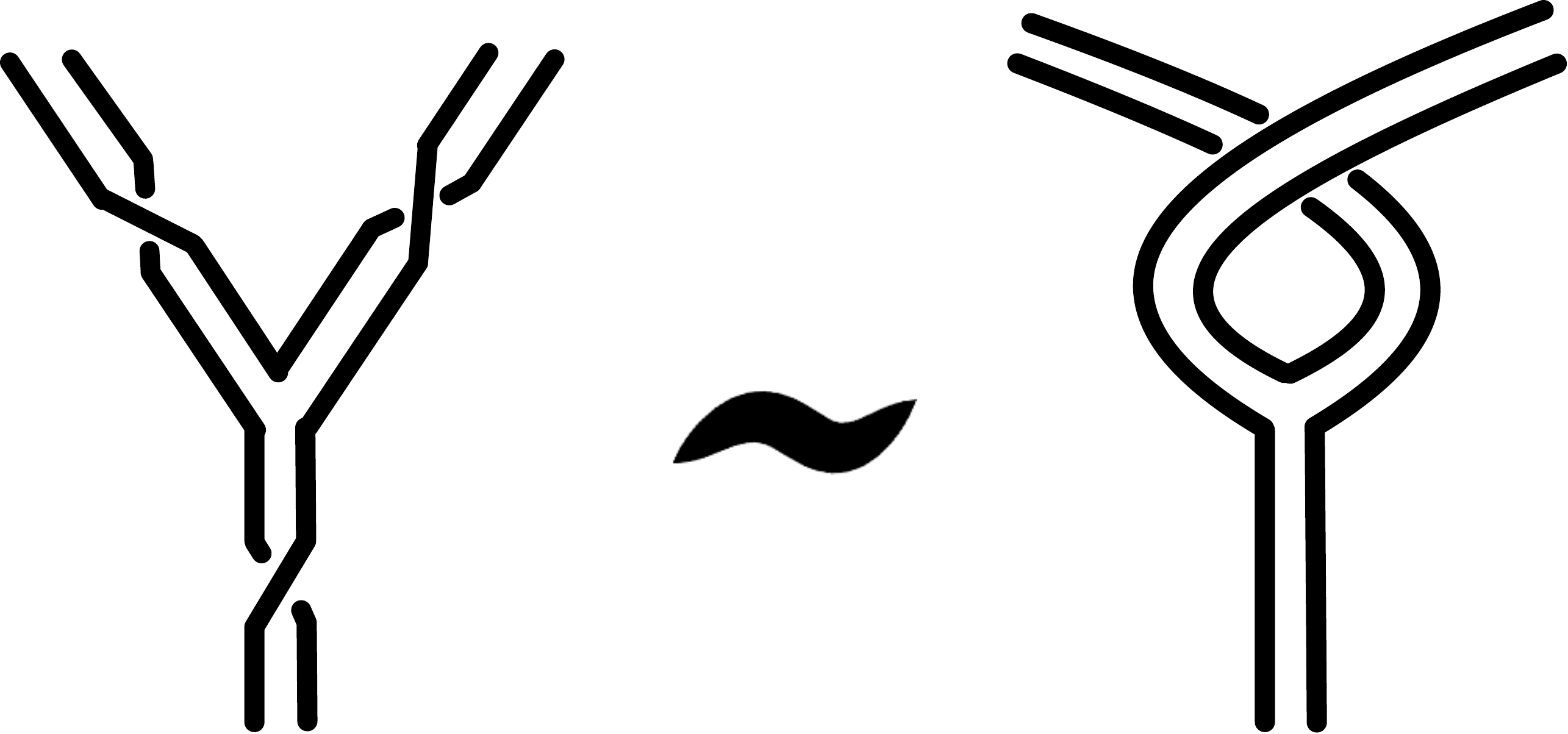}
\caption{The three half-twisted ribbon graph and the second cyclic order of \Cref{fig:R-is-surjective-1} are locally equivalent.}
\label{fig:R-is-surjective-2}
\end{figure}

After performing this operation for each vertex of $\Gamma_{\vec{0}}$, some of the ribbons will have two half-twists in them. When this happens, replace the two half-twists with a flat ribbon. The result is a state graph $\Gamma_{\alpha}$ for some $\alpha\in\{0,1\}^{|E|}$, which is equivalent to the original ribbon diagram $\Gamma'$.  

Next, note that each non-orientable ribbon graph of $G$, up to equivalence, can be found by putting half-twists into some of the ribbons of the surface associated to one of the ribbon diagrams.  Doing this operation may leave some ribbons of the non-orientable ribbon graph with two half-twists in its associated surface.  Like above, if this happens, replace the two half-twists with a flat ribbon. The result is again a state graph $\Gamma_{\alpha}$ for some $\alpha \in\{0,1\}^{|E|}$ that is equivalent to the non-orientable ribbon graph. Thus, $\mathcal{R}$ is onto.

For Statement (2), suppose $\varphi:\Gamma_{\alpha} \ra \Gamma_{\alpha'}$ is an equivalency of state graphs where $\alpha \not=\alpha'$ (cf. \Cref{Def:ribbongraph}).  Because the automorphism group $\mbox{Aut}(G)$ is trivial, we can assume that $\varphi$ is the identity on vertices and edges. (The underlying graphs of $\Gamma_\alpha$ and $\Gamma_{\alpha'}$ are identical.)  By construction of the state graphs, the only type of nontrivial ribbon isomorphism from $\Gamma_{\alpha}$ to $\Gamma_{\alpha'}$ is one that puts extra twists into selected bands. Hence the map $\varphi$  either (a) does nothing to a band if $\alpha_i=\alpha_i'$, (b) adds a twist if $\alpha_i=0$ and $\alpha_i'=1$, or (c) undoes a twist if $\alpha_i=1$ and $\alpha_i'=0$.
 
Suppose Case (b), that is, $\varphi$ adds a twist at edge $e_i$ (Case (c) is similar). A portion of the boundary of the face (or faces) adjacent  to $e_i$ are made up of two paths of edges: say $\dots a e_i b \dots$ on one side of $e_i$ and  $\dots c e_i d\dots$ on the other. Without loss of generality, keep edges $a$ and $c$ fixed before and after applying $\varphi$. Since $\varphi$ ``exchanges'' these paths at $e_i$, we get $$\varphi(\dots a e_i  b \dots) =\dots a e_i d \dots \mbox{ \ \  and \ \ } \varphi(\dots c e_i d \dots ) = \dots c e_i b \dots.$$
In order for $\varphi$ to be an equivalence, $\varphi$ must preserve each face path (see Lemma 4.1.2 of \cite{MT} for example). This can only happen if either
\begin{enumerate}
\item the vertex between $e_i$ and $b$ has valency greater than 2 and $b=d$, i.e., $b$ is a loop, or
\item the vertex between $e_i$ and $b$ has valency 2 and $b=d$.
\end{enumerate}
Both cases are ruled out by the hypotheses. Hence no such function $\varphi$ exists and $\mathcal{R}$ is injective.
\end{proof}

A graph without loops or multiple edges between two vertices is called {\em simple}.  Since a graph with multiple edges between two vertices  has a nontrivial automorphism, we can restate the second statement of  \Cref{thm:map-R-theorem} to say that $\mathcal{R}$ is injective for valence greater than two simple graphs with trivial automorphism group. It is likely that there are weaker conditions than a trivial automorphism group where $\mathcal{R}$ is still injective.

\begin{question}
What is the weakest set of conditions on $G$ or $\Gamma$ under which $\mathcal{R}$ remains injective?
\end{question}

\subsection{Non-orientable surfaces, signed ribbon diagrams, and their color homologies} \label{subsection:non-orientable-surfaces} All theorems of the blowup of a ribbon graph in this paper can be extended to ``signed ribbon diagrams,'' which is a slight modification of a ribbon diagram motivated by the work on state graphs in the last subsection. This allows us to talk about ribbon graphs whose associated closed surface is non-orientable.  As mentioned in the preliminary section, we could have worked with signed ribbon diagrams from the start, but this would have unnecessarily complicated  formulas, statements, proofs, etc.  

\begin{definition} \label{def:signedribbongraph} Let $G(V,E)$ be a connected graph. A {\em signed ribbon diagram} is a ribbon diagram $\Gamma$ of $G(V,E)$ together with a map $\lambda:E\ra \{1,-1\}$. The value $\lambda(e)$ for $e\in E$ is called the {\em sign} of the edge (or sometimes the {\em signature}).  If the sign of an edge is negative it is drawn in the diagram as a dotted edge. 

A {\em signed perfect matching graph} is a perfect matching graph $\Gamma_M$ (cf. \Cref{Def:pm-graph}) together with a sign function $\lambda:E\ra \{1,-1\}$ where only the signs of the perfect matching edges may possibly be negative, i.e., the signs of non-perfect matching edges are always positive.
\end{definition}

The construction of a ribbon graph from a signed ribbon diagram follows the construction after \Cref{Def:ribbondiagram} with disks and flat bands between them, except a half-twisted band is inserted for each negative edge instead of a flat band. Signed ribbon diagrams are sometimes called ``signed rotation systems'' (cf. \cite{MT}, \cite{Moffat2013}).

\begin{remark}  
Each state graph $\Gamma_\alpha$ of an all-positive-edged ribbon diagram $\Gamma$ can be represented by a signed ribbon diagram with a dotted edge for each $\alpha_i=1$. The projection of the boundary of the ribbon graph $\Gamma_\alpha$ constructed from disks and bands is in fact the state $\Gamma_\alpha$. \label{remark:state-graphs-as-signed-ribbon-diagrams}
\end{remark}

Besides local isotopies and moves of \Cref{Def:ribbonmoves}, there is one new way to change a signed ribbon diagram without changing the ribbon surface it generates: at a vertex $v$, attach the bands in the same order but with the opposite orientation around the vertex as described by the original ribbon diagram and flip the sign from $\lambda(e)$ to $-\lambda(e)$ for each edge $e$ incident to the vertex $v$. This amounts to ``flipping the vertex disk of the ribbon surface over,'' which does not change surface.  For example, flip the vertex disk in the righthand picture of \Cref{fig:R-is-surjective-2} over to get the lefthand picture. As a signed ribbon diagram, the lefthand picture is then a standard ``3-spoke graph'' but with negative edges. A slightly more complicated example of a signed ribbon diagram is shown below:

\begin{figure}[H]
\includegraphics[scale=.15]{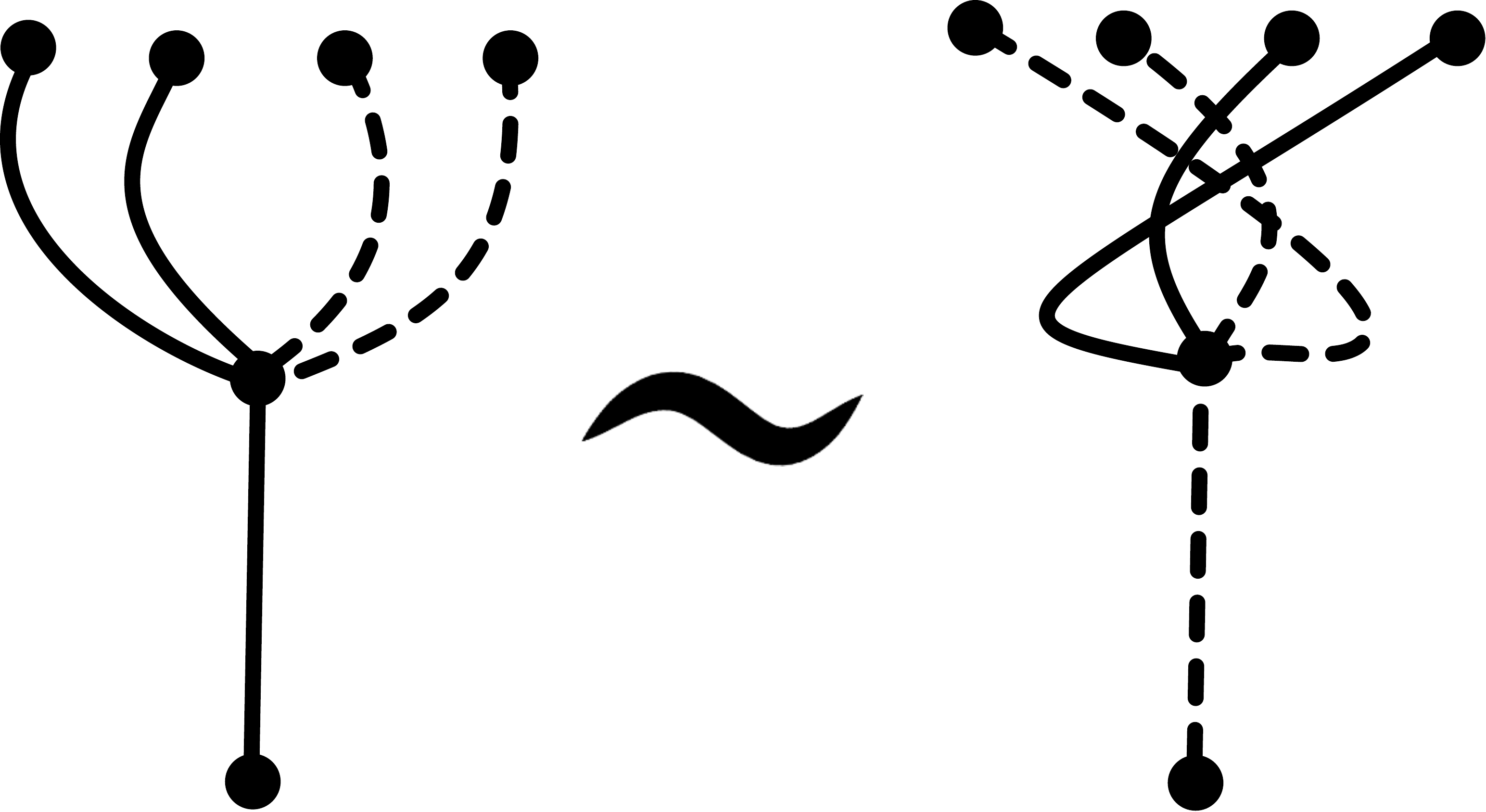}
\end{figure}

We can now describe the hypercube of states for a signed ribbon diagram. Note that this procedure {\em only} makes sense if we are working with signed perfect matching graphs. Although all theorems of this paper can be stated for signed perfect matching graphs, for what comes next, we work with the blowup $\Gamma_E^\flat$ of a signed ribbon diagram $\Gamma$.  Note that the blowup is naturally a signed perfect matching graph if we take the edges of the blowup circle to all be positive edges (cf. \Cref{def:blow-up-of-a-graph}).

Let $\Gamma$ be a signed ribbon diagram with labeled  edges $e_i$ and blowup $\Gamma^\flat_E$. The all-zero state graph $\Gamma_{\vec{0}}$ is constructed from $\Gamma$ using disks and ribbons;  the all-zero state $\Gamma_{\vec{0}}$ is  the projection of the boundary of  this state graph. The state $\Gamma_\alpha$ where $\alpha\in \{0,1\}^{|E|}$ is defined by the signed ribbon diagram as follows: For all edges $e_i$, if $\alpha_i=0$,  keep the sign of $e_i$ the same, and if $\alpha_i=1$, flip the sign of $e_i$ to $-\lambda(e_i)$. According to \Cref{remark:state-graphs-as-signed-ribbon-diagrams}, this signed ribbon diagram corresponds to a ribbon graph $i:G\ra \Gamma_\alpha$.

The formula in Definitions \ref{defn:n-color-poly}, \ref{defn:n-color-number}, \ref{defn:penrose_poly}, and \ref{defn:two_var_penrose_poly} all slightly change for negative perfect matching edges in a way that makes sense with how the hypercube of states is constructed: one simply  exchanges $\IIDiag$ and $\XDiag$ in the formulas. For example, the first formula in \Cref{defn:n-color-poly} for a negative perfect matching edge is:
\begin{eqnarray*}
\langle \NegPMEdgeDiag \rangle_{\! n} &=&   \langle \XDiag \rangle_{\! n} \ - \ q^m \langle  \IIDiag\rangle_{\! n}, 
\end{eqnarray*}
The reason for the exchange follows from understanding how the state graphs of a blowup change from a $0$-smoothing to a $1$-smoothing. In the positive edge case, the band corresponding to the edge starts out flat for a $0$-smoothing and a half-twist is inserted into the flat band to get the $1$-smoothing. In the negative case, the band starts out with a half-twist ($\XDiag$) and another half-twist is inserted into the twisted banded to get the $1$-smoothing. But a band with two half-twists (or a full twist) is the same as a state graph with a flat band ($\IIDiag$). See \Cref{fig:saddle} for a local cobordism version of this process. This same idea works in general for signed perfect matching graphs that are not blowups.

Finally, even though the meaning of a $\alpha_i$-smoothing in $\alpha \in \{0,1\}^{|E|}$ changes depending upon whether the edge is positive or negative, we continue to refer to the ``before'' smoothing as the $0$-smoothing and the ``after'' smoothing as the $1$-smoothing. This keeps the formulas for calculating the internal grading ($q$-grading) and homological grading the same.

With the above modifications understood, the main theorem of this section is:

\begin{theorem} All polynomials, homology theories, definitions, and theorems in this paper that hold for ribbon diagrams also hold for signed ribbon diagrams. Thus, ``oriented ribbon graphs''  can be replaced with ``ribbon graphs'' throughout the paper, unless the theorem (like \Cref{Theorem:mainthm-four-color}) explicitly states that the ribbon graph requires an orientation.
\label{theorem:all-ribbon-graphs-theorem}
\end{theorem}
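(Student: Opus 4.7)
The plan is to leverage the locality of every construction in the paper: each polynomial, chain group, and differential is built from local data at perfect matching edges (how smoothings fuse, split, or self-intersect circles), and nothing in those definitions uses the orientation of the ambient surface. A signed ribbon diagram only relabels which band is flat and which is half-twisted, so passing to signed diagrams should require transporting—not re-deriving—each construction.

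First I would carry out the extension of the polynomial invariants. For a signed perfect matching graph $\Gamma_M$ with sign function $\lambda$, define the bracket recursively using the usual rule at positive edges and the swapped rule (interchange $\IIDiag$ and $\XDiag$) at negative edges, together with the same loop value and disjoint-union formulas. The resulting state sum over $\alpha\in\{0,1\}^{|M|}$ produces the same form as Equation~\ref{eqn:general_state-sum}; the $\alpha$-state is now the signed diagram with the signs of edges with $\alpha_i=1$ flipped. In particular the all-zero state recovers the original signed diagram, so nothing about the combinatorics of the hypercube needs to change.

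Second I would verify that the chain complexes $(C^{*,*},\partial)$, $(\widehat C^*,\widehat\partial)$, and the spectral sequence of \Cref{MainTheorem:spectralsequence} lift. The vector space $V_\alpha = V^{\otimes k_\alpha}\{m|\alpha|\}$ depends only on the number of (immersed) circles in the state, which is well-defined for signed states by \Cref{remark:state-graphs-as-signed-ribbon-diagrams}. The edge maps $m,\Delta,\eta,\widetilde m,\widetilde\Delta,\widetilde\eta$ depend only on the fuse/split/self-intersection type of the transition and not on the sign of the edge being smoothed, so the definition \eqref{eqn:delta-definition} of $\partial$ and its analogues carry over verbatim. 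The proofs of $\partial^2=0$, $\widetilde\partial^2=0$, $\partial\widetilde\partial+\widetilde\partial\partial=0$, and the Color Basis Lemma are all local face-of-hypercube arguments, so they hold unchanged. Consequently the harmonic-coloring decomposition (\Cref{prop:direct-sum-equals-all-harmonics}) and the identification $\dim \widehat{\mathcal{CH}}_n(\Gamma_\alpha) = \#\{\text{$n$-face colorings of }\overline{\Gamma}_\alpha\}$ of \Cref{prop:dim-of-CHhat-state-equals-n-face-colorings} extend automatically.

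Third, invariance must be checked. The four ribbon moves of \Cref{Def:ribbonmoves} continue to induce isomorphisms for exactly the same local reasons as in \cite{BKR}, with signs carried along passively. The genuinely new move is the vertex-flip (reverse the cyclic order at a vertex and negate the signs of all incident edges). Under this move the hypercube of states is permuted by a canonical bijection that matches each local smoothing picture with itself after negating the signs of incident edges; because $m$ and $\Delta$ are commutative/cocommutative and $\eta$ is sign-symmetric, the induced map on chain complexes is an isomorphism that intertwines all three differentials. The hardest step will be checking that the sign conventions in \Cref{eqn:delta-definition} behave correctly under this permutation, which can be done by tracking the parity of the induced reordering of perfect matching edges at the flipped vertex. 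For \Cref{MainTheorem:TQFT} there is essentially nothing to do, since the category $\mathcal{UC}ob^n_{/l}$ is already unoriented and the functor $\mathcal F$ is blind to whether a band carries a half-twist; the signed theory is therefore the natural home of the TQFT from the start.
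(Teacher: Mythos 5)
Your proposal matches the paper's treatment: Theorem~\ref{theorem:all-ribbon-graphs-theorem} is presented essentially as a summary statement after the definitional setup of signed ribbon diagrams, with the same implicit argument you give — that every construction depends only on local combinatorial data at the smoothing sites and so carries over verbatim once the roles of the $0$- and $1$-smoothing are swapped at negative edges. You are somewhat more explicit than the paper in flagging invariance under the vertex-flip move, which the paper introduces as the one new equivalence of signed ribbon diagrams but leaves implicitly to the reader to verify.
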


\section{\Cref{Theorem:mainthmPenrose}: A TQFT approach to the Penrose Polynomial} \label{section:a-TQFT-approach-to-the-Penrose-Polynomial} We are now ready to assemble the theorems  and propositions of this and the previous sections together to prove \Cref{Theorem:mainthmPenrose}. First, \Cref{theorem:fulltheoremA} below is a ribbon graph version of \Cref{Theorem:mainthmPenrose} that we use to build up to the main theorem. To state it, we need:

\begin{definition} \label{def:even-odd-ribbon-graph} Let $\Gamma$ be a signed ribbon diagram of a connected graph $G(V,E)$. A state graph $\Gamma_\alpha$ of the blowup of $\Gamma$ is {\em even} if $|\alpha |$ is even, and {\em odd} otherwise.  More generally, a ribbon graph is called  {\em even} with respect to $\Gamma$ if it is equivalent to $\Gamma$ after inserting an even number of half-twists into some or all of its edges, and {\em odd} otherwise.
\end{definition}

Not all ribbon graphs are even or odd with respect to each other, unless the ribbon graphs are trivalent (cf. \Cref{thm:map-R-theorem}).  Recall that the closed associated surface $\overline{\Gamma}$ to a ribbon graph $\Gamma$ is constructed by gluing disks into the boundary of the ribbon graph (cf. \Cref{Def:ribbongraph}). 

\begin{theorem}
Let $\Gamma$ be a signed ribbon diagram of a connected graph $G(V,E)$. Let $\Gamma_\alpha$ be the state graph for the state $\alpha \in \{0,1\}^{|E|}$ in the hypercube of states of the blowup. Then, for $n\in\BN$,
$$P(\Gamma,n)= \sum_{\mbox{$\alpha$ even}} \# \{\mbox{$n$-face colorings of $\overline{\Gamma}_\alpha$}\} \  - \ \sum_{\mbox{$\alpha$ odd}} \#\{\mbox{$n$-face colorings of $\overline{\Gamma}_\alpha$}\}.$$
Furthermore, if $\Gamma$ is a plane graph, then $P(\Gamma,n)$ is the total number of $n$-face colorings on all state graphs of the blowup $\Gamma^\flat_E$. \label{theorem:fulltheoremA}
\end{theorem}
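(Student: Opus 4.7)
The plan is to string together Theorems B, C, and D in a single Euler-characteristic calculation. First I would apply \Cref{prop:EulercharPenrose} (or unravel \Cref{defn:penrose_poly} directly) to rewrite $P(\Gamma,n)$ as the evaluation at $q=1$ of the bracket of the blowup, $\langle \Gamma^\flat_E\rangle_n(1)$. By \Cref{MainTheorem:Bigraded-n-color-homology}, that evaluation is the ordinary Euler characteristic
\[
P(\Gamma,n) \;=\; \chi_q\bigl(CH^{*,*}_n(\Gamma^\flat_E;\BC)\bigr)\big|_{q=1} \;=\; \sum_i (-1)^i \dim_{\BC} CH^{i,*}_n(\Gamma^\flat_E;\BC).
\]

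Next I would push this Euler characteristic through the spectral sequence of \Cref{MainTheorem:spectralsequence}. Because each $d_r$ has homological bidegree $(1,rn)$, every page is a complex in the homological grading; a standard argument in homological algebra (the Euler characteristic is invariant under passing to the homology of a differential) gives
\[
\chi\bigl(E_1(\Gamma^\flat_E)\bigr) \;=\; \chi\bigl(E_2(\Gamma^\flat_E)\bigr) \;=\; \cdots \;=\; \chi\bigl(E_\infty(\Gamma^\flat_E)\bigr) \;=\; \sum_i (-1)^i \dim_{\BC} \widehat{CH}^i_n(\Gamma^\flat_E;\BC).
\]
Now I would invoke Statement (1) of \Cref{MainTheorem:Colorings-of-State-Graphs} to split each $\widehat{CH}^i_n$ into a direct sum of harmonic coloring spaces indexed by states $\Gamma_\alpha$ with $|\alpha|=i$, and Statement (2) to replace $\dim\widehat{\mathcal{CH}}_n(\Gamma_\alpha)$ by $\#\{n\text{-face colorings of } \overline{\Gamma}_\alpha\}$. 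Grouping the summands $\alpha$ by the parity of $|\alpha|$ yields the signed sum in the theorem.

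For the planar case, I would combine the displayed identity above with \Cref{proposition:even-degree-non-zero-n-face-colorings}: when $\Gamma$ is a plane graph, $\widehat{\mathcal{CH}}_n(\Gamma_\alpha)=0$ whenever $|\alpha|$ is odd, so every state graph $\overline{\Gamma}_\alpha$ in an odd homological degree has no $n$-face colorings. The subtracted sum is therefore empty, and the signed total collapses to the unsigned total $\sum_\alpha \#\{n\text{-face colorings of }\overline{\Gamma}_\alpha\}$.

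The main obstacle I foresee is purely bookkeeping rather than conceptual: I must be careful that (i) the spectral sequence of \Cref{MainTheorem:spectralsequence} really does preserve the homological Euler characteristic at $q=1$ (which follows because, at $q=1$, the $q$-graded Euler characteristic degenerates to the homological one and each $d_r$ is a homological differential of odd total degree), and (ii) the decomposition of $\widehat{CH}_n^*$ furnished by \Cref{MainTheorem:Colorings-of-State-Graphs} respects the homological grading so that odd and even degrees separate cleanly. Once both points are pinned down, the equality with $P(\Gamma,n)$ is immediate and the planar strengthening is a direct corollary of \Cref{proposition:even-degree-non-zero-n-face-colorings}.
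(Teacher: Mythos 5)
Your proposal is correct and follows essentially the same route as the paper's own proof: cite \Cref{prop:EulercharPenrose}, pass through the spectral sequence of \Cref{MainTheorem:spectralsequence} (Euler characteristics are preserved since each $d_r$ has homological degree $+1$), decompose via Statements (1) and (2) of \Cref{MainTheorem:Colorings-of-State-Graphs} (the latter being \Cref{prop:dim-of-CHhat-state-equals-n-face-colorings}), and for the plane case invoke \Cref{proposition:even-degree-non-zero-n-face-colorings}. The only citation you omit is \Cref{theorem:all-ribbon-graphs-theorem}, which the paper uses to justify that all of the preceding machinery applies to \emph{signed} ribbon diagrams rather than just oriented ones; since the statement is phrased for signed diagrams, that reference should be included.
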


Since all ribbon graphs (oriented or not) can be represented by signed ribbon diagrams, one needs only choose a signed ribbon diagram representative of the ribbon graph to calculate the polynomial for that ribbon graph.

\begin{proof} The theorem follows from \Cref{prop:EulercharPenrose}, \Cref{MainTheorem:spectralsequence},  Statement (1) of  \Cref{MainTheorem:Colorings-of-State-Graphs}, \Cref{prop:dim-of-CHhat-state-equals-n-face-colorings}, and \Cref{theorem:all-ribbon-graphs-theorem}. The statement about planar graphs follows from \Cref{proposition:even-degree-non-zero-n-face-colorings}.
\end{proof}

At this point, the polynomial distinctly depends upon the ribbon graph and not just the underlying abstract graph. We show how to upgrade this ribbon graph invariant to an abstract graph invariant when $G$ is trivalent.  By \Cref{thm:map-R-theorem}, if $G$ is trivalent, the map $\mathcal{R}$ is onto the set of all ribbon graphs. Therefore any two different ribbon graphs of $G$ are even or odd with respect to each other. Hence, when $G$ is trivalent, \Cref{theorem:fulltheoremA} implies $P(\Gamma,n) = \pm P(\Gamma',n)$ for all $n\in\BN$ for any two ribbon graphs $\Gamma$ and $\Gamma'$ of $G$.  Since the Penrose polynomials of all trivalent ribbon graphs of $G$ are the same up to sign, the polynomial of a ribbon graph with positive leading coefficient can be chosen as the Penrose polynomial of the abstract graph $G$:

\begin{definition} Let $G$ be a connected trivalent graph. A signed ribbon diagram $\Gamma$ of $G$ is called {\em positive} ({\em negative}) if the leading coefficient of $P(\Gamma,n)$ is positive (negative) whenever $P(\Gamma, n)$ is nonzero. It is called {\em zero} if $P(\Gamma,n)$ is identically zero.  Then the {\em Penrose polynomial of $G$} is defined to be $$P(G,n):=P(\Gamma,n)$$ for any nonnegative signed ribbon diagram $\Gamma$ of $G$.\label{def:penrose-poly-abstract-graph}
\end{definition}

The positive leading coefficient guarantees that $P(G,n)>0$ for large enough values of $n$ for graphs when the Penrose polynomial is nonzero.  This definition matches the original definition for trivalent plane graphs that Penrose gave in \cite{Penrose}. Such graphs are always positive.  

We  now have all the necessary ingredients to finish the proof of \Cref{Theorem:mainthmPenrose}. Note that some of the state graphs in the sums of the previous theorem may actually be equivalent as ribbon graphs. Under certain conditions (cf. \Cref{thm:map-R-theorem}), we can remove the need to count the $n$-face colorings of certain graphs more than once. In fact,  \Cref{theorem:fulltheoremA} together with \Cref{thm:map-R-theorem} proves \Cref{Theorem:mainthmPenrose}.

\begin{proof}[Proof of \Cref{Theorem:mainthmPenrose}]
By \Cref{thm:map-R-theorem}, the map $\mathcal{R}$ is a bijection when $G$ is trivalent, loopless, connected, and $Aut(G)=1$.  When $G$ is loopless, we can work with all ribbon graphs of $G$ and not just the subset that makes up the state graphs for the blowup of $\Gamma$. If $G$ has a loop, since $G$ is trivalent, the loop must be at the end of a bridge edge.  Since $G$ has a bridge, there are no $n$-colorings on {\em any} of the ribbon graphs of $G$. Applying \Cref{defn:penrose_poly} directly to the bridge edge and loop shows that the Penrose polynomial is zero. Thus, the theorem holds whether or not $G$ has a loop.

To get an abstract graph invariant, one need only choose a signed ribbon graph representative of $G$ that is nonnegative, that is, the leading coefficient of $P(\Gamma,n)$ is positive if the polynomial is not identically zero.  Since all plane graphs of $G$ are positive signed ribbon graphs, this together with \Cref{proposition:even-degree-non-zero-n-face-colorings} proves the statement about plane graphs in \Cref{Theorem:mainthmPenrose}.
\end{proof}
 
The condition that $Aut(G)=1$ is important in \Cref{Theorem:mainthmPenrose}, even for planar graphs. For example, the automorphism group of the $\theta_3$ plane graph is nontrivial; it is generated by two perpendicular reflections and one automorphism that each exchange a pair of outer edges.  By \Cref{Theorem:mainthmPenrose} or Statement (2) of \Cref{MainTheorem:n-color-polynomial}, $P(\theta_3,3)=[\theta_3]_3 = 24$. However, all ribbon graphs of $\theta_3$ that support $3$-face colorings are equivalent to the plane graph of $\theta_3$, which has six  $3$-face colorings. Therefore when all the $3$-face colorings are added up on all {\em distinct} ribbon graphs of $\theta_3$ the sum is $6$. For example, one pair of state graphs is equivalent by a rotation, and each is equivalent to the plane graph of $\theta_3$ by flipping along the $1$-smoothings:

\begin{figure}[H]
\includegraphics[scale=.4]{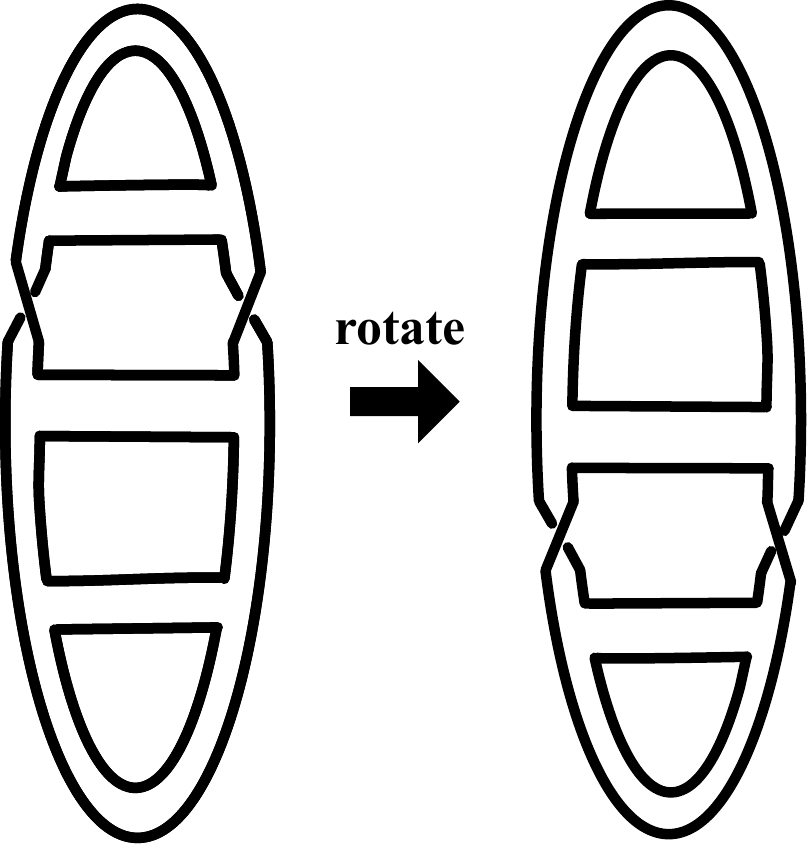}
\end{figure}

\subsection{The total face color polynomial} \label{section:totalfacecolorpoly} Another way to get a trivalent abstract graph invariant is to take the Poincar\'{e} polynomial of the filtered color homology. 

\begin{definition} Let $G(V,E)$ be a connected graph and let $\Gamma$ be any signed ribbon diagram of it. The Poincar\'{e} polynomials of the filtered $n$-color homologies generate the {\em $2$-variable total face color polynomial} and is characterized by
$$T(\Gamma,n,t) := \sum_{|\alpha|=i} t^i \dim \widehat{\mathcal{CH}}_n(\Gamma_\alpha) $$
when evaluated at $n\in\BN$. The {\em total face color polynomial of $\Gamma$} is $T(\Gamma,n):=T(\Gamma,n,1)$.  \label{definition:totalfacecolorpolynomial}
\end{definition}

\begin{remark}
If $G$ is also trivalent, then the total face color polynomial is an invariant of $G$, not just the ribbon graph chosen to define it. This is once again due to  \Cref{thm:map-R-theorem}.  In this case, denote the total face color polynomial by $T(G,n)$. Note that the $2$-variable total face color polynomial  does depend on the ribbon structure even when the graph is trivalent (cf. $K4s$ and $K4t$ in the table below). \label{rem:totalcolorpoly-graphinvariant}
\end{remark}

\begin{example}
When an abstract graph is not trivalent, the total face color polynomials can be quite different for different ribbon graphs of it. The two tori made from a bouquet of three loops in the figure below have a total of $12$ and $36$  $4$-face colorings for all of their respective state graphs:
\end{example}

\begin{figure}[H]
\includegraphics[scale=.8]{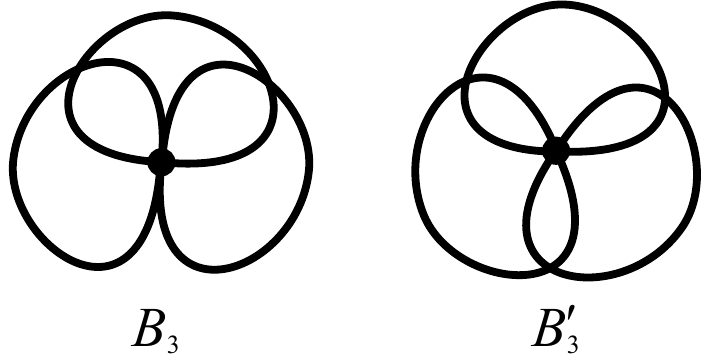}
$$T(B_3,n)=n(n-1) \mbox{ \ \ \ } T(B'_3,n) =n(n-1)^2$$
\end{figure}

If $G$ is trivalent and has $Aut(G)=1$, then $T(G,n)$ counts the total number of $n$-face colorings on {\em all} ribbon graphs of $G$, hence its name. Also, when $G$ is a trivalent planar graph with plane graph $\Gamma$, then $P(G,n)=T(\Gamma,n,-1)=T(G,n)$. These equalities are due to \Cref{proposition:even-degree-non-zero-n-face-colorings}.  Each polynomial ($T(\Gamma,n,t)$, $T(\Gamma,n)$, $T(G,n)$) will be referred to  as the ``total face color polynomial'' when the context is clear.

While the Penrose polynomial is sometimes identically the zero function (cf. \Cref{example:K33-computation-of-Penrose-poly}), the total face color polynomial need not be. For example, the total face color polynomial of the $K_{3,3}$ for $n=4$ is $$T(\Gamma,4,t)=24t^2+48t^3+24t^4+24t^5+48t^6+24t^7,$$   
where $T(\Gamma,4,t)$ is computed for (the blowup of) the ribbon graph $\Gamma$ shown in \Cref{Fig:ribbondiagram}. Thus, $T(K_{3,3},4) = 192$ while $P(\Gamma,n)=0$ for all $n$.

In fact, if $T(G,n)$ is not the zero function for all bridgeless trivalent graphs $G$, then the cycle double cover conjecture is true. This makes the total face color polynomial a powerful detector of closed $2$-cell embeddings/strong embeddings:

\begin{theorem}[Cycle double cover conjecture equivalence]\label{theorem:cycledoublecoverconjectureequivalence}
Let $G$ be a bridgeless connected trivalent graph. Then the total face color polynomial is nonzero if and only if $G$ has a cycle double cover.
\end{theorem}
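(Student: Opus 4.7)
The plan is to translate both sides of the equivalence into statements about face colorings of ribbon graphs of $G$, using the state-sum interpretation of $T(G,n)$, and then invoke the classical correspondence between cycle double covers and closed $2$-cell embeddings for trivalent graphs.

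First, I would unpack $T(G,n)$. Fix any signed ribbon diagram $\Gamma$ representing $G$. By Definition \ref{definition:totalfacecolorpolynomial} and Proposition \ref{prop:dim-of-CHhat-state-equals-n-face-colorings},
\[
T(\Gamma,n) \;=\; \sum_{\alpha\in\{0,1\}^{|E|}} \#\{\,n\text{-face colorings of }\overline{\Gamma}_\alpha\,\},
\]
where $\Gamma_\alpha$ ranges over the state graphs of the blowup $\Gamma^\flat_E$. Since each summand is a nonnegative integer, $T(G,n)$ fails to be the zero polynomial if and only if some $\overline{\Gamma}_\alpha$ admits a proper $n$-face coloring for some $n \in \BN$. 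By Theorem \ref{thm:map-R-theorem}(1), since $G$ is trivalent the map $\mathcal{R}$ is surjective onto ribbon graphs of $G$, so this is equivalent to the existence of some ribbon graph $\Gamma'$ of $G$ with $\overline{\Gamma'}$ admitting a proper $n$-face coloring.

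For the forward direction $(\Rightarrow)$, I would extract a CDC directly from such a $\Gamma'$. Each edge of $G$ has two face-sides in $\overline{\Gamma'}$, and properness of the face coloring forces these two sides into distinct faces, so every edge appears in exactly two distinct face boundary walks. At a trivalent vertex $v$ the three corners are pairwise adjacent along the three incident edges, so any face that visited $v$ twice would occupy two such corners, forcing both sides of some edge at $v$ into a single face and violating properness. Hence every face boundary is a simple cycle, and the family of face cycles of $\overline{\Gamma'}$ is a cycle double cover of $G$.

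For the reverse direction $(\Leftarrow)$, I would invoke the classical fact that every CDC $\mathcal{C}$ of a bridgeless trivalent graph arises as the face set of a closed $2$-cell embedding of $G$ in some (possibly non-orientable) closed surface: at each trivalent vertex $v$, a direct count shows the three incident edges are paired as three consecutive pairs across $\mathcal{C}$, inducing a rotation at $v$, and an appropriate choice of edge signs then builds a signed ribbon diagram $\Gamma'$ whose face walks are exactly the cycles of $\mathcal{C}$. Since $\overline{\Gamma'}$ is a closed surface whose face chromatic number is finite (bounded, for instance, by Heawood's map color theorem), it admits a proper $n$-face coloring for every sufficiently large $n$. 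By the state-sum formula above, this makes $T(G,n)>0$ for all such $n$, so $T(G,n)$ is not identically zero.

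The main obstacle will be the ribbon-graph construction in $(\Leftarrow)$: given only the cyclic pairing data produced by $\mathcal{C}$ at each vertex, one must simultaneously choose twists (edge signs) in the signed-ribbon-diagram framework of Subsection \ref{subsection:non-orientable-surfaces} so that the resulting face walks globally match the cycles of $\mathcal{C}$ rather than merging or splitting them. In the oriented case this follows from standard rotation-system theory, but the non-orientable case requires verifying that a globally consistent assignment of edge signs exists along each cycle of $\mathcal{C}$.
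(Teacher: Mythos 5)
Your proposal is correct and follows essentially the same route as the paper's proof: both pass through the equivalence, for trivalent graphs, between cycle double covers and strong $2$-cell embeddings, extract the CDC from the face boundary cycles of a properly colored state graph, and in the converse use the strong embedding's ribbon graph together with the surjectivity of $\mathcal{R}$ from \Cref{thm:map-R-theorem} to locate it among the state graphs contributing to $T(G,n)$. The ``obstacle'' you flag at the end dissolves for trivalent $G$: the CDC forces the three face-corners at each vertex to form a single circular link, so the polygonal complex glued from the cycles of $\mathcal{C}$ is automatically a closed surface whose ribbon graph is representable by a signed ribbon diagram via \Cref{theorem:all-ribbon-graphs-theorem}; and the appeal to Heawood is unnecessary, since assigning each of the $f$ faces its own color already gives a proper $f$-face coloring of the strong embedding.
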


\begin{proof}
A cycle double cover is equivalent to a strong embedding of $G$ into a (possibly non-orientable) surface. A $2$-cell embedding is {\em strong} if the closure of each face is homeomorphic to a closed disk, i.e., each edge is incident to two distinct faces. Hence a ribbon graph with an $n$-face coloring for some $n>1$ determines a strong embedding of $G$ as well as a cycle double cover.  On the other hand, if $G$ strongly embeds into a surface, let $\Gamma$ be the associated ribbon graph to that embedding. If $f$ is the number of faces of that embedding, then $\dim \widehat{CH}^0_f(\Gamma;\BC) >0$. \end{proof}

If the cycle double cover conjecture is true, then  the total face color polynomial is nonzero for all bridgeless trivalent graphs. Even if the cycle double cover conjecture is true, there may be a non-trivalent ribbon graph whose total face color polynomial is identically zero. This is because a state graph with a coloring may not exist in the hypercube of states for that particular ribbon graph while there is a different ribbon graph of the abstract graph where there is a coloring. A stronger form of the cycle double cover conjecture is:

\begin{conjecture} Let $G$ be a bridgeless connected graph and $\Gamma$ a signed ribbon diagram of $G$. Then the total face color polynomial $T(\Gamma, n,t)$ of $\Gamma$ is nonzero.
\end{conjecture}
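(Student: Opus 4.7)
The conjecture is strictly stronger than the cycle double cover conjecture, as noted in Theorem \ref{theorem:cycledoublecoverconjectureequivalence} and the remarks following it, so any serious attack must at minimum contend with CDC. My plan is therefore to separate the statement into two parts: (a) reducing the general case to a trivalent-graph statement that is equivalent to CDC, and (b) promoting a single existence statement (``some ribbon structure works'') to the universal one in the conjecture (``every signed ribbon diagram works''). Part (b) is what makes this new; part (a) is meant to show the conjecture is no worse than CDC.

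First I would handle the reduction to the trivalent case. Let $\Gamma$ be a signed ribbon diagram for a bridgeless connected $G(V,E)$. Take the blowup $\Gamma^\flat_E$, which is a trivalent signed ribbon diagram whose underlying graph $G^\flat$ is also bridgeless (blowing up a vertex never creates a bridge) and connected. By Statement (3) of \Cref{MainTheorem:Colorings-of-State-Graphs} and the definition of $T(\Gamma,n,t)$, the constant term $\dim \widehat{\mathcal{CH}}_n(\Gamma^\flat_{\vec{0}})$ of the blowup's total face color polynomial equals the number of $n$-face colorings of $\overline{\Gamma}$, and more generally each state graph of the blowup is a ribbon graph of $G^\flat$. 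Thus $T(\Gamma,n,t)\neq 0$ is implied by $T(\Gamma^\flat_E,n,t)\neq 0$, and it suffices to prove the conjecture when $G$ is trivalent. Here the orientation-reversal trick of Section 6.7 together with \Cref{theorem:all-ribbon-graphs-theorem} lets me replace signed ribbon diagrams by unsigned ones of the appropriate ribbon graph.

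Next, for trivalent $G$, I would use \Cref{thm:map-R-theorem}: the map $\mathcal{R}$ sending states of $\Gamma^\flat_E$ to ribbon graphs of $G$ is surjective. Combined with \Cref{prop:dim-of-CHhat-state-equals-n-face-colorings}, this identifies
\[
\{\alpha \,:\, \dim \widehat{\mathcal{CH}}_n(\Gamma_\alpha)>0\}
\]
with (a subset of the preimages under $\mathcal{R}$ of) the set of ribbon graphs of $G$ whose closed associated surface admits an $n$-face coloring, i.e.\ the strong embeddings of $G$. Consequently, assuming CDC, $G$ has a strong embedding into some (possibly non-orientable) surface with $n$ faces for some $n$, that embedding is realized by some ribbon graph, and by surjectivity of $\mathcal{R}$ it appears as a state graph of the hypercube of $\Gamma^\flat_E$. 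That single nonzero contribution forces $T(\Gamma^\flat_E,n,t)\neq 0$, hence $T(\Gamma,n,t)\neq 0$. This closes the argument modulo CDC.

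The main obstacle is of course CDC itself, but there is also a genuinely new difficulty that must be handled even if CDC is assumed: the conjecture requires $T(\Gamma,n,t)\neq 0$ for \emph{every} signed ribbon diagram $\Gamma$, not merely the blowup. The reduction above handles this by passing to $\Gamma^\flat_E$, where the hypercube is large enough for $\mathcal{R}$ to be surjective, but a ``direct'' argument at the level of $\Gamma$ itself would need to show that the flip-moves encoded by the hypercube of $\Gamma_M$ (for whatever perfect matching $M$ one wants to use) are rich enough to explore the relevant ribbon structures. This is exactly where \Cref{question:invariant-under-flip-moves-q} reappears, and understanding it may be necessary to prove the conjecture cleanly without passing through the blowup. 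So my honest expectation is: part (a) of the plan is essentially mechanical once CDC is granted, while the truly hard part is CDC, with a secondary (but in principle tractable) difficulty being the flip-move/blowup bookkeeping that converts ``some strong embedding exists'' into ``the hypercube of the specified $\Gamma$ sees a strong embedding as one of its state graphs.''
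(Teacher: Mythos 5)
This statement is a conjecture in the paper, not a theorem; there is no proof in the paper to compare against, so what you have written is a plan of attack. That plan, however, contains a genuine gap, and it is precisely the gap the paper flags in the two sentences immediately preceding the conjecture.

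Your reduction asserts ``$T(\Gamma,n,t)\neq 0$ is implied by $T(\Gamma^\flat_E,n,t)\neq 0$, and it suffices to prove the conjecture when $G$ is trivalent.'' This does not follow, and it rests on a misreading of where the state graphs live. The state graphs $\Gamma_\alpha$ appearing in the definition of $T(\Gamma,n,t)$ are, by \Cref{defn:state-graph} and the definition of $\mathcal{R}$, ribbon graphs of the \emph{original} graph $G$, not of $G^\flat$ (your sentence ``each state graph of the blowup is a ribbon graph of $G^\flat$'' is incorrect). For non-trivalent $G$, the map $\mathcal{R}$ is only guaranteed surjective onto ribbon graphs of $G$ when $G$ is trivalent (\Cref{thm:map-R-theorem}(1)); for higher valence you can twist edges of $\Gamma$ but you cannot permute cyclic orders at a high-degree vertex by half-twist insertion alone. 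Consequently, assuming CDC gives you \emph{some} ribbon graph of $G$ with a strong embedding, but that ribbon graph may simply not be among the state graphs of $\Gamma^\flat_E$, and then every term in $T(\Gamma,n,t)$ can vanish even though CDC holds. This is exactly what the paper says: ``Even if the cycle double cover conjecture is true, there may be a non-trivalent ribbon graph whose total face color polynomial is identically zero. This is because a state graph with a coloring may not exist in the hypercube of states for that particular ribbon graph while there is a different ribbon graph of the abstract graph where there is a coloring.'' Passing to $\Gamma^\flat_E$ viewed as a ribbon diagram of $G^\flat$ and computing $T(\Gamma^\flat_E,n,t)$ does not rescue the argument, because that polynomial is computed from the hypercube of the \emph{second} blowup $(\Gamma^\flat_E)^\flat$, whose state graphs are ribbon graphs of $G^\flat$, and there is no implication from its nonvanishing back to the nonvanishing of $T(\Gamma,n,t)$.

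So your ``part (a)'' is not mechanical modulo CDC; it is the new content of the conjecture. The trivalent case \emph{is} equivalent to CDC (via \Cref{theorem:cycledoublecoverconjectureequivalence} and \Cref{thm:map-R-theorem}), and that part of your write-up is fine. But the non-trivalent case is genuinely stronger, and a correct approach would need either (i) a direct argument that the hypercube of $\Gamma^\flat_E$ always contains a strongly embeddable state graph for bridgeless $G$ (not currently known, and this is where \Cref{question:invariant-under-flip-moves-q} and the structure of $\mathcal{R}$ become essential), or (ii) a homological-algebra argument on the spectral sequence that does not route through realizing a particular ribbon structure as a state graph at all. Your closing paragraph gestures at (i) but labels it ``in principle tractable bookkeeping''; it should instead be identified as the mathematical heart of the problem.
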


This conjecture, if true, also gives a new way to express the four color theorem: If  $\Gamma$ is a bridgeless connected plane graph, then $\Gamma$ is $4$-face colorable if and only if $(n-4)$ is not a factor of $T(\Gamma,n,t)$.  The total face color polynomial links the four color theorem directly to the cycle double cover conjecture.

\subsection{A table of examples of \Cref{Theorem:mainthmPenrose}}
\label{section:table-of-examples}

We have written Mathematica code, available upon request, that calculates the filtered $n$-color homologies for $n=2,3,4,5,6$ of a graph. Below is sampling of ribbon graphs and their filtered $n$-color homologies, where the $n$-color homologies are expressed as the $2$-variable total face color polynomial. For example, the Klein bottle made out of a bouquet graph with a positive and  negative edge, $B_2k$, is $T(B_2k,4,t)=12 t^1$ for $n=4$.  This is the same as $\widehat{CH}^i_4(B_2k;\BC)\cong\BC^{12}$ for $i=1$ and zero elsewhere.\\ 

\noindent\begin{tabular}{| c | l | l |}
\hline
Ribbon & Total face color polynomial & Penrose polynomial\\
Graph&  for $n=4,5$ or $n$ (in bold) & (depends on ribbon graph)\\
\hline \hline
\parbox{.8in}{\centering \vskip.1cm $B_1s$ Sphere\\[.1cm] \includegraphics[scale=0.60]{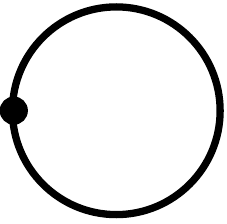}\\[.1cm]} & $T(B_1s,{\bf n},t) = n(n-1) t^0$ & $P(B_1s,n) =n(n-1)$ \\ \hline
\parbox{.8in}{\centering \vskip.1cm $B_1p$ $\BR P^2$ \\[.1cm] \includegraphics[scale=0.60]{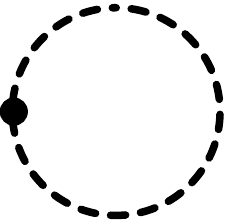}\\[.1cm]} & $T(B_1p,{\bf n},t) = n(n-1) t^1$ & $P(B_1p,n) =-n(n-1)$ \\ \hline
\parbox{.8in}{\centering \vskip.1cm $B_2t$ Torus\\[.1cm] \includegraphics[scale=0.60]{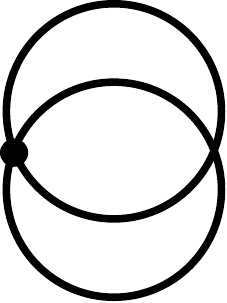}\\[.1cm]} & $T(B_2t,{\bf n},t) = n(n-1) t^2$ & $P(B_2t,n) =n(n-1)$ \\ \hline
\parbox{.8in}{\centering \vskip.1cm $B_2k$ Klein\\[.1cm] \includegraphics[scale=0.60]{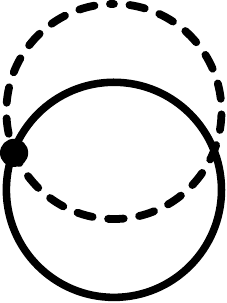}\\[.1cm]} & $T(B_2k,{\bf n},t) = n(n-1) t^1$ & $P(B_2k,n) =-n(n-1)$ \\ \hline
\parbox{.8in}{\centering \vskip.1cm $B_2p$ $\BR P^2$ \\[.1cm] \includegraphics[scale=0.60]{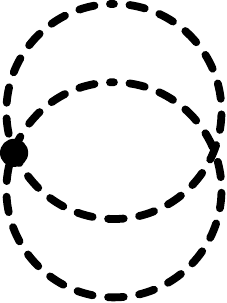}\\[.1cm]} & $T(B_2p,{\bf n},t) = n(n-1) t^0$ & $P(B_2p,n) =n(n-1)$ \\ \hline
\parbox{.8in}{\centering \vskip.1cm $\theta$ Sphere\\[.1cm] \includegraphics[scale=0.70]{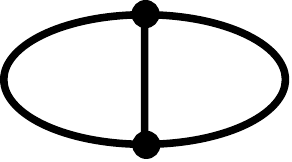}\\[.1cm]} & $T(\theta,{\bf n},t) = [n(n-1)(n-2)] t^0$ & $P(\theta,n) = n(n-1)(n-2)$ \\ \hline
\parbox{.8in}{\centering \vskip.1cm $3$-Prism \\[.1cm] \includegraphics[scale=0.55]{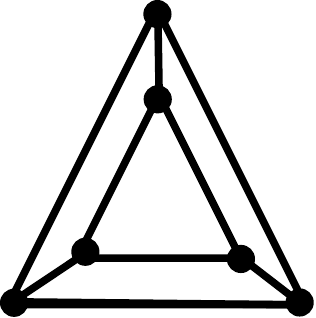}\\[.1cm]} & \begin{minipage}{2.7in}\noindent \begin{eqnarray*}{T(P3,{\bf n},t)} &\!\!\!\!\!\! =\!\!\!\!\!\! & [n(n\!\! -\!\! 1)(n\!\! -\!\! 2)(n\!\! -\!\! 3)^2]t^0 \\ && {+[n(n\!\! -\!\! 1)(n\!\! -\!\! 2)(2n\!\! -\!\! 5)]t^6} \end{eqnarray*}\end{minipage} & $P(P3,n) = n(n-1)(n-2)^3$\\ \hline
\parbox{.8in}{\centering \vskip.1cm $4$-Prism \\[.1cm] \includegraphics[scale=0.55]{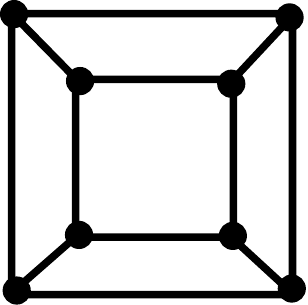}\\[.1cm]} & \begin{minipage}{2.5in}\noindent \begin{eqnarray*} T(P4,{\bf 4},t) &\!\! =\!\!& 96t^0+144t^4+288t^6\\ && +72t^8+144t^{10}+24t^{12}\end{eqnarray*} \end{minipage}  & \begin{minipage}{2.3in}\begin{eqnarray*} P(P4,n) &\! \! \!\!\! =\! \!\!\!\! & -208 n \! +\! 420 n^2 \! -\! 284 \\ & & n^3 \! +\! 83 n^4 \! -\! 12 n^5 \! +\! n^6\end{eqnarray*}\end{minipage} \\ \hline
\end{tabular}

\noindent \begin{tabular}{| c | l | l |}
\hline
 & Total face color polynomial & Penrose polynomial \\
Graph&  for $n=4,5$ or $n$ (in bold) & (depends on ribbon graph)\\
\hline \hline
\parbox{.8in}{\centering \vskip.1cm K3 Sphere \\[.1cm] \includegraphics[scale=0.65]{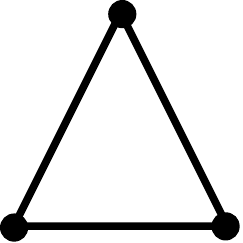}\\[.1cm]} & \begin{minipage}{2.1in}\noindent \begin{eqnarray*}{T(K3,{\bf n},t)} &=& {[n(n-1)] t^0} \\ && +3[n(n-1)]t^2\end{eqnarray*}\end{minipage} & $P(K3,n) = 4(n(n-1))$ \\ \hline
\parbox{.8in}{\centering \vskip.1cm K4s Sphere \\[.1cm] \includegraphics[scale=0.5]{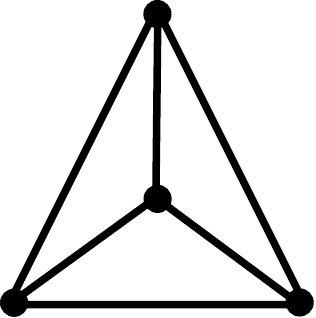}\\[.1cm]} & \begin{minipage}{2.55in}\noindent \begin{eqnarray*} {T(K4s,{\bf n},t)} &\!\!\!\!\! \!\! =\!\!\!\!\!\!\! & {[n(n\!\! -\!\! 1)(n\!\! -\!\! 2)(n\!\! -\!\! 3)]t^0}\\ &&+[n(n\!\! -\!\! 1)(n\!\! -\!\! 2)]t^6\end{eqnarray*}\end{minipage} & $P(K4s,n) = n(n-1)(n-2)^2$ \\ \hline
\parbox{.8in}{\centering \vskip.1cm K4t Torus \\[.1cm] \includegraphics[scale=0.7]{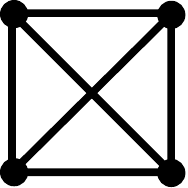}\\[.1cm]} & \begin{minipage}{2.5in}\noindent \begin{eqnarray*} {T(K4t,{\bf n},t)} &\!\!\!\!\! =\!\!\!\!\! & [n(n\!\! -\!\! 1)(n\!\! -\!\! 2)]t^2 \\ && {[n(n\!\! -\!\! 1)(n\!\! -\!\! 2)(n\!\! -\!\! 3)]t^4} \end{eqnarray*}\end{minipage} & $P(K4t,n) = n(n-1)(n-2)^2$ \\ \hline
\parbox{.8in}{\centering \vskip.1cm $K5$ $g\!=\!2$ \\[.1cm] \includegraphics[scale=0.5]{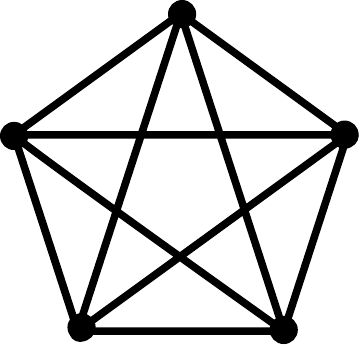}\\[.1cm]} & \begin{minipage}{2.4in}\noindent \begin{eqnarray*} T(K5,{\bf 4},t) &=& 36t^0+120t^3+180t^4\\ && +120t^5+1560t^6\end{eqnarray*} \end{minipage}  & \begin{minipage}{2in}\begin{eqnarray*} P(K5,n) &\!\!\! = \!\!\! & 176 n \! -\! 396 n^2 \! +\! 316 n^3\\ &&  - 115 n^4 \! +\! 20 n^5 \! -\! n^6\end{eqnarray*}\end{minipage} \\ \hline
\parbox{.8in}{\centering \vskip.1cm K6 $g\!=\!4$ \\[.1cm] \includegraphics[scale=0.5]{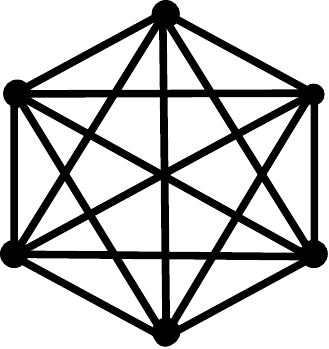}\\[.1cm]} & \begin{minipage}{2.65in}\noindent \begin{eqnarray*} T(K6,{\bf 4},t) &=& 72t^3+384t^6+3024t^7\\ && +5256t^9+864t^{10}\end{eqnarray*} \end{minipage} & \begin{minipage}{2.6in}\begin{eqnarray*} P(K6,n) &\!\!\!\!\! = \!\!\!\!\! & -368n \!\! +\!\! 1472 n^2\!\! -\!\! 2488 n^3 \\ && \!\! +\! 2328 n^4\!\! -\!\! 1327 n^5 \!\!+\!\! 478 n^6\\ && \!\!-\! 109 n^7 \!\!+\!\! 15 n^8\!\! - \!\! n^9
\end{eqnarray*}\end{minipage} \\ 
\hline
\parbox{.8in}{\centering \vskip.1cm $K_{3,3}$ Torus \\[.1cm] \includegraphics[scale=0.45]{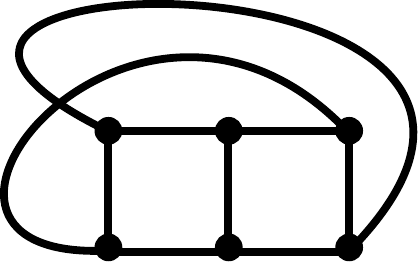}\\[.1cm]} & \begin{minipage}{2.6in}\noindent \begin{eqnarray*} T(K_{3,3},{\bf 4},t) &\!\!\!\!\!\! = \!\!\!\!\!\! & 24t^2+48t^3+24t^4 \\ && \!\! +\! 24t^5+48t^6+24t^7\end{eqnarray*} \end{minipage}   & $P(K_{3,3}, n) = 0$ \\ 
\hline
\parbox{.8in}{\centering \vskip.1cm Petersen \\[.1cm] \includegraphics[scale=0.5]{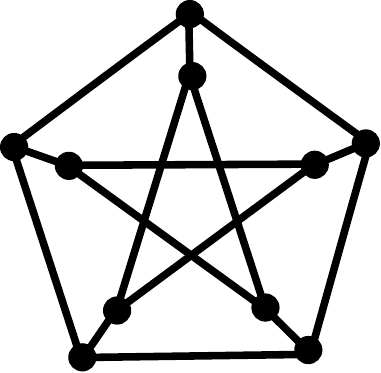}\\[.1cm]} &\begin{minipage}{2.6in}\noindent \begin{eqnarray*} T(Pet,{\bf 4},t)&\!\!\!\!\!\! = \!\!\!\!\!\! & 0 \\T(Pet,{\bf 5},t) &\!\!\!\!\!\! = \!\!\!\!\!\! & 1200t^4\!\! + \!\! 600t^6\!\! + \!\! 600t^7\\ && \!\! +\! 600t^8\!\! +\!\! 2400t^9\!\! +\!\! 600t^{10}\end{eqnarray*} \end{minipage}   & $P(Pet, n) = 0$ \\ 
\hline
\end{tabular}

\bigskip

\Cref{Theorem:mainthmPenrose} can be checked against these examples by inserting $t=-1$  into the total face color polynomial, which gives the Euler characteristic of the filtered $n$-color homology. Then the total face color polynomial evaluated at $n$ will equal the Penrose polynomial evaluated at the same $n$. This calculation can be checked visually for the first half of the table and computed for the second half. For example, $T(K6,4,-1)=-7104$, which equals $P(K6,4)$. 

When the degrees of $t$ in the $2$-variable total face color polynomial are known to be all even (e.g. $\Gamma$ is a plane graph), then the total face color polynomial can be determine from the degree of the Penrose polynomial and a finite number of filtered $n$-color homology calculations.  For example, suppose that the Penrose polynomial is degree $k$. Then inserting $t=-1$ into the total face color polynomial cannot ``cancel'' the degree $k$ term (all such terms have positive coefficients). Therefore the $n$-valued coefficients of each of the $t^{2i}$ terms in the total face color polynomial must be polynomials in variable $n$ of degree $k$ or less.  Since $n=0$ and $n=1$ are zeros of each of these polynomials (c.f. \Cref{thm:penrose-polynomial-values}),  only $k-1$  filtered $n$-color homologies need  to be computed to determine the total face color polynomial. For example, the total face color polynomial for the $P4$ graph is:

\begin{eqnarray*}
T(P4,n,t) &=& n(n-1) (n-2) \left[(-32 + 29 n - 9 n^2 + n^3) t^0 + 
    3 (n - 2) t^4 \right. \\
    & \mbox{} & \left. + 12 ( n - 3 ) t^6 + 3 ( n - 3 ) t^8 + 
    6 ( n - 3 ) t^{10} + (n - 3) t^{12}\right].
\end{eqnarray*}

Using the same logic, there is a bound on the number of $n$-color homologies that need to be computed to determine the total face color polynomial:

\begin{theorem}
Let $G(V,E)$ be a connected graph. Let $\Gamma$ be a signed ribbon diagram of $G$ with $f$ faces and $e=|E|$ edges. Then at most $e+f-1$ filtered $n$-color homologies ($n>1$) need to be computed to determine the $2$-variable total face color polynomial completely. \label{thm:bound-filtered-n-color-homology-calc}
\end{theorem}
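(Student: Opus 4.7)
The plan is to view $T(\Gamma,n,t)$ as a polynomial in $t$ whose coefficients are polynomials in $n$, bound the degrees of those coefficient polynomials, exploit the known vanishing of chromatic polynomials at $n=0,1$, and finally count how many evaluations at integer $n$ are needed to pin everything down by Lagrange interpolation. Explicitly, expand
$$T(\Gamma,n,t) \;=\; \sum_{i=0}^{e} C_i(n)\, t^i, \qquad C_i(n) \;=\; \sum_{|\alpha|=i}\dim \widehat{\mathcal{CH}}_n(\Gamma_\alpha).$$
By Statement~(2) of \Cref{MainTheorem:Colorings-of-State-Graphs}, each summand equals the number of $n$-face colorings of $\overline{\Gamma}_\alpha$, which is the chromatic polynomial of the (geometric) dual of $\overline{\Gamma}_\alpha$ evaluated at $n$. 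In particular, each $C_i(n)$ is an integer-coefficient polynomial in $n$.

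Next I would bound $\deg C_i$. All state graphs $\overline{\Gamma}_\alpha$ share the same underlying graph $G$, hence the same $v$ and $e$. Euler's formula in both the orientable and non-orientable cases gives $f_\alpha \le 2 + e - v$, with equality only when $\overline{\Gamma}_\alpha$ is a sphere. Since the dual of $\overline{\Gamma}_\alpha$ has $f_\alpha$ vertices, its chromatic polynomial has degree $f_\alpha$, so $\deg C_i \le 2 + e - v$. For any connected graph with a ribbon structure, $v + f \ge 2$, hence $2 + e - v \le e + f$, and we conclude $\deg C_i \le e + f$ for every $i$.

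Now I would use the fact that for any graph with at least one edge the chromatic polynomial vanishes at $n=0$ and $n=1$. Each dual graph of $\overline{\Gamma}_\alpha$ has at least one edge whenever $\Gamma$ has at least one edge, so each nonzero contribution to $C_i(n)$ is divisible by $n(n-1)$; hence $C_i(n)=n(n-1)Q_i(n)$ with $\deg Q_i \le e+f-2$. A polynomial of degree $\le e+f-2$ is uniquely determined by its values at $e+f-1$ distinct points, so knowing $C_i(n)$ at any $e+f-1$ integers $n\ge 2$ determines $Q_i$, and hence $C_i$, completely. Crucially, a single computation of $\widehat{CH}^{*}_n(\Gamma;\mathbb{C})$ for a fixed $n$ produces \emph{all} of the values $C_0(n),C_1(n),\ldots,C_e(n)$ simultaneously (by reading off dimensions in each homological grading, split by state via \Cref{prop:direct-sum-equals-all-harmonics}). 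Therefore computing the filtered $n$-color homology for $n=2,3,\ldots,e+f$, a total of $e+f-1$ computations, determines $T(\Gamma,n,t)$ as a polynomial in $n$ and $t$.

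The main obstacle I anticipate is not the interpolation count itself, but making the degree bound $\deg C_i \le e+f$ completely airtight across both orientable and non-orientable state graphs and across possible edge cases (graphs with loops, multiple edges, single-vertex blowups), and verifying that the $n(n-1)$ factor is actually shared by every summand contributing to each $C_i(n)$, not just by $C_i$ after cancellation. These are essentially bookkeeping checks using the Euler-characteristic argument and the structure of chromatic polynomials, and I do not expect them to force a weaker bound than $e+f-1$.
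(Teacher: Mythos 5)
Your proof is correct and arrives at the same conclusion, but the route to the degree bound differs from the paper's in a substantive way. The paper bounds $\deg C_i$ by counting circles along a path in the hypercube: the all-zero state has $f$ circles, and in the worst case each of the $e$ edge-maps is a $\widehat{\Delta}$ and raises the circle count by one, giving the crude bound $f+e$ for the all-one state. You instead observe that the number of circles in state $\Gamma_\alpha$ equals the number of faces $f_\alpha$ of the closed surface $\overline{\Gamma}_\alpha$, and that $f_\alpha = \chi(\overline{\Gamma}_\alpha) - v + e \le 2 + e - v$ by Euler's formula since $\chi \le 2$. This is a uniform, topological bound that is in fact \emph{tighter} than the paper's (since $v + f \ge 2$ forces $2 + e - v \le e + f$), which you then relax to $e+f$ to match the theorem statement. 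Your approach buys a cleaner and sharper intermediate statement; the paper's buys a more elementary, hypercube-local argument that doesn't invoke Euler characteristic. The remaining steps — the divisibility by $n(n-1)$, the identification $C_i(n) = \dim\widehat{CH}^i_n(\Gamma;\BC)$ via Theorem~\ref{MainTheorem:Colorings-of-State-Graphs}, and the interpolation count — coincide with the paper's. The only bookkeeping caveat you flag (whether every summand, not just the sum, is divisible by $n(n-1)$) is handled exactly as you expect: when $e\ge 1$ each dual graph has at least one edge, so each summand vanishes at $n=0,1$, and when some $f_\alpha=1$ the summand is identically zero; the edge case $e=0$ is implicitly excluded by both your argument and the paper's.
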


\begin{proof}
In the hypercube of states for $\Gamma^\flat_E$, the all-zero state has $f$ circles, which corresponds to contributing a degree $f$ polynomial in variable $n$ for $t^0$ term in the total face color polynomial.  The worst this may change by is a sequence of $\widehat{\Delta}$ maps from the all-zero state to the all-one state. In this case, the all-one state will have $f+e$ circles. Thus, the $t^{e}$ term will have at most a degree $f+e$ polynomial in variable $n$. To determine these polynomials, $f+e+1$ data points are needed. Since $n(n-1)$ is always a factor of each polynomial, only $f+e-1$ data points are required.
\end{proof}

A sharper bound for \Cref{thm:bound-filtered-n-color-homology-calc} is the number of circles in the state with the maximum number of circles, minus one. Also, there are clear relationships between different filtered $n$-color homologies for different $n$, so it may be possible to drastically reduce the number of homologies needed to determine the total face color homology. More generally, research is needed to see if the total face color polynomial is related to any of the other graph polynomial invariants.

\section{ \Cref{Theorem:mainthm-four-color}: TQFT approaches to the four color theorem and other famous conjectures} \label{section:A-TQFT-approach-to-the-4CT} In this section we begin with a discussion of the four-color theorem, that is, every bridgeless planar graph has a $4$-face coloring. Because the current proofs \cite{AppelHaken, AppelHaken2, RSST} rely on computer generated checking of hundreds of cases, there is still a ``black box'' aspect to the result. Hence it is still an interesting endeavor to develop a theory that can elucidate and expose the underlying structure of planar graphs. 

Recently, there have been attempts to prove this theorem and give meaning to that underlying structure using gauge theory. In this section, we compare and contrast our homology to the remarkable theory of instanton homology of webs defined by Kronheimer and Mrowka (cf. \cite{KM3, KM2, KM1}).  A web $K$ is a trivalent graph embedded in $\BR^3$. The first obvious difference between our homology theories and their instanton homology is that a web naturally includes the topology of how it sits in ambient space, while our homologies are ultimately about the surface associated to a ribbon graph, or if trivalent, the abstract graph itself (cf. \Cref{rem:totalcolorpoly-graphinvariant}). However, the theories are strikingly similar in regards to results about the spectral sequences they both generate. It is too soon to say if there is a correlation between the our homology and instanton homology. We note that our homology can be defined directly from a TQFT (see \Cref{section:UnorientedTQFTTheory}), and that the homology appears to be far more computable and richer (in that it is defined for all $n\in \BN$) as exhibited by the table in the last section. This makes it a worthy contender for understanding the structure of graphs and graph coloring.

\subsection{A comparison of our homology to instanton homology} \label{section:comparisionofhomologies} In \cite{KM3}, Kronheimer and Mrowka  constructed an instanton homology $J^\sharp(K)$ for a web $K$ from the Morse theory of the Chern-Simons functional on the space of connections $\mathcal{B}$ associated to $K$.  In \cite{KM1}, they introduced a system of local coefficients $\mathcal{L}$ on $\mathcal{B}$  and used it to to define  a homology $J^\sharp(K;  \mathcal{L})$ as a module over the ring $R=\mathbb{F}[\BZ^3]$. They then derived the following inequality

\begin{equation}
\dim_{\mathbb{F}} J^\sharp(K) \geq \mbox{rank}_R J^\sharp(K;\mathcal{L}), \label{eqn:JsharpleqJsharpL}
\end{equation}
and  three key facts about their homology theories:\\

\hspace{.2in}\parbox{6in}{ \begin{enumerate}
\item[\bf Fact 1:] The vector space $J^\sharp(K)$ is nonzero if and only if $K$ does not have an embedded bridge (Theorem 1.1 of \cite{KM3}).\\
\item[\bf Fact 2:] If $K$ lies in the plane, then the rank of $J^\sharp(K;\mathcal{L})$ as an $R$-module is equal to the number of $3$-edge colorings of $K$ (Theorem 1.2 of \cite{KM1}).\\
\item[\bf Fact 3:] The inequality follows from the fact that $J^\sharp(K)$ is related to $J^\sharp(K;\mathcal{L})$ through a spectral sequence where $J^\sharp(K)$ generates the $E_1$-page (Theorem 6.1 of \cite{KM1}).\\
\end{enumerate}}

Mrowka and Kronheimer point out that, while these facts alone do not prove the four color theorem (it could be that $J^\sharp(K;\mathcal{L})$ is still zero and not violate the inequality),  these homologies do offer a useful perspective on a structural proof of the four color theorem.  

We  prove or note the same three facts about our spectral sequence and produce a similar inequality. In this way, our bigraded $n$-color homology is like $J^\sharp(K)$ and our filtered $n$-color homology is like $J^\sharp(K;\mathcal{L})$.

First, we define a Poincar\'{e} polynomial for the bigraded $n$-color homology: 

\begin{definition}
Let $G(V,E)$ be a connected graph and let $\Gamma$ be any signed ribbon diagram of it.  The Poincar\'{e} polynomials of the bigraded $n$-color homologies generate $T^\sharp(\Gamma,n,t,q)$, which is characterized by
$$T^\sharp(\Gamma,n,t,q):= \sum_{i=0}^{|E|} t^i \left(\sum_j q^j \dim CH_n^{i,j}(\Gamma;\BC)\right)$$
when evaluated at $n\in \BN$. 
\end{definition}

The  inequality below follows from our version of Fact 3: By standard results in spectral sequence theory (cf. Example 1.F in \cite{McCleary}), \Cref{thm:spectral-sequence-for-filtered-homology} implies:

\begin{proposition}\label{prop:inequality-of-Ts}
Let $G(V,E)$ be a connected graph and let $\Gamma$ be any signed ribbon diagram of it. Then
$$T^\sharp(\Gamma,n,1,1) \geq T(\Gamma,n)$$
for all $n\in \BN$.
\end{proposition}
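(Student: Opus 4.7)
The plan is to deduce the inequality as an immediate consequence of the existence of the spectral sequence $(E_r(\Gamma),d_r)$ from \Cref{thm:spectral-sequence-for-filtered-homology}, whose $E_1$-page is the bigraded $n$-color homology $CH^{*,*}_n(\Gamma;\BC)$ and whose $E_\infty$-page is the filtered $n$-color homology $\widehat{CH}^*_n(\Gamma;\BC)$. The standard fact (Example~1.F in \cite{McCleary}) is that for any spectral sequence of finite-dimensional bigraded vector spaces, the dimension can only decrease when passing to homology with respect to each $d_r$, so $\dim E_{r+1} \leq \dim E_r$ in every bidegree, and hence $\dim E_\infty \leq \dim E_1$ in every bidegree.

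First I would observe that each differential $d_r$ on $E_r(\Gamma)$ has bidegree $(1,rn)$, so in particular it preserves the homological grading $i$. Consequently the inequality $\dim E_\infty \leq \dim E_1$ holds not just in total but in each fixed homological degree $i$, after summing over the internal quantum grading $j$. Setting $q = 1$ in the Poincar\'e polynomial $T^\sharp(\Gamma,n,t,q)$ exactly collapses the $j$-grading, giving
\[
T^\sharp(\Gamma,n,t,1) = \sum_{i} t^i \dim CH^{i,*}_n(\Gamma;\BC)
= \sum_i t^i \dim E_1^{i,*}(\Gamma).
\]
On the other side, by \Cref{MainTheorem:spectralsequence} and \Cref{prop:direct-sum-equals-all-harmonics}, the filtered homology decomposes in each degree as $\widehat{CH}^i_n(\Gamma;\BC) \cong \bigoplus_{|\alpha|=i}\widehat{\mathcal{CH}}_n(\Gamma_\alpha)$, so
\[
T(\Gamma,n,t) = \sum_i t^i \dim \widehat{CH}^i_n(\Gamma;\BC)
= \sum_i t^i \dim E_\infty^{i,*}(\Gamma).
\]

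The second step is to apply the pagewise dimension inequality of the spectral sequence in each homological degree and sum the results after setting $t = 1$. This gives
\[
T^\sharp(\Gamma,n,1,1) = \sum_i \dim E_1^{i,*}(\Gamma) \geq \sum_i \dim E_\infty^{i,*}(\Gamma) = T(\Gamma,n,1) = T(\Gamma,n),
\]
which is the desired inequality. The main, and essentially only, point that has to be handled carefully is verifying that the dimension-monotonicity of the spectral sequence can be applied in each fixed homological grading separately; since the higher differentials all have bidegree $(1,rn)$ with first coordinate $+1$, passing to the homology of $d_r$ on $E_r^{i,*}$ produces a subquotient of $E_r^{i,*}$, and this makes the comparison in each degree $i$ automatic. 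No further obstacle arises.
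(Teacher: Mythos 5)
Your proof is correct and takes essentially the same approach as the paper, which simply cites the standard spectral sequence fact (Example 1.F of \cite{McCleary}) together with \Cref{thm:spectral-sequence-for-filtered-homology}; you have just written out the details. One small slip: the sentence ``has bidegree $(1,rn)$, so in particular it preserves the homological grading $i$'' is wrong as stated, since $d_r$ \emph{raises} $i$ by $1$; the correct (and sufficient) observation, which you in fact make in your final paragraph, is that $E_{r+1}^{i,*}$ is a subquotient of $E_{r}^{i,*}$ because $d_r$ maps $E_r^{i-1,*}\to E_r^{i,*}\to E_r^{i+1,*}$, so the per-degree dimension inequality still holds. (Also note that for the proposition as stated you only need the total-dimension inequality $\dim E_\infty \le \dim E_1$; the per-homological-degree refinement is a bonus, not a requirement.)
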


This proposition is the first part of Statement (1) of \Cref{Theorem:mainthm-four-color}.  A notable difference between our inequality and  \Cref{eqn:JsharpleqJsharpL} is that ours holds for all $n\in \BN$ while instanton homology is only similar to the $n=3$ version of \Cref{prop:inequality-of-Ts}. To see this, a plane graph $\Gamma$ satisfies $T(\Gamma,n) = T(\Gamma,n,1) = T(\Gamma,n,-1)=P(\Gamma,n)$ by Statement (1) of  \Cref{MainTheorem:Colorings-of-State-Graphs}.  When $n=3$,  Statement (6) of \Cref{thm:penrose-polynomial-values} implies that $T(\Gamma,3)$ is equal to the number of $3$-edge colorings of $\Gamma$ when $\Gamma$ is trivalent, which is  analogous to Fact 2 above. 

Next,  we prove the rest of Statement (1) of \Cref{Theorem:mainthm-four-color}.  It is the main theorem of this subsection and our version of Fact 1: 

\begin{theorem} \label{theorem:CH-not-zero}
Let $G$ be a connected graph and $\Gamma$ be a ribbon graph of it with $f$ faces. Let $n>1$. Then, for the bigraded $n$-color homology,  
$$CH^{0,k}_n(\Gamma;\BC)=\BC,$$ 
where $k=(1-n/2)f$ and $n$ is even.   Furthermore, if the ribbon graph is oriented, then the conclusion also holds when $n$ is odd and  $k=(1/2-n/2)f$. In particular, $T^\sharp(\Gamma,n,1,1) >0$ for all oriented ribbon graphs regardless if $n$ is even or odd.
\end{theorem}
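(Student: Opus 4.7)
The strategy is to exhibit an explicit cycle in bidegree $(0,k)$ and show that the chain group in that bidegree is already one-dimensional. Define
\[
\psi(\Gamma) := \underbrace{x^{n-1}\otimes x^{n-1}\otimes\cdots\otimes x^{n-1}}_{f\text{ factors}} \ \in\ V_{\vec{0}} = V^{\otimes f}\{0\},
\]
where $V=\BC[x]/(x^n)$ and $f$ is the number of circles of the all-zero state $\Gamma_{\vec{0}}$ of the blowup $\Gamma^\flat_E$ (equivalently, the number of faces of $\overline{\Gamma}$). Reading off from the grading table in \Cref{subsection:finite-dim-graded-vs}, we have $\deg x^{n-1} = 1-m$ for $n$ even and $\deg x^{n-1} = -m$ for $n$ odd; since $|\alpha|=0$ no shift is applied, so $j(\psi(\Gamma)) = (1-n/2)f$ for $n$ even and $j(\psi(\Gamma))=(1/2-n/2)f$ for $n$ odd, matching the claimed value of $k$.

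Next I would verify $\partial\psi(\Gamma)=0$ by checking that every outgoing edge-differential from $\Gamma_{\vec{0}}$ annihilates $\psi(\Gamma)$. For an oriented ribbon diagram, each perfect matching edge of $\Gamma^\flat_E$ sitting in the all-zero state is a $\IIDiag$-smoothing, so transitioning to a $1$-smoothing either fuses two distinct face-circles (giving an $m$ map) or introduces a self-intersection into a single circle (giving an $\eta$ map); the splitting case ($\Delta$) cannot occur out of $\Gamma_{\vec{0}}$. Then one checks directly in $V$ that $m(x^{n-1}\otimes x^{n-1}) = x^{2(n-1)} = 0$ since $2(n-1)\geq n$, and $\eta(x^{n-1})=\sqrt{n}\,x^{n-1+m}=0$ since $n-1+m\geq n$ for all $n\geq 2$. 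Hence $\partial\psi(\Gamma)=0$.

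For uniqueness, I would observe that $k$ is the minimum possible $q$-grading of any monomial basis element of $C^{0,*}(\Gamma)=V^{\otimes f}$: inspecting the table, the unique basis vector of $V$ of minimum degree is $x^{n-1}$ (equal to $1-m$ for $n$ even, and equal to $-m$ for $n$ odd), so by additivity of degree under tensor product the only monomial of degree $k$ in $V^{\otimes f}$ is $\psi(\Gamma)$. Consequently $\dim_{\BC} C^{0,k}(\Gamma)=1$; since $C^{-1,k}(\Gamma)=0$, we conclude $CH^{0,k}_n(\Gamma;\BC) = \BC\cdot[\psi(\Gamma)]$. The ``in particular'' statement is then immediate: $T^\sharp(\Gamma,n,1,1) \geq \dim CH^{0,k}_n(\Gamma;\BC) = 1 > 0$.

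The hard step -- or rather the technical point requiring care -- is justifying the orientation hypothesis in the odd-$n$ case. When $\Gamma$ is represented by a signed ribbon diagram with negative edges, the conventions of \Cref{subsection:non-orientable-surfaces} swap the meaning of the $0$- and $1$-smoothings at negative edges, so the all-zero state can have outgoing $\Delta$-differentials (arising from negative edges whose ``before'' smoothing is $\XDiag$). A short computation shows that for $n$ odd the equation $i+j = (n-1)+2m = 2n-2$ with $0\leq i,j\leq n-1$ has the unique solution $(i,j)=(n-1,n-1)$, so $\Delta(x^{n-1})=x^{n-1}\otimes x^{n-1}\neq 0$, and the argument breaks. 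In contrast, for $n$ even the equation $i+j = 2n-1$ has no solutions with $i,j\leq n-1$, so $\Delta(x^{n-1})=0$ and the argument extends to arbitrary (possibly non-orientable) ribbon graphs. This parity asymmetry of the shifted comultiplication $\Delta$ is precisely what forces the orientation hypothesis when $n$ is odd.
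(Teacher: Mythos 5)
Your proof is correct and follows the paper's argument essentially step for step: you use the same class $\psi(\Gamma)=x^{n-1}\otimes\cdots\otimes x^{n-1}$, the same grading computation, and the same case analysis of the three edge-differentials with the parity-dependent behavior of $\Delta(x^{n-1})$ driving the orientation hypothesis for odd $n$. The one compressed step is the assertion that the splitting ($\Delta$) case cannot occur out of $\Gamma_{\vec{0}}$ for an oriented ribbon diagram: the fact that every edge is $\IIDiag$-smoothed does not by itself preclude a circle from splitting when a single circle runs along both sides of an edge; the paper fills this in by orienting the boundary circles of the oriented ribbon surface compatibly, so that along each band the two boundary strands run in opposite directions, forcing the single-circle case to be an $\eta$ map rather than a $\Delta$ map. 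Your closing discussion correctly identifies negative edges of a signed ribbon diagram as producing $\Delta$ differentials, but it is worth noting that even at a positive edge the map is $\Delta$ versus $\eta$ according to whether the two adjacent strands of a single circle are co-oriented or anti-oriented; it is the compatible choice of boundary orientation, available precisely in the oriented case, that guarantees the latter.
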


There are examples of non-orientable ribbon graphs where $CH^{0,*}_n(\Gamma;\BC)=0$ for $n$ odd (cf. $B_1p$ in the table above).  However, we conjecture that for all connected ribbon graphs $\Gamma$, $CH^{i,j}_n(\Gamma; \BC)$ is nonzero for some homological grading $i$ and quantum grading $j$.

This theorem exposes another fundamental difference between our homology and instanton homology: our bigraded $n$-color homology for an oriented ribbon graph is nonzero even when $\Gamma$ contains a bridge. Instanton homology is zero if $K$ lies in a plane and has a bridge. 

\begin{proof}
Let $\Gamma$ be represented by a signed ribbon diagram. Recall that the all-zero state $\Gamma_{\vec{0}}$ of $\Gamma^\flat_E$ is a set of $f$ circles corresponding to each of the faces  of $\Gamma$ and that $\vec{\alpha}_i$ denotes the element of $\{0,1\}^{|E|}$ with a one in the $i$th position and zeros elsewhere.  Therefore, $V_{\vec{0}} = V^{\ot f}$ where $V=\BC[x]/(x^n)$.  Furthermore, recall that each map $\partial_{\vec{0}\vec{\alpha}_i}:V_{\vec{0}} \ra V_{\vec{\alpha}_i}$ that goes from a $0$-smoothing at an edge $e_i$ to its $1$-smoothing is either an  $m$ map, a $\Delta$ map, or an $\eta$ map (cf. \Cref{subsec:face-colorings-of-ribbon-graphs}).

Consider the basis element $\psi(\Gamma) = x^{n-1}\ot x^{n-1}\ot \cdots \ot x^{n-1}$ that corresponds to associating the element $x^{n-1}$ to each of the $f$ circles in $V_{\vec{0}}$. This element constitutes a basis for $C^{0,k}(\Gamma)$ where $k=(1-n/2)f$ if $n$ is even and $k=(1/2-n/2)f$ if $n$ is odd. 

For an edge $e_i\in E$, there is an edge in the hypercube $\Gamma_{\vec{0}} \ra \Gamma_{\vec{\alpha}_i}$.  If there are two different faces adjacent to this edge, then $\partial_{\vec{0}\vec{\alpha}_i}$ corresponds to the map $m:V\ot V \ra V$. In the case of $\psi(\Gamma)$, $m(x^{n-1}\ot x^{n-1}) = x^{2n-2}$, which is zero in $V=\BC[x]/(x^n)$ when $n>1$. Thus, $\partial_{\vec{0}\vec{\alpha}_i}(\psi(\Gamma))=0$ for all $n>1$. If there is only one face (circle) adjacent to this edge $e_i$, then orient the circle.  If the orientation of the circle points in opposite directions along the edge, then $\partial_{\vec{0}\vec{\alpha}_i}$ corresponds to the map $\eta:V\ra V$. Since $\eta(x^{n-1}) = \sqrt{n} (x^{n-1+m})$ where $m=n/2$ if $n$ is even and $m=(n-1)/2$ if $n$ is odd, when $n>1$, $\eta(x^{n-1})=0$ and therefore $\partial_{\vec{0}\vec{\alpha}_i}(\psi(\Gamma))=0$ for all $n>1$.  If the orientation of the circle points in same direction along the edge, then $\partial_{\vec{0}\vec{\alpha}_i}$ corresponds to $\Delta:V\ra V\ot V$. Since $\Delta(x^{n-1})=0$ when $n$ is even and $n>1$, $\partial_{\vec{0}\vec{\alpha}_i}(\psi(\Gamma))=0$ for all even $n>1$. Hence, when $n$ is even, $\psi(\Gamma)$ is in the kernel of $\partial: C^{0,k}(\Gamma) \ra C^{1,k}(\Gamma)$.

If $n$ is odd, then $\Delta(x^{n-1})=x^{n-1}\ot x^{n-1}$, which is not in the kernel of $\partial: C^{0,k}(\Gamma) \ra C^{1,k}(\Gamma)$. However, when $\Gamma$ is oriented, then there exists a signed ribbon diagram representative of $\Gamma$ where all edges are positive. For this diagram,  the orientation of all circles of $\Gamma_{\vec{0}}$ can be chosen so that the circle(s) along each edge are oriented in the opposite direction to each other along the edge.  Hence, there can be no $\Delta$ maps from $V_{\vec{0}} \ra V_{\vec{0}\vec{\alpha}_i}$, only $m$ maps and $\eta$ maps. But in this situation, $\del(\psi(\Gamma))=0$.
\end{proof}

The homology class,  $\psi(\Gamma) = x^{n-1}\ot x^{n-1}\ot \cdots \ot x^{n-1}$, should remind topologists of Olga Plamenevskaya's invariant $\psi(L)$ for a transverse link $L$ (cf. \cite{Olga}).  Indeed, for oriented ribbon graphs, and in particular, plane graphs, $\psi(\Gamma)$ can play a role in the four color theorem.  In the spectral sequence, $\psi(\Gamma) \in E_1$, but one can ask if it lives to the $E_\infty$-page, and if not, on what page  it dies.  All of these questions lead to different invariants of the ribbon graph $\Gamma$.  

If $\Gamma$ is a bridgeless plane graph, $\psi(\Gamma)$ lives to the $E_\infty$-page as a nonzero linear combination of $n$-face colorings of $\Gamma$ in all examples we have calculated (cf. \Cref{prop:dim-of-CH0-equals-n-face-colorings}).  This leads us to make  the following conjecture, which, if true, gives a non-computer based approach to the proof of the four color theorem:

\begin{conjecture}
Let $\Gamma$ be a bridgeless plane graph of a connected graph $G$. Then the invariant, $\psi(\Gamma)\in CH^{0,k}_n(\Gamma;\BC)$, lives to the $E_\infty$-page. \label{conjecture:Psi-lives-to-E-infinity-page}
\end{conjecture}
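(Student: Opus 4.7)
The strategy is to construct an explicit $\widehat{\del}$-closed lift $\tilde{\psi} \in \widehat{C}^0(\Gamma)$ of $\psi(\Gamma)$ in the color basis of $\widehat{V}=\BC[x]/(x^n-1)$, so that the leading (lowest-$j$) component of $\tilde{\psi}$ in the quantum filtration recovers $\psi(\Gamma)$. By the standard spectral-sequence-versus-filtered-complex correspondence, the existence of such a lift is equivalent to $[\psi(\Gamma)]_1 \in E_1$ surviving to a nonzero class on the $E_\infty$-page.

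Using $x^{n-1}=\sum_{i=0}^{n-1}\lambda^{i}c_i$ in $\widehat{V}$ (since $\lambda^{-(n-1)}=\lambda$), we have the dual expansion $c_I = \frac{1}{n^f}\sum_{J}\lambda^{I\cdot J}x^J$, so the coefficient of $\psi(\Gamma)=x^{(n-1)\ot f}$ inside $c_I$ equals $\frac{1}{n^f}\lambda^{-|I|}$, where $|I|=i_1+\cdots+i_f$ and $f$ is the number of faces of $\overline{\Gamma}$. Letting $\mathcal{C}\subset\{0,\ldots,n-1\}^f$ denote the set of $n$-face colorings of $\overline{\Gamma}$, I would define
\[
\tilde{\psi} \;:=\; \frac{n^f}{|\mathcal{C}|}\sum_{I\in\mathcal{C}}\lambda^{|I|}\,c_I \;\in\; \widehat{C}^0(\Gamma).
\]
Because $\Gamma$ is bridgeless and planar, every edge-differential $\widehat{\del}_{\vec{0}\vec{\alpha}_i}$ out of the all-zero state is a multiplication map $\widehat{m}$, and by Equation~(1) of \Cref{lem:widehat-maps} together with the Color Basis Lemma, $\widehat{\del}(c_I)=0$ for every $I\in\mathcal{C}$; hence $\widehat{\del}(\tilde{\psi})=0$. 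A direct calculation shows that the coefficient of $\psi(\Gamma)$ in $\tilde{\psi}$ is $\frac{1}{|\mathcal{C}|}\sum_{I\in\mathcal{C}}\lambda^{|I|}\lambda^{-|I|}=1$, so $\tilde{\psi}=\psi(\Gamma)+(\text{strictly higher-}j \text{ terms})$, exactly the required lift.

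The construction above is valid precisely when $|\mathcal{C}|>0$, i.e., when $\overline{\Gamma}$ is $n$-face colorable. The principal obstacle to proving the conjecture is therefore this nonemptiness: for $n=4$, the statement that $\mathcal{C}\ne\emptyset$ for every bridgeless plane $\Gamma$ is the four color theorem itself, so a genuinely non-circular proof cannot assume it a priori. I would instead attempt an induction on $|E|$, using the functoriality of the $(1{+}1)$-dimensional TQFT of \Cref{section:UnorientedTQFTTheory}: deletion and contraction of an edge $e$ should induce cobordism morphisms $[[\Gamma]]\to[[\Gamma\setminus e]]$ and $[[\Gamma]]\to[[\Gamma/e]]$ that relate $\psi(\Gamma)$ to $\psi(\Gamma\setminus e)$ and $\psi(\Gamma/e)$, and a mapping-cone or long-exact-sequence analysis should propagate the survival of $\psi$ inductively without ever exhibiting a coloring. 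The hardest step will be controlling the higher-$j$ correction terms of $\tilde{\psi}$ coherently through each inductive reduction, and ensuring the reduced graphs remain within the hypotheses (bridgeless, planar, suitably blown up); the Klein-group structure underlying Statement~(2) of \Cref{Theorem:mainthm-four-color} appears essential for handling the case $n=4$.
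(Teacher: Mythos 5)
This statement is labeled a \emph{conjecture} in the paper, and the paper offers no proof of it.  The authors merely report that $\psi(\Gamma)$ survives to $E_\infty$ ``in all examples we have calculated,'' and explicitly remark that even proving the \emph{conditional} version---that $\psi(\Gamma)$ survives whenever $\Gamma$ is already known to be $n$-face colorable---``would be a first step.''  You are honest that your construction only delivers that conditional version, so you have not proved the conjecture; but neither has the paper.  Measured against what the paper actually establishes, your proposal goes further, not less far.

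Your conditional argument is correct and is a genuine contribution.  The identity $x^{n-1}=\sum_{i}\lambda^i c_i$ in $\widehat{V}$ is right, as is the dual expansion $c_I=\tfrac{1}{n^f}\sum_J\lambda^{I\cdot J}x^J$ and the resulting coefficient $\tfrac{1}{n^f}\lambda^{-|I|}$ of $\psi(\Gamma)$ in $c_I$.  For a bridgeless plane graph every edge-differential out of the all-zero state is an $\widehat m$ map, so by \Cref{lem:widehat-maps} and the Color Basis Lemma each $c_I$ with $I\in\mathcal{C}$ is $\widehat\del$-closed, hence so is your $\tilde\psi$, and the leading ($j$-minimal) coefficient computes to $1$.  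This is exactly the ``predictable'' linear combination the authors describe informally: your coefficients $\lambda^{|I|}$ (up to normalization) supply the missing formula.  One phrase is slightly loose: you assert the existence of such a closed lift is ``equivalent to'' $[\psi(\Gamma)]_1$ surviving to a nonzero $E_\infty$-class, which in general also requires that the lift not be exact in a lower filtration level.  But in homological degree $0$ there are no boundaries, so the equivalence does hold here; you should say so explicitly.

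The gap is the one you already identify.  The lift $\tilde\psi$ exists only when $\mathcal{C}\ne\emptyset$, and for $n=4$ the statement $\mathcal{C}\ne\emptyset$ for every bridgeless plane graph \emph{is} the four color theorem, so nothing has been gained for the unconditional conjecture.  The deletion--contraction sketch does not close this gap: the bigraded and filtered differentials are not obviously compatible with edge contraction (contracting an edge changes the vertex valences and hence the blowup), and you do not indicate how the higher-$j$ corrections in $\tilde\psi$ would be controlled through the induction or how the bridgeless hypothesis would be preserved.  Until the survival of $\psi$ is propagated through such a reduction without reference to the color set $\mathcal{C}$, the conjecture remains exactly as open as the paper leaves it.
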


In other words, the nonzero class, $\psi(\Gamma)$, is the prime candidate in the bigraded $n$-color homology for investigating whether or when $\widehat{CH}_n^0(\Gamma;\BC)$ is nonzero. The value of this nonzero class and the spectral sequence cannot be overstated. For example, try to specify a class in $E_1$ that lives to the $E_\infty$-page knowing only that $\Gamma$ is $n$-face colorable but without knowing any specific $n$-face coloring of $\Gamma$. A first step toward this conjecture would be to prove that $\psi(\Gamma)$ always lives to the $E_\infty$-page when $\Gamma$ is known to be $n$-face colorable.

We conjecture more---that for any oriented ribbon graph $\Gamma$,  $\psi(\Gamma)$ lives to the $E_\infty$-page if and only if $\Gamma$ is $n$-face colorable. Given an oriented ribbon graph $\Gamma$, write down $\psi(\Gamma)$ and explicitly calculate the cocycles determined by $d_1[\psi(\Gamma)]_1=0, d_2[\psi(\Gamma)]_2=0$, etc.  If $\psi(\Gamma)$ lives to the $E_\infty$-page, these calculations will {\em produce} a nonzero linear combination of colorings in $\widehat{CH}_n^0(\Gamma;\BC)$ from $\psi(\Gamma)$. In other words, the algebra of the spectral sequence builds, page by page, a set of colorings of $\Gamma$. In this way our spectral sequence offers a theoretical way to explore a constructible proof of the four color theorem.

\subsection{Obstructions in the spectral sequence to coloring a graph} The idea of constructibility can be flipped around: Instead of constructing $n$-face colorings from $\psi(\Gamma)$, we can view the spectral sequence as a source of {\em obstructions} to building face colorings. Like \Cref{Theorem:mainthmPenrose}, the entire homology of the ribbon graph $\Gamma$ (and not just the degree zero homology) is important for studying these possible obstructions. The following theorem, which is Statement (2) of \Cref{Theorem:mainthm-four-color}, highlights how the full homology can play a role in finding obstructions to plane graph colorings:

\begin{theorem}
Let $\Gamma$ be a plane  graph of a connected planar trivalent graph $G$ and $n=2^k$ for some $k\in \NN$. If there is a state graph $\Gamma_\alpha$ with $\dim \widehat{\mathcal{CH}}_n(\Gamma_\alpha) >0$, then $\Gamma$ is $n$-face colorable. In particular, if any ribbon graph of a connected planar trivalent graph $G$ is $n$-face colorable, then all plane graphs of $G$ are also $n$-face colorable.\label{prop:state-graph-colorable-implies-plane-graph-colorable}
\end{theorem}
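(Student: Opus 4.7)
The plan is to run a chain of identifications that exactly parallels the proofs of Statements (4) and (5) of \Cref{MainTheorem:n-color-polynomial}, now starting from a harmonic coloring of an arbitrary state graph rather than from a polynomial evaluation. First, I would invoke Statement (2) of \Cref{MainTheorem:Colorings-of-State-Graphs} (equivalently, \Cref{prop:dim-of-CHhat-state-equals-n-face-colorings}) to upgrade the hypothesis $\dim \widehat{\mathcal{CH}}_n(\Gamma_\alpha) > 0$ to an honest $n$-face coloring of the closed associated surface $\overline{\Gamma}_\alpha$, and identify the $n$ colors with the elements of the Klein group $K_n = \BZ_2 \times \cdots \times \BZ_2$. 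This identification is precisely the place where the hypothesis $n = 2^k$ is used.

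Next, I would convert this face coloring into a nowhere-zero $K_n$-flow on $G$. Define $\phi : E \to K_n$ on each edge $e \in E$ of the underlying graph $G$ to be the sum of the colors of the two faces of $\overline{\Gamma}_\alpha$ meeting along $e$. Because adjacent faces carry distinct colors, $\phi(e) \neq 0$, and because $K_n$ has exponent $2$, the three edge-values around any trivalent vertex sum to $2(c_1 + c_2 + c_3) = 0$, so $\phi$ satisfies Kirchhoff's law. This is exactly the construction appearing in the proof of Statement (5) of \Cref{MainTheorem:n-color-polynomial}, only now it is applied to the faces of the state-graph surface rather than to the circles of the state itself.

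With a nowhere-zero $K_n$-flow in hand on the planar trivalent graph $G$, I would apply Tait's face-coloring algorithm on the plane graph $\Gamma$: choose any element of $K_n$ as the color of the outer face of $\Gamma$ and propagate to adjacent faces by adding $\phi(e)$ across each crossed edge $e$. The flow condition makes propagation path-independent (any two paths differ by a cycle, on which $\phi$ sums to $0$), and the nowhere-zero property ensures that faces adjacent across any edge receive distinct colors, producing an $n$-face coloring of $\Gamma$. The ``in particular'' clause is then immediate from Statement (1) of \Cref{thm:map-R-theorem}: any ribbon graph of a connected trivalent $G$ is equivalent (via $\mathcal{R}$) to some state graph of the blowup of a chosen plane graph $\Gamma$, so $n$-face colorability of any ribbon graph of $G$ forces $\dim \widehat{\mathcal{CH}}_n(\Gamma_\alpha) > 0$ for the corresponding state, and the main statement closes the loop.

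The main obstacle is conceptual rather than computational: essentially every ingredient is established elsewhere in the paper, so the real task is to assemble the pieces and to verify that $n = 2^k$ is the only hypothesis actually used in the bridge from a state-graph coloring to a $G$-flow. The delicate point is that the state $\Gamma_\alpha$ may sit in a high-genus, possibly non-orientable, surface, yet the flow construction depends only on the incidence of faces to edges of $G$ and on the exponent-$2$ structure of $K_n$, so nothing about the specific $\alpha$ enters beyond the existence of an $n$-face coloring of $\overline{\Gamma}_\alpha$; it is planarity of $\Gamma$, not of $\Gamma_\alpha$, that is used in the final Tait step.
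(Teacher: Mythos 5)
Your proposal is correct and follows essentially the same route as the paper's proof: invoke \Cref{prop:dim-of-CHhat-state-equals-n-face-colorings} to obtain an $n$-face coloring of $\overline{\Gamma}_\alpha$, identify colors with $K_n$, use Tait's construction to produce a nowhere-zero $K_n$-flow on $G$, and then apply Tait again to color the plane graph $\Gamma$. Your version is if anything a touch cleaner in making the flow $\phi(e)$ canonical (XOR of the two incident face colors) rather than normalizing one face to $0$ first, and in spelling out both the path-independence of the propagation and the use of $\mathcal{R}$-surjectivity from \Cref{thm:map-R-theorem} for the ``in particular'' clause, but these are elaborations of the same argument, not a different one.
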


An immediate corollary of this proposition and \Cref{Theorem:mainthmPenrose} is Proposition 7 of \cite{Aigner} (cf. \Cref{thm:4-color-number-positive}). Proposition 7 is an important theorem about the Penrose polynomial because it links the four color theorem to the evaluation of the Penrose polynomial at $n=4$.  In comparing the proof of Proposition 7 and the proof below, one sees the power of the TQFT approach: The homology explicitly links  $P(G,n)>0$ to colorings on all ribbon graphs of $G$. 

\begin{proof}
If $|\alpha|=0$, i.e., $\Gamma_\alpha$ is the all-zero state, then the theorem follows from \Cref{prop:dim-of-CH0-equals-n-face-colorings}. Suppose $|\alpha|>0$. Since $\widehat{\mathcal{CH}}_n(\Gamma_\alpha) \not= 0$, there exists an $n$-face coloring of the associated closed surface $\overline{\Gamma}_\alpha$ to $\Gamma_\alpha$. Choose one such $n$-face coloring. There is a one-to-one correspondence with each color $\{c_0,c_1,\ldots, c_{n-1}\}$ and elements of the $n$-dimensional Klein group $K_n= \ZZ_2 \times \cdots \times \ZZ_2$.  Use this relationship to ``color''  the faces of $\overline{\Gamma}_\alpha$ by elements of $K_n$ so that at least one face is labeled $0$.  Pick a face labeled by $0$. Using the additive structure of $K_n$, use an analog of Tait's construction in \cite{Tait} to label each edge of $G$ with a nonzero element of $K_n$. This is a nowhere zero $K_n$-flow of  $G$, which can then be used (again by Tait's construction) to $n$-face color the faces of $\Gamma$.
\end{proof}

In particular, \Cref{prop:state-graph-colorable-implies-plane-graph-colorable} shows that the entire hypercube of states plays a role in finding obstructions to $4$-face coloring a plane graph: A supposed counterexample to the four color theorem would necessitate the entire filtered $4$-color homology of a plane graph to vanish. An example of the entire hypercube of states behaving in this way is when the graph has a bridge. Certainly, based upon \Cref{prop:state-graph-colorable-implies-plane-graph-colorable} above, the $E_\infty$-page of a graph with a bridge must vanish. We have noticed, after computing many examples and not discovering any counterexamples, that it seems to always vanish by the $E_2$-page:

\begin{conjecture}
Let $\Gamma$ be a ribbon graph of a connected graph $G$.  If $G$ has a bridge, then the $E_2$-page of the spectral sequence of $n$-color homologies vanishes for all $n\in \BN$.\label{conj:G-has-a-bridge-implies-E2-vanishes}
\end{conjecture}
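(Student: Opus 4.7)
The plan is to exploit the bridge to produce an explicit chain contraction of the $E_1$-differential $d_1 = \widetilde{\partial}_*$ by splitting, according to quantum grading, a well-known nullhomotopy for $\widehat{\partial}$ on the total complex. Let $e$ be a bridge of $G$, ordered as the first perfect matching edge of the blowup $\Gamma^\flat_E$, and write $C^{*,*}(\Gamma) = C_0 \oplus C_1$ for the splitting into sub-hypercubes with $\alpha_1 = 0$ and $\alpha_1 = 1$. Because $e$ lies in no cycle (cf.\ the proof of Statement (3) of \Cref{MainTheorem:n-color-polynomial}), toggling the smoothing at $e$ introduces a self-intersection on a single circle without changing the number of circles, so $V_{(0,\beta)}$ is canonically identified with $V_{(1,\beta)}$ up to the $q$-grading shift, and the two differentials take block form
\[
\partial = \begin{pmatrix} \partial_0 & 0 \\ f_\eta & \partial_1 \end{pmatrix}, \qquad \widetilde{\partial} = \begin{pmatrix} \widetilde{\partial}_0 & 0 \\ f_{\widetilde{\eta}} & \widetilde{\partial}_1 \end{pmatrix},
\]
where $f_\eta$ (resp.\ $f_{\widetilde{\eta}}$) is $\eta$ (resp.\ $\widetilde{\eta}$) on the tensor factor of the circle through $e$ and the identity on all other factors.

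The key algebraic observation is that although $\eta$ and $\widetilde{\eta}$ are each singular on $V = \mathbbm{k}[x]/(x^n)$, their sum $\widehat{\eta} = \eta + \widetilde{\eta}$ is a scaled permutation of the monomial basis of $V$, and is therefore an isomorphism. Hence $\widehat{f} = f_\eta + f_{\widetilde{\eta}} : C_0 \to C_1$ is an isomorphism, and one can set
\[
K = \begin{pmatrix} 0 & \widehat{f}^{-1} \\ 0 & 0 \end{pmatrix} : C \to C.
\]
A direct block computation, using $\widehat{\partial}^2 = 0$ in the form $\widehat{f}\,\widehat{\partial}_0 + \widehat{\partial}_1 \widehat{f} = 0$, verifies $\widehat{\partial} K + K \widehat{\partial} = \mathrm{id}$, which recovers the vanishing of the $E_\infty$-page. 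Under the $q$-grading, $\widehat{f}^{-1}$ decomposes canonically as $K_0 + K_{-1}$, where $K_0$ reverses $f_\eta$ on its image and preserves the $q$-grading, while $K_{-1}$ reverses $f_{\widetilde{\eta}}$ on its image and lowers the $q$-grading by $n$.

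Sorting the identity $\widehat{\partial} K + K \widehat{\partial} = \mathrm{id}$ by change in $q$-grading produces three separate identities:
\[
\partial K_0 + K_0 \partial + \widetilde{\partial} K_{-1} + K_{-1} \widetilde{\partial} = \mathrm{id}, \qquad \widetilde{\partial} K_0 + K_0 \widetilde{\partial} = 0, \qquad \partial K_{-1} + K_{-1} \partial = 0.
\]
The last identity says $K_{-1}$ anti-commutes with $\partial$, so it descends to a well-defined operator $K_{-1,*}$ on $E_1 = CH_n^{*,*}(\Gamma;\mathbbm{k})$. Evaluating the first identity on a $\partial$-cycle $v$ and passing to $E_1$ annihilates $\partial K_0 v$ (a $\partial$-boundary), leaving $[v] = \widetilde{\partial}_* K_{-1,*}[v] + K_{-1,*} \widetilde{\partial}_*[v]$. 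Since $d_1 = \widetilde{\partial}_*$, this exhibits $K_{-1,*}$ as a chain contraction of $d_1$ on $E_1$, so $E_2 = H(E_1, d_1) = 0$.

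The main obstacle I expect is the careful tracking of signs, both in the hypercube-level block formulas (where the sign conventions that enforce $\partial^2 = 0$ propagate into every block identity) and in the explicit description of $\widehat{\eta}^{-1}$ when $n$ is odd, where the two shifts $m$ and $n - m$ are unequal and the domains of the partial inverses must be kept straight. A secondary concern is extending the argument to non-orientable ribbon graphs via the signed ribbon diagram formalism of \Cref{subsection:non-orientable-surfaces}; here a negative sign on the bridge simply interchanges the roles of the $0$- and $1$-smoothings without altering the block structure, so the construction should carry through without substantive change.
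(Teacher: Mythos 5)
The statement you were asked to prove is presented in the paper as a conjecture (Conjecture~\ref{conj:G-has-a-bridge-implies-E2-vanishes}); the paper only establishes the $E_\infty$-vanishing (via \Cref{MainTheorem:Colorings-of-State-Graphs} and the nonexistence of $n$-face colorings for a graph with a bridge) and records that $E_2$-vanishing held in all computed examples. There is therefore no proof in the paper to compare against.

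Your argument appears to be a genuine proof of the conjecture. The essential points all check out. A bridge $e$ of $G$ gives a perfect-matching bridge of the blowup $\Gamma^\flat_E$, and the boundary-circle-count argument in the proof of Statement (3) of \Cref{MainTheorem:n-color-polynomial} shows that toggling the smoothing at that edge leaves the circle count of every state unchanged and therefore is an $\eta$-type edge differential in every state, which is precisely what your block form requires. The map $\widehat{\eta}=\eta+\widetilde{\eta}$ is multiplication by $\sqrt{n}\,x^{m}$ in $\mathbbm{k}[x]/(x^{n}-1)$, where $x$ is a unit and $\sqrt{n}\neq 0$ over $\BR$ or $\BC$, so it is invertible as claimed. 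Ordering the bridge as the first perfect-matching edge makes the sign on $f_{\eta}$ and $f_{\widetilde{\eta}}$ equal to $+1$, and the relation $\widehat{f}\,\widehat{\partial}_{0}+\widehat{\partial}_{1}\widehat{f}=0$ is exactly the $(2,1)$-block of $\widehat{\partial}^{2}=0$; conjugating by $\widehat{f}^{-1}$ then gives the vanishing of the off-diagonal block in $\widehat{\partial}K+K\widehat{\partial}$, yielding $\mathrm{id}$. The $q$-degree decomposition $\widehat{f}^{-1}=K_{0}+K_{-1}$ is forced by the two branches of $(j-m)\bmod n$ (preserving $j$ when $j\geq m$, dropping by $n$ when $j<m$), and sorting the contraction identity by $q$-degree really does produce exactly the three identities you list, since $\partial,\widetilde{\partial},K_{0},K_{-1}$ have $q$-degrees $0,n,0,-n$ so only the $q$-degrees $-n,0,n$ occur. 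The degree-$(-n)$ identity makes $K_{-1}$ an anti-chain map for $\partial$, hence it descends to $E_{1}=CH_{n}^{*,*}$, and passing the degree-$0$ identity to homology on a $\partial$-cycle gives $\mathrm{id}_{E_{1}}=d_{1}K_{-1,*}+K_{-1,*}d_{1}$, so $E_{2}=0$. The signed ribbon diagram case does go through as you say, since a negative sign on the bridge only interchanges which smoothing is labeled $0$; the toggle is still $\eta$-type and the block structure is unchanged. The one implicit hypothesis worth stating explicitly is that $\sqrt{n}$ is a unit in $\mathbbm{k}$, which holds over the coefficient fields $\BR$ and $\BC$ the paper actually uses for the spectral sequence. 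You should write out the sign bookkeeping in full, but I found no gap: this resolves the conjecture affirmatively.
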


\begin{remark} There are ribbon graphs that vanish at the $E_2$-page that do not have bridges; this is not an ``if and only if'' statement like in instanton homology (cf. Fact 1 above).  For example, the theta graph $\theta$ vanishes at the $E_2$-page for bigraded $2$-color homology.
\end{remark}

More generally, classes in the bigraded $n$-color homology act as obstructions to constructing colorings. By \Cref{proposition:even-degree-non-zero-n-face-colorings}, the odd degrees of filtered $n$-colored homology vanish for plane graphs. Therefore, any class that is in an odd degree in the bigraded $n$-color homology has to eventually act as an obstruction to prevent some class in an even degree to live to the $E_\infty$-page.  

Thus, embedded in the spectral sequence of the TQFT is deep structural information of plane graphs relevant to proving the four color theorem. Future research will be on extracting that information. For example, Kempe chains that can be color-switched to get valid $4$-face colorings are likely to show up on the penultimate page to the $E_\infty$-page. One should expect a relationship between the non-colorable class with a Kempe chain and the switched valid $4$-face coloring class on that page. Also, valid  (likewise invalid) $4$-face colorings on one state graph are related to valid (invalid) $4$-face colorings on nearby state graphs in the hypercube of states. We have already seen this type of phenomenon in the color hypercube of a coloring in \Cref{subsection:Poincare-Lemma} and the proof of \Cref{prop:direct-sum-equals-all-harmonics}. (Remember, the Poincar\'{e} Lemma of  \Cref{subsection:Poincare-Lemma} is a major key to unlocking the main theorems of this paper.) Those relationships should exist in the pages of the spectral sequence leading up to the $E_\infty$-page as well.

As a simple first step towards understanding this deeper structure, we can create a new invariant of ribbon graphs:

\begin{definition} For any ribbon graph $\Gamma$, define the {\em spectral invariant of $\Gamma$}, denoted $\mathcal{S}_n(\Gamma)$, to be the smallest number $r\in \BZ$ where $E_r=\widehat{CH}_n^*(\Gamma;\BC)$. 
\end{definition}

For example,  if \Cref{conj:G-has-a-bridge-implies-E2-vanishes} is true, then $\mathcal{S}_n(\Gamma)=2$  for all $n$ for all ribbon graphs whose underlying graph has a bridge.

\subsection{Tutte's $4$-flow conjecture, cycle double cover conjecture, and $T(G,n)$}  In this subsection we prove Statement (3) of \Cref{Theorem:mainthm-four-color}, which generalizes Statements (2)--(5) of \Cref{MainTheorem:n-color-polynomial} to the total face color polynomial (see also \Cref{thm:penrose-polynomial-values}). We then go on to describe a conjecture, which if true, can be used to prove the cycle double cover conjecture. Like our discussion in the previous subsection, a proof of the conjecture would likely involve the entire spectral sequence of the blowup of the ribbon graph and the relationships {\em between} states in the hypercube.

First, we prove the main part of Statement (3) of \Cref{Theorem:mainthm-four-color}:

\begin{theorem}
Let $G(V,E)$ be a connected trivalent graph and $n=2^k$ for some $k\in\BN$.  Then
$$\frac{1}{n}\cdot T(G,n-1) \leq \# \{\mbox{nowhere zero $K_n$-flows of $G$}\},$$
and, for $n=4$,  $T(G,3) = \#\{\mbox{$3$-edge colorings of $G$}\}$.\label{theorem:flow-theorem}
\end{theorem}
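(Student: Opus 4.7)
My plan is to combine the homological unpacking of $T(G, n-1)$ with an adaptation of the flow-to-coloring correspondence used in the proofs of Statements~(4) and (5) of \Cref{MainTheorem:n-color-polynomial}. First I would apply Statement~(1) of \Cref{MainTheorem:Colorings-of-State-Graphs} to any ribbon graph $\Gamma$ of $G$ to obtain $T(G, n-1) = \sum_\alpha \dim \widehat{\mathcal{CH}}_{n-1}(\Gamma_\alpha)$, and then Statement~(2) to identify each summand with the number of $(n-1)$-face colorings of $\overline{\Gamma}_\alpha$. Since $G$ is trivalent, \Cref{thm:map-R-theorem} ensures that every ribbon graph of $G$ appears as a state graph $\Gamma_\alpha$, so $T(G, n-1)$ is the total number of $(n-1)$-face colorings across all ribbon graphs of $G$. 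To pass from colorings to flows, I would fix a bijection between the $n-1$ colors and the nonzero elements of $K_n = \BZ_2 \times \cdots \times \BZ_2$, and send each $(n-1)$-face coloring $\sigma$ of $\overline{\Gamma}_\alpha$ to the $K_n$-valued edge function $\Phi(\sigma)(e) = \sigma(f_1) + \sigma(f_2)$, where $f_1, f_2$ are the faces adjacent to $e$. Since $K_n$ has characteristic $2$ and $\sigma(f_1) \neq \sigma(f_2)$ by properness, $\Phi(\sigma)(e)$ is nonzero; the three edge values at a trivalent vertex sum to twice the sum of the surrounding face colors, which is $0$ in $K_n$, so $\Phi(\sigma)$ is indeed a nowhere-zero $K_n$-flow.

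To deduce $T(G, n-1)/n \leq \#\{K_n\text{-flows}\}$, I would exploit the free $K_n$-action on proper $n$-face colorings (now allowing the color $0$) across all state graphs, defined by uniform shift $\sigma \mapsto \sigma + a$. This action preserves $\Phi$, so each $K_n$-orbit has exactly $n$ elements mapping to a single flow, and propagation of colors along $G$ via $\phi$ shows that once a base color at one face is chosen, the rest of the coloring on any compatible state graph is forced. This yields at most $n$ face colorings over each flow per compatible state graph. The main obstacle I anticipate is ruling out \emph{cross-state-graph} overcounting: I need to argue that the contributions from different state graphs simultaneously compatible with the same $\phi$ collectively do not exceed $n$. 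This reduces to showing that the cyclic ordering and twistings of bands at each trivalent vertex are essentially determined by the flow values $r, s, t$ (pairwise distinct nonzero with $r + s + t = 0$) together with the shared corner colors, via a local analysis of the kind carried out in the proof of \Cref{prop:dim-of-CHhat-state-equals-n-face-colorings}.

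Finally, for the equality $T(G, 3) = \#\{3\text{-edge colorings of } G\}$ when $n = 4$, I would exploit the combinatorial peculiarity of $K_4 = \BZ_2 \times \BZ_2$: each of its three nonzero elements $c$ admits a unique unordered decomposition $c = a + b$ into the other two nonzero elements. Following the line of reasoning used for Statement~(2) of \Cref{MainTheorem:n-color-polynomial}, a $3$-edge coloring of $G$ forces at each edge a single local pair of adjacent face colors, and hence a single $0$- or $1$-smoothing choice in the hypercube of $\Gamma^\flat_E$. Assembling these local choices produces a unique state graph $\Gamma_\alpha$ together with a unique $3$-face coloring $\sigma$ on $\overline{\Gamma}_\alpha$ satisfying $\Phi(\sigma) = \phi$. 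Combined with the fact that $\Phi$ is surjective onto the set of $3$-edge colorings by the construction in the first paragraph, this sets up a bijection between $3$-edge colorings of $G$ and pairs $(\Gamma_\alpha, \sigma)$, yielding the desired equality.
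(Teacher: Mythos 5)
Your plan tracks the paper's proof closely: both proofs unpack $T(G,n-1)$ via Theorem D into $(n-1)$-face colorings over all states of the blowup, use the same corner-sum map $\Phi$ at each trivalent vertex to produce a nowhere-zero $K_n$-flow, invoke the $K_n$-action by uniform shift of colors to bound the fiber over each flow by $n$, and close the $n=4$ equality by showing that all three nonzero shifts of a valid $3$-face coloring force a $0$-colored face, combined with the unique reconstruction of a state and coloring from a $K_4$-flow from the Statement~(2) argument. The one obstacle you flag---possible overcounting across state graphs---is genuine, but your proposed reduction is exactly how the paper handles it: given the flow $\phi$ and a single base corner color, the colors of all three corners at every vertex are forced (up to global shift), and because the corner matching across each edge is then determined by which adjacent corners share a color, the state graph (i.e. the twisting at each band) is itself determined by $\phi$ alone. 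So two valid colorings mapping to the same $\phi$ necessarily lie on the same state graph and differ by a uniform $x \in K_n$, which is the content of the paper's ``globally adding $x$ is the only way to get the same flow'' step. Filling in that propagation argument carefully would complete your proof along the paper's lines; note also that the ``$3$-edge colorings of $G$ correspond bijectively to nowhere-zero $K_4$-flows on $G$'' identification (via Tait, since $G$ is trivalent) must be made explicit to land the stated equality $T(G,3) = \#\{3\text{-edge colorings}\}$ from the flow count.
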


\begin{proof} Let $\Gamma$ be a ribbon diagram of $G$ and consider its blowup $\Gamma_E^\flat$. Inspect the proof of Statement (4) and proof of the lower bound in Statement (5) in \Cref{subsection:proof-of-theorem-A} again. These arguments only require the existence of valid colorings on states of the hypercube of $\Gamma^\flat_E$, not whether those states were even or odd with respect to $\Gamma$. Hence, $$T(G,n-1)\leq \#\{\mbox{nowhere zero $K_n$-flows of $\Gamma^\flat_E$}\}.$$

When $n>4$, there are many more nowhere zero $K_n$-flows on the blowup then on the original graph. However, we can use $T(G,n-1)$ to count only flows on the original graph. Recall the algorithm for producing a $K_n$-flow on $G$ in the proof of Statement (4) of \Cref{MainTheorem:n-color-polynomial} in  \Cref{subsection:proof-of-theorem-A}.  Given a valid coloring of the circles in a state by nonzero elements of $K_n$ (after choosing a correspondence as in the proof of Statement (5) of \Cref{subsection:proof-of-theorem-A}), the element of $K_n$ associated to each perfect matching, which in the case of the blowup is each edge of $G$, is the sum of the two elements of the circles adjacent to that edge. Look at a vertex of the original graph. Near a vertex, a valid coloring of a state is a coloring of the faces of the state graph. Since there are always three distinct faces for a valid coloring at a vertex, we can label those faces by three nonzero, distinct elements $a,b,c\in K_n$.  The $K_n$-flow on the edges are then $a+b$, $a+c$, and $b+c$ as shown in \Cref{fig:K-n-flow-of-edges}.

\begin{figure}[H]
\includegraphics[scale=.7]{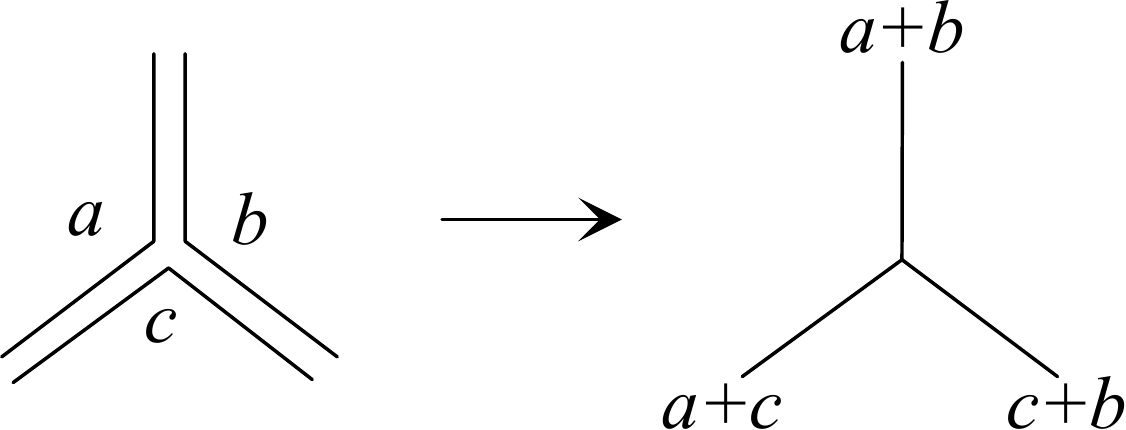}
\caption{How to take a valid $(n-1)$-coloring to a nowhere zero $K_n$-flow.}
\label{fig:K-n-flow-of-edges}
\end{figure}

We claim this defines a  nowhere zero $K_n$-flow.  The fact that it is a flow follows since $(a+b)+(b+c)+(a+c)=2(a+b+c)$, which is equal to zero in $K_n$.  It is a nowhere zero flow by the algebra of $K_n$: if $a\not=0$ and $b\not=0$, and $a\not=b$, then $a+b\not=0$. Finally, the algorithm defines a nowhere zero flow (possibly) up to globally adding an $x\in K_n$ to each element associated to each circle of a valid coloring of the state.  Suppose $a',b',c'\in K_n$ is another set of colors at a vertex  coming from a valid coloring of the state such that $a+b=a'+b'$, $a+c=a'+c'$, and $b+c=b'+c'$.  Then $a+a' = b+b'$, and if $a'=a+x$ and $b'=b+y$, then $x=y$. Since this equality holds over any edge, given a valid coloring by nonzero elements of $K_n$, then globally adding an element $x\in K_n$ to each element will generate the same nowhere zero $K_n$-flow, and importantly, this is the only way to get the same flow. Hence, $T(G,n-1)$ is at most $n$ times the number of nowhere zero $K_n$-flows that it specifically counts, proving the inequality.  

Note that it is often not exactly $n$ times more. When adding a global element $x\in K_n$ to each color of each face (circles of the state), the new valid coloring will sometimes include faces that will be colored $0\in K_n$, which is a valid $n$-coloring but not a valid $(n-1)$-coloring of the state according to our definitions. The colorings that have a $0\in K_n$ for a face do not get counted in $T(G,n-1)$. Therefore, the inequality can be and often is strict. 

When $n=4$, however, we do get a maximal bound. In this situation, the three faces at a vertex are labeled with the three nonzero elements $a,b,c\in K_4$, and {\em any} global nonzero $x\in K_4$ will necessarily be $a$, $b$, or $c$. This will cause the new coloring to always have a face colored by zero, which is not counted by $T(G,3)$. Hence, there is no over counting in this situation and $$T(G,3) \leq \#\{\mbox{nowhere zero $K_4$-flow of  $G$}\}.$$

We show this last inequality is an equality.  The upper bound proof of Statement (2) of \Cref{MainTheorem:n-color-polynomial} in \Cref{subsection:proof-of-theorem-A} showed that, given a $3$-edge coloring (a nowhere zero $K_4$-flow) of $\Gamma_E^\flat$, there must exist a state in the hypercube of $\Gamma_E^\flat$ that has a valid $3$-coloring of the circles by nonzero elements of $K_4$, i.e., a valid $3$-coloring of the faces of the state graph. Again, for $n=4$ only, the state and the coloring are uniquely determined by the nowhere zero $K_4$-flow on $\Gamma_E^\flat$, see \Cref{fig:k4-flow-to-coloring-of-state}.  Nowhere zero $K_4$-flows on the blowup correspond one-to-one with nowhere zero $K_4$-flows on $G$ using Tait's argument. Furthermore, whether the state is even or odd  with respect to $\Gamma$ did not matter with regards to the argument; only that the state exists and that there is a valid $3$-face coloring of its circles. Hence, $T(G,3)$ is also an upper bound for the number of nowhere zero $K_4$-flows of $G$. In general, for $G$ trivalent, $$\{\mbox{nowhere zero $K_n$-flows of $G$}\} \subset \{\mbox{$(n-1)$-edge colorings of $G$}\}$$ and is an equality when $n=4$. Thus, $T(G,3) = \#\{\mbox{$3$-edge colorings of $G$}\}$.
\end{proof}

We suspect that following corollary is a known result, but we were unable to find it after a detailed search of the literature. 

\begin{corollary}\label{cor:3-edge-color-equals-sum-of-3-face-color}
Let $G$ be a connected trivalent graph with trivial automorphism group. Then the number of $3$-edge colorings of $G$ is equal to the sum of the counts of the $3$-face colorings on all ribbon graphs of $G$.
\end{corollary}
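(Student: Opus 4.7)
The plan is to combine Theorem~\ref{theorem:flow-theorem} (equivalently, Statement~(3) of Theorem~\ref{Theorem:mainthm-four-color}) with the interpretation of the total face color polynomial $T(G,n)$ given in Remark~\ref{rem:totalcolorpoly-graphinvariant} and the discussion just before it. The corollary is essentially the conjunction of two identities: the first relates $T(G,3)$ to $3$-edge colorings, and the second relates $T(G,3)$ to the total count of $3$-face colorings on all ribbon graphs of $G$.

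First, I would invoke Theorem~\ref{theorem:flow-theorem} at $n=4$ to obtain
\begin{equation*}
T(G,3) \ = \ \#\{\mbox{nowhere zero $K_4$-flows of $G$}\} \ = \ \#\{\mbox{$3$-edge colorings of $G$}\},
\end{equation*}
where the second equality is the standard identification (used throughout Section~\ref{section:A-TQFT-approach-to-the-4CT}) between nowhere zero $K_4$-flows on a trivalent graph and $3$-edge colorings via the nonzero elements of $K_4$.

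Next, I would unpack $T(G,3)$ on the ``ribbon graph side.'' By Definition~\ref{definition:totalfacecolorpolynomial}, for any signed ribbon diagram $\Gamma$ of $G$,
\begin{equation*}
T(\Gamma,3) \ = \ \sum_{\alpha\in\{0,1\}^{|E|}} \dim \widehat{\mathcal{CH}}_{3}(\Gamma_\alpha),
\end{equation*}
and by Statement~(2) of Theorem~\ref{MainTheorem:Colorings-of-State-Graphs}, each summand equals the number of $3$-face colorings of the closed associated surface $\overline{\Gamma}_\alpha$ of the state graph $\Gamma_\alpha$. Because $G$ is trivalent with trivial automorphism group, Theorem~\ref{thm:map-R-theorem} guarantees that the map $\mathcal{R}$ from states of the hypercube of $\Gamma^\flat_E$ to ribbon graphs of $G$ (modulo ribbon-graph equivalence) is a bijection (this is where the loopless/valency-at-least-three hypotheses are automatic since $G$ is trivalent; the loop case contributes zero to both sides as noted in the proof of Theorem~\ref{Theorem:mainthmPenrose}). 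Consequently the sum over states is exactly a sum over distinct ribbon graphs of $G$, and by Remark~\ref{rem:totalcolorpoly-graphinvariant} the value $T(G,3)$ is independent of the chosen $\Gamma$ and equals
\begin{equation*}
T(G,3) \ = \ \sum_{\Gamma' \text{ ribbon graph of } G} \#\{\mbox{$3$-face colorings of $\overline{\Gamma'}$}\}.
\end{equation*}

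Combining the two expressions for $T(G,3)$ yields the claim. The step that really does all the work is the bijectivity of $\mathcal{R}$ from Theorem~\ref{thm:map-R-theorem}; this is the only place the trivial automorphism hypothesis enters and is what prevents double counting of equivalent ribbon graphs in the state sum. There is no further obstacle, since the remaining ingredients (the $n=4$ case of Theorem~\ref{theorem:flow-theorem} and the interpretation of harmonic colorings of a state as $n$-face colorings) have already been established.
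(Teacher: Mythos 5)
Your proof is correct and follows essentially the same route as the paper: \Cref{theorem:flow-theorem} at $n=4$ supplies $T(G,3) = \#\{\text{$3$-edge colorings of } G\}$, while \Cref{thm:map-R-theorem} (bijectivity of $\mathcal{R}$) together with Statement~(2) of \Cref{MainTheorem:Colorings-of-State-Graphs} identifies $T(G,3)$ with the sum of $3$-face-coloring counts over all distinct ribbon graphs of $G$. One small slip worth noting: looplessness is not automatic for a trivalent multigraph, but as you observe a loop forces a bridge, in which case both sides vanish, so the argument still goes through.
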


In the corollary, existence is not clear: given a $3$-face coloring of a ribbon graph, there exists a $3$-edge coloring of $G$, and vice versa. However, given a $3$-edge coloring, there may exist multiple ribbon graphs with $3$-face colorings on each that generate that $3$-edge coloring. (See the proof of Statement (2) of \Cref{Theorem:mainthm-four-color}, for example.) The corollary follows from the theorem above and \Cref{thm:map-R-theorem}: for a connected trivalent graph with trivial automorphism group, there are exactly $2^{|E|}$ inequivalent ribbon graphs. These ribbon graphs are the state graphs of the blowup of any choice of a ribbon graph $\Gamma$ of $G$. In general, when the automorphism group is not trivial, it is possible for a $3$-face coloring of a ribbon graph to generate multiple distinct $3$-edge colorings of $G$, i.e., there may be multiple state graphs that are equivalent as ribbon graphs.

\Cref{theorem:flow-theorem} when $n=4$ together with  \Cref{theorem:fulltheoremA} show that $T(G,n)$ is the correct generalization of the Penrose polynomial for nonplanar graphs. It also makes sense in terms of other known theorems and conjectures. For example, $T(K_{3,3},3) = 12$ and $T(Petersen, 3)=0$, which match well with Tutte's $4$-flow conjecture. 

Many of the famous flow conjectures can be rephrased in terms of $T(G,n)$. The most important of these in the context of \Cref{theorem:flow-theorem} is Tutte's $4$-flow conjecture. A proof for the conjecture was announced in 1998 by Robertson, Sanders, Seymour, and Thomas; this proof has not yet been completely verified. Like the proof of the four color theorem, it relies heavily on computer calculations. A theory-only proof of it would still be valuable.

The theorem does not directly relate to Tutte's $4$-flow conjecture to $T(\Gamma,n)$ because blowing up a non-trivalent graph can introduce a Petersen minor. However, the proof of \Cref{theorem:flow-theorem} can still be used to deduce: If $G$ is a connected graph with a ribbon graph $\Gamma$ such that $T(\Gamma,n-1)>0$ for $n=2^k$, then there exists a nowhere zero $n$-flow on $G$. Thus, a graph has a nowhere zero $4$-flow if and only if $T(\Gamma,3)>0$ for some ribbon graph of it, and

\begin{scholium}[Tutte's $4$-flow conjecture equivalence] A bridgeless graph $G$ that does not have the Petersen graph as a minor has a nowhere zero $4$-flow if and only if there exists a ribbon graph $\Gamma$ of $G$ such that   $T(\Gamma,3)>0$.  \label{prop:Tutte-4-flow-equivalence} 
\end{scholium}

In \Cref{theorem:flow-theorem}, $\frac{1}{n}\cdot T(G,n-1)$ is a lower bound on the number of nowhere zero $K_n$-flows. The proof of \Cref{theorem:flow-theorem} suggested that $\frac{1}{n}\cdot T(G,n)$ is possibly an upper bound: in the proof, globally adding an element $x\in K_n$ to an $(n-1)$-face coloring always produced a face coloring, but possibly with a face colored $0$, which is still an $n$-face coloring. The proof or a counterexample of either statement below would be interesting:

\begin{conjecture} \label{conjecture:total-color-poly-is-an-upper-bound}
Let $G(V,E)$ be a connected graph and $n=2^k$ for some $k\in \BN$. Then
\begin{enumerate}
\item $\#\{\mbox{nowhere zero $K_n$-flows of $G$}\} \leq \frac{1}{n} T(G,n)$, or
\item if there exists a nowhere zero $K_n$-flow, then $T(G,n)>0$.
\end{enumerate}
\end{conjecture}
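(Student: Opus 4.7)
The plan is to construct a surjection $\Psi$ from $K_n$-orbits of face-colored state graphs to nowhere zero $K_n$-flows on $G$, which will force $\tfrac{1}{n}T(G,n)\ge\#\{K_n\text{-flows}\}$ and hence both parts of the conjecture. By \Cref{prop:dim-of-CHhat-state-equals-n-face-colorings},
\begin{equation*}
T(G,n)=\sum_{\alpha\in\{0,1\}^{|E|}}\#\{n\text{-face colorings of }\overline{\Gamma}_\alpha\},
\end{equation*}
and the group $K_n=\BZ_2^{\,k}$ acts on the $n$-face colorings of each $\overline{\Gamma}_\alpha$ by the global shift $\kappa\mapsto\kappa+x$. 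This action is free (since $\overline{\Gamma}_\alpha$ has at least one face) and preserves the coloring condition, so every orbit has size exactly $n$. I would define $\Psi$ on representatives by $\Psi(\Gamma_\alpha,\kappa)(e)=\kappa(f_1)+\kappa(f_2)$, where $f_1,f_2$ are the two faces of $\overline{\Gamma}_\alpha$ adjacent to $e\in E(G)$, exactly as in the proof of \Cref{theorem:flow-theorem}.

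First I would verify that $\Psi$ descends to orbits and lands in nowhere zero $K_n$-flows. The shift cancels because $2x=0$ in $K_n$; the output is nowhere zero because the coloring is proper; and the flow condition at each vertex of $G$ follows from the identity $(a+b)+(a+c)+(b+c)=2(a+b+c)=0$ used in \Cref{theorem:flow-theorem}, applied either at $v$ directly if $G$ is trivalent or at each trivalent vertex of the blowup cycle replacing $v$ if not. With well-definedness established, Statement~(1) is equivalent to surjectivity of $\Psi$, and Statement~(2) reduces to showing that the image of $\Psi$ is nonempty whenever its target is.

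The principal obstacle is surjectivity. Given a nowhere zero $K_n$-flow $\phi$ on $G$, one must produce a state $\alpha$ and an $n$-face coloring $\kappa$ of $\overline{\Gamma}_\alpha$ with $\Psi(\Gamma_\alpha,\kappa)=\phi$. Locally, at each trivalent vertex the equations $\kappa(f_i)+\kappa(f_j)=\gamma_k$ have a one-parameter $K_n$-family of solutions, so the obstruction is cohomological: $\phi$ represents a class $[\phi]\in H^1(G;K_n)$, and it lifts to an $n$-face coloring of $\overline{\Gamma}_\alpha$ iff its image in $H^1(\overline{\Gamma}_\alpha;K_n)$ vanishes. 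My plan would be to show that for each $\phi$ one can choose the state $\alpha$ so that this pullback is trivial, arguing by induction on the first Betti number of $\overline{\Gamma}_\alpha$: starting from any state, find a cycle $C$ on which $[\phi]$ evaluates nontrivially, and flip the smoothing at a perfect matching edge lying on $C$ in order to incorporate $C$ into a new face boundary, verifying that each such flip strictly reduces the rank of the obstructing subspace without reviving previously killed obstructions. Controlling this descent is the delicate step and is where I expect the main difficulty to lie; a more tractable weakening sufficient for Statement~(2) alone is to decompose $K_n=\BZ_2^{\,k}$ and exhibit a single strong $2$-cell embedding of $G$ (equivalently, a cycle double cover via \Cref{theorem:cycledoublecoverconjectureequivalence}) on which the $\BZ_2$-components $\phi_1,\dots,\phi_k$ of $\phi$ simultaneously bound, in which case \Cref{thm:map-R-theorem} identifies that embedding with some state graph and yields $T(G,n)\ge n$.
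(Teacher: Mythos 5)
The statement you addressed is a \emph{Conjecture} in the paper, explicitly left open; the authors even remark that ``Right now there is no way to prove this conjecture without first assuming that $G$ has a cycle double cover.'' So there is no paper proof for you to have rediscovered, and the question is whether your sketch closes the gap. It does not. The well-definedness half of your argument (the $K_n$-shift action is free, $\Psi$ descends to orbits, lands in nowhere-zero flows, respects the vertex conservation law) is fine and mirrors the paper's lower-bound argument in \Cref{theorem:flow-theorem}. But you yourself flag surjectivity as the ``principal obstacle'' and then leave it unresolved: the proposed descent --- flip a smoothing on a cycle $C$ where $[\phi]$ evaluates nontrivially and argue that this ``strictly reduces the rank of the obstructing subspace without reviving previously killed obstructions'' --- is a restatement of what needs to be proved, not a proof. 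A single smoothing flip changes the state graph's Euler characteristic, orientability, and face structure in nonmonotone ways, and you offer no lemma controlling $H^1(\overline{\Gamma}_\alpha;K_n)$ across such flips. The paper itself explains why surjectivity is accessible only for $n=4$ (proof of Statement~(2) of \Cref{MainTheorem:n-color-polynomial}): there are exactly two nonzero elements of $K_4$ summing to any given nonzero $c$, which pins down a unique local resolution at each perfect-matching edge and hence a canonical state; and it then warns that ``this is not the case for $K_n$ when $n>4$; the proof cannot be easily generalized.'' Your sketch does not overcome that observation.

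Your fallback for Statement~(2) --- exhibit a strong $2$-cell embedding of $G$ on which the $\BZ_2$-components of $\phi$ simultaneously bound --- is not a tractable weakening but an assumption at least as strong as the cycle double cover conjecture, and in fact a constrained refinement of it (CDC asks only for some strong embedding, with no control over the resulting classes in $H^1$). Since the paper's point in stating this conjecture is precisely that it \emph{implies} CDC via \Cref{theorem:cycledoublecoverconjectureequivalence}, invoking CDC-or-stronger as the crux of your argument reverses the intended logical direction without making progress toward a proof.
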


If either statement is true, then the cycle double cover conjecture is true by \Cref{theorem:cycledoublecoverconjectureequivalence}.   By Seymour's $6$-flow theorem \cite{Seymour},  every bridgeless graph has a nowhere zero $6$-flow, which means it also has a nowhere zero $8$-flow. Then $T(G,7) > 0$ if Statement (2) is true and, by \Cref{theorem:cycledoublecoverconjectureequivalence}, $G$ has a cycle double cover. 

Right now there is no way to prove this conjecture without first assuming that $G$ has a cycle double cover. With more research on $T(G,n)$, however, one might be able to relate $T(G,n)$ or the homologies themselves to other known invariants of $G$ that would allow one to prove the conjecture without that assumption.

\section{\Cref{MainTheorem:TQFT}: Unoriented TQFT theory}\label{section:UnorientedTQFTTheory}

In this section we prove \Cref{MainTheorem:TQFT}.  Thus we establish that the homology theories of this paper are induced from a topological quantum field theory. 

\subsection{Facts about $\mathcal{UC}ob^n_{/l}$ and geometric complexes}\label{subsection:facts-about-UCOB}
It is well known that oriented $(1+1)$-dimensional Topological Quantum Field Theories valued in the category of vector spaces are classified by Frobenius algebras \cite{Kock}. Tureav and Turner \cite{TT} showed how to classify unoriented $(1+1)$-dimensional TQFTs using extended Frobenius algebras. We introduce the notion of a hyperextended version:

\begin{definition}[Compare to \cite{TT}] A {\em hyperextended Frobenius algebra} is a Frobenius algebra $(V,m,\iota, \epsilon)$ over $R$ together with an involution of Frobenius algebras $\phi:V\ra V$ and an element $\theta \in V$ satisfying the following two axioms:

\begin{enumerate}
\item $\phi(\theta v) = \theta v$, for all $v\in V$, 
\item there exists an $\ell\in \BN$  and a nonzero $a\in R$ such that $m(\phi \ot Id)(\Delta(1)) = a \theta^\ell$.
\end{enumerate}
\label{def:XFA}
\end{definition}

Here the {\em counital comultiplication},  $\Delta:V\ra V\ot V$, is defined by the unique element $\Delta(v) = \sum_i v'_i\ot v''_i$ that satisfies $v\cdot w = \sum_i v'_i \epsilon(v''_i\cdot w)$ for all $w\in V$. This counital comultiplication works automatically for our theory when $n$ is odd. We will need a shifted comultiplication, i.e., the one defined in earlier sections, when $n$ is even (see \Cref{def:HENFA}).  A characterization of this comultiplication in terms of the TQFT will emerge from the work of this section (cf. \Cref{{subsection:ribbon-graphs-to-geo-complex}}).

\begin{remark}
This definition is equivalent to Definition 2.5 in \cite{TT} when $\ell =2$ and $a=1$, in which case it is called an {\em extended Frobenius algebra}. We distinguish $\ell \not=2$ from $\ell=2$ with the use of the term ``hyper,'' but maybe in the future both  will simply be called ``extended.''
\end{remark}

Turaev and Turner showed that the isomorphism classes of unoriented $(1+1)$-dimensional TQFTs over $R$ are in bijective correspondence with isomorphism classes of extended Frobenius algebras over $R$. We show that their result can profitably be interpreted using hyperextended Frobenius algebras. In particular, their definition of $\theta$ is the same definition we use.

Given the work in previous sections of this paper, we will take our ground ring $R=\BC$ throughout this section. However, there are times (like $n=4$) when one may want to consider $R=\BZ$, for example. Furthermore, we take as our Frobenius algebra $(V_t, m, \iota, \epsilon)$ where  $V_t:=\BC[x]/(x^n-t)$ for some choice of $t\in [0,1]$, and $\epsilon:V_t \ra \BC$ with $\epsilon(x^{n-1}) = 1$ and zero otherwise. This is the same algebra described in the introduction and throughout the paper. In particular, $V_0 = V$ and $V_1 = \widehat{V}$ as defined in the previous sections.

The Frobenius algebra $V_t$ can be hyperextended by making the following choices:

\begin{enumerate}
\item set $\phi:V_t\ra V_t$ to be the identity map, and
\item set $\theta=x$, $a=n$ and $\ell = n-1$.
\end{enumerate}
By this point in the paper it should be no surprise that these are the choices that need to be made for when $n$ is odd: If $n$ has a square root in $R$, then by defining $\eta(v) = \sqrt{n}\theta^{(n-1)/2}\cdot v$ we can solve the equation $m\circ\Delta(1)=\eta\circ\eta(1)$, which is necessary for our homology theories. In effect, when $n$ is odd, a square root $\sqrt{a\theta^\ell}\in V_t$ can be found.

When $n$ is even, the square root does not exist using the counital comultiplication. Instead, we need a non-counital comultiplication that can still be used to solve the equation $m\circ\Delta(1)=\eta\circ\eta(1)$. Non-conunital comultplications have been studied in the literature (cf. \cite{GLSU}) under the name {\em nearly Frobenius algebras} (or sometimes {\em non-conuital} or {\em open} Frobenius algebras).  Regardless of whether $n$ even or odd, in this paper it is necessary to always start with a hyperextended Frobenius algebra. Therefore, we prove some basic theorems that relate  $\mathcal{UC}ob_{/l}^n$ to  this algebra  that can be of used for both parities. 

The basic types of cobordisms in $\mathcal{UC}ob^n$ discussed in \cite{TT} are: cylinder (identity), pair of pants (multiplication and comultiplication),  capping disk (birth and death). These are the usual  types of oriented surface cobordisms from the standpoint of Morse theory. Tureav and Turner identified one more for unoriented surface cobordism: a cylinder connect summed with an $\BR P^2$ (cf. Figure 1 in \cite{TT}).  We will take these types as the basic building blocks and mod out by the local relations $NC$, $S$, and $T$ (\Cref{eq:neckcutting,eq:S-relation,eq:T-relation}). Immediately, one gets the following consequences:

\begin{proposition} \label{theorem:UCob-relations} In $\mathcal{UC}ob^n_{/l}$, the following relations hold for all $n$:
\begin{enumerate}
\item  \begin{minipage}[c]{1.5in} \includegraphics[scale=0.70]{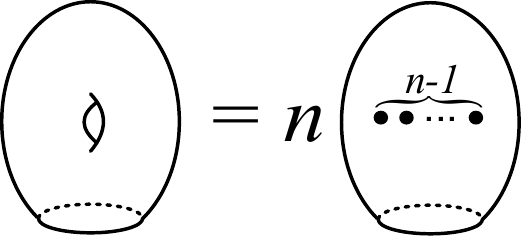}\end{minipage} and\\ \\

\item  \begin{minipage}[c]{2.16in} \includegraphics[scale=0.70]{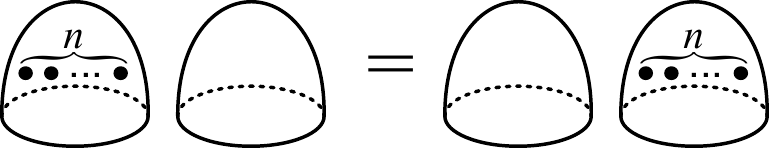}\end{minipage}.
\end{enumerate}
\end{proposition}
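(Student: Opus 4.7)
My plan is to derive both relations as direct consequences of the three local relations $NC$, $S$, and $T$, together with the fact that dots (connect sums with $\mathbb{R}P^2$) can be freely isotoped on any connected component of a surface. The unifying strategy is: apply $NC$ to a carefully chosen tube in the cobordism; this produces a formal sum of $n$ cobordisms distinguished only by how $n-1$ dots are partitioned across the cut; then either kill closed bubbles with $S$ or absorb $n-1$ dots on a single connected component into the identity using $T$ (which is the content of $\sharp(n-1)\mathbb{R}P^2 = 1$).

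For the handle-removal relation, I would take the tube along the handle and cut it via $NC$. Each of the $n$ resulting terms is the same cobordism with the handle excised and with $k$ dots on one side of the cut and $n-1-k$ dots on the other, for $k=0,1,\ldots,n-1$. Since the handle is attached back to the same connected component of the underlying surface, both dot clusters live on one connected piece, so the total of $n-1$ dots can be consolidated and absorbed via $T$, producing the handle-free cobordism with no residual dots. Summing over $k$ gives the overall coefficient $n$ stated in relation (1). This argument parallels the classical Khovanov neck-cutting argument for $n=2$, but with the sum running over all $n$ decompositions rather than just two.

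For the $n$-transfer relation, I expect the picture to equate $n$ dots on one tube with the same configuration having those $n$ dots transported across a nearby cobordism feature, modulo the $U$-relation parameter $t$. My plan is similar: insert $NC$ on the relevant tube and rearrange the resulting sum. In the algebra, $n$ dots correspond to multiplication by $\theta^n = x^n = t$ in $V_t$, so after $NC$ produces an extra $n-1$ dots and these are absorbed by $T$, the remaining structure is precisely multiplication by $t$ acting on either side of the cut. Because $t$ is a scalar in $\mathbb{C}$ (or the ground ring), it commutes through the tube, which is exactly the statement of the transfer.

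The main obstacle I anticipate is careful bookkeeping: although cobordisms are unoriented, each term in an $NC$-expansion carries combinatorial data (number and distribution of dots, topology of the intermediate surface) that must be tracked consistently through several applications of $NC$, $S$, and $T$. A secondary subtlety is that the dot-consolidation step requires verifying that both insertion points sit on the same connected component of the surface---this is automatic for the handle but needs case-checking for the $n$-transfer diagram. I would manage both issues by checking each case under the TQFT functor $\mathcal{F}:\mathcal{UC}ob^n \to \mathbb{C}\mathrm{Mod}$, where the computation reduces to the algebra $V_t$ with its known comultiplication and counit, making the equalities routine to verify numerically even when the diagrammatic manipulation is intricate.
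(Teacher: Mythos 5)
Your opening move for relation (1)—cut the handle tube with $NC$, obtaining $n$ terms distinguished by how $n-1$ dots are distributed between the two sides of the cut, and observe that both sides land on the same connected component—is exactly the paper's proof. But the final step is wrong: the $n-1$ dots are \emph{not} absorbed by the $T$ relation. The consequence $\sharp(n-1)\BR P^2 = 1$ is a statement about a \emph{closed} surface evaluating to the scalar $1$; it does not license erasing $n-1$ dots from a component that has boundary. The relation you actually obtain is that a handle is worth $n$ copies of $\sharp(n-1)\BR P^2$ attached to the same component, dots intact. A sanity check shows your version cannot hold: applying ``handle $= n\cdot(\text{no dots})$'' to a torus would give torus $= n\cdot S^2 = n\cdot 0 = 0$, contradicting the $T$ relation $T=n$. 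The text immediately following the proposition confirms the dots persist: Statement (1) is used there to replace each handle of an orientable surface by projective-plane summands, after which the $n$-transfer relation is needed to shuffle those dots around.

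For relation (2), your plan is too vague to go through as a diagrammatic argument, and the algebraic shortcut you reach for is circular in this context. The paper's proof applies $NC$ \emph{twice} to a cylinder carrying a single dot---once on the side of the tube before the dot, once on the side after it---and then cancels the $n-1$ common terms in the two resulting expansions, leaving precisely the statement that $n$ dots on one cap equals $n$ dots on the other cap. Your version (``insert $NC$, absorb $n-1$ dots by $T$, observe that $n$ dots is multiplication by $t\in\BC$'') imports both the mistaken dot-absorption step from (1) and the $U$ relation, which is a free parameter not assumed here; relation (2) must hold over $\BC[U]$ without specializing $U$. More fundamentally, verifying the equality after applying $\mathcal{F}$ to $V_t$ does not establish the relation in $\mathcal{UC}ob^n_{/l}$---at this point in the paper nothing has been proved about the faithfulness of $\mathcal{F}$ (that the morphism groups are freely generated by dotted disks is the content of \Cref{theorem:UCob-presentation}, which comes \emph{after} this proposition and depends on it), so ``check it numerically under the TQFT'' can only confirm a relation you have already proved diagrammatically, not substitute for that proof.
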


In the figures above, each $\bullet$ represents one connect sum with $\BR P^2$.

\begin{proof} 
The first statement follows from applying the neck cutting relation $NC$ to the handle. The second statement follows from applying the $NC$ relation twice to a cylinder with one connect sum $\BR P^2$ attached,  first with the connect sum before the cut and the second after. Equate the two results and cancel like terms (see Lemma 2.1 in \cite{Naot} for a similar proof in knot theory).
\end{proof}

Statement (1) implies that all oriented genus $g$ surfaces (closed or with boundary) are equivalent to a connect sum of some number of projective planes with the same boundary. Hence, from now on we work connect sums of $\BR P^2$.

The second statement implies that we can freely move $\sharp n\BR P^2$ between components of a surface.  Therefore, for any component with $n$ or more connect sums, $n$ copies of $\BR P^2$ can be transferred at a time to a copy of $\sharp (n-1) \BR P^2$, until there are less than $n$ connect sums on that component. This is due to the $T$ relation: First multiply the component by $1$, replace the $1$ with $\sharp (n-1)\BR P^2$, then transfer $n$ copies of $\BR P^2$ to it:

\begin{center}
\includegraphics[scale=0.70]{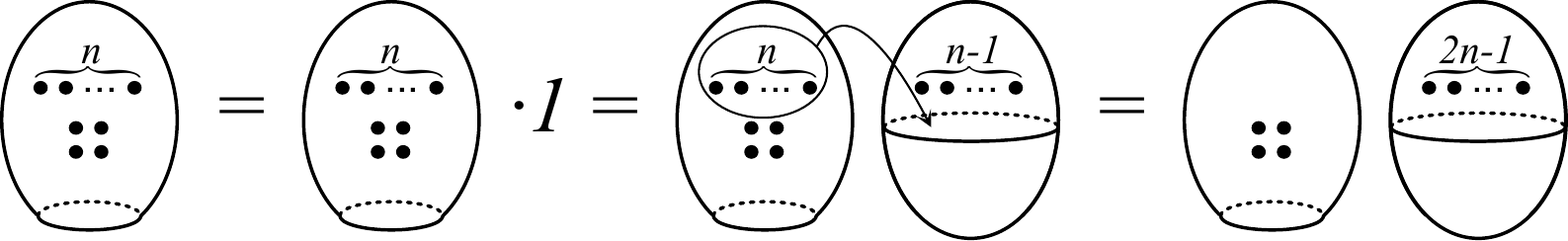}
\end{center}

This can be repeated with the remaining connect sums, each time picking up a copy of $\sharp(2n-1)\BR P^2$. Set $U$ to be the surface $\sharp(2n-1)\BR P^2$. Define $U^k$ to be the disjoint union of $k$ copies of $U$. Set $U^0$ to be the empty set, which is equivalent to $\sharp(n-1)\BR P^2$ after multiplying $\emptyset$ by $1$. Thus, we get:

\begin{theorem}\label{theorem:UCob-presentation}
Over the extended ring $\BC[U]$, where $U= \sharp(2n-1)\BR P^2$, the morphism groups of $\mathcal{UC}ob^n_{/l}$ are generated freely by unions of disks with up to $n-1$ connect sums of $\BR P^2$s, i.e.,
\begin{center}
\includegraphics[scale=0.70]{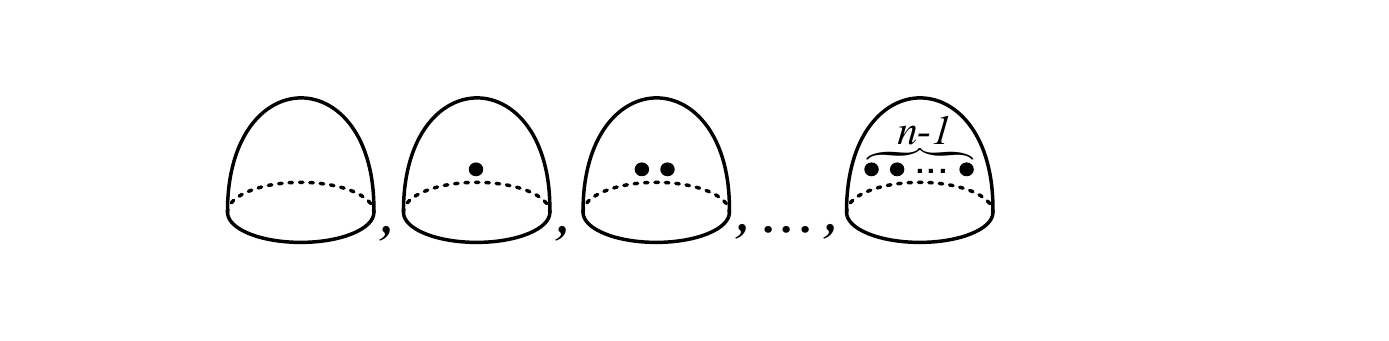}
\end{center}
together with the empty cobordism. Furthermore, $\sharp \ell \BR P^2 = 0$ except for ${\ell=n-1+kn}$ for  $k \in \BZ$, $k\geq 0$, i.e., except when $\sharp \ell \BR P^2 = U^k$.
\end{theorem}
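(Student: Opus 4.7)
The plan is to prove the two halves of the theorem in tandem by giving a reduction algorithm that rewrites every morphism in the claimed normal form and, along the way, identifies the powers $U^k$ as the only surviving closed components.

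The reduction proceeds connected component by connected component. By Proposition~\ref{theorem:UCob-relations}(1), every handle is traded for a pair of $\BR P^2$ connect sums, so each component is genus zero as a closed surface together with some number $\ell$ of $\BR P^2$ dots---either a disk with $\ell$ dots (if the component has boundary) or a closed $\sharp \ell \BR P^2$ (if not). By Proposition~\ref{theorem:UCob-relations}(2), blocks of $n$ dots can be shuffled between components: introduce a fresh unit $1 = \sharp(n-1)\BR P^2$ (justified by the $T$ relation together with $NC$, as in the discussion preceding the theorem) and transfer $n$ dots onto it, yielding a new disjoint component $\sharp(2n-1)\BR P^2 = U$. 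Iterating reduces the dot count on each non-$U$ component to an element of $\{0,1,\ldots,n-1\}$. Applied to a closed $\sharp \ell \BR P^2$, the same process leaves a residue $\sharp r \BR P^2$ with $0 \leq r \leq n-1$; the case $r = n-1$ gives exactly $U^k$ when $\ell = n-1+kn$. For the remaining residues one must show $\sharp r \BR P^2 = 0$: the case $r=0$ is the $S$ relation, and for $1 \leq r \leq n-2$ the plan is to apply $NC$ along a separating curve inside $\sharp r \BR P^2$ and observe that each term of the resulting sum carries a factor of the form $\epsilon(x^i)$ with $i < n-1$, which vanishes in the Frobenius algebra $V_t$.

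Freeness over $\BC[U]$ is then established using the TQFT functor $\mathcal{F}$ of \Cref{MainTheorem:TQFT}: a disk with $k$ dots is sent to a nonzero scalar multiple of $\theta^k$ in $V_t$, so the candidate basis maps onto the $\BC$-linearly independent set $\{1, \theta, \ldots, \theta^{n-1}\}$, while $\mathcal{F}(U)$ is a nonzero scalar of $\BC$. No nontrivial $\BC[U]$-relation can therefore exist among the candidate generators, and the morphism spaces are spanned freely over $\BC[U]$ as claimed. The main obstacle in this program is the intermediate vanishing $\sharp r \BR P^2 = 0$ for $1 \leq r \leq n-2$: the $S$ and $T$ relations control only the endpoints $r=0$ and $r = n-1$, and bridging them requires a careful $NC$ argument tailored to the algebraic structure of the hyperextended Frobenius algebra $V_t$ used to define $\mathcal{F}$.
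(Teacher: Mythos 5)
Your reduction algorithm tracks the paper's almost step for step: trade handles for crosscaps by Proposition~\ref{theorem:UCob-relations}(1), shuffle blocks of $n$ crosscaps onto new $U$-components by Proposition~\ref{theorem:UCob-relations}(2) together with the $T$ relation, and reduce each non-$U$ component to a disk (or closed surface) carrying at most $n-1$ crosscaps. You also correctly identify the load-bearing claim that $\sharp r\,\BR P^2 = 0$ for $1 \leq r \leq n-2$, which the $S$ and $T$ relations alone do not give. The trouble is that your proposed fill for that claim does not work, and this is exactly where the paper has to do real work.

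Applying $NC$ to a separating curve in the closed surface $\sharp r\,\BR P^2$ produces, \emph{inside} $\mathcal{UC}ob^n_{/l}$, a sum of disjoint unions of smaller closed non-orientable surfaces --- schematically, writing $y_j := \sharp j\,\BR P^2$, one gets relations of the form $y_r = \sum_{i=0}^{n-1} y_{a+i}\, y_{b+n-1-i}$ with $a+b=r$. There are no factors ``$\epsilon(x^i)$'' in the cobordism category; those live only in the target algebra $V_t$, after applying $\mathcal{F}$. Showing $\mathcal{F}(\sharp r\,\BR P^2)=0$ establishes vanishing in $\BC$, not in the morphism group of $\mathcal{UC}ob^n_{/l}$, which is what the theorem asserts and what your freeness argument needs. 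Worse, the terms appearing on the right side of the $NC$ relation are precisely the unknowns $y_j$ ($1 \leq j \leq n-2$) you are trying to show vanish, so a single application of $NC$ together with an appeal to $\epsilon$ is circular.

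The paper instead treats this as a linear-algebra problem in the cobordism category itself: set $y_0 = 0$ (the $S$ relation), $y_{n-1}=1$ (the $T$ relation), $y_n=0$ (an exercise with Proposition~\ref{theorem:UCob-relations}), and $U=t$ as an unspecified parameter; apply $NC$ to each of $\sharp\ell\,\BR P^2$ for $0 \leq \ell \leq n-1$; observe that the resulting $n$ equations in $y_1,\dots,y_{n-2}$ and $t$ are homogeneous, so the trivial solution exists, and then count linearly independent equations ($n-2$ when $t=0$, $n-1$ when $t\neq 0$) to conclude that $y_1 = \cdots = y_{n-2} = 0$ is the only one. That independence count is the actual content of the ``careful $NC$ argument'' you flagged as the obstacle, and it is missing from your proposal. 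One further caution: your use of $\mathcal{F}$ to certify freeness is a reasonable idea, but the verification that $\mathcal{F}$ respects the local relations appears only later in the paper, so as written it would create a forward reference.
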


\begin{proof}
It was already shown that any oriented connected surface can be rewritten as a connect sum of $\BR P^2$s, possibly with boundary, by \Cref{theorem:UCob-relations}. Suppose that $W$ is non-orientable and connected. If $W$ has more than one boundary component, the number of boundary components may be reduced by one using the $NC$ relation at the cost of increasing the number of boundary components. Hence, one can assume that $W$ is non-orientable, connected, and with at most one boundary component.  Thus, $W$ may be presented as the connect sum of a disk or a sphere with $\ell$ projective planes, i.e., $W=D^2\sharp k\BR P^2$ or $W=S^2\sharp k \BR P^2$ for some $k \geq 0$. In either situation, using the $T$ relation and \Cref{theorem:UCob-relations}, $W$ can be reduced to a union of one or two disks with up to $n-1$ connect sums of $\BR P^2$ on one of the disks, multiplied by some power of $U$.

Finally, we prove the last statement. By the $S$ relation, $S^2=0$, and an exercise using \Cref{theorem:UCob-relations} shows that $\sharp n\BR P^2 =0$. Set $\BR P^2 =y_1$, $\sharp 2 \BR P^2=y_2$, \ldots, $\sharp (n-2) \BR P^2 = y_{n-2}$. For a moment, set $U=t$ for $t\in \BC$. Applying the neck cutting $NC$ relation to $\sharp \ell\BR P^2$ for $0 \leq \ell \leq n-1$ produces $n$  equations, which can be further reduce to equations  in only variables $y_1, y_2, \ldots, y_{n-2}$ and $t$. Since there are no constant terms (they are homogenous) or $t$-only terms in these equations, $y_1=y_2=\dots = y_{n-2}=0$ is always a solution. If $t=0$, one equation is eliminated---the one found by applying $NC$ to $\sharp (n-1)\BR P^2$, which leaves $n-1$ equations and $n-2$ variables. If $t\not=0$, then there are $n$ equations and $n-1$ variables. Either way, there are $n-2$ ($t=0$) or $n-1$ ($t\not=0$) linearly independent equations, which implies that  $y_1 = y_2 =\dots =y_{n-2}=0$ is the only solution. Note that $t$ can be taken to be zero or nonzero, and in general $U$ can be left unspecified.  Since every  $\sharp \ell \BR P^2$ can be written as  $S^2$, $\BR P^2$, \dots, $\sharp (n-2) \BR P^2$ multiplied by some power of $U$, the last statement follows.
\end{proof}

Compare this theorem to Proposition 2.3 in \cite{Naot}.  The $U$ relation (\Cref{eq:U-relation}) sets $U$ to a number $t$ for some $t\in \BC$. Not specifying the $U$ relation means that $U$ can be viewed as an operator that ``adds'' $n$ connect sums of $\BR P^2$ to any component of a cobordism.

 We can use the presentation in \Cref{theorem:UCob-presentation} of  $\mathcal{UC}ob^n_{/l}$ to write down a way to represent $x^i \in V_t$ in $\mbox{Kom}_{/h}(\mbox{Mat}(\mathcal{UC}ob^n_{/l}))$ and the basic cobordism types, complete with gradings.  
 
First, since we are working in $\mbox{Mat}(\mathcal{UC}ob^n_{/l})$, we give a direction to each of the cobordism types, reading them left-to-right on the page, or from top-to-bottom (the $NC$, $S$, and $T$ and the theorems above are directionless). Thus, \includegraphics[scale=0.15]{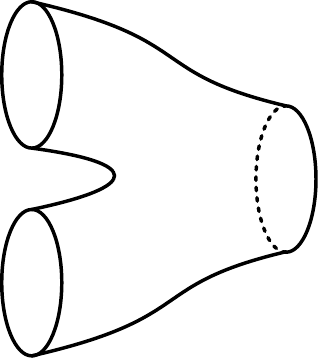} corresponds to multiplication $m$, \includegraphics[scale=0.15]{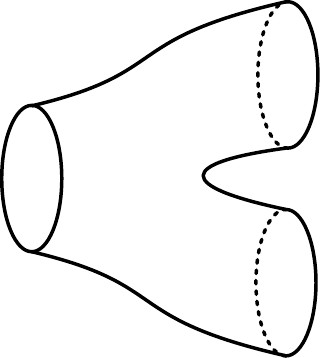} corresponds to comultiplication $\Delta$, \includegraphics[scale=0.35]{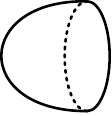} corresponds to a birth/cap $\iota:\BC \ra V_t$, and \includegraphics[scale=0.35]{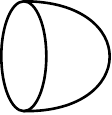} corresponds to a death/cup $\epsilon:V_t\ra \BC$. To determine their gradings, note that the degree of $\sharp(n-1)\BR P^2$ must be zero since it is equivalent to the empty set in $\BC[U]$. In addition to this requirement, choose the degree of $(cylinder)\sharp\BR P^2$  to be $-1$ so that it behaves like multiplication by $x$ in $V_t$. (The degrees of basis elements in $V_t=\langle 1, x, x^2, \ldots x^{n-1}\rangle$ are given in \Cref{subsection:finite-dim-graded-vs}.)  The degrees of the remaining maps are determine by this requirement and choice.   

\begin{center}
\includegraphics[scale=.4]{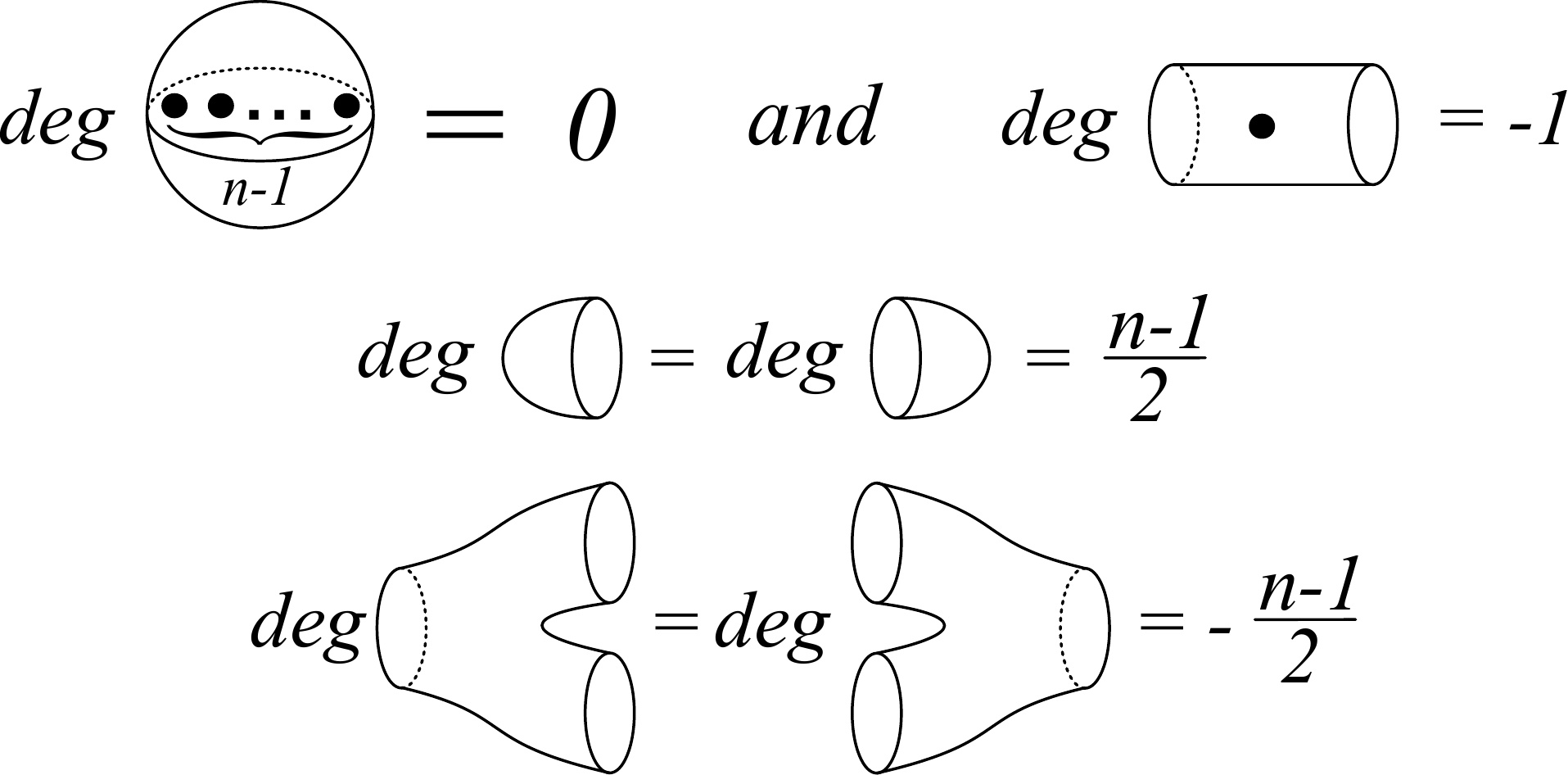}
\end{center}

\begin{remark} When $n$ is odd, the degrees of the cap and cup are $m$ and the multiplications are $-m$.  When $n$ is even, these degrees are half-integers. This still leads to the integer graded homologies of this paper.  If desired, the half-integer degrees for the cup/cap/multiplications can be eliminated entirely  by setting the degree of the cylinder connect sum $\BR P^2$ to $-2$ instead. This will change all of the $q$-gradings in the bigraded $n$-color homology by a factor of two, and will change the $n$-color polynomials by $q\mapsto q^2$ in \Cref{eq:immersed_circle,eq:EMformula,eq:disjoint_graphs_identity} and other places. Continue to call these new  polynomials and homologies by the same names after the change in variables and gradings.
\end{remark}

An exercise using the degrees above shows that the degree of $U$ is $-n$. 

\begin{equation}
\includegraphics[scale=.4]{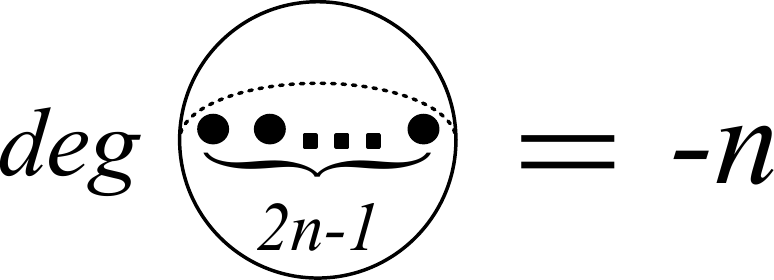}
\label{eq:RP2Relations}
\end{equation} 

\noindent This matches the $n$ in the bidegree $(1,n)$ of $\tilde{\del}$ in \Cref{theorem:tilde-differential}. See the last cobordism in \Cref{example:delta-cobordism} below for how $U$ plays a role in the geometric complex. 

\subsection{Building the geometric chain complex} \label{subsection:geometricchaincomplex} Mimicking Proposition 3.1 in \cite{Naot}, a circle object in $\mbox{Mat}(\mathcal{UC}ob^n_{/l})$ can be replaced with a column of {\em graded empty set objects} with appropriate $q$-gradings (denoted by $\emptyset\{\ell\}$ as defined in \Cref{subsection:finite-dim-graded-vs}). The $NC$ relation in \Cref{eq:neckcutting} provides the model for these graded empty set objects:  

\begin{center}
\includegraphics[scale=.70]{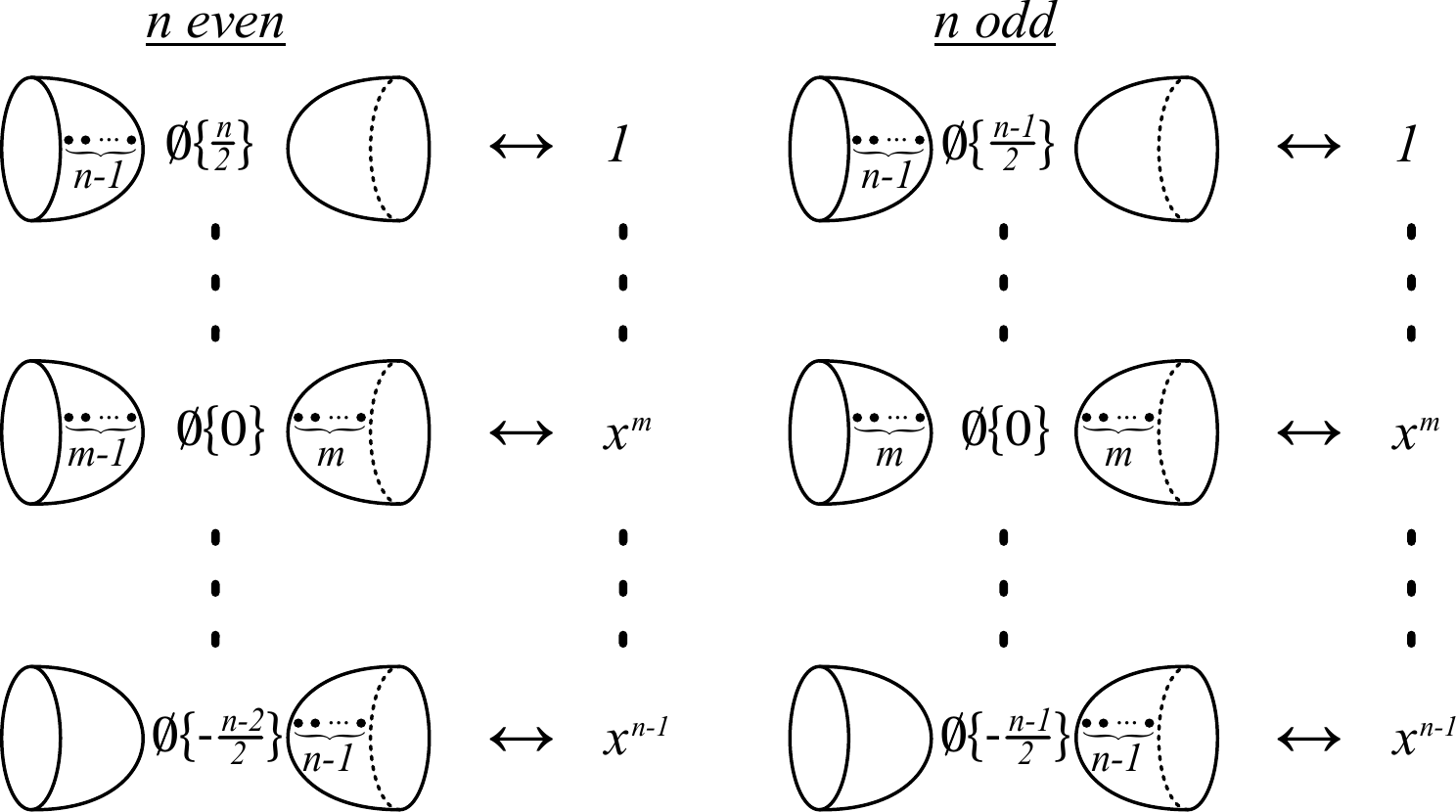}
\end{center}

Compare these empty set objects to the definitions in  \Cref{subsection:finite-dim-graded-vs}. For example, the $n=2$ and $n=3$ follow the examples in \Cref{sec:example-of-2-color-homology,sec:example-of-3-color-homology}:

\begin{center}
\includegraphics[scale=0.70]{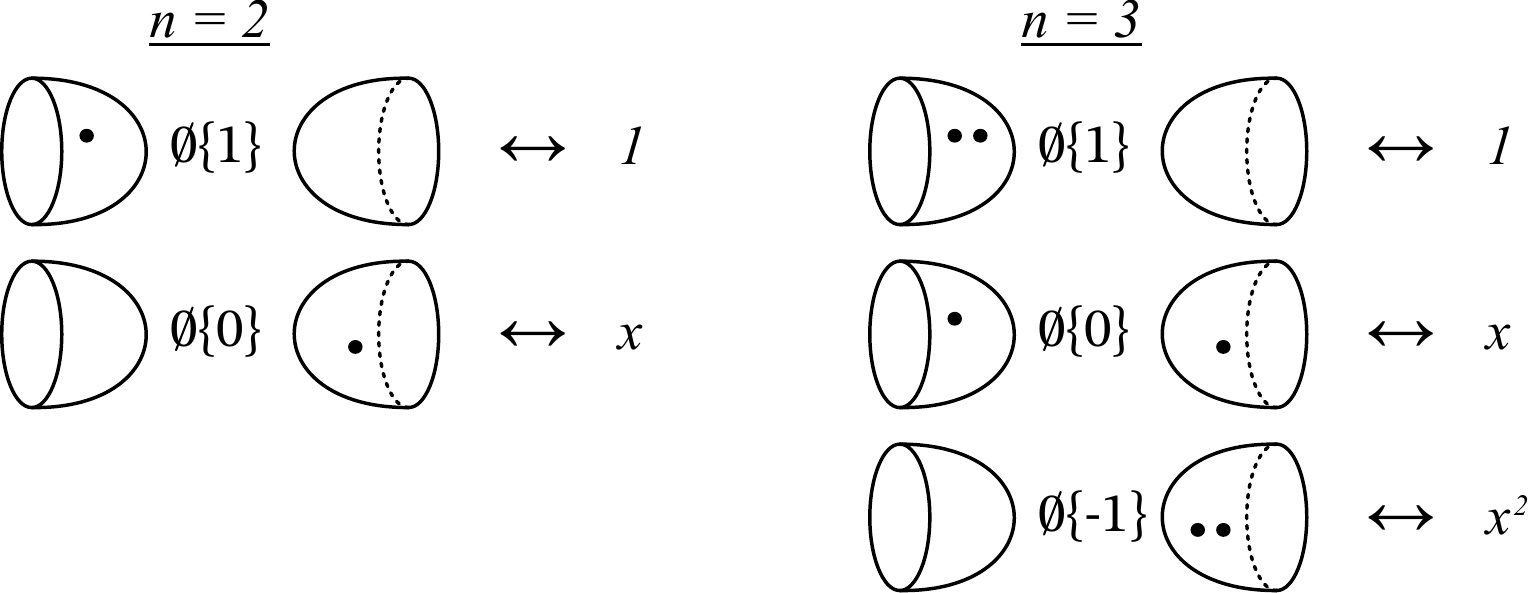}
\end{center}


\subsection{The map from ribbon graphs to geometric complexes} \label{subsection:ribbon-graphs-to-geo-complex} We now have enough background to start to describe the process of taking $\Gamma \ra [[\Gamma]]$ for a ribbon graph $\Gamma$. We work the details out for the blowup $\Gamma_E^\flat$ of $\Gamma$ with its canonical perfect matching, but everything discussed here also works for any perfect matching graph $\Gamma_M$.  Also, for the sake of clarity and ease, we assume that our ribbon diagrams are oriented. By modifying definitions in this section according to the formulas described in \Cref{subsection:non-orientable-surfaces}, the theory also works for signed ribbon diagrams, i.e., any ribbon diagram.

Let $G(V,E)$ be an abstract graph and let $\Gamma$ be a ribbon graph given by an oriented ribbon diagram.  Let $\Gamma^\flat_E$ be the blowup perfect matching diagram of $\Gamma$. For each state $\Gamma_\alpha$ in the hypercube of states of $\Gamma^\flat_E$, and for each circle in that state, associate one of the graded empty set objects above. An association of $1,\dots, x^{n-1}$ to each circle in a state is called an {\em enhanced state} in knot theory. Here we do the same, but in $\mbox{Kom}_{/h}(\mbox{Mat}(\mathcal{UC}ob^n_{/l}))$, we use the graded empty set objects defined above instead. We will refer to both as enhanced states.  If there are $k$ circles in a state, then there will be $n^k$  enhanced states, in fact, we get a natural identification between enhanced states and the basis of $V_t^{\ot k}$, where $V_t= \langle 1, x, x^2, \dots, x^{n-1}\rangle$. 

Maps between states require understanding the meaning of going from a $0$-smoothing to a $1$-smoothing at a smoothing site.  In knot theory, the local piece of the cobordism from a $0$-smoothing to a $1$-smoothing looks like a saddle surface: 

\begin{center}
\includegraphics[scale=.8]{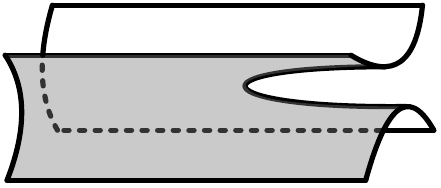}
\end{center}

\noindent There is also a local picture for going from $\IIDiag$ to $\XDiag$ in our theory, which can be described using time slices of $\BR^4$, see \Cref{fig:saddle}. 

\begin{figure}[H]
\includegraphics[scale=1]{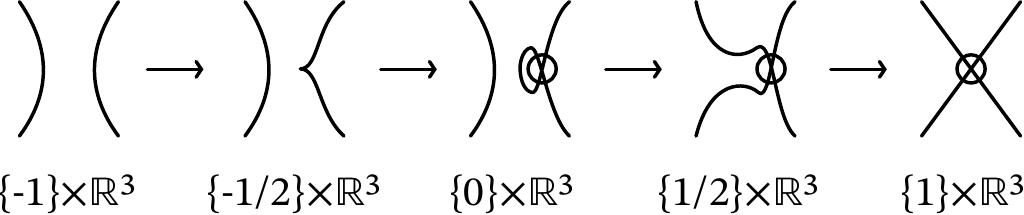}
\caption{A picture of a local cobordism from a $0$-smoothing to a $1$-smoothing. From $t=-1$ to $t=0$, part of the righthand  strand is flipped over in $\BR^3$. The circle at the crossing is to emphasize the intrinsic nature of this cobordism---there are no crossings. From $t=0$ to $t=1/2$, a saddle is inserted between the two strands as in the previous picture.}
\label{fig:saddle}
\end{figure}
Hence, cobordisms from circle(s) of a $0$-smoothing to circle(s) of a $1$-smoothing will be cylinders or pants, possibly with connect sums of $\BR P^2$, possibly multiplied by a constant in $\BC$. For edges in the hypercube of states, these cobordisms can be determined using the degrees of the graded empty set objects and the degrees of the cap/cup and multiplications above:
  
 \begin{enumerate}
 
 \item Cobordisms from two circles to one:
 $$[[\begin{minipage}[c]{.49in} \includegraphics[scale=0.50]{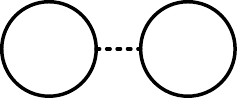}\end{minipage}]] =  \begin{minipage}[c]{.45in} \includegraphics[scale=0.30]{mPants.pdf}\end{minipage} \mbox{if $n$ is odd or even,}$$
 
  \item Cobordisms from one circle to two:
 $$[[\ \begin{minipage}[c]{.44in} \includegraphics[scale=0.50]{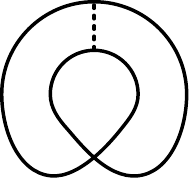}\end{minipage}]] =  \begin{minipage}[c]{.45in} \includegraphics[scale=0.30]{DeltaPants.pdf}\end{minipage} \mbox{if $n$ is odd \ \ \  or \ \ \ }
 [[\ \begin{minipage}[c]{.44in} \includegraphics[scale=0.50]{TwistedCircle.pdf}\end{minipage}]] =  \begin{minipage}[c]{.45in} \includegraphics[scale=0.30]{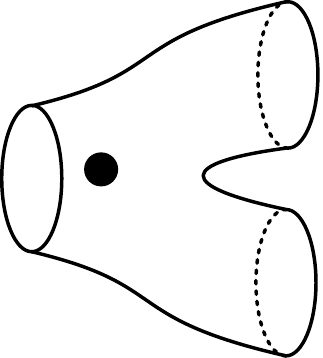}\end{minipage} \mbox{if $n$ is even,}$$
 
  \item Cobordisms from one circle to one circle:
 $$[[ \  \begin{minipage}[c]{.25in} \includegraphics[scale=0.50]{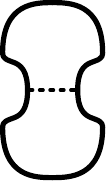}\end{minipage}]] =  \begin{minipage}[c]{.95in} \includegraphics[scale=0.60]{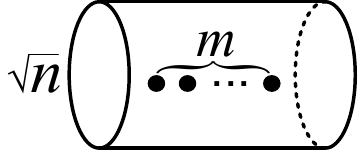}\end{minipage} \mbox{if $n$ is odd or even,}$$ 
\end{enumerate}

The dotted edge in the diagrams above represent the local site where a $0$-smoothing $\IIDiag$ is locally cobordant to a $1$-smoothing $\XDiag$ using the picture in \Cref{fig:saddle}.  Hence the diagrams represent edges in the hypercube of states.

The cobordisms above are determined by the degrees of the maps and graded empty set objects. The next two examples describe the key elements in how these maps are determined and why they are the same for each $n$, except Cobordism (2) where the cobordism depends upon the parity of $n$ only. The general cases are left as an exercise for the reader.

\begin{example}[The $\eta$ cobordism for $n=3$]  The cobordism pictured below takes a circle labeled by $1$ as a graded empty set object and follows it with Cobordism (3) above. By \Cref{theorem:UCob-presentation}, the only graded empty set object that {\em does not} vanish when attached to the end of Cobordism (3) is the one that corresponds to a circle labeled by $x$. (The middle surface must be an $\BR P^2 \sharp \BR P^2$ to be a nonzero cobordism.)
\begin{center}
\includegraphics[scale=.76]{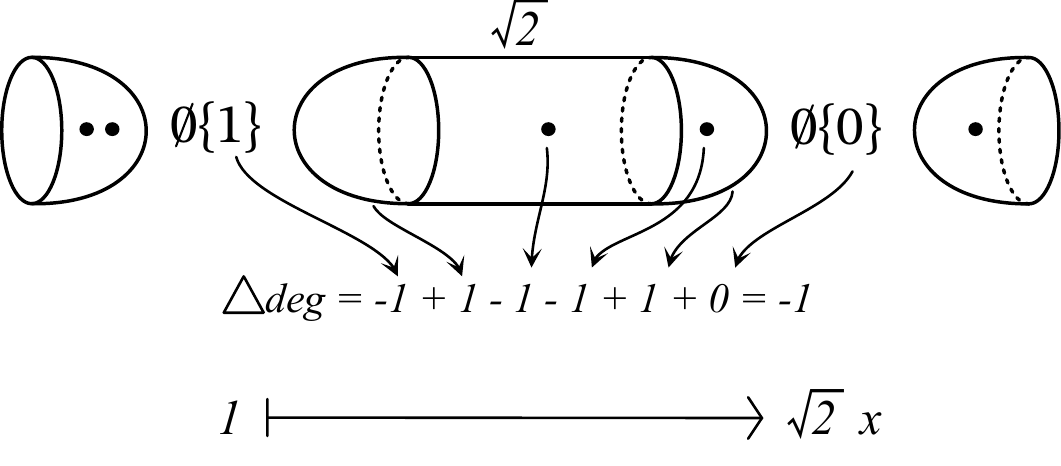}
\end{center}
\end{example}
The change in degree from $1$ to $x$ is $-1$, and after a global shift up by $m=1$, the total degree of the cobordism in the geometric chain complex is $0$ (cf. the definition of $V_\alpha$ in \Cref{subsection:differential} and \Cref{prop:preserve-quantum-degree}). Hence, this map corresponds to the $\eta$-map differential $d_\eta$ with bidegree $(1,0)$.

The most interesting cobordism above is Cobordism (2) when $n$ is even. This is because the pants includes a connect sum of one copy of $\BR P^2$. The following example shows that this $\BR P^2$ is necessary in order to preserve the comultiplication grading.

\begin{example}[The $\Delta$ cobordism for $n=4$] \label{example:delta-cobordism}   In the cobordism pictured below, the degrees of the caps and cups are $\frac{3}{2}$ and the degree of Cobordism (2) is $-1-\frac32$. The extra $\BR P^2$ is needed in Cobordism (2) to ensure that the change in degree of the cobordism is equal to $-2$. After a global shift up of $m=2$ in the geometric chain complex, the total degree of the cobordism in the geometric chain complex is zero. Hence, this cobordism corresponds to one of the three terms of the $\Delta$ map in bigraded $4$-color homology.

\begin{center}
\includegraphics[scale=.75]{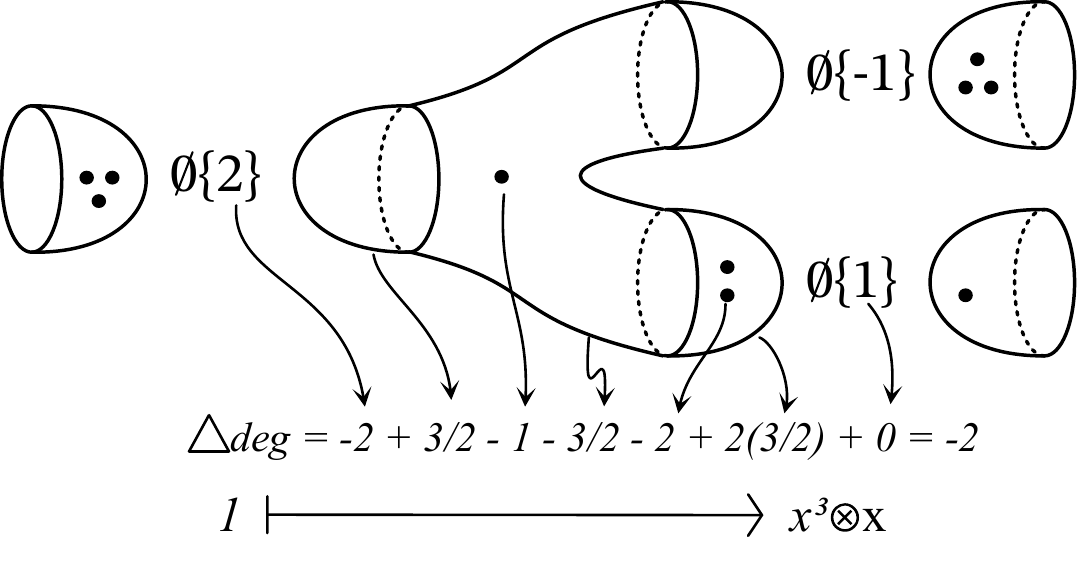}
\end{center}

The other two terms of the $\Delta$ map are found by gluing the graded empty set objects corresponding to $x^2\ot x^2$  and $x\ot x^3$ to the end of Cobordism (2). Hence,  $\Delta(1)=x^3\ot x +x^2\ot x^2 + x\ot x^3$ defined in this paper for $n=4$ corresponds to three  cobordisms, formally summed in  $\mbox{Kom}_{/h}(\mbox{Mat}(\mathcal{UC}ob^n_{/l}))$. See   \Cref{sec:example-of-4-color-homology}.

There is one more cobordism that is nontrivial in the $n=4$ case for $\Delta$. This is when the cobordism has seven connect sums of $\BR P^2$ when going from a graded empty object corresponding to $1$ to two graded empty objects labeled by $1$.  In this case,  $4$ connect sums of $\BR P^2$ can be transferred to $\sharp(3)\BR P^2$ to create a $U$:

\begin{figure}[H]
\includegraphics[scale=.45]{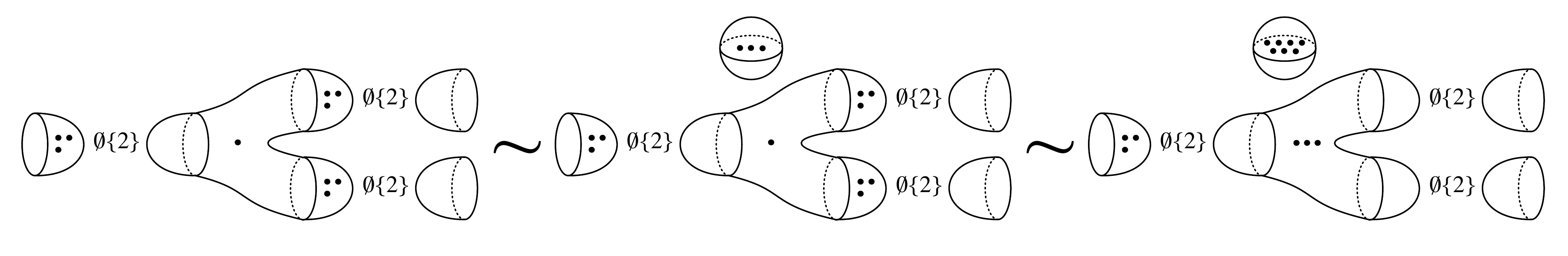}
\caption{An example that is equivalent to a corbordism with a $U$.}
\label{fig:cobordism-with-U}
\end{figure}

Hence, the fourth cobordism corresponds to $1\ra (1\ot 1)U$ where $\deg U= -4$ by \Cref{eq:RP2Relations}.  Setting $U$ to the value $t$ (the $U$ relation) and adding this cobordism formally to the other cobordisms corresponds to the map $\Delta_t(1)=x^3\ot x +x^2\ot x^2 + x\ot x^3 +t(1\ot 1)$. When $t=0$, $\Delta_0 = \Delta$ in the definition of bigraded $4$-color homology of \Cref{subsection:differential}. When $t=1$, $\Delta_1 = \widehat{\Delta}$ in the definition of filtered $4$-color homology of \Cref{subsection:colorbasis}.
\end{example}

Embedded in the discussion of \Cref{example:delta-cobordism} is the necessary insight in how to obtain $[[\Gamma]]$ from $\Gamma$ in general. First, generate columns of graded empty set objects using the circles in each state $\Gamma_\alpha$ of the hypercube of states to get the enhanced states. Shift the gradings of each enhanced state by  $m|\alpha |$ as in the definition of $j$ in  \Cref{subsection:differential}. Next, for an edge $\Gamma_\alpha \ra \Gamma_{\alpha'}$, use the identity cobordism between all circles in the enhanced state that are not part of the change from a $0$-smoothing to a $1$-smoothing, and insert Cobordisms (1)-(3) depending on how the number of circles  change between $\Gamma_\alpha$ and $\Gamma_{\alpha'}$.  This insertion is done between {\em all} enhanced states of the initial state to {\em all} enhanced states of the target state. Most of these cobordisms are equivalent to zero by \Cref{theorem:UCob-presentation}; only cobordisms like the four in \Cref{example:delta-cobordism} are nonzero, for example. The geometric chain complex formed from this process is then $[[\Gamma]]$.

\subsection{Proof of \Cref{MainTheorem:TQFT}} 
\label{subsection:functorFandproof}

In this subsection, we describe the TQFT of this paper, that is, a functor from $\mathcal{UC}ob^n_{/l}$ to the category $\BC\mbox{Mod}$ of graded $\BC$-modules given by taking disjoint unions of circles to tensor products. We define the functor on generators of $\mathcal{UC}ob^n_{/l}$.  The objects and morphisms are now familiar: the object is a single circle $\bigcirc$ and the morphisms are the cap/birth \includegraphics[scale=.40]{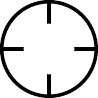}, cup/death  \includegraphics[scale=.30]{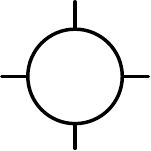}, projective cylinder  \includegraphics[scale=0.4]{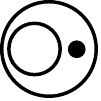}, i.e., a cylinder with a connect sum of an $\BR P^2$, pair of pants \includegraphics[scale=0.30]{TwistedCircle.pdf} (with a connect sum of $\BR P^2$ when $n$ is even), upside down pair of pants  \includegraphics[scale=0.40]{2Circles.pdf},  twisted cylinder \includegraphics[scale=0.30]{EtaCircle.pdf},  and $U$ where $U=\sharp(2n-1)\BR P^2$.

\begin{definition}\label{def:functor-math}
Let  $t\in\BC$ and $n\in \BN$ with $n>1$. Let $V$ be the graded $\BC$-module freely generated on $n$ elements $\{v_0,v_1,\ldots,v_{n-1}\}$ with $\deg v_i$ corresponding to the degree of $x^i$  for $n$ even or odd described in \Cref{subsection:geometricchaincomplex} (see also in \Cref{table:quantum-gradings}). Then $\mathcal{F}$ is the {\em TQFT} defined by $\mathcal{F}(\bigcirc)=V$ and by $\mathcal{F}(\raisebox{-0.23\height}{\includegraphics[scale=.40]{iota.pdf}})=\iota:\BC\ra V$,  $\mathcal{F}(\raisebox{-0.23\height}{\includegraphics[scale=.30]{epsilon.pdf}})=\epsilon:V\ra\BC$, $\mathcal{F}(\raisebox{-0.23\height}{\includegraphics[scale=.40]{projectivecylinder.pdf}})=\rho_t:V\ra V$, $\mathcal{F}(\raisebox{-0.23\height}{\includegraphics[scale=.30]{TwistedCircle.pdf}})=\Delta_t:V\ot V\ra V$, $\mathcal{F}(\raisebox{-0.23\height}{\includegraphics[scale=.30]{2Circles.pdf}})=m_t:V\ra V\ot V$, and $\mathcal{F}(\raisebox{-0.23\height}{\includegraphics[scale=.30]{EtaCircle.pdf}})=\eta_t:V \ra V$, and $\mathcal{F}(U)=t$.  The maps $\iota, \epsilon, \rho_t, m_t,\Delta_t, \eta_t$ are defined below.
\end{definition}

The product maps $m_t$ and $\Delta_t$ are defined similarly to Equation \ref{eq:wide-hat-differential-m}. The  difference is there is a factor of $t$ on all terms that do not correspond to the product maps $m$ and $\Delta$ in \Cref{subsection:differential}, i.e., when $t=0$  the $m_t$ and $\Delta_t$ maps become $m$ and $\Delta$ maps. With $m_t$ defined, $\rho_t(v) = m_t(v_1\ot v)$ and $\eta_t(v) = \sqrt{n}\rho^m(v)$ where $m=(n-1)/2$ when $n$ is odd and $m=n/2$ when $n$ is even.

One example should suffice to show how these maps are defined in general:

\begin{example} Some examples of product maps  and the $\eta_t$ map in $V$ when $n=3$:

\noindent\begin{minipage}{.7in}
\begin{eqnarray}\nonumber
m_t(v_0\ot v_i) &=& v_i  \\ \nonumber
m_t(v_1 \ot v_1) &=& v_2\\ \nonumber
m_t(v_1 \ot v_2) & =& t v_0\\ \nonumber
\end{eqnarray}
\end{minipage}\hspace{.5in}
\hspace{.5in}\begin{minipage}{2in}
\begin{eqnarray}\nonumber
\Delta_t(v_0) &=&v_0\ot v_2 + v_1\ot v_1+ v_2 \ot v_0 \\ \nonumber
\Delta_t(v_1) &=& v_1\ot v_2 +v_2\ot v_1 +t v_0 \ot v_0\\ \nonumber
\Delta_t(v_2) &=& v_2\ot v_2 + t v_0 \ot v_1 + t v_1 \ot v_0\\ \nonumber
\end{eqnarray}
\end{minipage}\hspace{1in}\begin{minipage}{2in}\noindent
\begin{eqnarray}\nonumber
\eta_t(v_0)&=& \sqrt{3} v_1 \\ \nonumber
\eta_t(v_1) &=& \sqrt{3} v_2 \\ \nonumber
\eta_t(v_2) &=&t\sqrt{3} v_0\\ \nonumber
\end{eqnarray}
\end{minipage}
\noindent Compare these maps with the maps in \Cref{sec:example-of-3-color-homology} when $t=0$.
\end{example}

This only leaves the definitions of the cap and cup maps, which are defined as follows: $\iota:\BC \ra V$ is given by $\iota(1) = v_0$ and $\epsilon:V\ra \BC$ is given by $\epsilon(v_{n-1})=1$ and is zero on the other generators. Both maps are extended linearly.

 \Cref{MainTheorem:TQFT} follows directly from the next theorem given the definition of $[[\Gamma]]$ above.

\begin{theorem} The TQFT $\mathcal{F}$ descends to a functor $\mathcal{UC}ob^n_{/l}\ra \BC\mbox{Mod}$, and therefore extends to a functor $$\mathcal{F}:\mbox{Kom}_{/h}(\mbox{Mat}(\mathcal{UC}ob^n_{/l})) \ra\mbox{Kom}(\BC\mbox{Mod}).$$
\end{theorem}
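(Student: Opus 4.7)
The plan is to verify that the assignment $\mathcal{F}$ given on generating objects and morphisms in \Cref{def:functor-math} is a well-defined functor out of $\mathcal{UC}ob^n_{/l}$ by checking three things: (i) the image of the basic generators actually assembles into a valid functor on $\mathcal{UC}ob^n$ (i.e., the Frobenius-type compatibilities hold), (ii) the three local relations $NC$, $S$, $T$ are sent to equalities of $\BC$-linear maps, and (iii) the resulting functor preserves the gradings specified in \Cref{subsection:geometricchaincomplex}. Once these three things are in hand, the extension to $\mathrm{Mat}$ (direct sums and matrices of morphisms) and then to $\mathrm{Kom}_{/h}$ (complexes up to chain homotopy) is purely formal, since taking direct sums and forming complexes are functorial operations in $\BC\mathrm{Mod}$.

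First I would check the unoriented $(1{+}1)$-cobordism axioms in the style of Turaev--Turner \cite{TT}. Specifically, $(V_t,m_t,\iota,\epsilon)$ is a Frobenius algebra for every $t\in\BC$: associativity of $m_t$ and coassociativity of $\Delta_t$ follow because the Frobenius form $\epsilon$ on $V_t=\BC[x]/(x^n-t)$ is the standard trace pairing (extract the coefficient of $x^{n-1}$), and the Frobenius relation $(m_t\otimes Id)(Id\otimes \Delta_t)=\Delta_t\circ m_t=(Id\otimes m_t)(\Delta_t\otimes Id)$ is a one-line verification in $V_t$. The involution $\phi=Id$ and the element $\theta=x$ make $(V_t,\phi,\theta)$ a hyperextended Frobenius algebra in the sense of \Cref{def:XFA} (taking $a=n$, $\ell=n-1$), so by Turaev--Turner $\mathcal{F}$ determines a well-defined functor out of $\mathcal{UC}ob^n$ before we impose the local relations.

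Second, and this is the main content, I would verify the three local relations. The $NC$ relation becomes, after applying $\mathcal{F}$, the identity
\[
Id_{V_t} \;=\; \tfrac{1}{n}\sum_{i=0}^{n-1}\rho_t^{\,n-1-i}\,\iota\,\epsilon\,\rho_t^{\,i},
\]
which says that $x^{n-1}=\sum v_i\otimes v^{\,i}$ gives the Frobenius element of $V_t$ under the standard pairing; this is a direct computation using $\epsilon(v_j)=\delta_{j,n-1}$. The $S$ relation $\mathcal{F}(\raisebox{-0.2\height}{\includegraphics[scale=0.4]{SphereRelation.pdf}})=0$ reduces to $\epsilon\circ\iota=0$, which is immediate when $n>1$ since $\epsilon(v_0)=0$. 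The $T$ relation asks that the genus-one surface corresponds to multiplication by $n$ times the image of $\sharp(n-1)\BR P^2$, which translates algebraically to $\sum_i v^{\,i}v_i = n\cdot\theta^{n-1} = n\cdot v_{n-1}$; again a one-line check in $V_t$. The ``implies'' half of the $T$ relation (the $\BR P^2$ count) then follows from iterating $NC$ as in the paragraph after \Cref{theorem:UCob-presentation}. The grading claim reduces to the degree assignments on cap, cup, pants, and the projective cylinder recorded in \Cref{subsection:geometricchaincomplex}: each generator shifts the quantum grading by exactly the amount shown, and this is preserved on $\mathcal{F}$ by the definition of the degrees on $V$.

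The main obstacle, and the only place where one has to be careful, is the even-$n$ case of the comultiplication, where Cobordism $(2)$ in \Cref{subsection:ribbon-graphs-to-geo-complex} is a pants \emph{connect-summed with one $\BR P^2$}. This is forced by degree-counting (cf. \Cref{example:delta-cobordism}) and is precisely what makes our $\Delta_t$ a \emph{shifted} (non-counital) comultiplication rather than the counital one; one must check that the Frobenius compatibilities $(m_t\otimes Id)\circ(Id\otimes\Delta_t)=\Delta_t\circ m_t$ and the $NC$ relation still go through with this shift, which they do because $\rho_t$ is multiplication by $x$ and hence commutes past everything. Once all the local relations are checked on generators, the extension to $\mathrm{Mat}$ is automatic (apply $\mathcal{F}$ entry-wise to matrices of cobordisms and take direct sums of objects) and the extension to $\mathrm{Kom}_{/h}$ follows because $\mathcal{F}$ sends chain homotopies of geometric complexes to chain homotopies of $\BC$-module complexes, being a $\BC$-linear additive functor. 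Combined with the verification (already sketched throughout Sections~\ref{section:bigraded-n-color-homology} and \ref{section:filtered-n-color-homology}) that the differential $\widehat{\partial}$ of $[[\Gamma]]$ has $\mathcal{F}$-image equal to $\partial$ (when $t=0$) and to $\widehat\partial$ (when $t=1$), this gives $H(\mathcal{F}([[\Gamma]]))=CH^{*,*}_n(\Gamma;\BC)$ and $=\widehat{CH}^*_n(\Gamma;\BC)$ respectively, completing \Cref{MainTheorem:TQFT}.
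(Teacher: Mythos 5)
Your proof follows essentially the same route as the paper's: verify the Frobenius-type compatibilities, check that $\mathcal{F}$ kills the three local relations $S$, $T$, $NC$, confirm degree bookkeeping, and then observe that the extension to $\mathrm{Mat}$ and $\mathrm{Kom}_{/h}$ is formal. Two points deserve correction, though.

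First, the displayed $NC$ identity contains a spurious $\tfrac{1}{n}$: the correct verification, matching the definition of $\epsilon$ with $\epsilon(v_j)=\delta_{j,n-1}$, is that
\[
Id_{V_t} \;=\; \sum_{i=0}^{n-1}\rho_t^{\,i}\,\iota\,\epsilon\,\rho_t^{\,n-1-i}
\]
on the nose, with no normalizing factor. Applied to $v_k$, exactly one summand survives (the one with $\rho_t^{n-1-i}(v_k)$ landing on $v_{n-1}$) and returns $v_k$; dividing by $n$ would break the relation.

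Second, your treatment of the $T$ relation for even $n$ is somewhat obscured. The algebraic check $\sum_i v^i v_i = n\,\theta^{\,n-1}$ is computed with the trace-dual basis, i.e.\ using the \emph{counital} comultiplication $\Delta_\epsilon$ of the Frobenius algebra $(V_t,m_t,\iota,\epsilon)$. For odd $n$ this coincides with the $\Delta_t$ to which $\mathcal{F}$ sends the pants generator; but for even $n$ the generator carries a connect-summed $\BR P^2$ and hence $\mathcal{F}$ sends it to the \emph{shifted} comultiplication $\Delta_t=\Delta_\epsilon\circ\rho_t$. The torus surface in the $T$ relation is built from the \emph{plain} pants, whose image is $\Delta_\epsilon$, not $\Delta_t$; so one must use $\Delta_\epsilon$ in the $T$ computation, as you in effect do, but the distinction should be drawn explicitly. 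Your ``main obstacle'' paragraph touches on this but frames it purely as a Frobenius-compatibility check for $\Delta_t$, whereas the precise issue is which of $\Delta_\epsilon$ or $\Delta_t$ enters the torus. With these two fixes the argument aligns with the paper's proof.
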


\begin{proof} The functor is well-defined: it respects the relations between the set of generators of $\mathcal{UC}ob^n$, i.e., the relations defining a Frobenius algebra.  It also satisfies the definition of a hyperextended Frobenius algebra over $\BC$ by choosing $\phi:V\ra V$ to be the identity map, $\theta=v_1$, and $\ell=n-1$ and $a=n$.  Note that $\eta_t^2(v_0)=n\rho_t^{2m}(v_0)$, and when $n$ is odd,  
\begin{equation}\label{eq:extendedFrobeniusrelationodd}
m_t(\Delta_t(v_0)) = m_t(\sum_{i=0}^{n-1} v_i \ot v_{n-i-1}) = n v_{n-1} = \eta_t^2(v_0).
\end{equation}
When $n$ is even,
\begin{equation} \label{eq:extendedFrobeniusrelationeven}
m_t(\Delta_t(v_0)) = m_t(\sum_{i=1}^{n-1} v_i \ot v_{n-i} + t(v_0\ot v_0)) = n t v_o=\eta_t^2(v_0).
\end{equation}
See \Cref{fig:P_3_A-squared-ex} for an example in the hypercube of states for why  \Cref{eq:extendedFrobeniusrelationodd} and \Cref{eq:extendedFrobeniusrelationeven} are important and necessary for defining the homology theories in this paper.

The functor $\mathcal{F}$ satisfies the $S$, $T$, and $NC$ relations. The $S$ relation is the simplest: $S^2=0$ is equivalent to a cap followed by a cup, which is equivalent to showing $\epsilon(\iota(1))=0$. This holds for all $n>1$.

The $T$ relation requires some care.  A torus is a cap followed by a pair of pants followed by an upside down pair of pants, then finally by a cup. When $n$ is odd, $T^2=n$ follows straightforwardly from \Cref{eq:extendedFrobeniusrelationodd}:
\begin{equation}\label{eq:calc-that-T2-equals-one}
 \mathcal{F}(T^2)=(\epsilon\circ m_t \circ \Delta_t \circ \iota)(1) =\epsilon\left(m_t(\Delta_t (v_0))\right) = \epsilon(n v_{n-1})  = n.
\end{equation}
But this calculation does not  work using \Cref{eq:extendedFrobeniusrelationeven} when $n$ is even. This is because  $\Delta_t$ defined above for building the geometric chain complex (and homology theories) is different than the comultiplication defined by the counit $\epsilon$ of the Frobenius algebra.  When $n$ is even, the map $\Delta_t$ is a pair of pants connect summed with a copy of $\BR P^2$. The $n$ even case is an example of a {\em shifted comultiplication}. We discuss these comultiplications in the next subsection. The calculation to show $T^2=n$ in the even case uses only a pair of pants, i.e., the comultiplication defined by the counit $\epsilon$.  

Let $\Delta_\epsilon:V\ra V\ot V$ be the counital comultiplication defined by the Frobenius algebra. Then $\Delta_\epsilon(v_0) = \sum_{i=0}^{n-1} v_i \ot v_{n-i-1}$, and in general, $\Delta_t(v)=\Delta_\epsilon(\rho_t(v))$ when $n$ is even. (Both maps can be computed directly by applying the neck cutting relation ($NC$) twice to a pair of pants together with applications of $\rho_t$.) Replacing $\Delta_t$ with $\Delta_\epsilon$ in \Cref{eq:calc-that-T2-equals-one} shows that $\mathcal{F}(T^2) = n$ when $n$ is even as desired. Hence, when $n$ is even, the isomorphism class of the category $\mathcal{UC}ob^n_{/l}$ corresponds to a hyperextended Frobenius algebra with its counital comultiplication $\Delta_\epsilon$, but the map used to define the homology theories is the non-counital shifted comultiplication $\Delta_t$.

Next we prove the neck cutting relation.  The righthand side of the neck cutting relation \Cref{eq:neckcutting} is the following expression in $\BC\mbox{Mod}$:
$$  (\rho_t^0 \circ \iota\circ \epsilon\circ \rho_t^{n-1})(v)+(\rho_t^1 \circ \iota \circ \epsilon\circ \rho_t^{n-2})(v)+ \dots+ (\rho_t^{n-1} \circ \iota\circ \epsilon\circ \rho_t^{0})(v)$$
This expression is the identity on $\{v_0,\dots,v_{n-1}\}$.  For example, only the second  term is nonzero when applied to $v_1$,
\begin{eqnarray*}
(\rho_t^1 \circ \iota \circ \epsilon\circ \rho_t^{n-2})(v_1) & = & \rho_t^1 \circ \iota\circ \epsilon(v_{n-1}) = \rho_t(v_0) = v_1,
\end{eqnarray*}
the remaining terms result in computing $\epsilon(v_i)$ for $i\not=n-1$.
Thus, $\mathcal{F}$ of the righthand side of \Cref{eq:neckcutting} is the identity map, which equals $\mathcal{F}$ of the  cylinder. 

Finally, the extension to complexes is done by taking formal direct sums into honest direct sums. Note that at this stage, circle objects have been replaced with a column of graded empty set objects (corresponding to the generators of $V$) in the geometric chain complex. Verifying that $\mathcal{F}$ respects degrees is left as an exercise to the reader.
\end{proof}

\subsection{Shifted comultiplications in hyperextended Frobenius algebras} 
\label{subsection:ExtendedNearlyFrobeniusAlgebras}

Looking back over the paper, one sees that the choice of  comultiplication $\Delta_t$ is crucial. It must simultaneously
\begin{enumerate}
\item preserve the quantum grading when $t=0$ and preserve the filtration when $t=1$,
\item be injective when $t=1$ so that $\Delta_1(c_i)$ is proportional to $c_i\ot c_i$ on the color basis, and
\item be used to define a ``square root'' of $m_t\circ\Delta_t(1)$ so that $m_t\circ \Delta_t = \eta_t \circ\eta_t$ can be solved for all $t$.
\end{enumerate}
Without these properties, the homology theories cannot capture face coloring information about the ribbon graph. The counital comultiplication given by $\epsilon(x^{n-1})=1$ in the hyperextended Frobenius algebra satisfies all three conditions when $n$ is odd and does not when $n$ is even. When $n$ is even, we used a comultiplication that does satisfy the three conditions. This comultiplication is, in the following sense, a ``shifted'' version of the counital comultiplication:

\begin{definition} Let $(V, m,\iota, \epsilon, \phi,  \theta, a, l)$ be a hyperextended Frobenius algebra over $R$ and let $k\in \BZ$, $k\geq 0$.  A {\em shifted comultiplication}, $\Delta:V\ra V\ot V$, is defined by the unique element $\Delta(v) = \sum_i v'_i\ot v''_i$ that satisfies $\theta^k\cdot v\cdot w = \sum_i v'_i \epsilon(v''_i\cdot w)$ for all $w\in V$.
\label{def:HENFA}
\end{definition}

In our homology theories, when $n$ is odd, the shifted comultiplication we used is $k=0$. When $n$ is even, the reader can check that $k=1$ was used.  Our homology theories show that that there are nontrivial results for shifted comultiplications when $k>0$.  Indeed, there should be other homologies for other choices of $k$ or by choosing a different counit (e.g. $\epsilon(x)=1$ instead of $\epsilon(x^{n-1})=1$).  We leave all of these considerations for possible future research (cf. \cite{K1} or \cite{BN3}).

\section{Final remarks}

In this paper we introduced invariants of graphs and showed that many known results in graph theory follow as corollaries from these stronger invariants, see \Cref{MainTheorem:n-color-polynomial} for example. In some cases, the invariants lead to simple-to-state results of graph theory itself (cf. \Cref{cor:3-edge-color-equals-sum-of-3-face-color}) or could be used to generalize and give new meaning to 50-year-old polynomials (\Cref{Theorem:mainthmPenrose}). We also linked some of the big conjectures in graph theory, i.e., four color theorem (\Cref{Theorem:mainthm-four-color}), cycle double cover conjecture (\Cref{theorem:cycledoublecoverconjectureequivalence}), nowhere zero flow and edge coloring conjectures (\Cref{prop:Tutte-4-flow-equivalence}), directly to showing that certain filtered $n$-color homologies are nonzero. 

Most of the graph theory results we derived used the graded Euler characteristic or a version of the Poincar\'{e} polynomial of dimensions derived from the harmonic colorings on individual state graphs.  Look at \Cref{fig:InvariantTable} again.  The $n$-color polynomial and total face color polynomial are the invariants of this paper that most closely align to combinatorial arguments, so it is maybe not surprising that these invariants could be used to easily reproduced known and nontrivial graph theory results.  Moving forward, however, we think the real value of this paper is in the introduction of the underlying homology theories themselves: we hope that the easy-reproduction of known graph theory results serve as motivation to the reader to study the homology theories and spectral sequences. It in the spectral sequence where we think many of the solutions to the highly nontrivial problems live. 

Homology theories come with well-known tools that are helpful for decomposing the spaces studied into smaller, more manageable pieces. For example, what does excision look like for our $n$-color homologies? What are graphs with ``boundaries?'' Are there Mayer-Vietoris long exact sequences, or at least, short exact sequences that lead to interesting long exact sequences? (We know some of these exist for our homologies.) Future research will explore the homology of these ``tangles''  and how they can be glued together. 

Finally, we point out that the homology theories presented in this paper are only the second set of new homology theories predicted in \cite{BaldCohomology}. In that paper, the first author also suggested a third set of homology theories based upon the vertex polynomial.  These homology theories have been worked out and written up in \cite{BM-Vertex}. They also compute useful information about a graph. Or, if you think like a topologist, results in graph theory establish the non-vanishing of invariants of $2$-dimensional CW complexes of smooth surfaces.

For example, the {\em filtered vertex $n$-color homology}, denoted  $\widehat{VCH}_n^{*}(\Gamma)$ in \cite{BM-Vertex}, can be used to count the number of {\em perfect matchings} of the graph $G$. More specifically, if $\Gamma$ is a plane ribbon graph for an abstract planar graph $G$, then
$$\dim \widehat{VCH}_2^0(\Gamma) = 2 \cdot \# \{\mbox{perfect matchings of $G$}\}.$$
We expect that this is just the beginning of the discovery of homology theories that categorify polynomials described in \cite{BaldCohomology} that report interesting information about graphs and CW complexes of surfaces.


\end{document}